\numberwithin{equation}{section}
\newtheoremstyle{thmlemcorr}{10pt}{10pt}{\itshape}{}{\bfseries}{.}{10pt}{{\thmname{#1}\thmnumber{ #2}\thmnote{ (#3)}}}
\newtheoremstyle{thmlemcorr*}{10pt}{10pt}{\itshape}{}{\bfseries}{.}\newline{{\thmname{#1}\thmnumber{ #2}\thmnote{ (#3)}}}
\newtheoremstyle{remexample}{10pt}{10pt}{}{}{\bfseries}{.}{10pt}{{\thmname{#1}\thmnumber{ #2}\thmnote{ (#3)}}}
\theoremstyle{thmlemcorr}
\newtheorem{theorem}{Theorem}
\numberwithin{theorem}{section}
\newtheorem{lemma}[theorem]{Lemma}
\newtheorem{corollary}[theorem]{Corollary}
\newtheorem{proposition}[theorem]{Proposition}
\theoremstyle{thmlemcorr*}
\newtheorem{theorem*}{Theorem}
\newtheorem{lemma*}[theorem]{Lemma}
\newtheorem{corollary*}[theorem]{Corollary}
\newtheorem{proposition*}[theorem]{Proposition}
\newtheorem{problem*}[theorem]{Problem}
\newtheorem{conjecture*}[theorem]{Conjecture}
\newtheorem{definition*}[theorem]{Definition}
\theoremstyle{remexample}
\newtheorem{remark}[theorem]{Remark}
\newtheorem{example}[theorem]{Example}
\newcommand{\Crm}{\mathrm{C}}
\newcommand{\Lrm}{\mathrm{L}}
\newcommand{\Wrm}{\mathrm{W}}
\newcommand{\Ecal}{\mathcal{E}}
\newcommand{\Mcal}{\mathcal{M}}
\newcommand{\Rcal}{\mathcal{R}}
\newcommand{\Scal}{\mathcal{S}}
\newcommand{\Wcal}{\mathcal{W}}
\newcommand{\ffrak}{\mathfrak{f}}
\newcommand{\Ibb}{\mathbb{I}}
\newcommand{\Jbb}{\mathbb{J}}
\newcommand{\Nbb}{\mathbb{N}}
\newcommand{\Rbb}{\mathbb{R}}
\newcommand{\Zbb}{\mathbb{Z}}
\DeclareMathOperator{\Sgn}{Sgn}
\newcommand{\ee}{\mathrm{e}}
\newcommand{\set}[2]{\left\{\, #1 \ \ \textup{\textbf{:}}\ \ #2 \,\right\}}
\newcommand{\setb}[2]{\bigl\{\, #1 \ \ \textup{\textbf{:}}\ \ #2 \,\bigr\}}
\newcommand{\setB}[2]{\Bigl\{\, #1 \ \ \textup{\textbf{:}}\ \ #2 \,\Bigr\}}
\newcommand{\setBB}[2]{\biggl\{\, #1 \ \ \textup{\textbf{:}}\ \ #2 \,\biggr\}}
\newcommand{\norm}[1]{\|#1\|}
\newcommand{\normb}[1]{\bigl\|#1\bigr\|}
\newcommand{\normBB}[1]{\biggl\|#1\biggr\|}
\newcommand{\abs}[1]{|#1|}
\newcommand{\absb}[1]{\bigl|#1\bigr|}
\newcommand{\absBB}[1]{\biggl|#1\biggr|}
\newcommand{\dpr}[1]{\langle #1 \rangle}
\newcommand{\dprb}[1]{\bigl\langle #1 \bigr\rangle}
\newcommand{\dprBB}[1]{\biggl\langle #1 \biggr\rangle}
\newcommand{\cl}[1]{\overline{#1}}
\newcommand{\di}{\mathrm{d}}
\newcommand{\dd}{\;\mathrm{d}}
\newcommand{\DD}{\mathrm{D}}
\newcommand{\N}{\mathbb{N}}
\newcommand{\R}{\mathbb{R}}
\newcommand{\loc}{\mathrm{loc}}
\newcommand{\ONE}{\mathbbm{1}}
\newcommand{\toweak}{\rightharpoonup}
\newcommand{\toweakstar}{\overset{*}\rightharpoonup}
\newcommand{\toup}{\uparrow}
\newcommand{\todown}{\downarrow}
\newcommand{\BigO}{\mathrm{\textup{O}}}
\newcommand{\sbullet}{\begin{picture}(1,1)(-0.5,-2.5)\circle*{2}\end{picture}}
\newcommand{\frarg}{\,\sbullet\,}
\newcommand{\BV}{\mathrm{BV}}
\newcommand{\eps}{\epsilon}
\DeclareMathOperator{\Var}{Var}
\DeclareMathOperator{\Diss}{Diss}
\newcommand{\term}[1]{\textit{#1}}
\newcommand{\proofstep}[1]{\medskip\textit{#1}}
\newcommand{\wlj}{{\tilde{u}_{\lambda_j}}}
\newcommand{\w}{{\tilde{u}}}
\newcounter{assumption}
\newcommand{\nextas}[1]{%
  ~\refstepcounter{assumption}%
   \protected@write \@auxout{}{\string\newlabel{#1}{{(A\theassumption)}{\thepage}{(A\theassumption)}{#1}{}}}%
   \hypertarget{#1}{(A\theassumption)}%
}
\def\Xint#1{\mathchoice 
{\XXint\displaystyle\textstyle{#1}}%
{\XXint\textstyle\scriptstyle{#1}}%
{\XXint\scriptstyle\scriptscriptstyle{#1}}%
{\XXint\scriptscriptstyle\scriptscriptstyle{#1}}%
\!\int} 
\def\XXint#1#2#3{{\setbox0=\hbox{$#1{#2#3}{\int}$} 
\vcenter{\hbox{$#2#3$}}\kern-.5\wd0}} 
\def\dashint{\,\Xint-}
\renewcommand{\eps}{\varepsilon}
\renewcommand{\epsilon}{\varepsilon}
\renewcommand{\phi}{\varphi}
\renewcommand{\hat}{\widehat}
\begin{document}


\title[Two-speed solutions]{Two-speed solutions to non-convex rate-independent systems}

\author{Filip Rindler}
\address{\textit{F.~Rindler:} Mathematics Institute, University of Warwick, Coventry CV4 7AL, UK, and The Alan Turing Institute, British Library, 96 Euston Road, London NW1 2DB London, UK.}
\email{F.Rindler@warwick.ac.uk}

\author{Sebastian Schwarzacher}
\address{\textit{S.~Schwarzacher:} Institute for Applied Mathematics, University of Bonn, Endenicher Allee 60, D-53115 Bonn, Germany, and Katedra matematick\'{e} anal\'{y}zy, Charles University Prague, Sokolovsk\'{a} 83, 186 75 Praha 8, Czech Republic.}
\email{schwarz@karlin.mff.cuni.cz}

\author{Juan J.\ L.\ Vel\'{a}zquez}
\address{\textit{J.\ J.\ L.\ Vel\'{a}zquez:} Institute for Applied Mathematics, University of Bonn, Endenicher Allee 60, D-53115 Bonn, Germany.}
\email{velazquez@iam.uni-bonn.de}


\hypersetup{
  pdfauthor = {Filip Rindler, Sebastian Schwarzacher, Juan J. L. Velazquez},
  pdftitle = {Two-speed solutions to non-convex rate-independent systems},
  pdfsubject = {MSC (2010): 49J40 (primary); 47J20, 47J40, 74H30},
  pdfkeywords = {Rate-independent systems, quasistatic evolution, nonconvex functional, two-speed solution, viscosity approximation, slow-loading limit}
}


\maketitle
\thispagestyle{empty}

\begin{abstract}

We consider evolutionary PDE inclusions of the form
\[
  -\lambda \dot{u}_\lambda + \Delta u - \DD W_0(u) + f \ni \partial \Rcal_1(\dot{u})
  \qquad \text{in $(0,T) \times \Omega$,}
\]
where $\Rcal_1$ is a positively $1$-homogeneous rate-independent dissipation potential and $W_0$ is a (generally) non-convex energy density. This work constructs solutions to the above system in the slow-loading limit $\lambda \todown 0$. Our solutions have more regularity both in space and time than those that have been obtained with other approaches. On the ``slow'' time scale we see strong solutions to a purely rate-independent evolution. Over the jumps, we obtain a detailed description of the behavior of the solution and we resolve the jump transients at a ``fast'' time scale, where the original rate-dependent evolution is still visible. Crucially, every jump transient splits into a (possibly countable) number of rate-dependent evolutions, for which the energy dissipation can be explicitly computed. This in particular yields a global energy equality for the whole evolution process. It also turns out that there is a canonical slow time scale that avoids intermediate-scale effects, where movement occurs in a mixed rate-dependent / rate-independent way. In this way, we obtain precise information on the impact of the approximation on the constructed solution. Our results are illustrated by examples, which elucidate the effects that can occur.

\vspace{4pt}

\noindent\textsc{MSC (2010): 49J40 (primary); 47J20, 47J40, 74H30.} 

\noindent\textsc{Keywords:} Rate-independent systems, quasistatic evolution, nonconvex functional, two-speed solution, viscosity approximation, slow-loading limit.

\vspace{4pt}

\noindent\textsc{Date:} \today{}.
\end{abstract}

\setcounter{tocdepth}{1}
\tableofcontents

\section{Introduction}

Consider the prototypical PDE system (to be interpreted in a suitable sense) 
\begin{equation} \label{eq:PDE_ex}
  \lambda  \dot{u}_\lambda(t) + \frac{\dot{u}_\lambda(t)}{\abs{\dot{u}_\lambda(t)}} - \Delta u_\lambda(t) + \DD W_0(u_\lambda(t)) = f(t), \qquad u_\lambda \colon [0,T] \times \Omega \to \R^m,
\end{equation}
on a time interval $[0,T]$ ($T > 0$) and a bounded $\Crm^{1,1}$-domain $\Omega \subset \R^d$. This PDE system combines rate-independent dissipation (e.g.\ dry friction) and inertial (parabolic) dissipation. When the energy potential $W_0$ is non-convex (e.g.\ a double-well potential), as is often the case in applications, then the behavior of~\eqref{eq:PDE_ex} may display rapid phase transitions, where $u(t)$ moves from one well of $W_0$ to another with speed $\abs{\dot{u}_\lambda} \sim 1/\lambda$. 

It is often desirable to take the \emph{slow-loading limit} of~\eqref{eq:PDE_ex} as $\lambda \todown 0$. This corresponds to the assumption that the rate-dependent dissipative effects only act with infinitesimal speed. In the limit one may conjecture that $u$ follows the degenerate equation
\begin{equation} \label{eq:PDE_0}
 \frac{\dot{u}(t)}{\abs{\dot{u}(t)}} - \Delta u(t) + \DD W_0(u(t)) = f(t), \qquad u \colon [0,T] \times \Omega \to \R^m,
\end{equation}
in a suitable (weak) sense. However, since $\abs{\dot{u}_\lambda} \sim 1/\lambda \to \infty$ over the rapid transitions, we must expect that $u$ in general has jumps in time. In particular, for the total energy process $E(t)$, defined as
\[
  E(t) := \int_\Omega \frac{\abs{\nabla u(t,x)}^2}{2} + W_0(u(t,x)) - f(t,x) \cdot u(t,x) \dd x,
\]
at a jump point $t_0 \in (0,T)$ of $u(t)$, the energy difference
\[
  \delta E(t_0) := E(t_0+) - E(t_0-) = \lim_{t \todown t_0} E(t) - \lim_{t \toup t_0} E(t)
\]
should still depend on the original dynamics from~\eqref{eq:PDE_ex}. In fact, it turns out that the jump path connecting the two jump endpoints is not necessarily straight and thus the total energy dissipation cannot simply be measured as a total variation (with respect to a suitable dissipation distance), as is for instance the case in the by now classical Mielke--Theil energetic solutions~\cite{MielkeTheilLevitas02,MielkeTheil04}. Instead, it turns out that the dissipation of energy over a jump depends on both the path and the speed of the jump transient between $u(t_0-)$ and $u(t_0+)$. This jump transient, which progresses along a ``fast time'' (relative to the ``slow time'' $t$), cannot be expected to be independent of the approximation. Indeed, as we will demonstrate below, some jumps are what we call {\em rate-dependent}, which means that they follow an evolution of the type~\eqref{eq:PDE_ex} with $\lambda = 1$. Thus, from this point of view, the above formulation~\eqref{eq:PDE_0} by itself is \emph{under-specified}.

The rate-independent system~\eqref{eq:PDE_0} above has been studied in great detail in the work of Mielke and collaborators, starting from~\cite{MielkeTheilLevitas02,MielkeTheil04}, and recently culminating in the book~\cite{MielkeRoubicek15book}, to which we refer for motivation, applications and history. In particular, we mention the theory of ``balanced viscosity'' solutions by Mielke--Rossi--Savar\'{e}; see the main works~\cite{MielkeRossiSavare09,MielkeRossiSavare12, MielkeRossiSavare16}. There, the authors develop a powerful framework to construct solutions which satisfy a conservation-of-energy formula. This formula includes a contribution to the energy dissipation originating from the rate-dependent evolution over the jumps, which is computed by means of a variational principle.

While the theory of balanced viscosity solutions encompasses a large number of non-convex and non-smooth functionals, all constructed solutions are of (potentially) \emph{low regularity}. Moreover, unlike in classical PDE theory, it seems to be impossible to establish regularity of such solutions a-posteriori. There are essentially three reasons for this: First, once the solution processes are constructed, all regularity is already ``lost'' and the stability and energy conditions that characterize balanced viscosity solutions are too weak to derive it. Second, because of the potential non-uniqueness of balanced viscosity solutions, there may be no reason to believe that all balanced viscosity solutions are in fact regular (as fast oscillations between several possible solutions may occur). Third, balanced viscosity solutions can be constructed in very general situations, including those involving non-smooth functionals, where no further regularity theory may exist.

On the other hand,~\cite{RindlerSchwarzacherSuli17} developed a solution theory of strong solutions and derived essentially optimal regularity estimates in the case of ``functionally convex'' $W_0$. For fully non-convex energies, the validity of the regularity results of~\cite{RindlerSchwarzacherSuli17} breaks down at jump points and a new analysis is necessary. We also refer to~\cite{ThomasMielke10,Knees10,MielkePaoliPetrovStefanelli10,MielkeZelik14,Minh14} for other regularity results in the theory of rate-independent systems.

In the present work, we impose stronger assumptions on the energy functionals (still allowing for strong non-convexity) than the ones required for the theory of balanced viscosity solutions of Mielke--Rossi--Savar\'{e}~\cite{MielkeRossiSavare09,MielkeRossiSavare12, MielkeRossiSavare16}. As a tradeoff, however, our solutions have higher regularity both in space and time; see Theorem~\ref{thm:main_ex}. For instance, on intervals without jumps our solutions are \emph{strong} in the sense of a variational inequality (which is more information than the energetic balance defining balanced viscosity solutions). Furthermore, we get higher integrability and differentiability properties of the solution in space and time.

Like in the theory of balanced viscosity solutions, we explicitly resolve jumps into a (possibly countable) number of fast jump parts over which the dissipation is rate-dependent as in the original system~\eqref{eq:PDE_ex} with $\lambda = 1$. This effect originates from the possibility that a developing jump in~\eqref{eq:PDE_ex} (i.e.\ with slope $1/\lambda$) may in fact consist of several pieces that are separated on a slower scale than $\lambda$. See Figure~\ref{fig:twoscales_jump} for an illustration. Thus, the jump resolves to a number of fast evolutions, which are, however, disconnected from each other with respect to the fast time.

We also establish convergence of inertial (parabolic) approximations to a (regular) limit solution; see Theorem~\ref{thm:main_approx}.

\begin{figure}[t]
\includegraphics[width=250pt]{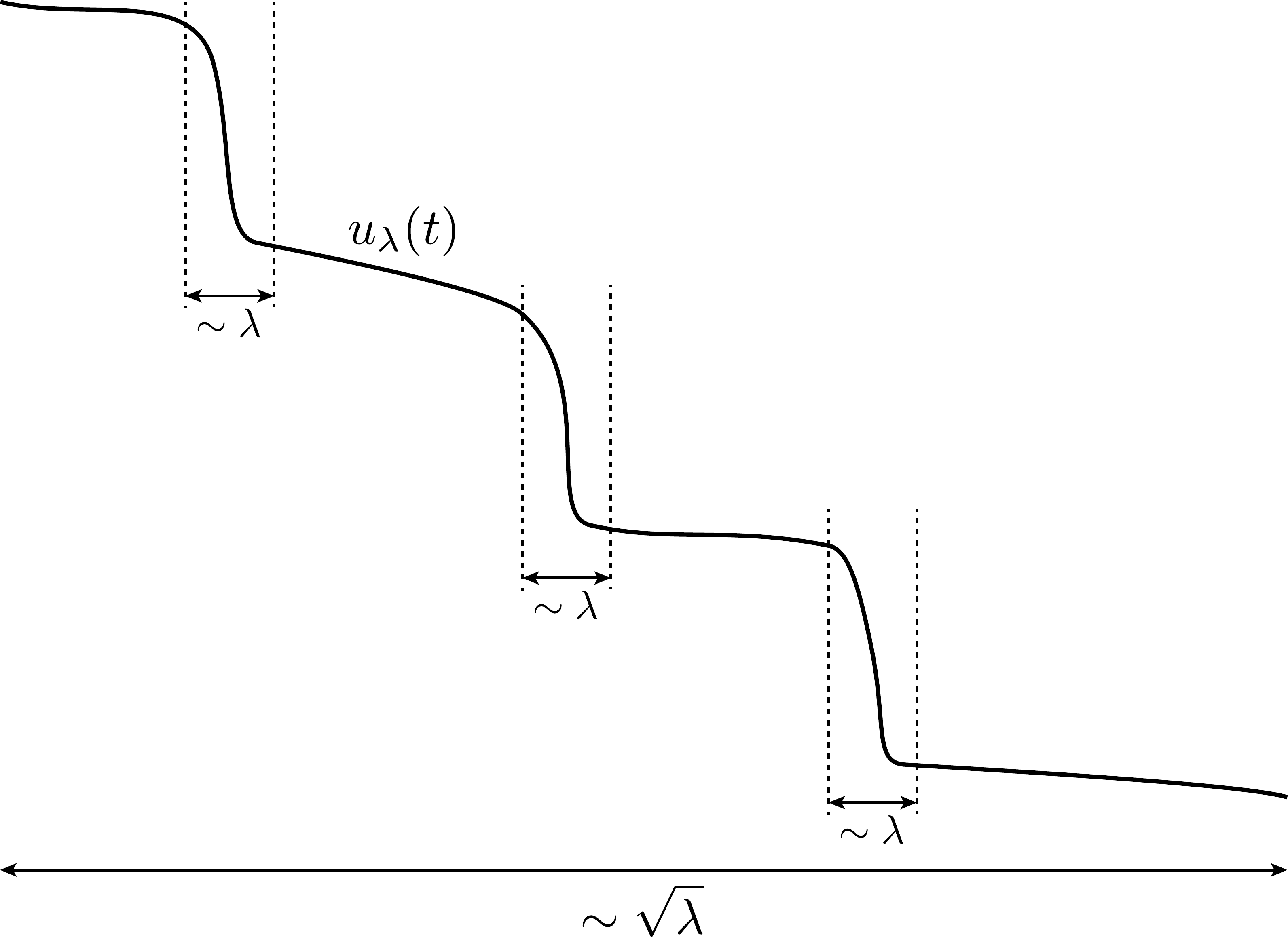}
\caption{A jump with two scales}
\label{fig:twoscales_jump}
\end{figure}

As it turns out, the time scale $t$ above is in fact not very well-suited to describing the rate-independent evolution with (possibly) rate-dependent jump transients. Indeed, in the ``naive'' time scale $t$, there may occur pathological paths of a rate-independent (sliding) nature. Namely, these ``weak shocks'' happen with a speed that is slower than the fast speed of the jump transients, yet faster than the speed $t$ (see also Example~\ref{ex:jumplength1}). Instead, we explicitly construct a canonical slow time $s$, in which the total dissipation is more explicit (a similar idea is in fact already present in~\cite{MielkeRossiSavare09,MielkeRossiSavare12, MielkeRossiSavare16}). In fact, our main existence result, Theorem~\ref{thm:main_ex} is formulated in this new, natural time scale. In this time scale, all rate-independent jumps have been removed; consequently, the solution's energy dissipation will be purely rate-independent except for the jumps on which the energy is dissipated in a (purely) \emph{rate-dependent} manner. For the sake of completeness, we also give a (necessarily less satisfying) formulation of the energy dissipation in the original time scale $t$, see Corollary~\ref{cor:main_ex}.

We mention that the present work can also be understood as progress towards the clarification of which parts of a rate-independent evolution is {\em independent of the approximation} and which part {\em depends on the approximation}. This is achieved by tracing down precisely the impact on the energy from the approximation.

On the technical side we mention the key estimate in Lemma~\ref{lem:key1}. This crucial result entails that the rate-dependent dissipations associated to the processes $u_{\lambda}$ converge to the rate-independent dissipation of the limit process $u$. In particular, there are no contributions to the energy dissipation due to fast and small oscillations of $u_{\lambda}$  away from the jumps (see Proposition~\ref{prop:E_contpoint}).

This paper is organized as follows: We first describe the setup and the main results of this work in Section~\ref{sc:mainresult}, which are then illustrated with examples. After looking at what can be gained from an energy inequality together with stability alone in Section~\ref{sc:energy_ineq}, we turn to the construction of solutions of~\eqref{eq:PDE_ex} for $\lambda > 0$ in Section~\ref{sc:RDevol}. Then, finally, in Section~\ref{sc:twospeed} we construct our full solution process and jump resolutions. It is worth mentioning that the analysis of Section~\ref{sc:energy_ineq} does not depend on the approximation and hence might be of independent interest.

\subsection*{Acknowledgements}
This project has received funding from the European Research Council (ERC) under the European Union's Horizon 2020 research and innovation programme, grant agreement No 757254 (SINGULARITY). F.~R.\ also acknowledges the support from an EPSRC Research Fellowship on Singularities in Nonlinear PDEs (EP/L018934/1). S.S. and J.J.L.V. acknowledge support through the CRC 1060 (The Mathematics of Emergent Effects) of the University of Bonn that is funded through the German Science Foundation (DFG). S.S. further thanks  for the research support PRIMUS/19/SCI/01 and UNCE/SCI/023 of Charles University and the program GJ17-01694Y of the Czech national grant agency.

\section{Setup and main results}
\label{sc:mainresult}

\subsection{Problem formulation}

Consider for $\lambda > 0$ the following PDE system on a time interval $[0,T]$ ($T>0$) and a bounded $\Crm^{1,1}$-domain $\Omega \subset \R^d$, $d=2$ or $3$, for a map $u \colon [0,T] \times \Omega \to \R^m$:
\begin{align} \label{eq:PDE_lambda}
  \left\{
  \begin{aligned}
    -\lambda \dot{u}_\lambda(t) +  \Delta u_\lambda(t) - \DD W_0(u_\lambda(t))+f(t) &\in  \partial \Rcal_1(\dot{u}_\lambda(t))
      &&\text{in $(0,T) \times \Omega$,}\\
    u_\lambda(t)|_{\partial \Omega} &= 0  &&\text{for $t \in (0,T]$,} \\
    \qquad u_\lambda(0) &= u_0  &&\text{in $\Omega$}.
  \end{aligned} \right.
\end{align}
Here, $\Rcal_1 \colon \Lrm^1(\Omega;\R^m) \to [0,\infty)$ is the \term{rate-independent dissipation potential}, which is assumed to be convex and positively $1$-homogeneous (in the introduction we used $\Rcal_1(v)=\int_\Omega \abs{v(x)}\dd x$); $\partial \Rcal_1$ is its subdifferential; $W_0\colon \R^m \to [0,\infty)$ is the \term{energy functional}, which satisfies natural \emph{coercivity} and \emph{growth conditions} that will be specified below; $f \colon [0,T]\times \Omega\to \R^m$ is the \term{external loading (force)}; and $u_0:\Omega \to \R^m$ is the \term{initial value}. It is important to note that $W_0$ may be \emph{non-convex} and could for instance have the form of a double well.

Call
\[
  u_\lambda \in \Lrm^\infty(0,T;(\Wrm^{1,2}_0 \cap \Wrm^{2,2})(\Omega;\R^m))
  \quad\text{with}\quad
  \dot{u}_\lambda \in \Lrm^2((0,T) \times \Omega;\R^m)
\]
a \term{strong solution} to~\eqref{eq:PDE_lambda} if
\[
  -\lambda \dot{u}_\lambda(t) + \Delta u(t) - \DD W_0(u(t)) + f(t) \in \partial \Rcal_1(\dot{u}_\lambda(t))  \qquad\text{for a.e.\ $t \in (0,T)$,}
\]
i.e.,
\begin{equation} \label{eq:Varlam}
  \left\{\begin{aligned}
    &\Rcal_1(\dot{u}_\lambda(t)) + \dprb{- \lambda \dot{u}_\lambda(t) + \Delta u_\lambda(t) - \DD W_0(u_\lambda(t)) + f(t), \xi(t) - \dot{u}_\lambda(t)}  \leq \Rcal_1(\xi(t)) \\
    &\text{for a.e.\ $t\in (0,T)$ and all $\xi \in \Lrm^1(0,T;\Wrm^{1,2}_0(\Omega;\R^m))$,}
  \end{aligned}\right.
\end{equation}
and if
\[
  u_\lambda(0) = u_0  \quad\text{in $\Lrm^2(\Omega;\R^m)$,}
\]
noting that $u$ has regularity $\Wrm^{1,2}(0,T;\Lrm^2(\Omega;\R^m))$, hence the trace on the left time end-point is well-defined.

We remark that above the Laplacian $\Delta$ could be replaced by a (possibly time-dependent)  second-order strongly elliptic linear PDE operator in the spatial variables. For the sake of clarity, in the following we only consider the case of the Laplacian.

Formally setting $\lambda = 0$ in~\eqref{eq:PDE_lambda}, we obtain the associated \term{rate-independent system} (without initial condition)
\begin{align} \label{eq:PDE_RI}
  \left\{
  \begin{aligned}
      \Delta u(t) - \DD W_0(u(t)) + f(t) &\in \partial \Rcal_1(\dot{u}(t))
      &&\text{in $(0,T) \times \Omega$,}\\
    u(t)|_{\partial \Omega} &= 0  &&\text{for $t \in (0,T)$.}
  \end{aligned} \right.
\end{align}
We call a map
\[
  u \in \Lrm^\infty_{\loc}(0,T;(\Wrm^{1,2}_0 \cap \Lrm^\infty)(\Omega;\R^m))
  \quad\text{with}\quad
  \dot{u} \in \Lrm^1(0,T;\Wrm^{1,2}_0(\Omega;\R^m))
\]
a \term{strong solution} to~\eqref{eq:PDE_RI} provided that
\[
  \Delta u - \DD W_0(u) + f \in \Lrm^\infty(0,T;\Lrm^2(\Omega;\R^m))
\]
and
\[
  \Delta u(t) - \DD W_0(u(t)) + f(t) \in \partial \Rcal_1(\dot{u}(t))  \qquad\text{for a.e.\ $t \in (0,T)$.}
\]
By recalling the definition of the subdifferential, this differential inclusion means that
\begin{equation}\label{eq:strongsol_ineq}
  \left\{\begin{aligned}
    &\Rcal_1(\dot{u}(t)) + \dprb{\Delta u(t) - \DD W_0(u(t)) + f(t), \xi(t) - \dot{u}(t)} \leq \Rcal_1(\xi(t))\\
    &\text{for a.e.\ $t \in (0,T)$ and all $\xi \in \Lrm^1(0,T;\Wrm^{1,2}_0(\Omega;\R^m))$.}
  \end{aligned}\right.
\end{equation}
We refer to~\cite[Remark~1.1]{RindlerSchwarzacherSuli17} for some comments on the regularity classes in which we look for solutions. If~\eqref{eq:strongsol_ineq} is satisfied at $t \in (0,T)$, we say that $u$ is a \term{strong solution at $t$} to~\eqref{eq:PDE_RI}. Note that above we required only \emph{local} $\Lrm^\infty$-regularity in time since this is all we can expect if there is a jump at the initial time.

Observe that due to the rate-independent character of~\eqref{eq:strongsol_ineq}, we find for every Lipschitz function $\phi \colon [0,T]\to [0,S]$ with $\phi(0) = 0$ and $\phi(T) = S$ that the function $\w(s):=u(\phi(s))$ satisfies~\eqref{eq:PDE_RI} on $[0,S]$. Indeed if $u$ is a strong solution to~\eqref{eq:PDE_RI}, then $\w$ satisfies
 \[
  \Rcal_1(\partial_s \w(s)) + \dprb{\Delta \w(s) - \DD W_0(\w(s)) + \tilde{f}(s), \xi(s) - \partial_s \w(s)} \leq \Rcal_1(\xi(t)),
\]
where $\tilde{f}(s):=f\circ \phi(s)$.

\subsection{Assumptions} \label{sc:assume}

Unless stated otherwise, the following conditions will be assumed to hold in the rest of the paper:

\begin{enumerate}[({A}1)]
  \item \label{as:domain} Let $d\in\{2,3\}$ and let $\Omega \subset \R^d$ be open, bounded and with boundary of class $\Crm^{1,1}$.
  \item \label{as:R} The rate-independent dissipation (pseudo-)potential $\Rcal_1 \colon \Lrm^1(\Omega;\R^m) \to \R $ is given as
\[\qquad
  \Rcal_1(v) := \int_\Omega R_1(v(x)) \dd x,  \qquad v \in \Lrm^1(\Omega;\R^m),
\]
with $R_1 \colon \R^m \to [0,\infty)$ convex and positively $1$-homogeneous, i.e.\ $R_1(\alpha z) = \alpha R_1(z)$ for any $\alpha \geq 0$ and $z \in \R^m$. Moreover, we assume that $R_1(z)>0$ for all $z \in \R^m\setminus \{0\}$.
  \item \label{as:W} The energy functional $\Wcal_0 \colon \Lrm^q(\Omega;\R^m) \to [0,\infty]$, where $q \in (1,\infty)$, has the form
\[\qquad
  \Wcal_0(u) := \int_\Omega W_0(u(x)) \dd x
\]
with $W_0 \in \Crm^2(\R^m;[0,\infty))$ satisfying the following assumptions for suitable constants $C,\mu > 0$, and all
$v, w \in \R^m$:
\begin{align*}
  \qquad
  C^{-1}\abs{v}^q - C &\leq W_0(v) \leq C(1+\abs{v}^q);  \\
  \abs{\DD W_0(v)}  &\leq C(1+\abs{v}^{q-1});             \\
 -\mu \abs{v-w}^2 &\leq (\DD W_0(v)-\DD W_0(w))\cdot(v-w);  \\
 -\mu \abs{w}^2&\leq \DD^2 W_0(v)[w,w].
\end{align*}
The last assumption means that the non-convexity is not too degenerate (this still of course allows multi-well potentials).

  \item \label{as:f} The force term satisfies $f \in \Wrm^{1,\infty}(0,T;\Lrm^\infty(\Omega;\R^m))$.

  \item \label{as:u0} The initial value satisfies $u_0 \in (\Wrm^{1,2}_0 \cap \Lrm^q)(\Omega;\R^m)$.
\end{enumerate}

We note that the assumed positive convexity and $1$-homogeneity of $R_1$ implies that $R_1$ is in fact globally Lipschitz continuous (see, e.g.,~\cite[Lemma~5.6]{Rindler18book}). Hence, the hypotheses yield that there are $c_1,c_2 > 0$ such that $c_1\abs{z}\leq R_1(z) \leq c_2 \abs{z}$ for all $z \in \R^m$. Observe also that the convexity and $1$-homogeneity imply  that $R_1$ is sublinear since
\[
R_1(a+b)=2R_1\Big(\frac{a}2+\frac{b}2\Big)\leq R_1(a)+R_1(b).
\]

Associated with $\Rcal_1$ we define the \term{rate-independent dissipation} of $u \colon [0,T] \to \Lrm^1(\Omega;\R^m)$ on the subinterval $[s,t] \subset [0,T]$ to be
\[
  \Var_{\Rcal_1}(u;[s,t]) := \sup \setBB{ \sum_{k=0}^{N-1} \Rcal_1(u(s_{k+1}) - u(s_k)) }{ s = s_0 < s < \cdots < s_N = t }
\]
with the convention $\Var_{\Rcal_1}(u;[s,s]) := 0$. If for such a $u$ we have $\Var_{\Rcal_1}(u;[0,T]) < \infty$, we say that $u$ is of \term{bounded dissipation} (or \term{bounded $\Rcal_1$-variation}). Further set
\[
  \Var_{\Rcal_1}(u;(s,t)) := \lim_{\eps \todown 0} \Var_{\Rcal_1}(u;[s+\eps,t-\eps])
\]
and similarly for half-open intervals. 

The above notion of $\Rcal_1$-variation is sensitive to changes of $u$ at isolated points. When we apply the $\Rcal_1$-variation to maps that are only defined almost everywhere, we implicitly understand the $\Rcal_1$-variation to mean the infimum over all maps that are equal almost everywhere. This makes the $\Rcal_1$-variation lower semicontinuous with respect to weak convergence in $\Lrm^1(0,T;\Lrm^1(\Omega;\R^m))$. In any case, this will never cause problems since in all our results the jump points are explicitly resolved and the variation is ultimately only computed for \emph{continuous} maps.

For
\begin{align*}
  \Var(u;[s,t]) &:= \Var_{\norm{\frarg}_{\Lrm^1}}(u;[s,t]) \\
  &\phantom{:}= \sup \setBB{ \sum_{k=0}^{N-1} \normb{u(s_{k+1}) - u(s_k)}_{\Lrm^1} }{ s = s_0 < s < \cdots < s_N = t }
\end{align*}
we obtain the usual notion of \term{variation} for functions with values in $\Lrm^1(\Omega;\R^m)$. Due to the assumptions on $R_1$, we find that
\begin{align*}
c_1\Var(u;[s,t])\leq \Var_{\Rcal_1}(u;[s,t])\leq c_2\Var(u;[s,t])
\end{align*}
Hence, $\Var_{\Rcal_1}(u;[0,T]) < \infty$ if and only if $\Var(u;[0,T]) < \infty$ and in this case we write $u \in \BV(0,T;\Lrm^1(\Omega;\R^m))$. It can be shown that for $u \in \BV(0,T;\Lrm^1(\Omega;\R^m))$ at every $t \in (0,T)$ the left limit $u(t-) := \lim_{s\toup t} u(s)$ and the right limit $u(t+) := \lim_{s\todown t} u(s)$ exist, where the limits are taken with respect to the (strong) $\Lrm^1(\Omega;\R^m)$-topology.

For a countable ordered set of jump points $D = \{q_k\}_{k \in I} \subset J_u$ ($J_u$ being the jump set of $u$), $I \subset \Zbb$, we define the total variation on $[s,t] \setminus D$ as follows:
\[
  \Var_{\Rcal_1}(u;[s,t] \setminus D) := \Var_{\Rcal_1}(u;[s,t]) - \sum_{k \in I} \Rcal_1(u(q_k+)-u(q_k-)).
\]

Typical examples of $R_1$ are $R_1(z)=\sum_{i=1}^m\alpha_i\abs{z^i}$ with $\alpha_i > 0$, but also versions that have different frictions in different directions, such as the (scalar) example
\[
R_1(z)=
\begin{cases}
\alpha \abs{z} &\text{for }z>0,
\\
\beta \abs{z} &\text{for }z\leq 0,
\end{cases}
\]
where $\alpha,\beta > 0$.

\begin{example}[Friction potential]
\label{ex:fric}
Due to its $1$-homogeneity, the function $R_1$ is determined by the shape of the set $\set{z\in \R^m}{R_1(z)\leq 1}$. Indeed, one can associate to any bounded closed convex set $K$ that has $0$ as an interior point, a related friction potential $R_1$ by defining
\[
R_1(z):=\inf\set{s>0}{\frac1s z\in K}.
\]
This function is $1$-homogeneous by definition. It is also convex. To see this let us first assume that $z_1,z_2\in \partial K$. Then $R_1(z_1)=1=R_1(z_2)$ and by the convexity of $K$ we find that
$\theta z_1+(1-\theta) z_2\in K$ for all $\theta\in [0,1]$. Hence,
\[
R_1(\theta z_1+(1-\theta) z_2)\leq 1 = \theta R_1(z_1)+(1-\theta)R_1(z_2).
\]
By the above and the $1$-homogeneity it then follows for general $a,b\in \R^m$ (assuming that $a\neq 0$) that with $\beta:=\theta R_1(a)+(1-\theta) R_1(b)$,
\begin{align*}
R_1\big(\theta a+(1-\theta)b\big)
&=\beta
R_1\biggl(\frac{\theta R_1(a)}{\beta}\cdot\frac{a}{R_1(a)}+\frac{(1-\theta)R_1(b)}{\beta}\cdot\frac{b}{R_1(b)}\biggr)
\\
&\leq \beta\bigg(\frac{\theta R_1(a)}{\beta}
R_1\Big(\frac{a}{R_1(a)}\Big)+\frac{(1-\theta)R_1(b)}{\beta}R_1\Big(\frac{b}{R_1(b)}\Big)\biggr)
\\
&= \theta  R_1(a)+(1-\theta) R_1(b),
\end{align*}
which implies the convexity and hence also the continuity of $R_1$.
\end{example}

For $u \in (\Wrm^{1,2}_0 \cap \Lrm^q)(\Omega;\R^m)$ we also define the \term{regularized energy functional}
\begin{equation} \label{eq:Wcal}
  \Wcal(u) :=  \int_\Omega \frac{\abs{\nabla u(x)}^2}{2} \dd x + \Wcal_0(u) =  \int_\Omega \frac{\abs{\nabla u(x)}^2}{2} + W_0(u(x)) \dd x
\end{equation}
and the \term{total energy functional}
\[
  \Ecal(t,u) := \Wcal(u) - \dprb{f(t),u} = \int_\Omega \frac{\abs{\nabla u(x)}^2}{2} + W_0(u(x)) - f(t,x) \cdot u(x) \dd x. 
\]
In case that $\Wcal$ is \emph{convex} and $u_0$ satisfies the \term{compatibility condition}
\begin{equation} \label{eq:u0}
  \dprb{\Delta u_0 - \DD W_0(u_0) + f(0), \psi} \leq \Rcal_1(\psi)
\end{equation}
for all $\psi \in\Wrm^{1,2}_0(\Omega;\R^m)$, the existence and uniqueness of strong solutions is known (provided the given data satisfies the necessary regularity), see~\cite{RindlerSchwarzacherSuli17}. In the present paper we are concerned with \emph{non-convex} $\Wcal$, in which case solutions might have jumps. Further, in the non-convex case even a smooth global solution $u$ may not attain a given smooth initial condition that satisfies~\eqref{eq:u0}.

\subsection{Main results} \label{ssc:twospeedsol}

Our principal result is the construction of what one may call a ``solution with two different speeds'' by the parabolic approximation~\eqref{eq:PDE_lambda}. The following three cases may occur:
\begin{enumerate}
\item[Case~A] {\bf The continuous parts of the energy evolution}, where the rate-independent evolution follows the right-hand side in a quasi-static manner. Here, we are able to prove that the solution is strong and regular.
\item[Case~B] {\bf The parts where the energy approaches a jump with a rate slower than $\lambda$}. These jumps can be transferred to Case~A by introducing a {\em canonical time scale} that is slowing down the evolution in these parts by modifying the right-hand side.
\item[Case~C] {\bf The parts where the inertial dissipation is contributing.}  These are what we call {\em rate-dependent} jumps of the energy since they conserve the character of the inertial approximation. We resolve the path over the jump by an at most countable collection of solutions of~\eqref{eq:PDE_lambda} with $\lambda=1$ and constant right-hand side.
\end{enumerate}

For
\[
  u\in \BV(0,T;\Lrm^1(\Omega;\R^m))\cap \Lrm^\infty(0,T;(\Wrm^{1,2}_0 \cap \Lrm^q)(\Omega;\R^m))
\]
we define the \term{energy process}
\[
  E(t) := \Ecal(t,u(t)),  \qquad t \in [0,T].
\]

Below, in Theorem~\ref{thm:main_ex}, we will construct a rate-independent process $u$ as above together with processes $\{v^j(t_k, \frarg)\}_{k\in \Jbb,j\in \Ibb_k}$ (see~\ref{itm:iv} below), which we call a \term{two-speed solution} to~\eqref{eq:PDE_RI}, that satisfies the following properties:
\begin{enumerate}[(I)]
 \item \label{itm:i}
 The jump set $\Jbb$ of the energy process $E$ is exactly the (countable) set of negative jumps of $t \mapsto \Wcal(u(t))\in \BV(0,T)$ and is also the set of jumps of $u$ with respect to the $\Lrm^1(\Omega;\R^m)$-norm. Hence, the energy process $t \mapsto E(t)$ lies in $\BV(0,T)$.
 \item \label{itm:ii}
The solution is a strong solution at almost all times in $[0,T) \setminus \Jbb$.
 \item \label{itm:iii}
 At almost all non-jump points $t \in [0,T) \setminus \Jbb$ the \term{local stability}
\begin{align} \label{eq:stability}
\qquad
  -\DD_u \Ecal(t,u(t)) \in \partial \Rcal_1(0) =: \Scal
\end{align}
holds, that is,
\[\qquad
  \int_\Omega \bigl[- \DD W_0(u(t)) + f(t) \bigr] \cdot \psi - \nabla u(t) \cdot \nabla \psi \dd x
  \leq \int_\Omega R_1(\psi) \dd x
\]
for all $\psi \in \Wrm^{1,2}_0(\Omega;\R^m)$.
\item \label{itm:iv}
At every jump $t_k \in \Jbb$ of the energy there is a countable ordered set $\Ibb_k \subset \N$ and \term{jump resolution maps}
\[
v^i(t_k, \frarg)\in \Wrm^{1,2}(-\infty,\infty;\Wrm^{1,2}_0(\Omega;\R^m))\cap \Lrm^\infty(-\infty,\infty;\Wrm^{2,2}(\Omega;\R^m))
\]
for every $i \in \Ibb_k$, satisfying the \term{jump evolution}
\[\qquad
-\partial_\theta v^i(t_k,\theta) +\Delta v^i(t_k,\theta)-\DD W_0(v^i(t_k,\theta))+f(t_k)\in \partial \Rcal_1(\partial_\theta v^i(t_k,\theta))
\]
for $\theta \in (-\infty,\infty)$ in the strong sense, see~\eqref{eq:Varlam}.

\item \label{itm:v} 
For $t_k\in \Jbb$ and $i,j \in \Ibb_k$ with $i\neq j$ it holds that
\begin{align*}\qquad
E(t_k-)& \geq \Ecal(t_k,v^i(t_k,-\infty))>\Ecal(t_k,v^i(t_k,\infty))\\
&\geq \Ecal(t_k,v^j(t_k,-\infty))>\Ecal(t_k,v^j(t_k,\infty))\geq E(t_k+)
\end{align*}
or 
\begin{align*}\qquad
E(t_k-)& \geq \Ecal(t_k,v^j(t_k,-\infty))>\Ecal(t_k,v^j(t_k,\infty))\\
&\geq \Ecal(t_k,v^i(t_k,-\infty))>\Ecal(t_k,v^i(t_k,\infty))\geq E(t_k+),
\end{align*}
where $E(t_k\pm) := \lim_{t\to t_k\pm} E(t)$ and $u(t_k\pm)=\lim_{t\to t_k\pm} u(t)$.
Moreover, there is a constant $C > 0$ such that for all $t_k\in \Jbb$ and $i,j \in \Ibb_k$ with $i\neq j$ and $\Ecal(t_k,v^i(t_k,\infty))\geq  \Ecal(t_k,v^j(t_k,-\infty))$
it holds that
\begin{align*}\qquad
\norm{u(t_k-)-v^i(t_k,-\infty)}_{\Lrm^1} &\leq C\big(E(t_k-)-\Ecal(t_k,v^i(t_k,\infty))\big),\\
\norm{v^i(t_k,\infty)-v^j(t_k,-\infty)}_{\Lrm^1} &\leq C\big(\Ecal(t_k,v^i(t_k,\infty))- \Ecal(t_k,v^j(t_k,-\infty))\big),\\
\norm{v^i(t_k,\infty)-u(t_k+)}_{\Lrm^1} &\leq C\big(\Ecal(t_k,v^i(t_k,\infty))-E(t_k+)\big).
\end{align*}
\item \label{itm:vi}
For all subintervals $[s,t] \subset [0,T)$ with $s,t \in [0,T) \setminus \Jbb$, the following \term{energy balance} holds:
\[\qquad
  \Ecal(t,u(t)) = \Ecal(s,u(s)) - \Diss_+(u;[s,t]) - \int_s^t \dprb{\dot{f}(\tau), u(\tau)} \dd \tau,
\]
where $\Diss_+(u;[s,t])$ denotes the \term{total dissipation},
\begin{align}\qquad
  \Diss_+(u;[s,t]) &:= \Var_{\Rcal_1}(u;[s,t] \setminus \Jbb)
    + \sum_{t_k \in \Jbb \cap [s,t]}\Diss_{\mathrm{jump}}(t_k), 
  \notag\\
  \label{eq:dissjump}
  \Diss_{\mathrm{jump}}(t_k) &:= \sum_{i \in \Ibb_k} \biggl(\int_{-\infty}^\infty \norm{\partial_\theta v^i(t_k,\theta)}_{\Lrm^2}^2 + \Rcal_1(\partial_\theta v^i(t_k,\theta)) \dd \theta \biggr).
  \end{align}
\item \label{itm:vii} If there is a jump at the initial time, meaning that $0 \in \Jbb$, then the map $v^1(0, \frarg)$ has a special structure, namely
\[\qquad
  v^1(0,\theta) = u_0 \quad\text{for $\theta \in (-\infty,0]$.}
\]
\item \label{itm:viii}\label{itm:last} The set
\begin{equation} \label{eq:connect}\qquad
  \cl{\setb{ (s,u(s)) }{ s \in [0,T] \setminus \Jbb } \cup \bigcup_{t_k \in \Jbb} \bigcup_{i \in \Ibb_k} \setb{ (\theta,v^i(t_k,\theta) }{ \theta \in (-\infty,\infty) }},
\end{equation}
where the closure is taken with respect to the $(\R \times \Lrm^1(\Omega;\R^m))$-strong topology, is connected. If $0 \in \Jbb$, then $\theta$ instead is restricted to $[0,\infty)$ for $v^1(0,\theta)$.
\end{enumerate}

We remark that condition~\ref{itm:viii} should be interpreted in the sense that our two-speed solution is indeed a connected process and there are no ``gaps''.

We also note that energy balance can be written in the following conservative form: Upon introducing
\[
  \Gamma(t) := \Ecal(t,u(t)) + \Diss_+(u;[0,t]) + \int_0^t \dprb{\dot{f}(\tau), u(\tau)} \dd \tau
\]
and using the additivity of the dissipation with respect to the interval, we may write~\ref{itm:vi} concisely as
\[
  \Gamma(t) \equiv \Gamma(0)  \qquad\text{for all $t \in [0,T] \setminus \Jbb$.}
\]

We have the following existence theorem for \emph{regular} solutions:

\begin{theorem} \label{thm:main_ex}
Under the assumptions of Section~\ref{sc:assume} there exists an increasing function $\phi \colon [0,T]\to [0,T]$ such that $\phi(0)=0$, $\phi(T) = T$ and with
\[
  \tilde{f}(s):=f\circ\phi(s),  \qquad s \in [0,T],
\]
in place of $f$, there exists a two-speed solution $u$ to~\eqref{eq:PDE_RI} satisfying \ref{itm:i}--\ref{itm:last} with regularity in the following spaces:
\begin{align*}
  &\Lrm^a_\loc((0,T) \times \Omega;\R^m), \\
  &\Lrm^a_\loc(0,T;\Wrm^{1,r}_0(\Omega;\R^m)),  \\
  &\Lrm^2_\loc(0,T;\Wrm^{2,r}(\Omega;\R^m)),  \\
  &\Lrm^\infty(0,T;(\Wrm^{1,2} \cap \Lrm^q)(\Omega;\R^m)),  \\
  &\Lrm^\infty_\loc(0,T;\Wrm^{2,2}(\Omega;\R^m)),  \\
  &\BV(0,T;\Lrm^1(\Omega;\R^m)))
\end{align*}
for all $a\in [1,\infty)$, $r\in [1,2^*)$, where $2^* := \frac{2d}{d-2}$.
\end{theorem}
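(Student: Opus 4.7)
The plan is to construct the two-speed solution as the $\lambda \todown 0$ limit of strong solutions $u_\lambda$ to the viscous approximation~\eqref{eq:PDE_lambda}, whose existence in the regularity class required here is obtained in Section~\ref{sc:RDevol}. The first step is to gather $\lambda$-uniform estimates from the rate-dependent energy identity
\[
  \Ecal(t, u_\lambda(t)) + \int_0^t \Rcal_1(\dot u_\lambda(\tau)) + \lambda \norm{\dot u_\lambda(\tau)}_{\Lrm^2}^2 \dd\tau = \Ecal(0, u_0) - \int_0^t \dprb{\dot f(\tau), u_\lambda(\tau)} \dd\tau,
\]
which, together with assumption~\ref{as:W}, yields uniform bounds on $\norm{u_\lambda}_{\Lrm^\infty(0,T; \Wrm^{1,2} \cap \Lrm^q)}$, on $\Var_{\Rcal_1}(u_\lambda;[0,T])$, and on $\sqrt{\lambda}\,\norm{\dot u_\lambda}_{\Lrm^2}$. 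Combining with the higher-order regularity estimates for the viscous problem that were derived in Section~\ref{sc:RDevol}, one obtains compactness of $u_\lambda$ in each of the functional spaces appearing in the statement.

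Next, I would introduce a canonical slow time by defining a $\lambda$-dependent arc-length-type parametrization
\[
  s_\lambda(t) := t + \Var_{\Rcal_1}(u_\lambda; [0,t]) + \lambda \int_0^t \norm{\dot u_\lambda(\tau)}_{\Lrm^2}^2 \dd\tau,
\]
inverting it to $t_\lambda(s)$ on $[0,S_\lambda]$. The $t_\lambda$ are nondecreasing and $1$-Lipschitz, and the reparametrized maps $\w_\lambda(s) := u_\lambda(t_\lambda(s))$ are uniformly Lipschitz into $\Lrm^1(\Omega;\R^m)$ and satisfy~\eqref{eq:PDE_lambda} (after the rate-independent rescaling argument spelled out after~\eqref{eq:strongsol_ineq}). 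Extracting along a subsequence yields $t_\lambda \to \phi$ uniformly and $\w_\lambda \to \w$ in $\Crm([0,S]; \Lrm^1)$; the candidate solution is $u = \w \circ \phi^{-1}$ on the complement of the jump set, and $\Jbb$ is identified as the (countable) set of negative jumps of $s \mapsto \Wcal(\w(s))$. Passing to the limit in the variational inequality~\eqref{eq:Varlam} using the crucial Lemma~\ref{lem:key1}, which excludes hidden rate-dependent dissipation coming from small fast oscillations away from jumps (Proposition~\ref{prop:E_contpoint}), gives the strong solution property~\ref{itm:ii} and the local stability~\ref{itm:iii}. By construction, this reparametrization eliminates rate-independent sliding segments, so only genuinely rate-dependent jumps remain.

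The main obstacle is the jump-resolution construction~\ref{itm:iv}--\ref{itm:v}. At each jump point $t_k$ of $E$, I would study the fine structure of $u_\lambda$ on a window $(t_k - \delta, t_k + \delta)$ in the \emph{original} time variable. Introducing the fast variable $\theta = (t - t_k^\lambda)/\lambda$ recasts~\eqref{eq:PDE_lambda} as the $\lambda = 1$ system
\[
  -\partial_\theta v_\lambda + \Delta v_\lambda - \DD W_0(v_\lambda) + f(t_k + \lambda\theta) \in \partial \Rcal_1(\partial_\theta v_\lambda),
\]
and the Lipschitz regularity of $f$ from~\ref{as:f} reduces the right-hand force to $f(t_k)$ in the limit. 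Carefully analyzing the superlevel sets of $\abs{\dot u_\lambda}$ shows that one slow-scale jump may split into at most countably many disjoint fast excursions separated on intermediate time scales; these intermediate scales carry no asymptotic energy variation (again by Lemma~\ref{lem:key1}) and hence do not contribute to $\Diss_{\mathrm{jump}}(t_k)$. Each individual excursion, rescaled by $\lambda$, converges through the uniform higher-order estimates to a solution $v^i(t_k, \frarg)$ of the $\lambda = 1$ system with limits at $\pm\infty$ being critical points of $\Ecal(t_k, \frarg)$ modulo the stability cone $\Scal$. The ordering and the Lipschitz-in-energy estimates in~\ref{itm:v} follow from two-sided coercivity of $\Rcal_1$ combined with the energy identity applied to each $v^i$; the global energy balance~\ref{itm:vi} is then obtained by passing to the limit in the rate-dependent energy identity for $u_\lambda$, matching the continuous part with $\Var_{\Rcal_1}(u;[s,t]\setminus \Jbb)$ and each fast-excursion contribution with the corresponding term in~\eqref{eq:dissjump} via the change of variable $\theta = (t-t_k^\lambda)/\lambda$. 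The initial-jump assertion~\ref{itm:vii} and the connectedness~\ref{itm:last} are then direct bookkeeping consequences of how the excursions are ordered and glued together on the canonical time scale.
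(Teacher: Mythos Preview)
Your overall strategy---viscous approximation, uniform estimates, reparametrization, jump resolution via blow-up---matches the paper's architecture, but the reparametrization you propose is not the one the paper uses, and the difference is not cosmetic.

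You set
\[
  s_\lambda(t) := t + \Var_{\Rcal_1}(u_\lambda;[0,t]) + \lambda \int_0^t \norm{\dot u_\lambda(\tau)}_{\Lrm^2}^2 \dd\tau,
\]
which is the full dissipation arc-length. In this clock the reparametrized limit $\w$ is Lipschitz into $\Lrm^1$, hence (via the uniform $\Wrm^{2,2}$-bound and interpolation) continuous into $\Wrm^{1,2}$, so $s\mapsto \Wcal(\w(s))$ is continuous and there are \emph{no} jumps at all. Your sentence ``$\Jbb$ is identified as the set of negative jumps of $s\mapsto \Wcal(\w(s))$'' then identifies the empty set, and the two-speed structure of the theorem (a process in the $s$-variable that still has genuine jumps, each resolved by the $v^i$) collapses. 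Put differently: your clock stretches \emph{both} rate-independent sliding \emph{and} rate-dependent bursts into intervals of positive $s$-length, whereas the theorem asks for a clock that stretches only the former and leaves the latter as honest discontinuities.

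The paper's construction avoids this by first \emph{separating} the parabolic contributions. In Section~\ref{ssc:parab} one works on the discrete grid $t_k^{\lambda_j}=k\lambda_j$ and, inductively over $m=1,2,\ldots$, flags as ``parabolic'' those grid cells where the energy loss $\omega^{\lambda_j}(t_{k+1}^{\lambda_j})-\omega^{\lambda_j}(t_k^{\lambda_j})$ exceeds $1/m$. These cells (together with a small neighbourhood of size $\lambda_j L_j$) are collected in sets $A_j^{(\ell)}$ and \emph{removed} from the stretching: the piecewise-affine stretching function $\psi_j^m$ adds the energy increment only on the complement $Z_j^m$. Consequently, at a parabolic point the new time $s$ advances only by $\lambda_j$ (not by the energy drop $d^{(\ell)}$), so the corresponding transient stays a jump in the limit $\w$. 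The jump set $\Jbb$ is then exactly the accumulation set of the parabolic sequences, and the $v^{(\ell)}$ are obtained by the blow-up $v^j(\theta)=u_{\lambda_j}(t_{k_j}^{\lambda_j}+\lambda_j\theta)$ at those points (Lemma~\ref{lem:energ2}).

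Your proposal to recover the fast excursions a posteriori by ``analyzing the superlevel sets of $\abs{\dot u_\lambda}$'' is in the right spirit, but you have not explained how to reconcile this with your arc-length clock, in which those excursions already occupy positive $s$-measure and the limit is continuous. The missing ingredient is precisely the pre-selection of parabolic cells and the definition of a stretching that omits them; once that is in place, your remaining outline (stability via Proposition~\ref{prop:stab}, energy balance via Proposition~\ref{prop:E_contpoint} away from jumps and via the blow-up dissipations $d^{(\ell)}$ at jumps, connectedness via Proposition~\ref{pro:continuity}) proceeds essentially as the paper does.
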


This theorem implies the following corollary for the original time scale: For every jump point $t_k\in \Jbb$ there exists at most one process
\[
  b^k \in \BV(0,1;\Lrm^1(\Omega;\R^m))\cap \Lrm^\infty(0,1;(\Wrm^{1,2}_0 \cap \Lrm^q)(\Omega;\R^m))
\]
that is a solution (in the sense of~\eqref{eq:strongsol_ineq}) to
\begin{equation} \label{eq:bk_evol}
\Delta b^k(\theta)-\DD W_0(b^k(\theta))+f(t_k)\in \partial \Rcal_1(\partial_\theta b^k(\theta))
\qquad \text{in $(0,1) \times \Omega$} 
\end{equation}
and that satisfies itself~\ref{itm:i}--\ref{itm:last}.
In the condition in Section~\ref{ssc:twospeedsol} we then have to replace the definition of $\Diss_{\mathrm{jump}}$ in~\eqref{eq:dissjump} by the following:
 \begin{align}
  \label{eq:dissjump2}
  \begin{aligned}
  \Diss_{\mathrm{jump}}(t_k) &:= \sum_{i \in \Ibb_k} \biggl(\int_{-\infty}^\infty \norm{\partial_\theta v^i(t_k,\theta)}_{\Lrm^2}^2 + \Rcal_1(\partial_\theta v^i(t_k,\theta))\dd \theta \biggr)\\ 
&\quad\quad + \Var_{\Rcal_1}(b^k;[s,t] \setminus J_{b^k}),
\end{aligned}
  \end{align}
where  $J_{b^k}$ is the jump set of $b^k$. Consequently, jumps in $b^k$ are \emph{not counted}. This is explained as follows: The process $b^k$ represents all the rate-independent evolution at time $t_k$, but with time ``stretched out'' (note the invariance of~\eqref{eq:bk_evol} under time-rescalings). The jumps in $b^k$ are in fact \emph{rate-dependent} pieces and thus resolved by the $v^i$ and counted in a rate-dependent way in~\eqref{eq:dissjump2}. Thus, $b^k$ represents all the (rate-independent) evolution pieces \emph{between} the $v^i$. We also refer to Example~\ref{ex:bump} for further illustration.

\begin{corollary} \label{cor:main_ex}
Under the assumptions of Section~\ref{sc:assume}, there exists a two-speed solution to~\eqref{eq:PDE_RI} with right-hand side $f \colon [0,T]\times \Omega\to \R^m$ satisfying~\ref{itm:i}--\ref{itm:last} with~\eqref{eq:dissjump2} in place of~\eqref{eq:dissjump}. Moreover, the constructed solution has regularity in the spaces listed in Theorem~\ref{thm:main_ex}.
\end{corollary}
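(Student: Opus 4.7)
The plan is to obtain Corollary~\ref{cor:main_ex} by transporting the two-speed solution produced in Theorem~\ref{thm:main_ex} for the reparametrized forcing $\tilde f = f\circ \phi$ back to the original time variable $t$. Write $\tilde u$, $\tilde \Jbb$ and $\{v^i(\tilde t_k,\cdot)\}$ for the canonical-scale solution, its jump set and its jump-resolution maps. Since $\phi\colon [0,T]\to[0,T]$ is increasing with $\phi(0)=0$, $\phi(T)=T$, each level set $\phi^{-1}(\{t\})$ is either a singleton or a closed interval $[s_k^-,s_k^+]$. At $t$ where $\phi^{-1}(\{t\})$ is a singleton $\{s\}$, I would set $u(t):=\tilde u(s)$; at a point $t_k$ where $\phi^{-1}(\{t_k\})=[s_k^-,s_k^+]$ is non-degenerate, I would retain the one-sided limits $u(t_k\pm)$ by composing with $\tilde u$ at the interval endpoints and encode the entire $\tilde u$-transient on this flat interval as a single process $b^k$.

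For each non-degenerate flat interval I would define $b^k(\theta):=\tilde u\bigl(s_k^-+\theta(s_k^+-s_k^-)\bigr)$ for $\theta\in[0,1]$. On $[s_k^-,s_k^+]$ the forcing $\tilde f(s)=f(\phi(s))=f(t_k)$ is constant, so by the rate-independence argument recorded after~\eqref{eq:strongsol_ineq} the reparametrized map $b^k$ satisfies~\eqref{eq:bk_evol} in the strong sense on $(0,1)$. The jumps of $\tilde u$ within $[s_k^-,s_k^+]$ form a countable ordered family, and after relabelling this becomes exactly the index set $\Ibb_k\subset\N$; the jump-resolution maps $v^i(t_k,\cdot)$ are inherited unchanged from Theorem~\ref{thm:main_ex} (they live in the fast time $\theta$ and are unaffected by the $\phi$-rescaling, which is anyway constant on $[s_k^-,s_k^+]$). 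This simultaneously endows $b^k$ with the two-speed structure~\ref{itm:i}--\ref{itm:last} for constant forcing $f(t_k)$ and yields~\ref{itm:iv}--\ref{itm:v} for $u$ at $t_k$.

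For the energy balance~\ref{itm:vi} with the modified dissipation~\eqref{eq:dissjump2} I would split the dissipation of $\tilde u$ over a canonical-scale interval $\phi^{-1}([s,t])$ into two parts: the contribution from pieces where $\phi$ is strictly increasing, which transforms under the change of variables into $\Var_{\Rcal_1}(u;[s,t]\setminus\Jbb)$ together with the $v^i$-integrals at those $t_k\in\Jbb$ whose preimage is a singleton; and the contribution from each flat interval $[s_k^-,s_k^+]$, which after reparametrization reassembles into $\Var_{\Rcal_1}(b^k;[0,1]\setminus J_{b^k})$ plus the $v^i$-integrals at points of $J_{b^k}$. The external-work term transforms via $\dot{\tilde f}(s)=\dot f(\phi(s))\phi'(s)$, vanishes on flat intervals (where $\phi'=0$), and on the strictly increasing part pushes forward to $\int_s^t \dpr{\dot f(\tau),u(\tau)}\dd\tau$ by a standard change of variables. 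Summing both contributions produces~\ref{itm:vi} with~\eqref{eq:dissjump2} exactly as stated.

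The regularity assertions are inherited from Theorem~\ref{thm:main_ex}: the spatial spaces depend only on $\tilde u$ at a fixed argument, so composition with the (multivalued) inverse of $\phi$ preserves them pointwise, the $\Lrm^\infty$-in-time bounds survive monotone composition, and the BV-in-time regularity follows from the standard change-of-variables formula for BV along a continuous monotone reparametrization. The main obstacle is the bookkeeping of the three kinds of contributions to $\Jbb$ (singleton preimage corresponding to a Case~C jump in canonical scale; flat interval with no additional canonical-scale jumps; flat interval containing countably many such jumps) and the verification of the connectivity property~\ref{itm:viii} in the original scale: one must check that inserting the trajectory of $b^k$ between consecutive $v^i$'s at $t_k$ again gives a connected closure in~\eqref{eq:connect}. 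This reduces to the corresponding property in canonical scale, since on each flat interval $\tilde u$ is continuous off $J_{b^k}$ and, by~\ref{itm:v} applied to $\tilde u$, each $v^i$ continuously bridges its neighbours within $b^k$.
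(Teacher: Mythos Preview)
Your proposal is correct and takes essentially the same approach as the paper: transport the canonical-scale solution $\tilde u$ back to the original time via the (multivalued) inverse of $\phi$, and on each flat interval $[s_k^-,s_k^+]$ of $\phi$ define $b^k$ as the affine reparametrization of $\tilde u$ restricted to that interval, keeping the $v^i$ unchanged. The paper's proof is terser (it writes $w:=u\circ\psi$ with $\psi$ the left inverse of $\widehat\phi$ and simply records the definition of $b^k$), whereas you spell out more of the bookkeeping---the constancy of $\tilde f$ on the flat intervals, the splitting of the dissipation into strictly-increasing and flat pieces, and the verification of \ref{itm:viii}---which the paper leaves implicit.
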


Our second main result concerns the approximability of energy-preserving two-speed solutions.

\begin{theorem} \label{thm:main_approx}
The two-speed solution of~\eqref{eq:PDE_RI} constructed in Corollary~\ref{cor:main_ex} is the limit of strong solutions of~\eqref{eq:PDE_lambda} for a sequence $\lambda_j \to 0$ as $j\to \infty$. Indeed, there is a sequence $u_{\lambda_j}$ of solutions of~\eqref{eq:PDE_lambda} for $\lambda = \lambda_j$ such that
\begin{equation}
\label{eq:conv1}
\left\{
\begin{aligned}
u_{\lambda_j}&\to u  \quad\text{in } \Lrm^a_\loc((0,T) \times \Omega;\R^m) \quad\text{for $a\in [1,\infty)$;}\\
u_{\lambda_j}&\to u  \quad\text{in } \Lrm^a_\loc(0,T;\Wrm^{1,r}_0(\Omega;\R^m)) \quad\text{for $a\in [1,\infty)$, $r\in [1,2^*)$;}\\
u_{\lambda_j}&\toweak  u  \quad\text{in }\Lrm^2_\loc(0,T;\Wrm^{2,r}(\Omega;\R^m))\quad\text{ for $r\in [1,2^*)$;}\\
u_{\lambda_j}&\toweakstar u  \quad\text{in }\Lrm^\infty(0,T;(\Wrm^{1,2} \cap \Lrm^q)(\Omega;\R^m));\\
u_{\lambda_j}&\toweakstar u  \quad\text{in }\Lrm^\infty_\loc(0,T;\Wrm^{2,2}(\Omega;\R^m));\\
u_{\lambda_j} &\toweakstar u \quad\text{in $\BV(0,T;\Lrm^1(\Omega;\R^m)))$.}
\end{aligned}
\right.
\end{equation}
Moreover, for each $t_k\in \Jbb$ and $i\in \Ibb_k$ there exist sequences of \term{intermediate speeds} $\tau^{(k,i)}_j \in \R$ with $\tau^{(k,i)}_j \to 0$ as $j\to \infty$, such that for all $L>0$,
\begin{align}
\label{eq:conv2}
u_{\lambda_j}(t_k+\tau^{(k,i)}_j+\lambda_j\theta) \to v^i(t_k,\theta) \quad\text{in }\Wrm^{1,2}(-L,L;\Wrm^{1,2}_0(\Omega;\R^m)).
\end{align}
Finally, with
\[
  \mu^\mathrm{RI}_{\lambda_j}(A):=\int_A \Rcal_1(\dot{u}_{\lambda_j})\dd t, \qquad
  \mu^\mathrm{RD}_{\lambda_j}(A):=\int_A\lambda_j \norm{\dot{u}_{\lambda_j}}_{\Lrm^2}^2\dd t,  \qquad \text{$A\subset [0,T]$ Borel,}
\]
we have that
\begin{align}
\mu^\mathrm{RI}_{\lambda_j} &\toweakstar \Var_{\Rcal_1}(u;\frarg\setminus\Jbb) + \mu^\mathrm{RI}_{\mathrm{jump}},  \label{eq:conv3} \\
\mu^\mathrm{RD}_{\lambda_j} &\toweakstar \mu^\mathrm{RD}  \label{eq:conv4}
\end{align}
as measures on $[0,T]$, where
\begin{align*}
  \mu^\mathrm{RI}_{\mathrm{jump}} &:= \sum_{t_k \in \Jbb} \biggl[\sum_{i \in \Ibb_k} \int_{-\infty}^\infty \Rcal_1(v^i(t_k,\theta)) \dd \theta + \int_0^1\Rcal_1(\partial_\theta b^k(\theta))\dd \theta  \biggr] \delta_{t_k}, \\  
  \mu^\mathrm{RD} &:= \sum_{t_k \in \Jbb} \biggl[ \sum_{i \in \Ibb_k} \int_{-\infty}^\infty \norm{\partial_\theta v^i(t_k,\theta)}_{\Lrm^2}^2 \dd \theta \biggr] \delta_{t_k}.
\end{align*}
\end{theorem}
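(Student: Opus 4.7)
The plan is to start from uniform a~priori estimates on the strong solutions $u_\lambda$ of~\eqref{eq:PDE_lambda}, pass to a subsequential limit, identify the limit with the two-speed solution constructed in Corollary~\ref{cor:main_ex}, and finally resolve the jump transients through a rescaling argument. Testing~\eqref{eq:Varlam} with $\xi=0$ and integrating yields the energy balance
\[
  \Ecal(t,u_\lambda(t)) + \int_0^t \bigl[\lambda \norm{\dot{u}_\lambda}_{\Lrm^2}^2 + \Rcal_1(\dot{u}_\lambda)\bigr] \dd\tau = \Ecal(0,u_0) - \int_0^t\dprb{\dot f,u_\lambda}\dd\tau,
\]
which, together with the coercivity of $W_0$ and the growth assumptions~\ref{as:W}, bounds $\mu^{\mathrm{RD}}_\lambda([0,T])+\mu^{\mathrm{RI}}_\lambda([0,T])$ and $\sup_t \Ecal(t,u_\lambda(t))$ uniformly in $\lambda$. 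Reading $-\Delta u_\lambda \in -\lambda\dot{u}_\lambda -\partial\Rcal_1(\dot u_\lambda)-\DD W_0(u_\lambda)+f$ as an elliptic equation with right-hand side bounded in $\Lrm^\infty(0,T;\Lrm^2)$ and applying the estimates developed in Section~\ref{sc:RDevol} furnishes the uniform bounds in all function spaces listed in~\eqref{eq:conv1}.

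Next, I extract a subsequence $\lambda_j\todown 0$ such that $u_{\lambda_j}\toweakstar u^*$ in $\BV(0,T;\Lrm^1)$ with Helly-type strong $\Lrm^1$ convergence at every continuity point of $u^*$, together with the weak$^*$ / weak convergences in~\eqref{eq:conv1}. Let $\Jbb^*$ denote the (at most countable) jump set of $t\mapsto\Ecal(t,u^*(t))$. On any compact subinterval $[s,t]$ disjoint from $\Jbb^*$ the key estimate in Lemma~\ref{lem:key1} enforces $\mu^{\mathrm{RD}}_{\lambda_j}([s,t])\to 0$ and $\mu^{\mathrm{RI}}_{\lambda_j}([s,t])\to \Var_{\Rcal_1}(u^*;[s,t])$, which, combined with the elliptic regularity of $u_{\lambda_j}$ and the monotonicity-type lower semicontinuity of the subdifferential of $\Rcal_1$, allows one to pass to the limit in~\eqref{eq:Varlam} and to conclude that $u^*$ satisfies the strong variational inequality~\eqref{eq:strongsol_ineq} a.e.\ on $[0,T]\setminus\Jbb^*$; this identifies the continuous part of $u^*$ with the process of Theorem~\ref{thm:main_ex}.

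At each $t_k\in\Jbb^*$ the rate-dependent dissipation concentrates as an atom, $\mu^{\mathrm{RD}}_{\lambda_j}\bigl((t_k-\delta,t_k+\delta)\bigr)\to\Diss_{\mathrm{jump}}(t_k)-(\text{rate-independent part})$, and the transit from $u(t_k-)$ to $u(t_k+)$ occurs at the natural fast speed $1/\lambda_j$. I therefore select a finite or countable family of \emph{centers} $\tau^{(k,i)}_j$ greedily: at each step pick $\tau^{(k,i)}_j$ to be a point at which the energy $\Ecal(\,\sbullet\,,u_{\lambda_j}(\,\sbullet\,))$ drops by at least a fixed threshold $\eta_i>0$ within a window of length $C\lambda_j$, then remove that window and repeat. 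After rescaling $\theta=(s-t_k-\tau^{(k,i)}_j)/\lambda_j$, the function $u_{\lambda_j}(t_k+\tau^{(k,i)}_j+\lambda_j\theta)$ solves~\eqref{eq:PDE_lambda} with $\lambda=1$ and constant right-hand side $f(t_k)+\BigO(\lambda_j)$; the uniform estimates of Section~\ref{sc:RDevol} applied at unit viscosity provide compactness in $\Wrm^{1,2}(-L,L;\Wrm^{1,2}_0)$, yielding a limit $v^i(t_k,\,\sbullet\,)$ that solves the jump evolution in~\ref{itm:iv}. The strong monotonicity of the rescaled equation forces strict energy decrease in $\theta$ and hence condition~\ref{itm:v}.

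The principal obstacle is the simultaneous resolution of \emph{all} fast pieces at a jump when $\Ibb_k$ is countably infinite, while also accounting for intermediate rate-independent transitions represented by the process $b^k$ in~\eqref{eq:dissjump2}. The greedy selection with decreasing thresholds $\eta_i\todown 0$, combined with a diagonal extraction across $k$ and $i$, produces the required $\tau^{(k,i)}_j$, but one must show that (a) no rate-dependent dissipation escapes between the windows (this follows from Lemma~\ref{lem:key1} applied locally in the stretched time) and (b) the \emph{order} of the pieces along the jump is preserved, so that the limit of the concatenation matches the canonical slow-time two-speed solution from Corollary~\ref{cor:main_ex}. Once this resolution is in place, the energy balance~\ref{itm:vi} of the pre-limit equations passes to the limit and gives both the explicit form of $\mu^{\mathrm{RI}}_{\mathrm{jump}}$ and $\mu^\mathrm{RD}$ in~\eqref{eq:conv3}--\eqref{eq:conv4}; uniqueness of the limit (up to the freedom in labelling $\{\tau^{(k,i)}_j\}_i$) follows by identification with the two-speed solution, so the whole sequence $u_{\lambda_j}$ converges, not merely a subsequence.
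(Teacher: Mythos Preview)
Your overall strategy coincides with the paper's: uniform a~priori estimates (Section~\ref{sc:RDevol}), subsequential compactness (\eqref{eq:l_conv}), a threshold-based selection of ``parabolic'' intervals where the energy drops by at least $1/m$ (Section~\ref{ssc:parab}), and a blow-up $\theta=(t-t_{k_j}^{\lambda_j})/\lambda_j$ to extract the jump resolutions (Lemma~\ref{lem:energ2}). Your greedy selection with thresholds $\eta_i\todown 0$ is precisely the paper's iteration over $m=1,2,\ldots$ removing parabolic sequences from the grid. So the substance is the same; the paper's proof of Theorem~\ref{thm:main_approx} is in fact just a pointer to this construction, because the two-speed solution of Corollary~\ref{cor:main_ex} is \emph{defined} as this very limit.

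Two concrete issues to flag. First, your final sentence overclaims: you assert that ``the whole sequence $u_{\lambda_j}$ converges, not merely a subsequence'' by uniqueness of the two-speed limit. The paper does not claim this, and in the non-convex setting one does not expect uniqueness of two-speed (or balanced-viscosity) solutions; the theorem only asserts the existence of \emph{some} sequence $\lambda_j\to 0$ along which the stated convergences hold. Your identification argument therefore does not upgrade subsequential to full convergence. Second, and more minor, you invoke Lemma~\ref{lem:key1} to force $\mu^{\mathrm{RD}}_{\lambda_j}([s,t])\to 0$ on jump-free intervals, but that lemma is formulated for the \emph{limit} $u$ (which satisfies stability and the energy inequality), not for the approximations $u_{\lambda_j}$. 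The correct route in the paper is: pass to the limit in the energy balance~\eqref{eq:ulambda_energy}, obtain the energy inequality~\eqref{eq:subenerg} for $u$, then use Proposition~\ref{prop:E_contpoint} to show this inequality is an equality away from jumps; comparing with the pre-limit energy balance then forces the rate-dependent contribution to vanish there. Your framing of ``identifying $u^*$ with the process of Theorem~\ref{thm:main_ex}'' is also slightly backwards---that process is, by construction, the limit $u^*$ itself.
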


\begin{remark}[The rate-independent path length]
The singular measure $\mu^\textrm{RI}_{\textrm{jump}}$ with support on the jumps of the energy gives the rate-independent path length, for which
\[
\mu^\textrm{RI}_{\textrm{jump}}(\{t_k\})\geq\Rcal(u(t_k+) - u(t_k-))
\]
and this inequality may be strict. This phenomenon is due to the fact that the (non-convex) elastic potential might enforce a complex path in arbitrary short time scales, creating a contribution that is larger than the $\Rcal_1$-distance; see Example~\ref{ex:jumplength}.
\end{remark}

\begin{remark}[Balanced viscosity solutions]
The characterization of $\mu^\mathrm{RI}$ and $\mu^\mathrm{RD}$ is related to the theory of balanced viscosity solutions of~\cite{MielkeRossiSavare16}. Indeed, the solution on the $\theta$-scale can be associated to an optimal path with respect to the related Finsler energy, see for instance Theorem~B.18 in~\cite{MielkeRossiSavare12} and Proposition~3.19~(3) in~\cite{MielkeRossiSavare16}. From these results one can deduce that our two-speed solutions are also balanced viscosity solutions. We give an explicit calculation in Example~\ref{ex:jumpapprox}. 
\end{remark}

The proofs of Theorem~\ref{thm:main_ex}, Corollary~\ref{cor:main_ex}, and Theorem~\ref{thm:main_approx} will be accomplished in Sections~\ref{sc:energy_ineq}--\ref{sc:twospeed}; see Section~\ref{ssc:proofs} for how these parts fit together to yield the above results.

\subsection{Examples}

In this section we consider four examples illustrating various aspects of the theory.

The first example shows that the dissipation along a jump may be strictly larger than the total variation. It is also elucidated how this solution relates to the balanced viscosity solutions of Mielke--Rossi--Savar\'{e}~\cite{MielkeRossiSavare09,MielkeRossiSavare12, MielkeRossiSavare16}.

\begin{figure}[t]
\includegraphics[width=180pt]{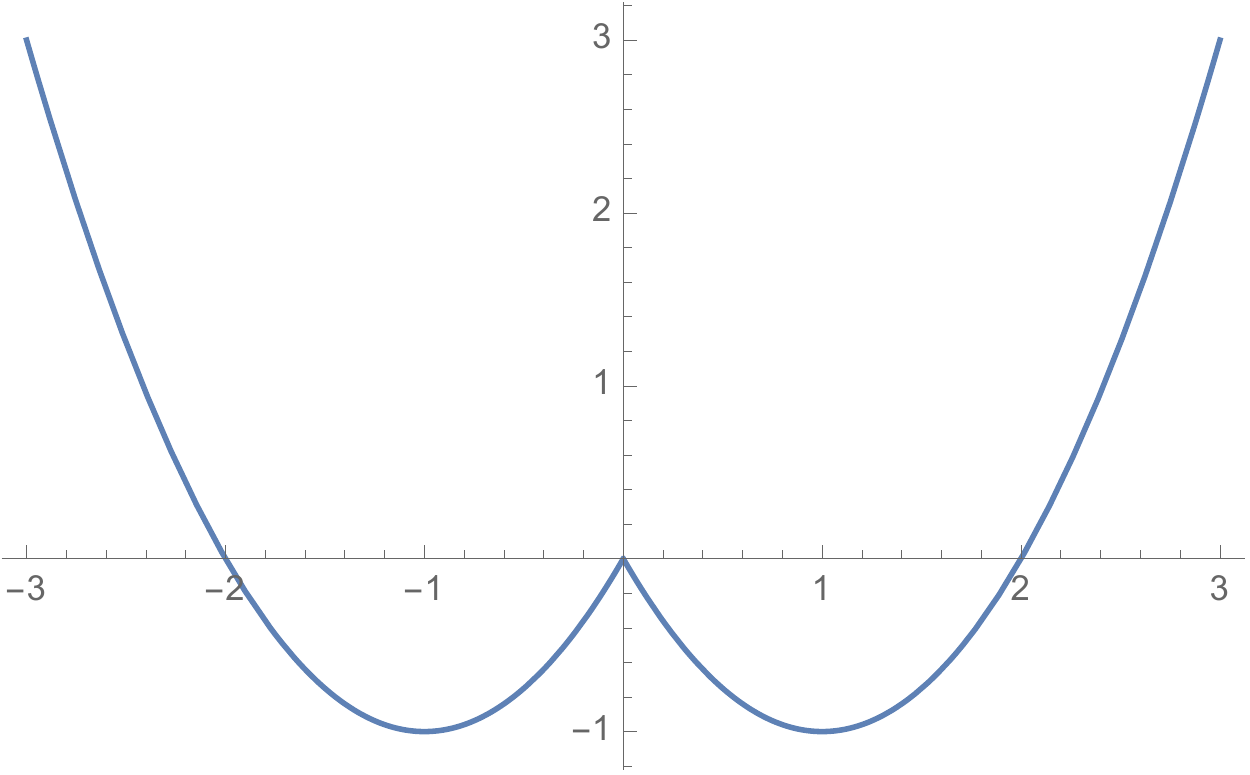}\hspace{10pt}
\includegraphics[width=180pt]{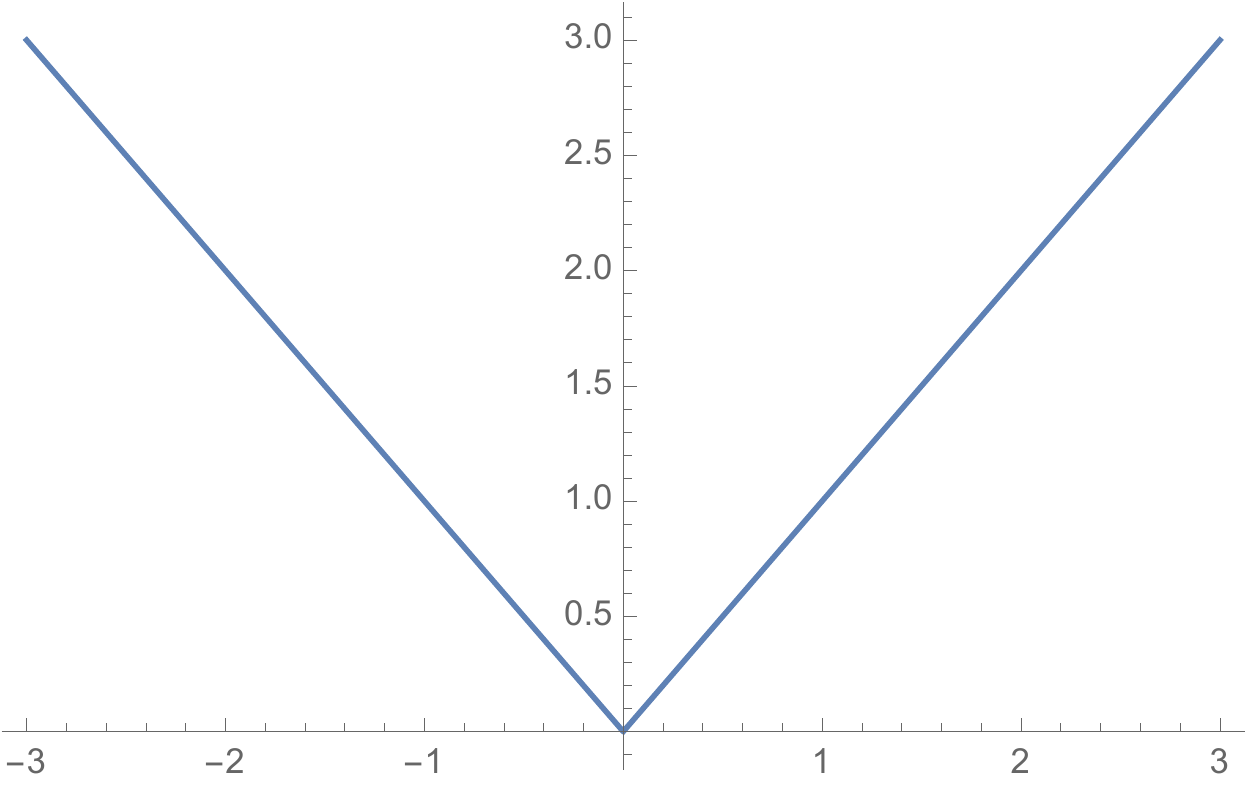}
\caption{$W_0$ and $R_1$ in Example~\ref{ex:jumpapprox}}
\label{fig:ex1}
\end{figure}

\begin{example}[Different notions of solution]
  \label{ex:jumpapprox}
Consider the following zero-dimensional double-well setup, see Figure~\ref{fig:ex1}:
\[
  \Wcal_0(z) = W_0(z) := \min\{ z(z+2), z(z-2) \},  \qquad
  \Rcal_1(z) = R_1(z) := \abs{z}.
\]
We also use the right-hand side
\[
  f(t) := t,  \qquad t \in [0,\infty),
\]
and the initial value $u(0) = -1$.

\begin{figure}[t]
\includegraphics[width=180pt]{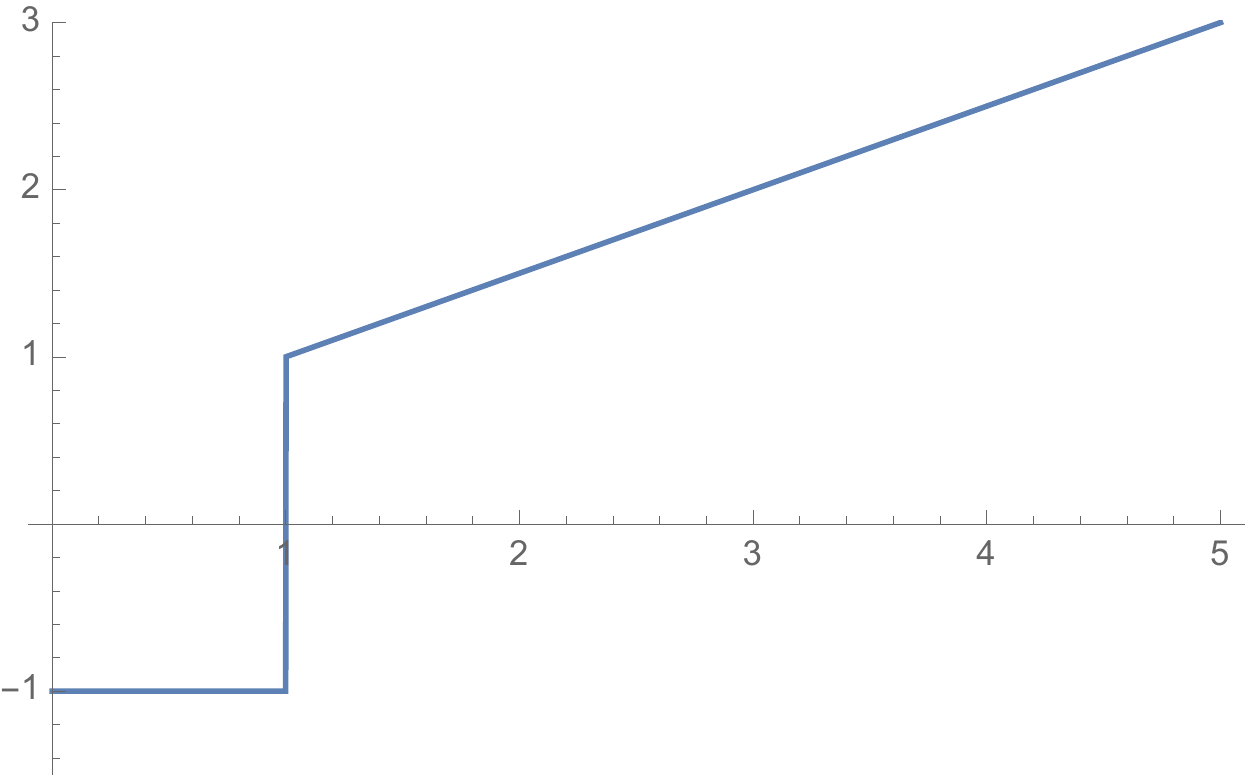}\hspace{10pt}
\includegraphics[width=180pt]{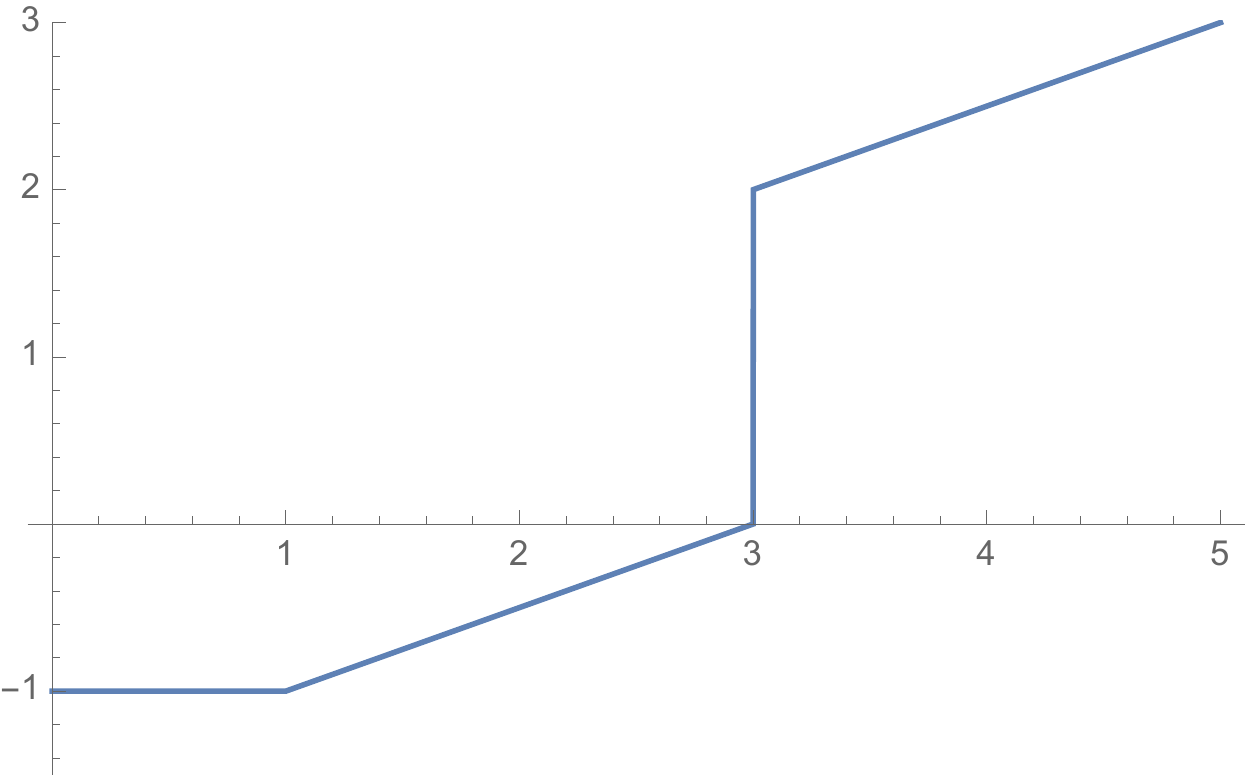}
\caption{$u^{\mathrm{weak}}$ and $u^{\mathrm{ext}}$}
\label{fig:ex_weakstrong}
\end{figure}

It can be easily seen that
\[
  u^{\mathrm{weak}}(t) := \begin{cases}
    -1   & \text{if $t \in [0,1)$,} \\
    \frac{t+1}{2}  & \text{if $t \in [1,\infty)$}
  \end{cases}
\]
is an energetic solution in the sense of Mielke--Theil~\cite{MielkeTheil04} on the infinite time interval $[0,\infty)$, see Figure~\ref{fig:ex_weakstrong}. This means that $u = u^{\mathrm{weak}}$ satisfies the \emph{energy balance}
\begin{equation} \label{eq:energy_balance}
  \Ecal(t,u(t)) - \Ecal(0,u(0)) = \int_0^t \dot{f}(s) \cdot u(s) \dd s - \mathrm{Var}_{R_1}(u;[0,t)),  \qquad t \in [0,\infty),
\end{equation}
as well as the \emph{global stability} inequality
\[
  \Ecal(t,u(t)) \leq \Ecal(t,z) + R_1(z-u(t))  \quad
  \text{for all $z \in \R$,}
\]
where $\Ecal(t,z) := W_0(z) - f(t) \cdot z$ for all $t \in [0,\infty)$, and $\mathrm{Var}_{R_1}$ denotes the total $R_1$-variation.

On the other hand, over the time interval $[0,3)$ also a \emph{maximally strong solution} $u^{\mathrm{strong}}$ exists, namely
\[
  u^{\mathrm{strong}}(t) := \begin{cases}
    -1   & \text{if $t \in [0,1)$,} \\
    \frac{t-3}{2}  & \text{if $t \in [1,3)$,}
  \end{cases}
\]
This solution can be extended to $t \in [0,\infty)$, as a weak solution only, by setting
\[
  u^{\mathrm{ext}}(t) := \begin{cases}
    -1   & \text{if $t \in [0,1)$,} \\
    \frac{t-3}{2}  & \text{if $t \in [1,3)$,} \\
    \frac{t+1}{2}  & \text{if $t \in [3,\infty)$.}
  \end{cases}
\]
This map satisfies the energy balance~\eqref{eq:energy_balance} as well as the \emph{local stability} condition
\[
  -\DD \Ecal(t,u(t)) \in \partial R_1(0), \qquad t \in [0,\infty).
\]
The difference can be explained by considering the \emph{effective} energy functional before the jump, namely $z \mapsto W_0(z) + R_1(z-(-1)) - f(t) \cdot z = z(z+2) + \abs{z+1} - tz$, see Figure~\ref{fig:ex_tilt}. It can be seen that the weak solution $u^{\mathrm{weak}}$ jumps from one well to the other as soon as it can lower the total energy (at time $t = 1$), whereas the maximally strong solution $u^{\mathrm{ext}}$ has to wait with its jump until the potential barrier has vanished (at time $t = 3$). In physical problems, we thus see that the energetic solutions jump in general too early, whereas viscosity approximations and also balanced viscosity solutions (both are equal to $u^{\mathrm{ext}}$) in a physically reasonable way.

\begin{figure}[t]
\includegraphics[width=180pt]{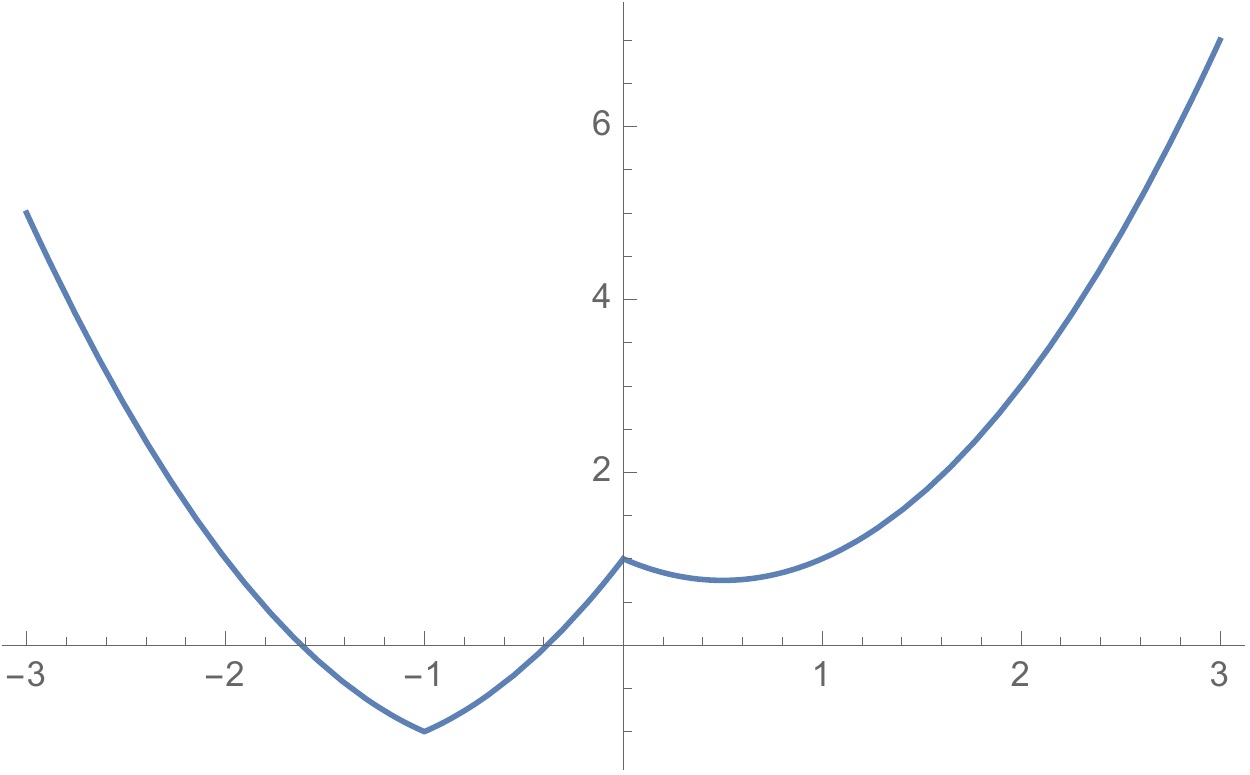}\hspace{10pt}
\includegraphics[width=180pt]{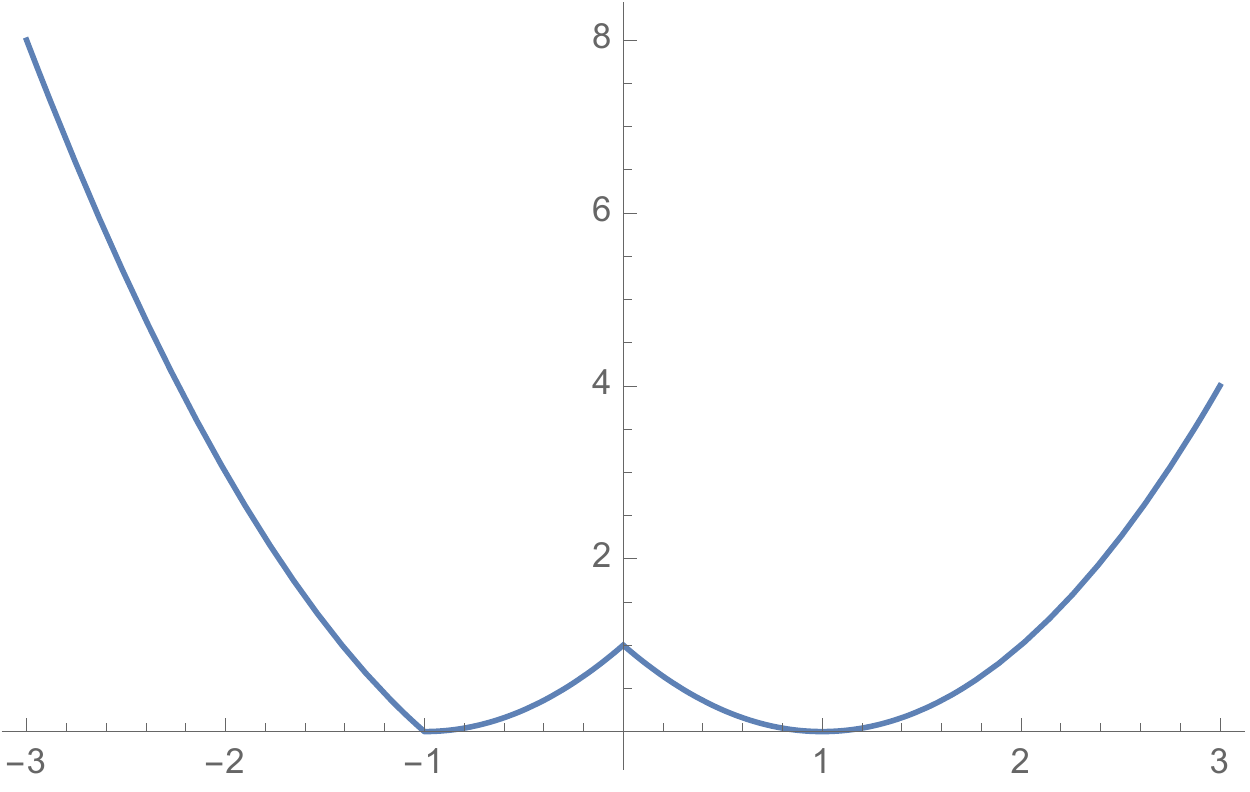}\hspace{10pt}
\includegraphics[width=180pt]{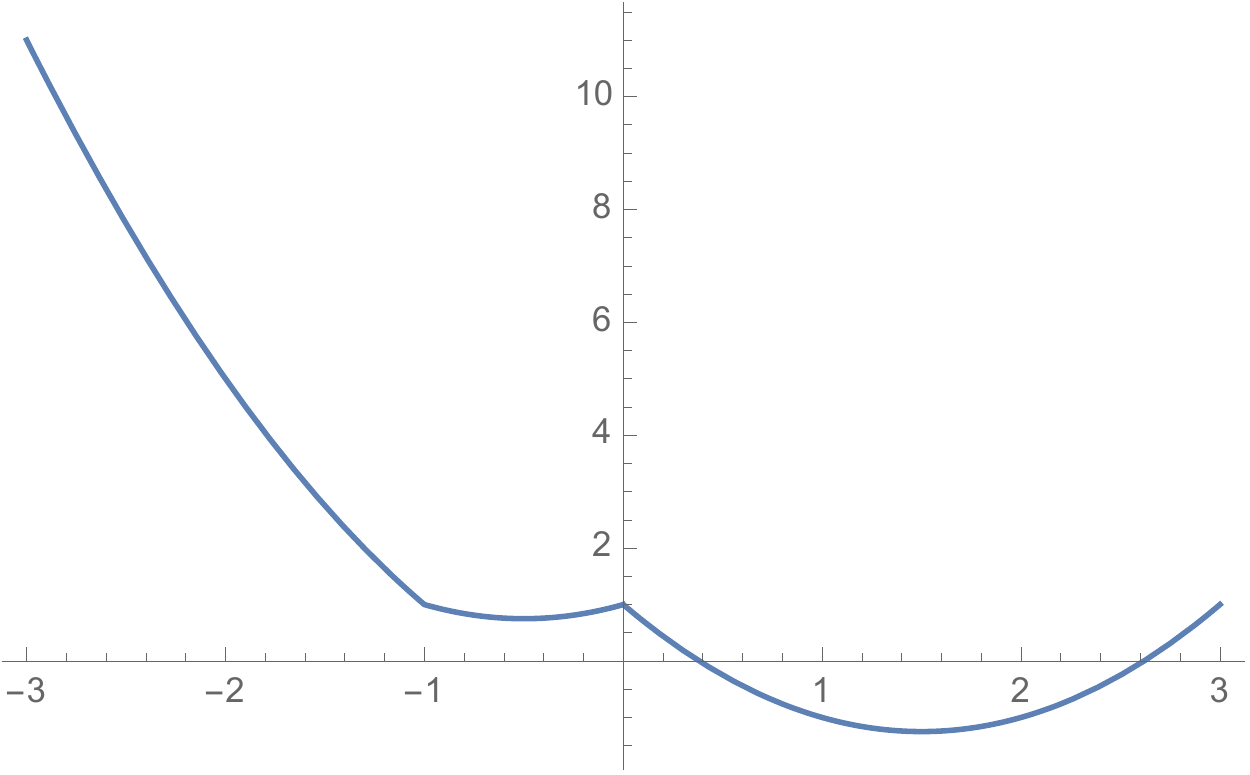}\hspace{10pt}
\includegraphics[width=180pt]{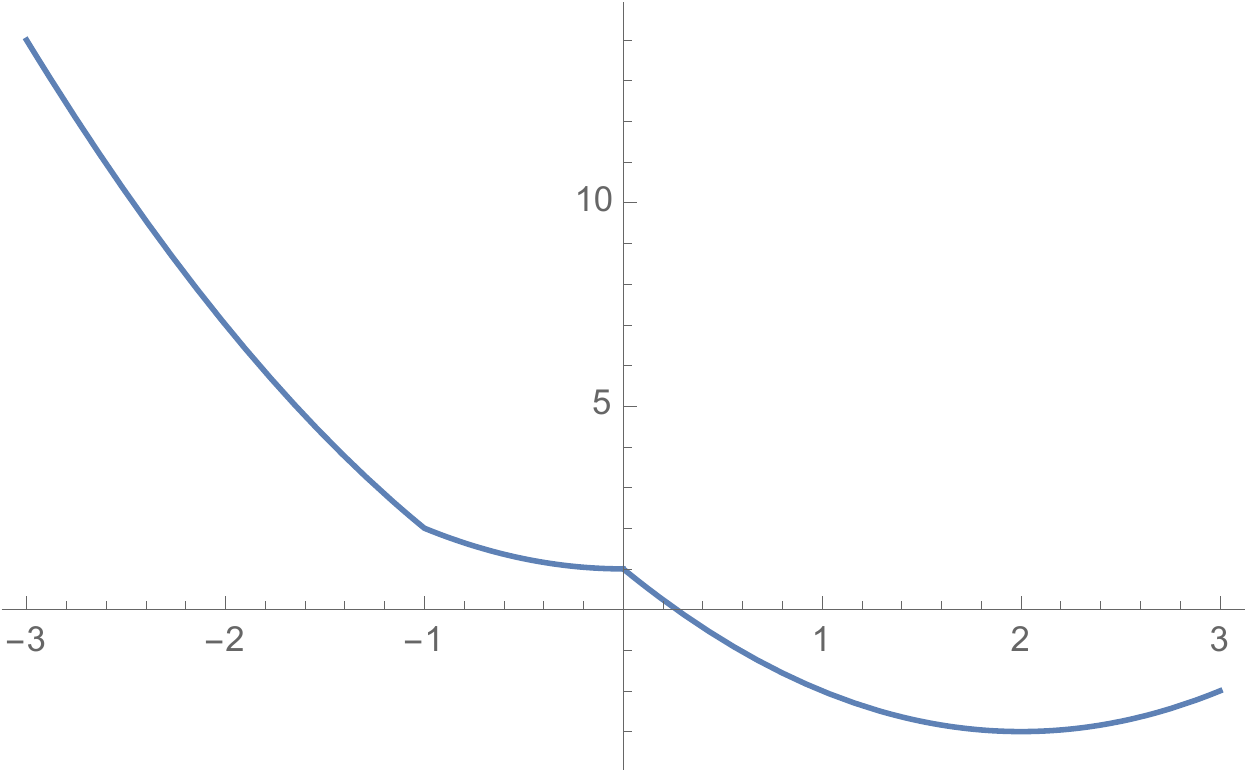}
\caption{The effective potential $z \mapsto W_0(z) + R_1(z-(-1)) - f(t) \cdot z = z(z+2) + \abs{z+1} - tz$ at $t = 0,1,2,3$.}
\label{fig:ex_tilt}
\end{figure}

If we add the viscosity term to our example, we need to solve
\[
  \Sgn(\dot{u}^{\mathrm{jump}}_\lambda(t)) + \lambda \dot{u}^{\mathrm{jump}}_\lambda(t) +\DD W_0(u^{\mathrm{jump}}_\lambda(t)) = f(t)
\]
for $\lambda > 0$. At $t=3$, which we shift to $0$ (to investigate the jump in $u^{\mathrm{ext}}$), we thus need to solve the ODE
\[
  \lambda \dot{u}^{\mathrm{jump}}_\lambda + 2u_\lambda = 4.
\]
It can be checked that the solution is
\[
  u^{\mathrm{jump}}_\lambda(\theta) = 2-2 \ee^{-2\theta/\lambda}  \qquad\text{with}\qquad
  \dot{u}^{\mathrm{jump}}_\lambda(\theta) = \frac{4 \ee^{-2\theta/\lambda}}{\lambda},  \qquad \theta \in (0,\infty),
\]
see Figure~\ref{fig:ex_jump_lambda}. Clearly, they converge to the expected jump at $0$ (corresponding to $t=3$), which we already saw in $u^{\mathrm{ext}}$. Hence, the viscosity approximation ``selects'' $u^{\mathrm{ext}}$ over $u^{\mathrm{weak}}$. 

\begin{figure}[t]
\includegraphics[width=180pt]{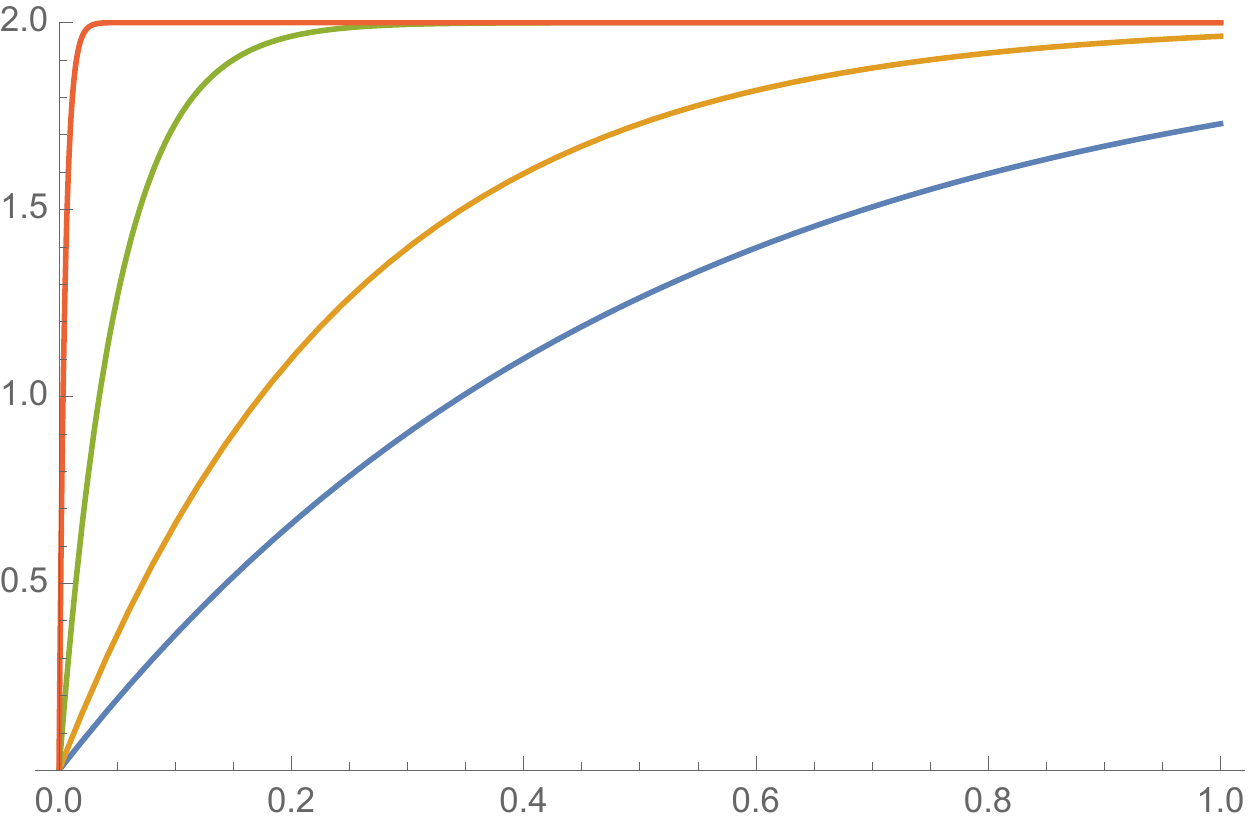}\hspace{10pt}
\includegraphics[width=180pt]{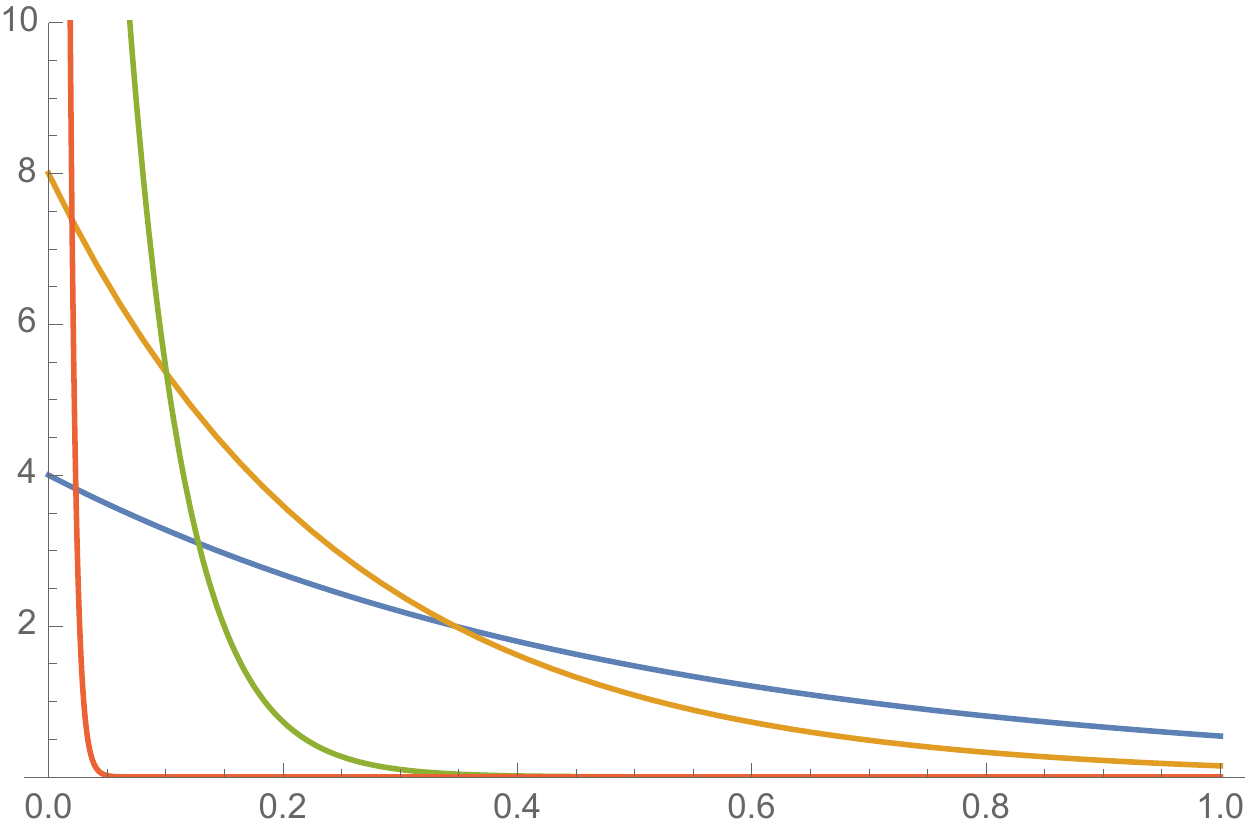}\hspace{10pt}
\caption{The solution $u^{\mathrm{jump}}_\lambda$ and its derivative $(u^{\mathrm{jump}}_\lambda)'$ for $\lambda = 1,0.5,0.1,0.01$}
\label{fig:ex_jump_lambda}
\end{figure}

The total expended rate-independent and rate-dependent energy over the jump are
\begin{align*}
  \mu^\mathrm{RI}(3) &= \int_0^\infty \abs{\dot{u}^{\mathrm{jump}}_\lambda(\theta)} \dd \theta = \int_0^\infty 4 \ee^{-2\theta} \dd \theta = 2, \\
  \mu^\mathrm{RD}(3) &= \int_0^\infty  \abs{\dot{u}^{\mathrm{jump}}_\lambda(\theta)}^2 \dd \theta = \int_0^\infty   16 \ee^{-4 \theta} \dd \theta = 4.
\end{align*}
Their sum is equal to the expected jump energy
\[
  E(3-,u(3-)) - E(3+,u(3+)) = 6.
\]
Observe that by defining $v^{\mathrm{jump}}_\lambda(\theta) = u^{\mathrm{jump}}_\lambda(\lambda \theta)=2-2 \ee^{-2\theta} $, we find that the jump is resolved in one single fast-scale solution $v^{\mathrm{jump}}(\theta)=2-2 \ee^{-2\theta}$. In particular, the dissipation of the fast-scale solution $v^{\mathrm{jump}}$ is equal to the jump in the energy.

In the framework of balanced viscosity solutions of Mielke--Rossi--Savar\'{e}~\cite{MielkeRossiSavare09,MielkeRossiSavare12, MielkeRossiSavare16}, the dissipation is given by the \emph{Finsler dissipation cost}
\begin{align*}
  \Delta_{\ffrak_\lambda}(u_-,u_+) := \inf \setBB{ \int_0^1 \ffrak_\lambda(\zeta,\dot{\zeta}) \dd r }{ &\text{$\zeta \colon [0,1] \to \Lrm^2(\Omega;\R^m)$ absolutely continuous,} \\
  &\text{$\zeta(0) = u_-$, $\zeta(1) = u_+$} },
\end{align*}
where
\[
  \ffrak_\lambda(\zeta,\dot{\zeta}) := \Psi_\lambda(\dot{\zeta}) + \Psi_\lambda^*(-\DD E(\zeta)),  \qquad
  \Psi_\lambda(\dot{\zeta}) := R_1(\dot{\zeta}) + \frac{1}{2} \lambda \abs{\dot{\zeta}}^2
\]
and $\Psi_\lambda^*$ is the Fenchel conjugate of $\Psi_\lambda$. By the Fenchel inequality and a well known theorem in the theory of convex subdifferentials (see, e.g., Theorem~3.32 in~\cite{Rindler18book}), we have that $\ffrak_\lambda(\zeta,\dot{\zeta})$ is minimal (with respect to the choice of $\dot{\zeta}$) if
\[
  -\DD E(\zeta) \in \partial \Psi_\lambda(\dot{\zeta}) = \partial R_1(\dot{\zeta}) + \lambda \dot{\zeta},  \qquad \theta \in (-\infty,\infty),
\]
and in this case
\[
  \ffrak_\lambda(\zeta,\dot{\zeta}) = -\DD E(\zeta) \cdot \dot{\zeta}.
\]
Thus, the jump transient is again identified as $u^{\mathrm{jump}}$ from above and (notice that the path integral is rescaling-invariant)
\begin{align*}
  \Delta_{\ffrak_\lambda}(u(3-),u(3+))
  &= \int_0^\infty -\DD E(u^{\mathrm{jump}}(\theta)) \cdot \dot{u}^{\mathrm{jump}}(\theta) \dd \theta \\
  &= \int_0^\infty - \bigl( 2(2-2\ee^{-2\theta/\lambda})) - 5 \bigr) \cdot \frac{4 \ee^{-2\theta/\lambda}}{\lambda} \dd \theta \\
  &= \int_0^\infty 4 \ee^{-2\theta} + 16 \ee^{-4 \theta} \dd \theta \\
  &= 6,
\end{align*}
which agrees with our calculation for the two-speed dissipation above. So, $u^{\mathrm{ext}}$ is in fact the balanced viscosity solution.
\end{example}

The next example shows that, unlike globally stable energetic solutions, our two-speed solutions (like balanced viscosity solutions) may have a jump at the initial time, which may also follow a non-trivial path even if $f\equiv 0$.

\begin{example}
\label{ex:jumplength}
We take the initial condition $(u(0),v(0))=(0,0)$. We aim to find a potential such that $(u,v)$ moves to $(u(1),v(1))=(0,1)$ by solving the following rate-independent system ($R_1(u,v) := \abs{u} + \abs{v}$):
\begin{align}
\label{eq:length_RI}
\left \{
\begin{aligned}\
&-\DD_u W(u,v) \in \Sgn(\dot{u}),\\
&-\DD_v W(u,v) \in \Sgn(\dot{v}),\\
&u(0)=0, \quad  v(0)=0.
\end{aligned}
\right.
\end{align}
For this, take $\phi\in \Crm^2([\frac13,\frac23],[-1,1])$ such that $\phi(\frac13)=-1$ and $\phi(\frac23)=1$.
Consider
\[
W(u,v) :=\begin{cases}
-u-\frac13 -v &\text{if }v\in (-\infty,\frac13],\\
\phi(v)(u-\frac13)-v-\frac{2}{3} &\text{if }v\in (\frac13,\frac23),\\
u-v-1 &\text{if }v\in [\frac23,\infty),
\end{cases}
\]
which is continuous. 
It can be easily checked that
\[
(u(t), v(t))=\begin{cases}
(t,t) &\text{if }t\in [0,\frac13],\\
(\frac{1}{3},t) &\text{if }t\in (\frac13,\frac23),\\
(1-t,t) &\text{if }t\in [\frac23,1),\\
(0,1) &\text{if }t\in [1,\infty)
\end{cases}
\]
is a solution to~\eqref{eq:length_RI} and the initial value $(u(0),v(0))=(0,0)$ is (locally) stable, i.e.\ $-\DD W(u,v) \in \Scal = [-1,1]^2$. We find
\[
\int_0^1\abs{\dot{u}}+\abs{\dot{v}} \dd t=\frac{5}{3}=-\frac{1}{3}-(-2)=W(0,0)-W(0,1).
\]
 By simply rescaling the solution $(u_\tau(\theta)), (v_\tau(\theta))):=(u(\frac{ \theta}{\tau}),v(\frac{\theta}{\tau}))$, where $\tau > 0$, we find
\[
\int_0^\tau\abs{\partial_\theta{u}_\tau }+\abs{\partial_\theta{v}_\tau} \dd \theta=\frac{5}{3}=W(0,0)-W(u_\tau(\tau),v_\tau (\tau)).
\]
The limit function is
\[
(u(t),v(t)) = \lim_{\tau\to 0}\,(u_\tau(t), v_\tau(t))=:(\tilde{u}(t),\tilde{v}(t))=(0,1) \quad\text{for }t>0
\]
Hence, $(\tilde{u},\tilde{v})$ has a single jump at the initial time $t = 0$ and
\[
\mu^\mathrm{RI}(0)=\frac{5}{3}>1=\Var_{R_1}(\tilde{u};\{0\}).
\]
This implies that we have constructed two different solution of~\eqref{eq:length_RI}, even though in~\eqref{eq:length_RI} there is no external force and the initial condition is (locally) stable. 
\end{example}

Next we demonstrate how  an arbitrarily long (rate-independent) jump path can be approximated by a parabolic approximation. This shows the necessity to include a canonical slow time in the concept of solutions, since otherwise the purely rate-independent measure in a point is more than the jump length. It is noteworthy that in the given situation the pathological movement is induced by the energy potential only and hence does not depend on the (parabolic) approximation.

\begin{figure}[t]
\includegraphics[width=80pt]{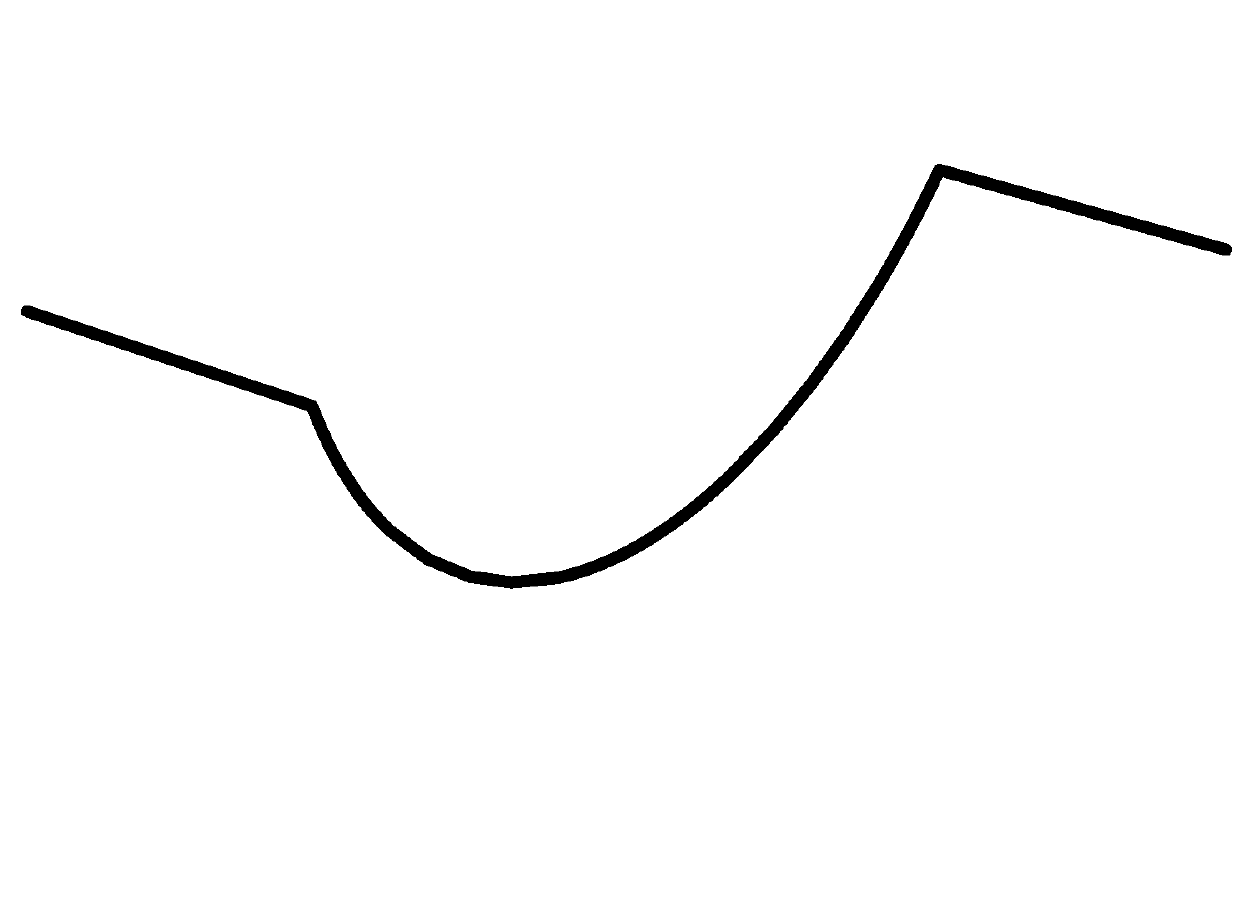}\raisebox{33pt}{,}
\includegraphics[width=80pt]{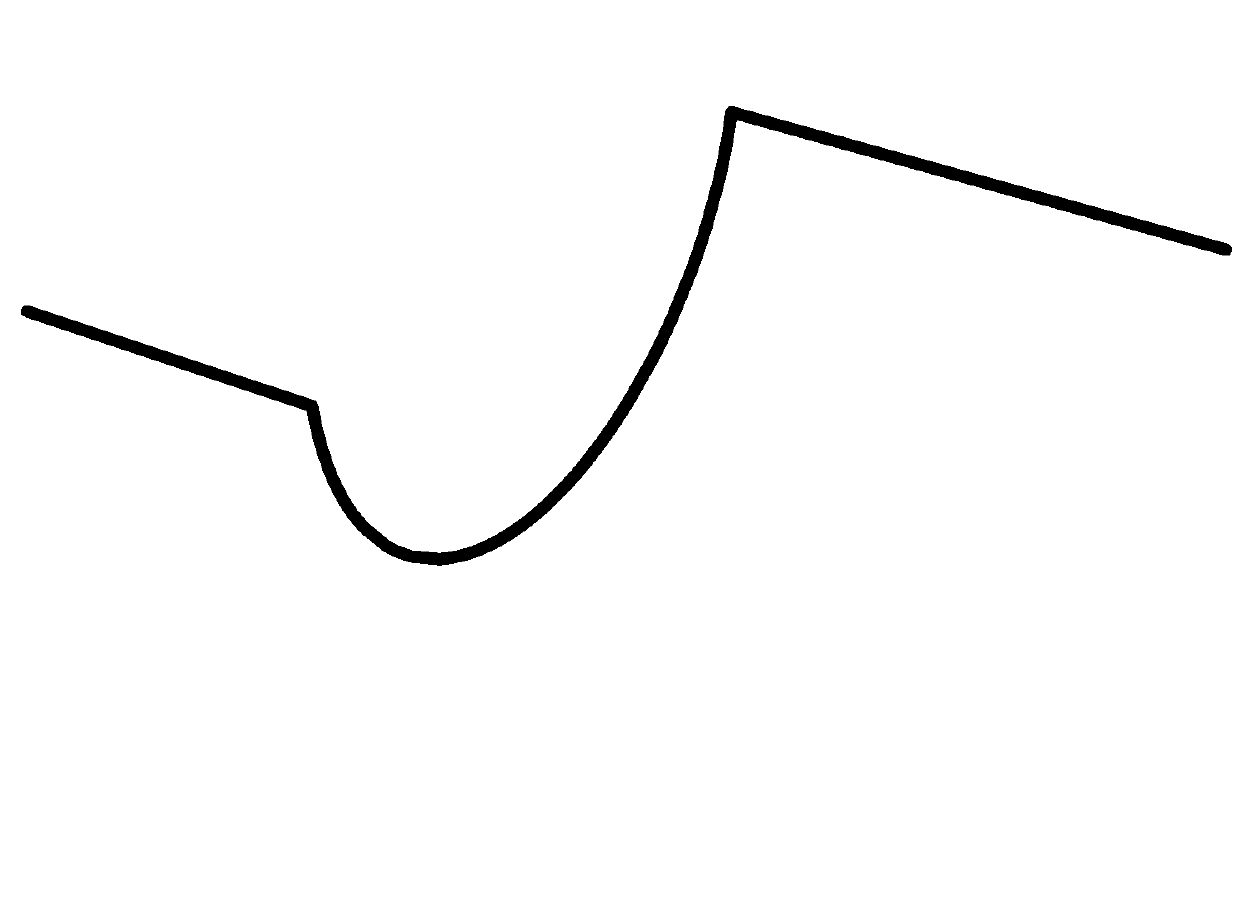}\raisebox{33pt}{,}
\includegraphics[width=80pt]{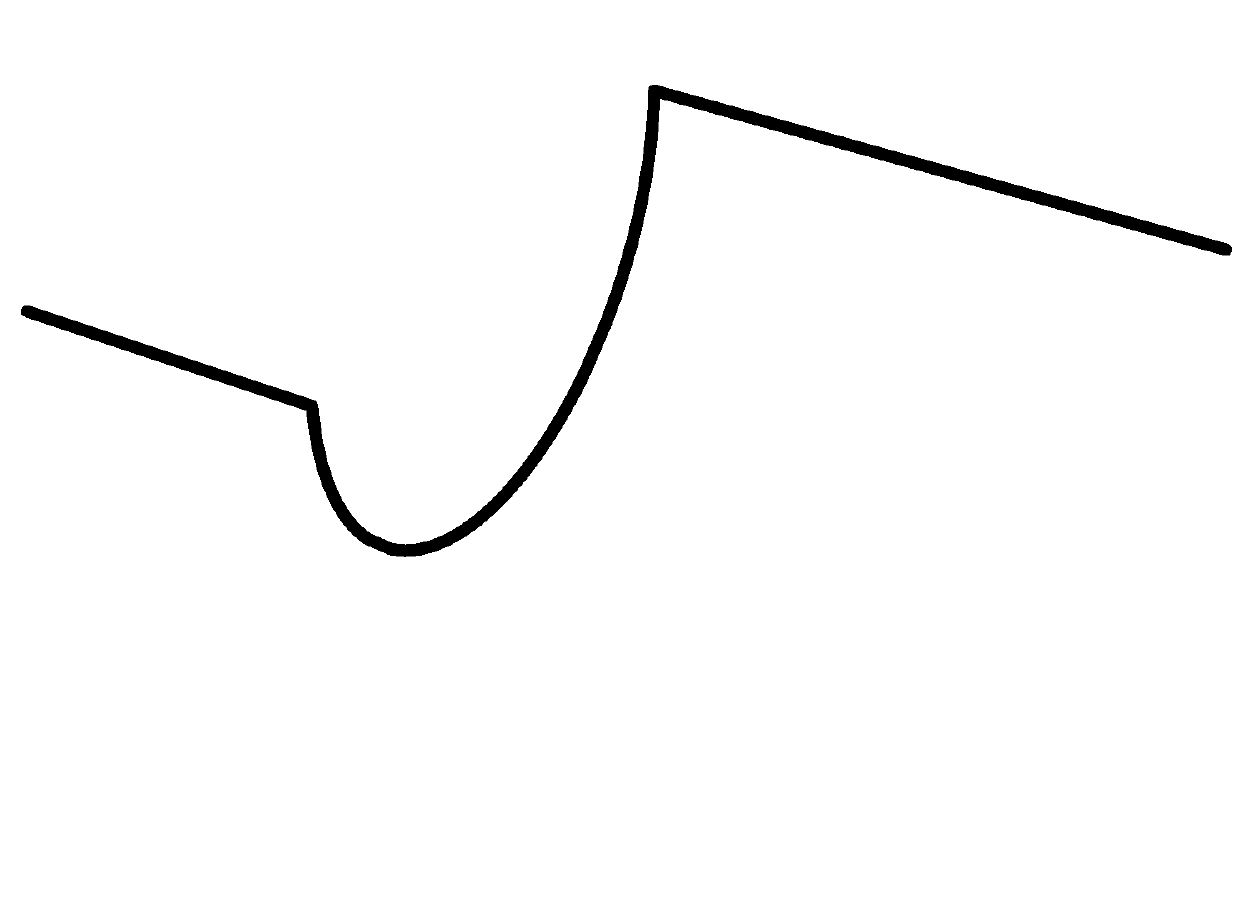}\raisebox{33pt}{$\to$}
\includegraphics[width=80pt]{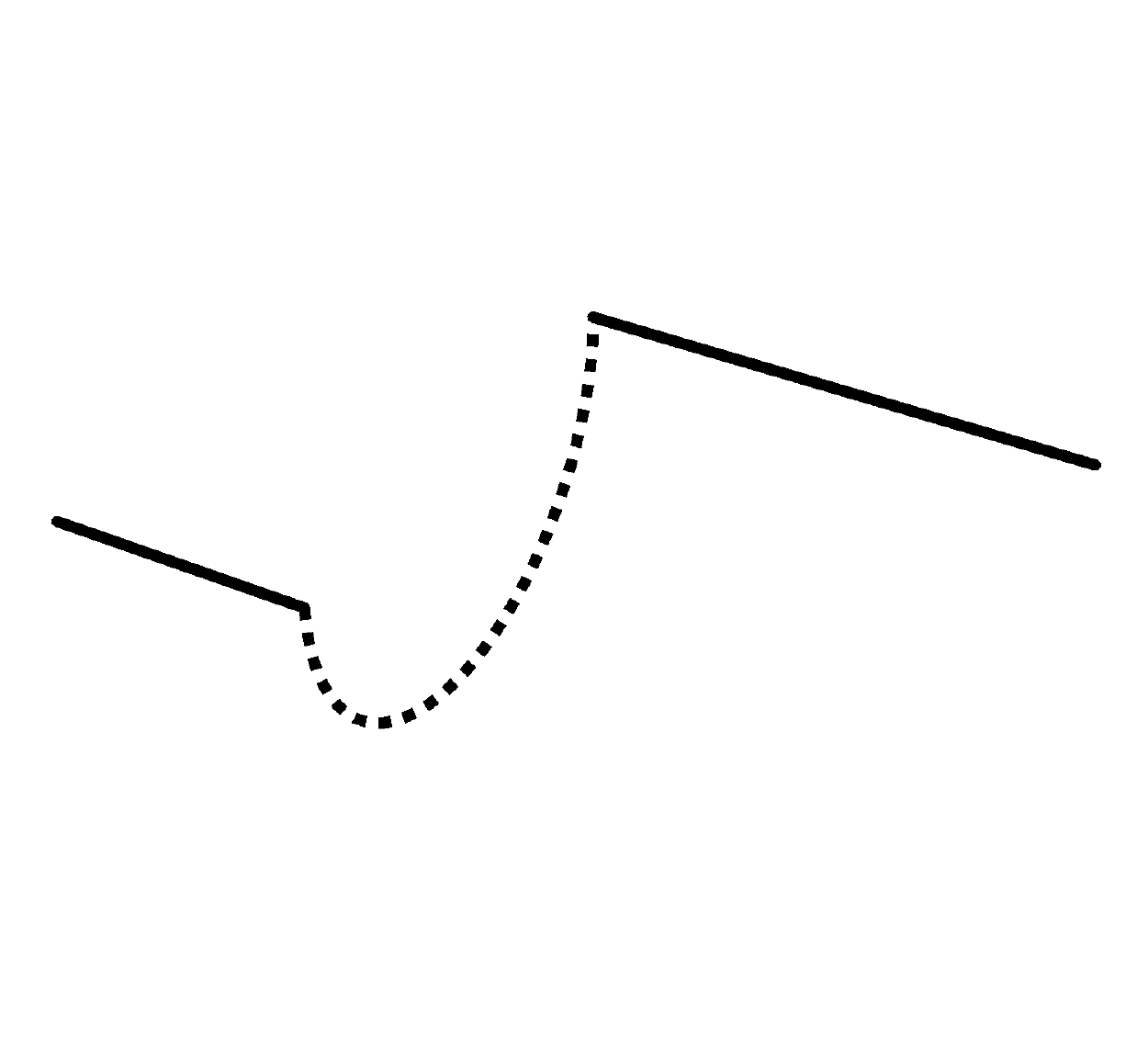}
\caption{An evolution $u_\lambda(t)$ following a non-affine path as $\lambda \todown 0$.}
\label{fig:semicirc}
\end{figure}

\begin{example}
\label{ex:jumplength1}
Consider $\xi \in \Crm^1(0,1)$ such that $\xi(0)=0=\xi(1)$. For $K>0$ we define the functional $W = W_0$ via
\[
W(u,v) := K^2(u-\xi(v))^2+K(v-1)^2.
\]
Moreover, we set $R_1(u,v) := \abs{u} + \abs{v}$ and $f \equiv 0$.
We wish to solve
\begin{align*}
\left \{
\begin{aligned}\
&-\DD_u W(u,v) \in \Sgn(\dot{u}) + \lambda \dot{u},\\
&-\DD_v W(u,v) \in \Sgn(\dot{v}) + \lambda \dot{v},\\
&u(0)=0, \quad  v(0)=0.
\end{aligned}
\right.
\end{align*}
By the energy equality we find that
\[
\int_0^t\abs{\dot{u}_\lambda}+\abs{\dot{v}_\lambda}+\lambda\abs{\dot{u}_\lambda}^2+\lambda\abs{\dot{v}_\lambda}^2\dd s+K^2(u_\lambda(t)-\xi(v_\lambda(t)))^2+K(v_\lambda(t)-1)^2 = K.
\] 
Hence, by increasing $K$ the function $u$ can be forced to be follow $\xi(v)$ closely, see Figure~\ref{fig:semicirc} for an illustration. In particular, when letting $\lambda \todown 0$, we find a jump at $0$, from $(0,0)$ to $(0,1)$. Hence, $\Var((u,v);\{0\})=1$, but $
\mu^\mathrm{RI}(0) \approx \int_0^1\sqrt{1+\abs{\xi'(s)}^2}\dd s$,
which can be chosen to be arbitrarily large.
\end{example}

The final example demonstrates the importance of rescaling the time via the function $\phi$. Since rate-independent solutions can be glued together and accelerated or delayed, it is easy to produce potentials where jumps are collections of various paths with different speeds. The next example exhibits a jump that can be resolved using a change of variables $\phi$ (or be described by the rate-independent evolution $b^k$) and a rate-dependent jump. In some sense it is a combination of Example~\ref{ex:jumpapprox} and Example~\ref{ex:jumplength}. Further combinations are possible. In particular, switching between rate-independent evolutions (collected in $b^k$) and rate-dependent evolutions (in the $v^i$) are easily constructed in a similar manner.

\begin{figure}[t]
\includegraphics[width=180pt]{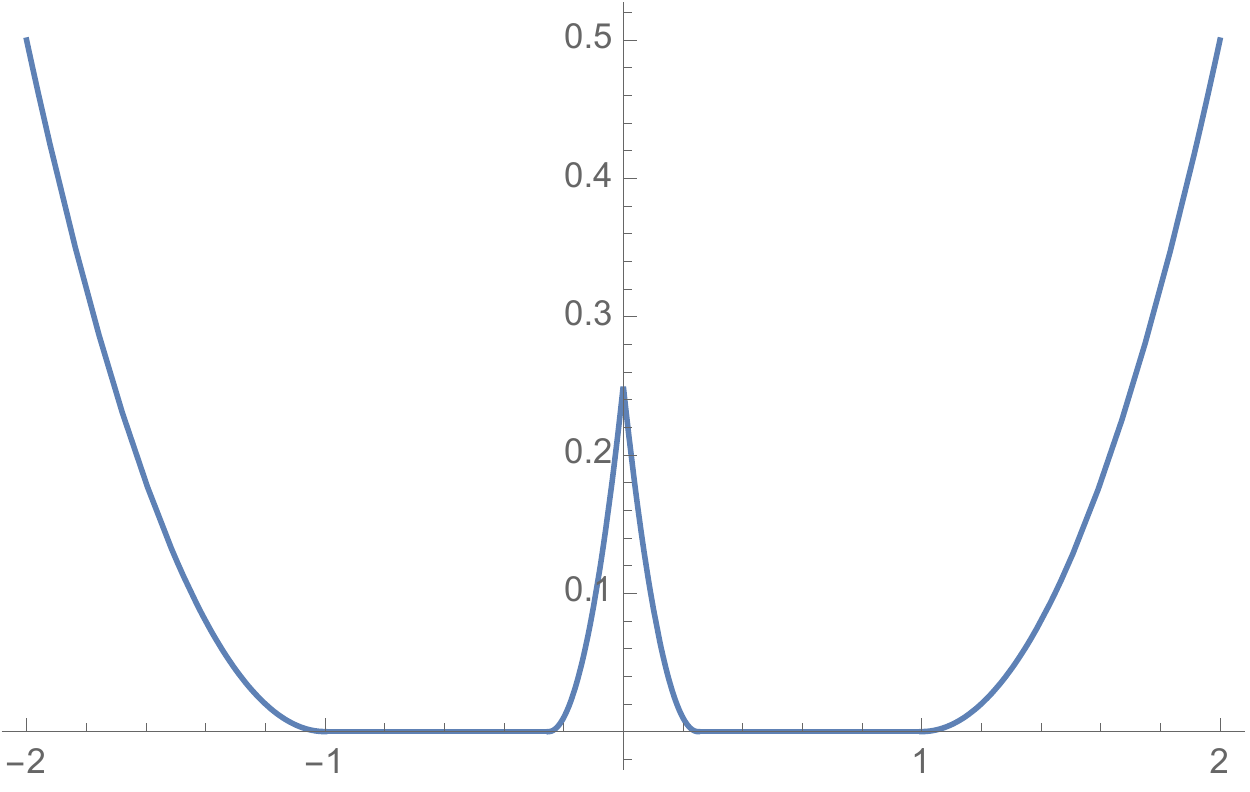}\hspace{10pt}
\includegraphics[width=180pt]{ex_disspot.pdf}
\caption{$W_0$ and $R_1$ in Example~\ref{ex:bump}}
\label{fig:bump}
\end{figure}

\begin{example}
\label{ex:bump}
We introduce the following energy that provides a lot of possible (energy-preserving) solutions for rate-independent systems, see Figure~\ref{fig:bump}:
\[
  W_0(z)  := \begin{cases}
   \frac12 (z+1)^2   & \text{if $z \in (-\infty,-1)$,} \\
   4\min\bigl\{z(z+\frac12),z(z-\frac12)\bigr\}+\frac{1}{4}  & \text{if $z\in (-\frac{1}{4},\frac{1}{4})$,} \\
  \frac12 (z-1)^2& \text{if $z \in (1,\infty)$,}\\
   0 & \text{if $z \in [-1,-\frac{1}{4}]\cup [\frac{1}{4},1]$.}
  \end{cases}
\]
This implies
\[
 \DD W_0(z) = \begin{cases}
    z+1   & \text{if $z \in (-\infty,-1)$,} \\
    8z+2  & \text{if $z\in (-\frac{1}{4},0)$,} \\
    8z-2  & \text{if $z\in (0,\frac{1}{4})$,} \\
   z-1 & \text{if $z \in (1,\infty)$,}\\
   0 & \text{if $z \in [-1,-\frac{1}{4}]\cup [\frac{1}{4},1]$.}
  \end{cases}
\]
We also set
\[
  u_0:=0, \quad  R_1(z) := \abs{z}, \quad f(t):=1.
\]
It can be seen easily that the solution immediately (i.e.\ at $t=0$) transitions rate-dependently from $z=0$ to $z=\frac14$ since the total force $- \DD W_0(z)+f(0)$ is greater than $1$, so we need the rate-dependent dissipation. Then, it further transitions from $z=\frac{1}{4}$ to $z=1$ since then $- \DD W_0(z)+f(0) = 1$, so it balances with the rate-independent dissipation term. Then, for $z > 1$, we have $- \DD W_0(z)+f(0) = 2-z = 1$, so the solution remains at $z=1$. In the situation of Theorem~\ref{thm:main_ex}, time would be rescaled so that there is a rate-dependent jump at $s=0$ (resolved into a $v^1$), then, a rate-independent transition from $s=0$ to $s=s^*$ (which can be chosen in different ways, e.g.\ $s^*=\frac14$ if we choose an arclength-parametrization). In the situation of Corollary~\ref{cor:main_ex}, however, we would stick with the original time and have $b^{t=0}(\theta)$ transitioning from $z=\frac{1}{4}$ to $z=1$ instead.
\end{example}

\section{Energy inequality and stability} \label{sc:energy_ineq}

It is a well-known observation that in order to prove that a constructed process is a solution to a rate-independent system, it suffices to establish only an energy \emph{inequality} together with  the local stability~\eqref{eq:stability} (this holds for instance for Mielke--Theil energetic solutions). For strong solutions, of course one has to require in addition some regularity of the processes. In this section we show how such regularity estimates can be obtained on a large part of the time domain. The key technical result of this section is Lemma~\ref{lem:key1}, which estimates the oscillation of the solution by the oscillation of the energy. Then, a covering argument via a suitable maximal function implies that the solution is indeed strong on a large part of the time domain.

Within this section we assume that we are given
\[
  u\in \Lrm^\infty(0,T;(\Wrm^{1,2}\cap \Lrm^q)(\Omega;\R^m))
\]
such that the following two conditions hold:
\begin{enumerate}[(i)]
  \item the energy process $E(t) := \Ecal(t,u(t))$ satisfies for almost all $s,t \in [0,T]$ with $s < t$ the inequality
\begin{align} \label{eq:subenerg}
\qquad
E(t) - E(s) \leq - \Var_{\Rcal_1}(u;[s,t]) -\int_s^t \dprb{\dot{f}(\tau), u(\tau)} \dd \tau \; ;
\end{align}
  \item the local stability~\eqref{eq:stability} holds for almost every $t\in [0,T]$, namely
\begin{align} \label{eq:stab}
\qquad
  \int_\Omega - \nabla u(t) \cdot \nabla \psi + \bigl[- \DD W_0(u(t)) + f(t) \bigr] \cdot \psi \dd x
  \leq \int_\Omega R_1(\psi) \dd x
\end{align}
for all $\psi \in \Wrm^{1,2}_0(\Omega;\R^m)$.
\end{enumerate}

The first lemma contains the observation, alluded to above, that~\eqref{eq:subenerg} and~\eqref{eq:stab} are equivalent to~\eqref{eq:strongsol_ineq} provided that $u$ has additional regularity.

\begin{lemma}
\label{lem:strongsol}
Assume that $\nabla u(t), \nabla \dot{u}(t) \in \Lrm^2(\Omega;\R^{m \times d})$ or $u(t), \Delta \dot{u}(t) \in \Lrm^2(\Omega;\R^{m \times d})$ for some $t \in [0,T]$ and that~\eqref{eq:subenerg},~\eqref{eq:stab} are satisfied (at this $t$ and for almost every $s < t$). Then, $u$ is a strong solution to~\eqref{eq:PDE_lambda} at $t$, i.e.,~\eqref{eq:strongsol_ineq} holds at $t$.
\end{lemma}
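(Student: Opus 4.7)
The strategy is to upgrade the local stability~\eqref{eq:stab}, which expresses a subdifferential inclusion at $0$, to the full variational inequality~\eqref{eq:strongsol_ineq}, which expresses the inclusion at $\dot u(t)$, by extracting from the energy inequality~\eqref{eq:subenerg} a matching energy identity along the curve at $t$.

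Set
$$A(\psi) := \int_\Omega \bigl[-\nabla u(t)\cdot\nabla \psi + (-\DD W_0(u(t))+f(t))\cdot\psi\bigr] \dd x, \qquad \psi\in \Wrm^{1,2}_0(\Omega;\R^m).$$
The regularity assumed at $t$ permits an integration by parts (once in the first regularity case, twice in the alternative case) identifying $A(\psi)$ with the duality pairing $\dprb{\Delta u(t) - \DD W_0(u(t)) + f(t), \psi}$ appearing in~\eqref{eq:strongsol_ineq}. The lemma thus reduces to showing
$$\Rcal_1(\dot u(t)) + A(\xi - \dot u(t)) \leq \Rcal_1(\xi) \qquad\text{for all } \xi\in \Wrm^{1,2}_0(\Omega;\R^m),$$
whereas~\eqref{eq:stab} rewrites as $A(\psi)\leq \Rcal_1(\psi)$ for all such $\psi$. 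Specialising stability to $\psi = \xi$ and to $\psi = \dot u(t)$ (an admissible test thanks to the regularity hypothesis) yields both $A(\xi)\leq \Rcal_1(\xi)$ and $A(\dot u(t))\leq \Rcal_1(\dot u(t))$, so it only remains to prove the matching lower bound $A(\dot u(t))\geq \Rcal_1(\dot u(t))$: combined with the stability bound this gives the equality $A(\dot u(t)) = \Rcal_1(\dot u(t))$, and then
$$\Rcal_1(\dot u(t)) + A(\xi - \dot u(t)) = A(\xi) \leq \Rcal_1(\xi)$$
is immediate.

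To obtain the lower bound I would invoke the chain rule
$$\Ecal(t,u(t)) - \Ecal(s,u(s)) = -\int_s^t A_\tau(\dot u(\tau)) \dd\tau - \int_s^t \dprb{\dot f(\tau), u(\tau)}\dd\tau,$$
with $A_\tau$ the obvious time-$\tau$ analogue of $A$. Inserted into~\eqref{eq:subenerg}, this yields $\int_s^t A_\tau(\dot u(\tau))\dd\tau \geq \Var_{\Rcal_1}(u;[s,t])$. Exploiting the classical identity $\frac{d}{d\tau}\Var_{\Rcal_1}(u;[0,\cdot])(\tau) = \Rcal_1(\dot u(\tau))$ valid a.e.\ for absolutely continuous curves, dividing by $t-s$, and letting $s\uparrow t$ along a common Lebesgue point of $\tau\mapsto A_\tau(\dot u(\tau))$ and $\tau\mapsto\Rcal_1(\dot u(\tau))$ then produces $A(\dot u(t))\geq \Rcal_1(\dot u(t))$.

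The only delicate point is this last step: the regularity hypothesis is prescribed only at the single time $t$, so to justify the chain rule in a one-sided neighbourhood of $t$ and to ensure $t$ is a common Lebesgue point of the two integrands one must combine the global regularity $u\in \Lrm^\infty(0,T;\Wrm^{1,2}\cap \Lrm^q)$ with the $\BV$-in-time structure encoded in~\eqref{eq:subenerg} (which furnishes the absolute continuity of $\tau\mapsto u(\tau)$ modulo a countable set of jumps). Once this technical point is handled, the logical skeleton of the argument is a clean three-step combination of stability, chain rule, and the energy inequality.
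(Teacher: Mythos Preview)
Your proposal is correct and follows the same logical skeleton as the paper: use the energy inequality to obtain $A(\dot u(t))\geq \Rcal_1(\dot u(t))$, then add the stability inequality tested with $\xi$ to conclude. Two minor remarks: first, your step of testing stability with $\psi=\dot u(t)$ to upgrade the lower bound to an equality is unnecessary, since $\Rcal_1(\dot u(t)) - A(\dot u(t))\leq 0$ together with $A(\xi)\leq \Rcal_1(\xi)$ already gives $\Rcal_1(\dot u(t)) + A(\xi-\dot u(t))\leq \Rcal_1(\xi)$ directly. Second, the paper sidesteps your ``delicate point'' entirely: rather than establishing a chain rule on an interval $[s,t]$ and invoking Lebesgue points, it simply divides~\eqref{eq:subenerg} by $t-s$ and passes to the limit $s\uparrow t$ at the single point $t$, using only the pointwise differentiability encoded in the hypothesis and the elementary bound $\Var_{\Rcal_1}(u;[s,t])\geq \Rcal_1(u(t)-u(s))$ on the right-hand side. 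This avoids needing any regularity of $\tau\mapsto A_\tau(\dot u(\tau))$ on a neighbourhood of $t$.
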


\begin{proof}
Assume that $\nabla u(t), \nabla \dot{u}(t) \in \Lrm^2(\Omega;\R^{m \times d})$; the proof for $u(t), \Delta \dot{u}(t) \in \Lrm^2(\Omega;\R^{m \times d})$ is analogous.

Dividing~\eqref{eq:subenerg} by $t-s$ and letting $s \toup t$ gives
\[
  \Rcal_1(\dot{u}(t)) + \int_\Omega \nabla u(t) \cdot \nabla \dot{u}(t) - \bigl[- \DD W_0(u(t)) + f(t) \bigr] \cdot \dot{u}(t) \dd x
  \leq 0.
\]
Thus, the inequality in~\eqref{eq:strongsol_ineq}, that is,
\begin{align*}
  &\Rcal_1(\dot{u}(t)) - \int_\Omega \nabla u(t) \cdot \bigl( \nabla \xi(t) - \nabla \dot{u}(t) \bigr)  \dd x \\
  &\quad + \int_\Omega \bigl[- \DD W_0(u(t)) + f(t) \bigr] \cdot (\xi(t) - \dot{u}(t)) \dd x \leq \Rcal_1(\xi(t))
\end{align*}
for all $\xi \in \Lrm^1(0,T;\Wrm^{1,2}_0(\Omega;\R^m))$, follows by adding~\eqref{eq:stab} with $\psi := \xi(t)$ to the preceding inequality.
\end{proof}

The proof of the following lemma is a special case of~\cite[Lemma~3.1]{RindlerSchwarzacherSuli17} and is therefore omitted.

\begin{lemma} \label{lem:apriori-space}
Assume that $W_0$ satisfies the assumptions in~\ref{as:W}. Then, any weak solution $u\in (\Wrm^{1,2}_0 \cap \Lrm^q)(\Omega;\R^m)$ of
\[
\left\{
\begin{aligned}
-\Delta u +\DD W_0 (u)&=g \quad\text{ in $\Omega$,}\\
u|_{\partial\Omega}&=0,
\end{aligned} \right.
\]
where $g\in \Lrm^s(\Omega;\R^m)$ for $s\in [2,\infty)$, satisfies
\[
 \norm{\nabla^2u}_{\Lrm^s}\leq C(1+ \norm{g}_{\Lrm^s}+\norm{g}_{\Lrm^{2}}^{q-1}).
\]
\end{lemma}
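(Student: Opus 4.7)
The plan is to isolate the linear part of the PDE and invoke standard elliptic $L^s$-regularity, then control the nonlinear term $\DD W_0(u)$ by bootstrapping integrability of $u$ starting from the natural energy estimate.

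\textbf{Step 1 (elliptic regularity for the displaced equation).} I would rewrite the equation as the linear problem $-\Delta u = g - \DD W_0(u)$ in $\Omega$ with $u|_{\partial\Omega}=0$. Because $\Omega$ is of class $\Crm^{1,1}$, Calder\'on--Zygmund theory yields, for every $s\in[2,\infty)$, a constant $C=C(\Omega,s)$ with
\[
  \norm{\nabla^2 u}_{\Lrm^s} \leq C\bigl(\norm{g}_{\Lrm^s} + \norm{\DD W_0(u)}_{\Lrm^s}\bigr).
\]
The growth bound $\abs{\DD W_0(v)}\leq C(1+\abs{v}^{q-1})$ from \ref{as:W} then reduces the task to estimating $\norm{u}_{\Lrm^{s(q-1)}}$ in terms of $\norm{g}_{\Lrm^2}$.

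\textbf{Step 2 (basic energy estimate).} Testing the weak formulation with $u$ gives
\[
  \int_\Omega \abs{\nabla u}^2 \dd x + \int_\Omega \DD W_0(u)\cdot u \dd x = \int_\Omega g\cdot u \dd x.
\]
The semi-convexity assumption $\DD^2 W_0(v)[w,w]\geq-\mu\abs{w}^2$ means that $W_0+(\mu/2)\abs{\frarg}^2$ is convex; applying its convexity inequality between $0$ and $v$ and combining with the coercivity $W_0(v)\geq C^{-1}\abs{v}^q-C$ produces the pointwise lower bound $\DD W_0(v)\cdot v \geq C^{-1}\abs{v}^q - C - \mu\abs{v}^2/2$. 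Absorbing the $\Lrm^2$ term by Young's inequality (together with Poincar\'e) yields the a~priori estimate
\[
  \norm{\nabla u}_{\Lrm^2}^2 + \norm{u}_{\Lrm^q}^q \leq C\bigl(1 + \norm{g}_{\Lrm^2}^2\bigr).
\]

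\textbf{Step 3 (bootstrap to $\Lrm^{s(q-1)}$).} Starting from the $\Wrm^{1,2}\cap \Lrm^q$-bound of Step~2 and the Sobolev embeddings valid in $d\in\{2,3\}$ (any finite exponent for $d=2$, $\Wrm^{1,2}\hookrightarrow\Lrm^6$ for $d=3$; $\Wrm^{2,p}\hookrightarrow\Lrm^{p^{**}}$ whenever the latter is sub-critical), I would iterate Step~1 at progressively higher Lebesgue exponents: at each round, if $u\in\Lrm^p$ then $\DD W_0(u)\in\Lrm^{p/(q-1)}$, so $u\in\Wrm^{2,p/(q-1)}$ and consequently $u$ lies in a strictly larger Lebesgue space. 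Since $q$ is fixed, finitely many rounds produce $u\in\Lrm^{s(q-1)}$, with the constants at each stage tracked linearly in $1+\norm{g}_{\Lrm^2}$. This bootstrap yields
\[
  \norm{u}_{\Lrm^{s(q-1)}} \leq C\bigl(1+\norm{g}_{\Lrm^2}\bigr),
\]
hence $\norm{\DD W_0(u)}_{\Lrm^s}\leq C(1+\norm{g}_{\Lrm^2}^{q-1})$. Inserting this into Step~1 gives the claimed bound.

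\textbf{Main obstacle.} The delicate point is the bookkeeping in Step~3: each nonlinear step increases the power of $\norm{g}_{\Lrm^2}$ by a factor of $q-1$, so one must verify that the iteration can be performed in a single effective pass (or that the resulting exponent telescopes correctly to exactly $q-1$ in the final bound on $\norm{\DD W_0(u)}_{\Lrm^s}$). In $d=2$ this is essentially immediate because $\Wrm^{1,2}\hookrightarrow\Lrm^r$ for all finite~$r$, so one directly obtains $u\in\Lrm^{s(q-1)}$ from the energy estimate. In $d=3$ the growth exponent $q$ must be compatible with $2^{*}=6$ along the iteration, and one uses the fact that the upper $q$-growth in \ref{as:W} was chosen precisely to make this iteration close in finitely many steps with the stated linear dependence on $\norm{g}_{\Lrm^2}$.
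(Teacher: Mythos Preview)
The paper does not prove this lemma at all: it simply cites \cite[Lemma~3.1]{RindlerSchwarzacherSuli17} and omits the argument. Your outline---energy estimate, Calder\'on--Zygmund for $-\Delta$, and control of $\DD W_0(u)$ via $\norm{u}_{\Lrm^{s(q-1)}}$---is the standard route and almost certainly matches what the cited reference does.

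That said, the concern you yourself flag in Step~3 is real and not just bookkeeping. A genuine multi-step bootstrap in $d=3$ would compound the exponent: if one pass gives $\norm{u}_{\Lrm^\infty}\lesssim 1+\norm{g}_{\Lrm^2}^{q-1}$, feeding this back into $\abs{\DD W_0(u)}\lesssim 1+\abs{u}^{q-1}$ produces $\norm{\DD W_0(u)}_{\Lrm^s}\lesssim 1+\norm{g}_{\Lrm^2}^{(q-1)^2}$, not $\norm{g}_{\Lrm^2}^{q-1}$. To recover the exact exponent $q-1$ stated in the lemma you must avoid iterating the nonlinearity. In $d=2$ this is immediate since $\Wrm^{1,2}\hookrightarrow\Lrm^r$ for all finite $r$, so Step~2 alone gives $\norm{u}_{\Lrm^{s(q-1)}}\lesssim 1+\norm{g}_{\Lrm^2}$ directly. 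In $d=3$ a single pass works provided $2(q-1)\leq 2^*=6$, i.e.\ $q\leq 4$: then $\norm{u}_{\Lrm^{2(q-1)}}\lesssim\norm{u}_{\Wrm^{1,2}}\lesssim 1+\norm{g}_{\Lrm^2}$ yields $\norm{\DD W_0(u)}_{\Lrm^2}\lesssim 1+\norm{g}_{\Lrm^2}^{q-1}$, hence $u\in\Wrm^{2,2}\hookrightarrow\Lrm^\infty$ with that same power, and the final $\Lrm^s$-step does not re-enter the nonlinearity. For larger $q$ the stated exponent cannot be obtained by the scheme you sketch; you should check the hypotheses in \cite{RindlerSchwarzacherSuli17} to see whether an implicit restriction on $q$ (or a different argument, e.g.\ a Moser-type iteration giving the $\Lrm^\infty$-bound directly with a linear dependence on the data) is in force there.
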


The first regularity result is a straightforward consequence of the preceding lemma:

\begin{lemma}
\label{lem:Linfty}
If~\eqref{eq:stab} is satisfied at $t \in (0,T)$, then $\nabla^2 u(t)\in \Lrm^s(\Omega)$ for all $s\in [2,\infty)$ and
\[
 \norm{u(t)}_{\Wrm^{2,s}}\leq C\big(1+\norm{f(t)}_{\Lrm^s}+\norm{f(t)}_{\Lrm^2}^{q-1}\big).
\]
If~\eqref{eq:stab} is satisfied at almost every $t\in (0,T)$, then
 \[
 \norm{u}_{\Lrm^\infty(0,T;\Wrm^{2,s}(\Omega))}\leq C\big(1+\norm{f}_{\Lrm^\infty(0,T;\Lrm^s(\Omega))}+\norm{f}_{\Lrm^\infty(0,T;\Lrm^2(\Omega))}^{q-1}\big).
 \] 
\end{lemma}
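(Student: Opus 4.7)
The plan is to read the stability inequality~\eqref{eq:stab} as saying that $-\Delta u(t) + \DD W_0(u(t)) - f(t)$ is, up to an $\Lrm^\infty$-bounded defect, zero, and then invoke the elliptic a priori estimate of Lemma~\ref{lem:apriori-space}. First I would rewrite~\eqref{eq:stab} as
\[
  \dprb{\Delta u(t) - \DD W_0(u(t)) + f(t), \psi} \leq \int_\Omega R_1(\psi) \dd x \leq c_2 \int_\Omega \abs{\psi}\dd x
\]
for all $\psi \in \Wrm^{1,2}_0(\Omega;\R^m)$. Replacing $\psi$ by $-\psi$ and using $R_1(-\psi)\leq c_2|\psi|$ (which holds by~\ref{as:R}) yields the two-sided estimate $\absb{\dprb{\Delta u(t)-\DD W_0(u(t))+f(t),\psi}} \leq c_2\norm{\psi}_{\Lrm^1}$. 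Since $\Wrm^{1,2}_0(\Omega;\R^m)$ is dense in $\Lrm^1(\Omega;\R^m)$, the linear functional on the left extends to an element of $(\Lrm^1)^\ast = \Lrm^\infty$; equivalently, there exists $g(t) \in \Lrm^\infty(\Omega;\R^m)$ with $\norm{g(t)}_{\Lrm^\infty} \leq c_2$ such that
\[
  -\Delta u(t) + \DD W_0(u(t)) = f(t) + g(t)  \qquad\text{in the weak sense.}
\]

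The next step is to apply Lemma~\ref{lem:apriori-space} to this elliptic equation with right-hand side $f(t)+g(t)$. Since $\norm{g(t)}_{\Lrm^\infty}\leq c_2$ and $\Omega$ is bounded, $\norm{g(t)}_{\Lrm^s}\leq c_2\abs{\Omega}^{1/s}\leq C$ for every $s \in[2,\infty)$, so the lemma yields
\[
  \normb{\nabla^2 u(t)}_{\Lrm^s} \leq C\bigl(1 + \norm{f(t)+g(t)}_{\Lrm^s} + \norm{f(t)+g(t)}_{\Lrm^2}^{q-1}\bigr) \leq C\bigl(1+\norm{f(t)}_{\Lrm^s} + \norm{f(t)}_{\Lrm^2}^{q-1}\bigr),
\]
where the $\Lrm^\infty$-smallness of $g(t)$ plus the elementary inequality $(a+b)^{q-1}\leq C(a^{q-1}+b^{q-1})$ absorbs the $g$-contributions into the constant.

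To upgrade this $\nabla^2 u(t)$ bound to a full $\Wrm^{2,s}$ bound I would first apply the above with $s=2$, which gives $\nabla^2 u(t)\in \Lrm^2(\Omega;\R^{m\times d\times d})$; together with $u(t)\in \Wrm^{1,2}_0(\Omega;\R^m)$ and the Sobolev embedding $\Wrm^{2,2}(\Omega)\hookrightarrow \Lrm^\infty(\Omega)$ in dimensions $d\in\{2,3\}$ (valid on $\Crm^{1,1}$-domains by~\ref{as:domain}), this yields an $\Lrm^\infty$-bound on $u(t)$ depending only on $\norm{f(t)}_{\Lrm^2}$. Hence $\norm{u(t)}_{\Lrm^s}$ and $\norm{\nabla u(t)}_{\Lrm^s}$ are controlled (the latter via interpolation of $\Wrm^{2,s}$ between $\Lrm^\infty$ and the $\nabla^2 u(t)$ bound, or by Poincar\'e after bounding $\nabla u(t)$ through elliptic regularity applied to $-\Delta u(t) = f(t)+g(t)-\DD W_0(u(t))$, whose right-hand side is now in every $\Lrm^s$). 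Combining with the estimate on $\nabla^2 u(t)$ gives the asserted pointwise-in-$t$ inequality. The $\Lrm^\infty(0,T;\Wrm^{2,s})$ statement then follows by taking the supremum in $t\in(0,T)$ since the right-hand side of the pointwise estimate is manifestly bounded by the corresponding $\Lrm^\infty(0,T;\Lrm^s)$- and $\Lrm^\infty(0,T;\Lrm^2)$-norms of $f$.

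The only delicate point I see is the duality step extracting $g(t)\in\Lrm^\infty$ from the one-sided inequality~\eqref{eq:stab}; making this rigorous requires that $\Wrm^{1,2}_0(\Omega;\R^m)$ be dense in $\Lrm^1(\Omega;\R^m)$ (which follows from density of $\Crm^\infty_c$) and the identification $(\Lrm^1)^\ast=\Lrm^\infty$. Everything after that is a direct application of Lemma~\ref{lem:apriori-space} and standard Sobolev embedding on $\Crm^{1,1}$-domains in dimensions $d\leq 3$.
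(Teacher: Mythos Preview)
Your proof is correct and follows essentially the same approach as the paper: use the stability inequality~\eqref{eq:stab} together with its version for $-\psi$ to see that $-\Delta u(t)+\DD W_0(u(t))-f(t)$ is bounded in $\Lrm^\infty$ (equivalently, in every $\Lrm^s$), and then apply Lemma~\ref{lem:apriori-space}. The paper does the duality step more tersely, estimating $\norm{-\Delta u(t)+\DD W_0(u(t))}_{\Lrm^s}$ directly by $\sup_{\norm{\psi}_{\Lrm^{s'}}\leq 1}\Rcal_1(\psi)+\norm{f(t)}_{\Lrm^s}$, and then invokes the Poincar\'e--Friedrichs inequality to pass from the $\nabla^2 u$ bound to the full $\Wrm^{2,s}$ bound; your more explicit extraction of $g(t)\in\Lrm^\infty$ and bootstrap via the $\Wrm^{2,2}\hookrightarrow\Lrm^\infty$ embedding are entirely equivalent.
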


\begin{proof}
Assume that $u(t)$ satisfies~\eqref{eq:stab}. Then, for all $s \in [2,\infty)$,
\begin{align*}
\norm{-\Delta u(t)+ \DD W_0(u(t))}_{\Lrm^s} &\leq \sup_{\substack{\psi\in \Lrm^{s'}(\Omega)\\\norm{\psi}_{\Lrm^{s'}} \leq 1}} \Rcal(\psi)+ \norm{f(s)}_{\Lrm^s}
\leq C+ \norm{f(t)}_{\Lrm^s}.
\end{align*}
Hence the first result follows by Lemma~\ref{lem:apriori-space} and the Poincar\'e--Friedrichs inequality. The second statement is immediate from the first.
\end{proof}

Note that by the above lemma, the Sobolev embedding theorem, and the dual characterization of $\Lrm^\infty$, any $u$ satisfying \eqref{eq:stab} automatically satisfies
\[
  u \in \Lrm^\infty((0,T) \times \Omega;\R^m).
\]

\subsection{Control of the energy process via the dissipation}

The following key \emph{oscillation lemma} gives a way of proving that the time derivative $\dot{u}$ posseses a spatial gradient at points where the energy is smooth.

\begin{lemma}
\label{lem:key1}
Let $[s,t]\subset [0,T]$ such that $\nabla^2u(s),\nabla^2u(t) \in \Lrm^2(\Omega;\R^{m \times d \times d})$. Moreover, let~\eqref{eq:subenerg} be satisfied in $[s,t]$ and let~\eqref{eq:stab} be satisfied at $s$. Then,
\[
\norm{\nabla u(t) - \nabla u(s)}_{\Lrm^2}^2
\leq C \biggl( \norm{u(t)-u(s)}_{\Lrm^1}^2 + \norm{\dot{f}}_{\Lrm^\infty([s,t]\times\Omega)} \cdot \int_s^t\norm{u(t)-u(\tau)}_{\Lrm^1} \dd \tau \biggr).
\]
\end{lemma}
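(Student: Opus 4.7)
\medskip

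\textbf{Plan of proof.} The plan is to expand $\tfrac12\|\nabla u(t)-\nabla u(s)\|_{L^2}^2$ by the ``polarization'' identity
\[
\tfrac12\|\nabla u(t)-\nabla u(s)\|_{L^2}^2
 = \tfrac12\|\nabla u(t)\|_{L^2}^2-\tfrac12\|\nabla u(s)\|_{L^2}^2
   -\int_\Omega \nabla u(s)\cdot\nabla(u(t)-u(s))\di x,
\]
and then to use an integration by parts on the last term (permitted since $\nabla^2 u(s)\in L^2$ and $u(t)-u(s)\in W^{1,2}_0$) to obtain $+\int_\Omega \Delta u(s)\cdot(u(t)-u(s))\di x$. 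The gradient energies are converted into $\Wcal(u(t))-\Wcal(u(s))$ at the cost of the bulk term $\Wcal_0(u(s))-\Wcal_0(u(t))$; the factor $\Wcal(u(t))-\Wcal(u(s))$ is then controlled from above using the energy inequality~\eqref{eq:subenerg} together with the algebraic identity $\langle f(t),u(t)\rangle-\langle f(s),u(s)\rangle=\int_s^t\langle \dot f(\tau),u(t)\rangle\di\tau+\langle f(s),u(t)-u(s)\rangle$. The residual force term $\int_s^t\langle\dot f(\tau),u(t)-u(\tau)\rangle\di\tau$ is exactly the one appearing in the desired bound.

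The next step is to test the local stability~\eqref{eq:stab} at $s$ against $\psi:=u(t)-u(s)\in W^{1,2}_0(\Omega;\R^m)$ and again integrate by parts to turn $-\nabla u(s)\cdot\nabla\psi$ into $\Delta u(s)\cdot\psi$. This yields
\[
\int_\Omega \Delta u(s)\cdot(u(t)-u(s))\di x
\;\leq\;\Rcal_1(u(t)-u(s))
       +\int_\Omega \big[\DD W_0(u(s))-f(s)\big]\cdot(u(t)-u(s))\di x,
\]
and the $f(s)$-contribution cancels the leftover $\langle f(s),u(t)-u(s)\rangle$ from the energy-inequality step. Moreover, by the very definition of the variation, $\Rcal_1(u(t)-u(s))\leq\Var_{\Rcal_1}(u;[s,t])$, so this term is absorbed by the $-\Var_{\Rcal_1}(u;[s,t])$ appearing on the right-hand side of~\eqref{eq:subenerg}: this cancellation is the crucial mechanism of the estimate.

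There remains the combination $\Wcal_0(u(s))-\Wcal_0(u(t))+\int_\Omega \DD W_0(u(s))\cdot(u(t)-u(s))\di x$. A pointwise Taylor expansion for $W_0$ together with the semiconvexity bound $\DD^2 W_0(v)[w,w]\geq -\mu|w|^2$ from~\ref{as:W} shows that this combined term is bounded by $\tfrac{\mu}{2}\|u(t)-u(s)\|_{L^2}^2$. Collecting everything we reach
\[
\tfrac12\|\nabla u(t)-\nabla u(s)\|_{L^2}^2\leq
\tfrac{\mu}{2}\|u(t)-u(s)\|_{L^2}^2
+\|\dot f\|_{L^\infty([s,t]\ex\Omega)}\int_s^t\|u(t)-u(\tau)\|_{L^1}\di\tau.
\]

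The final step — and the only genuinely delicate point — is to convert the $L^2$-norm on the right into $\|u(t)-u(s)\|_{L^1}^2$. For this I would use a Gagliardo--Nirenberg / Sobolev interpolation in dimensions $d\in\{2,3\}$ of the form
\[
\|v\|_{L^2}^2\leq \eps\|\nabla v\|_{L^2}^2+C_\eps\|v\|_{L^1}^2,
\]
applied to $v=u(t)-u(s)\in W^{1,2}_0(\Omega;\R^m)$; this follows from $H^1\embed L^6$ (for $d=3$), interpolation between $L^1$ and $L^6$, and Young's inequality. Choosing $\eps$ small enough so that $\tfrac{\mu}{2}\eps<\tfrac14$ allows the gradient term on the right to be absorbed into the left-hand side, yielding the asserted inequality (with a suitably adjusted constant~$C$). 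The main obstacle is precisely this bookkeeping: one has to make all four contributions (gradient-energy identity, energy inequality, stability inequality, and Taylor expansion of $W_0$) fit together so that exactly the $-\Var_{\Rcal_1}$ term cancels $\Rcal_1(u(t)-u(s))$ and nothing uncontrolled remains.
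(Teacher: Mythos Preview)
Your proof is correct and follows essentially the same route as the paper: polarization of $\tfrac12\|\nabla u(t)-\nabla u(s)\|_{L^2}^2$, combination of the energy inequality~\eqref{eq:subenerg} with the stability~\eqref{eq:stab} tested at $\psi=u(t)-u(s)$ so that the $\Rcal_1$-terms cancel, a Taylor remainder for $W_0$ yielding a $\|u(t)-u(s)\|_{L^2}^2$ term, and finally the $L^1$--$W^{1,2}$ interpolation with absorption. The one (harmless) difference is that you bound the $W_0$-remainder via the semiconvexity assumption $\DD^2 W_0(v)[w,w]\geq -\mu|w|^2$ from~\ref{as:W}, whereas the paper instead invokes the $L^\infty$-bound on $u$ coming from $\nabla^2 u(s),\nabla^2 u(t)\in L^2$ (Sobolev embedding in $d\le 3$) together with local boundedness of $\DD^2 W_0$; your variant is slightly cleaner and makes the role of the $W^{2,2}$-hypothesis less essential for this particular step.
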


\begin{proof}
By~\eqref{eq:subenerg} and the definition of $\Var_{\Rcal_1}$,
\begin{align}
\label{eq:varbyenergy}
E(t)-E(s)\leq - \Rcal_1(u(t)-u(s)) -\int_s^{t}\dprb{\dot{f}(\tau),u(\tau)} \dd \tau .
\end{align}
On the other hand we find by~\eqref{eq:stab} at the time $s$, taking $\psi:=u(t)-u(s)$ as test function, that
\begin{align}
\label{eq:statioreg}
\begin{aligned}
&- \int_\Omega \nabla (u(s)) \cdot \bigl( \nabla u(t) - \nabla u(s) \bigr) \dd x - \Rcal_1(u(t)-u(s))
\\
 &\quad \leq  \int_\Omega \DD W_0(u(s)) \cdot (u(t)-u(s)) -f(s) \cdot (u(t)-u(s)) \dd x.
\end{aligned}
\end{align}
 Now we calculate, using~\eqref{eq:varbyenergy} and~\eqref{eq:statioreg},
\begin{align*}
&\int_\Omega \frac{\abs{\nabla u(t) - \nabla u(s)}^2}{2}\dd x
\\
&\quad =\int_\Omega \frac{\abs{\nabla u(t)}^2}{2}-\frac{\abs{\nabla u(s)}^2}{2}\dd x-\int_\Omega \nabla u(s)\cdot \big(\nabla u(t)-\nabla u(s)\big)\dd x
\\
&\quad = E(t)-E(s)
 +\int_\Omega -W_0(u(t))+f(t)\cdot u(t) + W_0(u(s))-f(s)\cdot u(s)\dd x
\\
&\quad\quad -\int_\Omega\nabla u(s)\cdot \big(\nabla u(t)-\nabla u(s)\big)\dd x
\\
&\quad \leq -\int_s^{t} \dprb{\dot{f}(\tau), u(\tau)} \dd \tau+\dprb{f(t)-f(s), u(t)}
 \\
&\quad \quad+
 \int_\Omega W_0(u(s))-W_0(u(t))+\DD W_0(u(s))\cdot(u(t)-u(s))\dd x 
 \\
 &\quad = \int_s^{t} \dprb{\dot{f}(\tau), u(t)-u(\tau)} \dd \tau 
\\
&\quad \quad+ \int_\Omega \biggl( \int_0^1\DD W_0 \bigl( u(t)+\gamma (u(s)-u(t)) \bigr)\dd \gamma \biggr) \cdot(u(s)-u(t)) \dd x \\
&\quad \quad+ \int_\Omega \DD W_0(u(s))\cdot(u(t)-u(s))\dd x
\\
&\quad = \int_s^{t}\dprb{\dot{f}(\tau), u(t)-u(\tau)} \dd \tau \\
&\quad \quad+ \int_\Omega \int_0^1 \int_0^1\DD^2 W_0\Big(u(t)+\gamma (u(s)-u(t)) +\sigma \big(u(s)-u(t)- \gamma (u(s)-u(t))\big)\Big)\\
&\quad\quad\qquad\qquad\qquad\qquad[(1-\gamma)(u(s)-u(t)),u(t)-u(s)] \dd \sigma \dd \gamma\dd x.
\end{align*}
Since we assumed $\nabla^2 u(s), \nabla^2 u(t) \in \Lrm^2(\Omega;\R^{m \times d \times d})$ and $d \in \{2,3\}$, we have shown an $\Lrm^\infty$-bound on $u(t)$. Thus, the above together with the boundedness of $\DD^2 W_0$ on bounded sets (as mentioned above, $u \in \Lrm^\infty((0,T) \times \Omega;\R^m)$ by~\eqref{eq:stab} via~Lemma~\ref{lem:Linfty}) implies
\begin{align*}
&\norm{\nabla u(t) - \nabla u(s)}_{\Lrm^2}^2 \leq C\norm{u(t)-u(s)}_{\Lrm^2}^2  + \norm{\dot{f}}_{\Lrm^\infty([s,t]\times\Omega)} \int_{s}^t\norm{u(t)-u(\tau)}_{\Lrm^1} \dd \tau.
\end{align*}
Now, by interpolation and Sobolev inequality, we find for any $\delta > 0$ a constant $c_\delta > 0$ such that with the Sobolev embedding exponent $2^* := \frac{2d}{d-2}$ we have
\begin{align*}
\norm{u(t)-u(s)}_{\Lrm^2}^2 &\leq C_\delta\norm{u(t)-u(s)}^2_{\Lrm^1}+\delta \norm{u(t)-u(s)}_{\Lrm^{2^*}}^2 
\\
&\leq C_\delta\norm{u(t)-u(s)}_{\Lrm^1}^2+ C\delta \norm{\nabla u(t) - \nabla u(s)}_{\Lrm^2}^2.
\end{align*}
Together with the previous estimate, this implies the result by absorption.
\end{proof}

The next proposition shows that the variation controls the energy process away from large jumps.

\begin{proposition}
\label{prop:E_contpoint}
Let $u\in \Lrm^\infty((0,T) \times \Omega;\R^m)\cap \Lrm^\infty(0,T;\Wrm^{2,2}(\Omega;\R^m))$ satisfy~\eqref{eq:subenerg},~\eqref{eq:stab} in $[0,T]$. Let $\epsilon>0$ and suppose that the energy process $E(t) := \Ecal(t,u(t))$ has no jump larger than $\epsilon$ on the interval $[a,b] \subset [0,T]$ with $a,b$ not jump points. Then there is a constant $C > 0$, which is independent of $\epsilon$, such that
\[
\Var_{\Rcal_1}(u;[a,b])\leq E(a)-E(b)-\int_a^b\dprb{\dot{f}(\tau), u(\tau)} \dd \tau\leq (1+C\epsilon)\Var_{\Rcal_1}(u;[a,b]).
\]
\end{proposition}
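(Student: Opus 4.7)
My plan: the lower bound $\Var_{\Rcal_1}(u;[a,b])\leq E(a)-E(b)-\int_a^b\dprb{\dot f(\tau),u(\tau)}\dd\tau$ is immediate from~\eqref{eq:subenerg} taken with $s=a$, $t=b$, so all effort goes into the upper bound. For that, I will first derive a pointwise ``small-interval'' upper bound using the stability condition and a Taylor expansion of $W_0$, then partition $[a,b]$ into subintervals of energy oscillation $\leq 2\epsilon$ and small length, and finally close the loop by exploiting the \emph{quadratic} $\Lrm^1$-estimate of Lemma~\ref{lem:key1}.

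\textit{Subinterval inequality.} For $s<t$ in a full-measure subset of $[a,b]$ at which both~\eqref{eq:stab} at $s$ and $u(s),u(t)\in\Wrm^{2,2}$ hold, I expand
\[
E(s)-E(t) = -\tfrac12\norm{\nabla u(t)-\nabla u(s)}_{\Lrm^2}^2 + \int_\Omega \bigl[\Delta u(s)-\DD W_0(u(s))+f(s)\bigr]\cdot\bigl(u(t)-u(s)\bigr)\dd x - \int_\Omega R + \int_s^t\dprb{\dot f(\tau),u(t)}\dd\tau,
\]
where the linear cross-term comes from integrating by parts the product $\nabla u(s)\cdot\nabla(u(t)-u(s))$ (valid since $u\in\Wrm^{1,2}_0$) and $R$ is the pointwise $W_0$-Taylor remainder, bounded below by $-\tfrac\mu2\abs{u(t)-u(s)}^2$ thanks to $\DD^2 W_0\geq-\mu$. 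Applying~\eqref{eq:stab} at $s$ with $\psi=u(t)-u(s)$ bounds the middle term by $\Rcal_1(u(t)-u(s))$; rewriting $\int_s^t\dprb{\dot f,u(t)-u(\tau)}\dd\tau=\int_s^t\dprb{\dot f,u(t)}\dd\tau-\int_s^t\dprb{\dot f,u(\tau)}\dd\tau$ and dropping the non-positive $-\tfrac12\norm{\nabla u(t)-\nabla u(s)}_{\Lrm^2}^2$ yields
\begin{equation}\label{eq:planineq}
E(s)-E(t)-\int_s^t\dprb{\dot f(\tau),u(\tau)}\dd\tau \leq \Rcal_1(u(t)-u(s)) + \tfrac{\mu}{2}\norm{u(t)-u(s)}_{\Lrm^2}^2 + \int_s^t\dprb{\dot f(\tau),u(t)-u(\tau)}\dd\tau.
\end{equation}

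\textit{Partition and reduction.} Since $E\in\BV(0,T)$ and every jump on $[a,b]$ has size $\leq\epsilon$, for any $\delta>0$ I pick a partition $a=t_0<t_1<\cdots<t_N=b$ of regular points in the above full-measure set with $E(t_{i-1})-E(t_i)\leq 2\epsilon$ and $t_i-t_{i-1}\leq\delta$; I will later take $\delta\leq\epsilon$. Summing~\eqref{eq:planineq} over $i$ and using $\sum_i\Rcal_1(u(t_i)-u(t_{i-1}))\leq\Var_{\Rcal_1}(u;[a,b])$ from the very definition of the variation, the inequality reduces to showing that
\[
\sum_i\norm{u(t_i)-u(t_{i-1})}_{\Lrm^2}^2 \qquad\text{and}\qquad \sum_i\int_{t_{i-1}}^{t_i}\norm{u(t_i)-u(\tau)}_{\Lrm^1}\dd\tau
\]
are both bounded by $C\epsilon\Var_{\Rcal_1}(u;[a,b])$.

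\textit{Closing via Lemma~\ref{lem:key1}.} The already-proven lower bound together with the partition constraints gives the critical smallness $\norm{u(t_i)-u(t_{i-1})}_{\Lrm^1}\leq c_1^{-1}\Var_{\Rcal_1}(u;[t_{i-1},t_i])\leq c_1^{-1}(E(t_{i-1})-E(t_i))+C(t_i-t_{i-1})\leq C\epsilon$. The second sum is easy: bounding $\norm{u(t_i)-u(\tau)}_{\Lrm^1}\leq c_1^{-1}\Var_{\Rcal_1}(u;[t_{i-1},t_i])$ and integrating yields a total of at most $\delta\cdot c_1^{-1}\Var_{\Rcal_1}(u;[a,b])\leq\epsilon c_1^{-1}\Var_{\Rcal_1}(u;[a,b])$. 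For the first sum, I apply Lemma~\ref{lem:key1} on each $[t_{i-1},t_i]$ and use Poincar\'e in $\Wrm^{1,2}_0$ to bound
\[
\norm{u(t_i)-u(t_{i-1})}_{\Lrm^2}^2 \leq C\norm{u(t_i)-u(t_{i-1})}_{\Lrm^1}^2 + C\norm{\dot f}_{\Lrm^\infty}\int_{t_{i-1}}^{t_i}\norm{u(t_i)-u(\tau)}_{\Lrm^1}\dd\tau;
\]
the decisive observation is that $\norm{u(t_i)-u(t_{i-1})}_{\Lrm^1}^2\leq C\epsilon\norm{u(t_i)-u(t_{i-1})}_{\Lrm^1}$, so summing and using $\sum_i\norm{u(t_i)-u(t_{i-1})}_{\Lrm^1}\leq c_1^{-1}\Var_{\Rcal_1}(u;[a,b])$ along with the already-handled work-type sum delivers the required $C\epsilon\Var_{\Rcal_1}(u;[a,b])$ bound.

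The main obstacle I foresee is precisely retaining the \emph{quadratic} $\Lrm^1$-dependence built into Lemma~\ref{lem:key1} rather than a merely linear one: it is this feature, combined with the partition-enforced smallness $\norm{u(t_i)-u(t_{i-1})}_{\Lrm^1}\leq C\epsilon$, that turns a naive $O(1)$ estimate on $\sum_i\norm{u(t_i)-u(t_{i-1})}_{\Lrm^2}^2$ into the $O(\epsilon)$ one needed to reach the sharp factor $(1+C\epsilon)$ rather than only $(1+C)$.
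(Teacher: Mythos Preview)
Your proof is correct and follows essentially the same route as the paper: a subinterval upper bound from stability plus a Taylor expansion of $W_0$, a partition into pieces of energy oscillation $\leq 2\epsilon$ and length $\leq\delta$, and then Lemma~\ref{lem:key1} (via Poincar\'e) combined with the smallness $\norm{u(t_i)-u(t_{i-1})}_{\Lrm^1}\leq C\epsilon$ to convert the quadratic remainder into an $O(\epsilon)\cdot\Var_{\Rcal_1}$ contribution. The only difference is cosmetic: the paper applies~\eqref{eq:stab} at \emph{both} $t_j$ and $t_{j+1}$ with the same test function $\psi=u_{j+1}-u_j$ and averages, producing a symmetric remainder involving $\tfrac12(\DD W_0(u_j)+\DD W_0(u_{j+1}))$, whereas you use only the left endpoint and obtain the simpler one-sided bound $-R\leq\tfrac\mu2\norm{u(t)-u(s)}_{\Lrm^2}^2$; this is a mild streamlining, not a different argument.
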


\begin{proof}
We assume without loss of generality that~\eqref{eq:subenerg} holds at $s = a$, $t = b$ (since $a,b$ are not jumps).
Hence, the lower bound is~\eqref{eq:subenerg}. We are left to prove
\[
E(a)-E(b)-\int_a^b\dprb{\dot{f}(\tau), u(\tau)} \dd \tau \leq (1+C\epsilon)\Var_{\Rcal_1}(u;[a,b]).
\]
By assumption, all jumps in $E(t)$ are of height at most $\epsilon$. In particular, we find a $\delta>0$ such that for almost all $s,t$ with $a\leq s<t\leq b$ and $\abs{t-s}\leq \delta$, it holds that
  \[
\abs{E(t)-E(s)}\leq 2\epsilon
\]
and
\[
\sup_{\sigma\in [a,b]}\int\limits_{\sigma}^{\min\{\sigma+\delta,b\}}\int_\Omega\abs{\dot{f}(\tau)} \cdot \abs{u(\tau)}\dd x\dd \tau\leq \epsilon.
\]
Take $a = t_0<t_1<\cdots<t_N = b$ such that $t_{j+1}-t_j\leq \delta$ for all $j =0,\ldots,N-1$, and such that that we can apply Lemma~\ref{lem:key1} for $t_j,t_{j+1}$. We abbreviate
\[
  u_j := u(t_j),  \qquad j = 0,\ldots,N.
\]
Then, for all $j\in \{1,..,N\}$ we have by~\eqref{eq:subenerg} that
\begin{align}
\label{eq:Rdelta}
c_1 \norm{u_{j+1}-u_j}_{\Lrm^1} \leq \int_{\Omega}R_1(u_{j+1}-u_j)\dd x\leq 3\epsilon.
\end{align}
Applying~\eqref{eq:stab} at $t_j, t_{j+1}$ with $\psi := u_{j+1}-u_j$, we find (like in the proof of Lemma~\ref{lem:key1}),
\begin{align*}
&E(t_j)-E(t_{j+1})-\int_{t_j}^{t_{j+1}}\dprb{\dot{f}(\tau), u(\tau)} \dd \tau \\
&\quad= \int_\Omega \frac{1}{2} \bigl( \nabla(u_j+u_{j+1})\cdot \nabla(u_j-u_{j+1}) \bigr) + W_0(u_j)-W_0(u_{j+1}) \dd x
\\
&\quad\quad - \dprb{f(t_j), u_j} + \dprb{f(t_{j+1}), u_{j+1}} - \int_{t_j}^{t_{j+1}}\dprb{\dot{f}(\tau), u(\tau)} \dd \tau\\
&\leq \Rcal(u_{j+1}-u_j)
\\
&\quad\quad + \int_\Omega  W_0(u_j)-W_0(u_{j+1}) -\frac{1}{2}\Big(\DD W_0(u_j)+\DD W_0(u_{j+1}) \Big)\cdot(u_j-u_{j+1})\dd x
\\
&\quad\quad + \dprBB{f(t_{j+1})-f(t_j), \frac{u_j + u_{j+1}}{2}} - \int_{t_j}^{t_{j+1}}\dprb{\dot{f}(\tau), u(\tau)} \dd \tau \\
&= \Rcal(u_{j+1}-u_j)
\\
&\quad\quad + \int_\Omega  \int_0^1\DD W_0\big(u_{j+1}+\gamma(u_j-u_{j+1})\big) \cdot (u_j-u_{j+1})\dd \gamma \dd x
\\
&\quad\quad -\int_\Omega\frac{1}{2}\Big(\DD W_0(u_j)+\DD W_0(u_{j+1})\Big)\cdot(u_j-u_{j+1})\dd x
\\
&\quad\quad + \int_{t_j}^{t_{j+1}}\dprBB{ \dot{f}(\tau), \frac{u_j + u_{j+1}}{2}-u(\tau) } \dd \tau. 
\end{align*}
Using the fundamental theorem of calculus, we find
\begin{align*}
&E(t_j)-E(t_{j+1})-\int_{t_j}^{t_{j+1}}\dprb{\dot{f}(\tau), u(\tau)} \dd \tau \\
&\quad= \Rcal(u_{j+1}-u_j)
\\
&\quad\quad + \frac{1}{2} \int_\Omega  \int_0^1\bigg(\int_0^1 \DD^2 W_0\Big(u_j+\sigma\big(u_{j+1}+\gamma(u_j-u_{j+1})-u_j\big)\Big)(\gamma-1)\dd \sigma
\\
&\quad\quad\qquad\qquad\quad +\int_0^1 \DD^2 W_0\Big(u_{j+1}+\sigma\big(\gamma(u_j-u_{j+1})\big)\Big)\gamma \dd\sigma\bigg)
\\
&\quad\quad\qquad\qquad\;\; \times [u_j-u_{j+1}, u_j-u_{j+1}]\dd \gamma\dd x
\\
&\quad\quad + \int_{t_j}^{t_{j+1}}\dprBB{ \dot{f}(\tau), \frac{u_j + u_{j+1}}{2}-u(\tau) } \dd \tau 
\\
&\quad=: \Rcal(u_{j+1}-u_j)+\mathrm{(I)}_j+\mathrm{(II)}_j.
\end{align*}
We estimate $\mathrm{(I)}_j$ using the assumed uniform $\Lrm^\infty$-bounds on $u$ (by Sobolev embedding), the uniform boundedness of $\DD^2 W_0$ on bounded sets (see above), and Poincar\'e--Friedrichs's inequality, as follows:
\begin{align*}
\mathrm{(I)}_j &:=\frac{1}{2} \int_\Omega  \int_0^1\bigg(\int_0^1 \DD^2 W_0\Big(u_j+\sigma\big(u_{j+1}+\gamma(u_j-u_{j+1})-u_j\big)\Big)(\gamma-1)\dd \sigma
\\
&\quad\qquad\qquad\quad +\int_0^1 \DD^2 W_0\Big(u_{j+1}+\sigma\big(\gamma(u_j-u_{j+1})\big)\Big)\gamma \dd\sigma\bigg)
\\
&\quad\qquad\qquad\;\; \times [u_j-u_{j+1}, u_j-u_{j+1}]\dd \gamma\dd x
\\
&\phantom{:}\leq C\norm{\nabla u_{j+1} - \nabla u_j}_{\Lrm^2}^2.
\end{align*}
We estimate further using Lemma~\ref{lem:key1} and~\eqref{eq:Rdelta},
\begin{align*}
\mathrm{(I)}_j
&\leq C \biggl( \norm{u(t_{j+1})-u(t_j)}_{\Lrm^1}^2 + \norm{\dot{f}}_{\Lrm^\infty([t_j,t_{j+1}]\times\Omega)} \cdot \int_{t_j}^{t_{j+1}}\norm{u(t_{j+1})-u(\tau)}_{\Lrm^1} \dd \tau \biggr)
\\
&\leq C\epsilon \Rcal_1(u_{j+1}-u_j)+C\norm{\dot{f}}_{\Lrm^\infty([t_j,t_{j+1}]\times\Omega)} \cdot \int_{t_j}^{t_{j+1}}\norm{u_{j+1}-u(\tau)}_{\Lrm^1} \dd \tau.
\end{align*}
Furthermore, using the uniform $\Lrm^\infty$-bounds on $\dot{f}$,
\[
\mathrm{(II)}_j\leq C \norm{\dot{f}}_{\Lrm^\infty([t_j,t_{j+1}]\times\Omega)} \cdot \int_{t_j}^{t_{j+1}}\normBB{\frac{u_j + u_{j+1}}{2}-u(\tau)}_{\Lrm^1} \dd \tau.
\]
Combining all the above estimates, we arrive at
\begin{align*}
&E(a)-E(b)-\int_a^b\dprb{\dot{f}(\tau), u(\tau)} \dd \tau\\
&\quad 
=\sum_{j=0}^{N-1} \biggl( E(t_j)-E(t_{j+1})-\int_{t_j}^{t_{j+1}}\dprb{\dot{f}(\tau), u(\tau)} \dd \tau \biggr)\\
&\quad \leq (1+C\epsilon)\sum_{j=0}^{N-1} \Rcal_1(u_{j+1}-u_j)
\\
&\quad\quad + C\sum_{j=0}^{N-1} \biggl( \norm{\dot{f}}_{\Lrm^\infty([t_j,t_{j+1}]\times\Omega)} \cdot \int_{t_j}^{t_{j+1}}\norm{u_{j+1}-u(\tau)}_{\Lrm^1} + \norm{u(\tau)-u_j}_{\Lrm^1} \dd \tau \biggr). 
\end{align*}
Using the assumptions on $\Rcal_1$, we proceed by estimating
\begin{align*}
&E(a)-E(b)-\int_a^b\dprb{\dot{f}(\tau), u(\tau)} \dd \tau\\
&\quad
\leq  (1+C\epsilon)\sum_{j=0}^{N-1} \Rcal_1(u_{j+1}-u_j)
\\
&\quad\quad +C\sum_{j=0}^{N-1} \biggl( \norm{\dot{f}}_{\Lrm^\infty([t_j,t_{j+1}]\times\Omega)} \cdot \int_{t_j}^{t_{j+1}}\Rcal_1(u_{j+1}-u(\tau))+\Rcal_1(u(\tau)-u_j) \dd \tau \biggr)
\\
&\quad\leq  (1+C\epsilon)\sum_{j=0}^{N-1} \Var_{\Rcal_1}(u;[t_j,t_{j+1}])
\\
&\quad\quad + C \sum_{j=0}^{N-1} \Bigl( \abs{t_{j+1}-t_j} \cdot \norm{\dot{f}}_{\Lrm^\infty([t_j,t_{j+1}]\times\Omega)} \cdot \Var_{\Rcal_1}(u;[t_j,t_{j+1}]) \Bigr)
\\
&\quad \leq (1+C\epsilon+C\delta\norm{\dot{f}}_{\Lrm^\infty([a,b]\times\Omega;\R^m)})\Var_{\Rcal_1}(u;[a,b]).
\end{align*}
The result follows since we may assume that $C\delta\norm{\dot{f}}_{\Lrm^\infty([a,b]\times\Omega;\R^m)}\leq\epsilon$.
\end{proof}

As a direct consequence we find the energy balance on jump-free intervals:

\begin{corollary}
\label{cor:energ_cont}
Assume that $\nabla^2 u \in \Lrm^\infty(0,T;\Lrm^2(\Omega;\R^{m \times d \times d}))$. Let the energy process $E(t) := \Ecal(t,u(t))$ have no jump on $[a,b]\subset [0,T]$. Then,
\[
\Var_{\Rcal_1}(u;[a,b])=E(a)-E(b)-\int_a^b\dprb{\dot{f}(\tau), u(\tau)} \dd \tau.
\]
\end{corollary}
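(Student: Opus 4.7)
The plan is simply to pass to the limit $\epsilon \todown 0$ in the two-sided estimate of Proposition~\ref{prop:E_contpoint}. The work has essentially all been done in that proposition, and the corollary is merely its limiting form in the complete absence of jumps.

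First I would check that the hypotheses of Proposition~\ref{prop:E_contpoint} are met. Under the standing assumption~\eqref{eq:stab} of Section~\ref{sc:energy_ineq}, Lemma~\ref{lem:Linfty} combined with Sobolev embedding gives $u \in \Lrm^\infty((0,T) \times \Omega;\R^m)$, and the additional hypothesis $\nabla^2 u \in \Lrm^\infty(0,T;\Lrm^2(\Omega;\R^{m\times d \times d}))$ places $u$ in $\Lrm^\infty(0,T;\Wrm^{2,2}(\Omega;\R^m))$, so the regularity assumptions of the proposition are satisfied. Since $E$ has no jump on $[a,b]$ at all, for every $\epsilon>0$ there is in particular no jump of height exceeding $\epsilon$ on $[a,b]$, and the endpoints $a,b$ are automatically continuity points of $E$.

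Applying Proposition~\ref{prop:E_contpoint} with this (arbitrary) $\epsilon>0$ then yields
\[
\Var_{\Rcal_1}(u;[a,b]) \;\leq\; E(a)-E(b)-\int_a^b\dprb{\dot{f}(\tau), u(\tau)} \dd \tau \;\leq\; (1+C\epsilon)\,\Var_{\Rcal_1}(u;[a,b]).
\]
The lower bound is independent of $\epsilon$ and is already one half of the claimed identity. Using the upper bound once (say with $\epsilon=1$) together with the finiteness of $E(a)$, $E(b)$, and $\int_a^b \dprb{\dot f(\tau), u(\tau)} \dd \tau$ (guaranteed by $f \in \Wrm^{1,\infty}(0,T;\Lrm^\infty)$ and $u \in \Lrm^\infty(0,T;\Lrm^q)$) shows that $\Var_{\Rcal_1}(u;[a,b]) < \infty$. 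Letting $\epsilon \todown 0$ in the right-hand inequality then closes the gap and produces the matching upper bound, giving the claimed equality. There is no real obstacle here; the only conceptual point is recognising that ``no jumps'' is exactly the condition that lets the error factor $C\epsilon$ in Proposition~\ref{prop:E_contpoint} be driven to zero.
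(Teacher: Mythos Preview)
Your argument is correct and matches the paper's approach: the corollary is stated there as an immediate consequence of Proposition~\ref{prop:E_contpoint}, obtained exactly by letting $\epsilon \todown 0$ in the two-sided estimate. Your additional care in verifying the hypotheses and the finiteness of $\Var_{\Rcal_1}(u;[a,b])$ is fine and does not diverge from the intended reasoning.
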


\subsection{Strong solutions on large portions of the time interval}

Let $\nu$ be a non-negative finite Radon measure on $\Rbb$. We introduce the following maximal function:
\[
\Mcal\nu(t):=\sup_{h>0}\frac{\nu((t-h,t+h))}{2h},  \qquad t \in \R.
\]
It is clear that if $t$ is a singular point of the measure $\nu$, then
$\Mcal\nu(t)=\infty$.

The following is a standard result for such a maximal function:

\begin{lemma}
\label{lem:vitali}
Let $\nu$ be a non-negative finite Radon measure on $\Rbb$ and let $\eta > 0$. Then there exist at most countably many pairwise disjoint non-empty intervals $I_k$ such that 
\[
  \setb{ t }{ \Mcal\nu(t)>\eta }\subset \bigcup_k I_k  \qquad\text{and}\qquad
  \absb{\setb{ t }{ \Mcal\nu(t)>\eta }}\leq \sum_k\abs{I_k}\leq \frac{3}{\eta}\nu(\Rbb).
\]
Moreover, $\nu$ has a Lipschitz-continuous density on the interior of $\set{ t }{ \Mcal\nu(t)\leq\eta }$  with  Lipschitz constant that is similar to $\eta$. 
\end{lemma}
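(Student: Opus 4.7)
The plan is to proceed in three steps: first show that $\Mcal\nu$ is lower semicontinuous so that the superlevel set decomposes as a countable disjoint union of open intervals, then apply the one-dimensional Vitali/Wiener $3r$-covering lemma to bound its Lebesgue measure by $\frac{3}{\eta}\nu(\Rbb)$, and finally derive the Lipschitz estimate on the complementary good set by a direct pointwise comparison.

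For the first step I would fix $h > 0$ and observe, via the Portmanteau theorem for the Radon measure $\nu$ and the open set $(t-h,t+h)$, that the map $t \mapsto \nu((t-h,t+h))$ is lower semicontinuous. Since a supremum of LSC functions is LSC, $\Mcal\nu$ is LSC on $\Rbb$, hence the set
\begin{equation*}
  \Omega_\eta := \setb{ t \in \Rbb }{ \Mcal\nu(t) > \eta }
\end{equation*}
is open. As any open subset of $\Rbb$, it decomposes uniquely into at most countably many pairwise disjoint open intervals $\{I_k\}$, which already yields the claimed covering property.

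For the second step I would, for each $t \in \Omega_\eta$, select some $h(t) > 0$ such that $\nu((t-h(t),t+h(t))) > 2h(t)\eta$, and apply the Wiener $3r$-covering lemma on $\Rbb$ to the open cover $\{(t-h(t),t+h(t))\}_{t \in \Omega_\eta}$. This yields a countable pairwise disjoint subfamily $\{B_j = (t_j - h_j, t_j + h_j)\}_j$ whose triples $3 B_j$ still cover $\Omega_\eta$. The disjointness of the $B_j$ and the defining inequality then give
\begin{equation*}
  \sum_k \abs{I_k} = \abs{\Omega_\eta} \leq \sum_j 3 \cdot 2 h_j < \frac{3}{\eta}\sum_j \nu(B_j) \leq \frac{3}{\eta}\nu(\Rbb),
\end{equation*}
as desired.

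For the last step let $U$ be the interior of $\setn{t}{\Mcal\nu(t)\leq\eta}$. For any $s < t$ with $[s,t] \subset U$, setting $m := (s+t)/2$, $r := (t-s)/2$ gives $m \in U$, so $\nu((s,t)) = \nu((m-r,m+r)) \leq 2r\eta = (t-s)\eta$. Since every atom $t_0$ of $\nu$ satisfies $\Mcal\nu(t_0) = \infty$ and therefore lies outside $U$, the cumulative function $F(t) := \nu((-\infty,t])$ restricted to $U$ satisfies $F(t) - F(s) \leq (t-s)\eta$; equivalently, $\nu|_U$ is absolutely continuous with density bounded by $\eta$, and its primitive is Lipschitz with constant $\eta$. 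The only potentially delicate point in the whole argument is the treatment of atoms in Step~3, but as just noted they are automatically excluded from $U$; beyond that, the Vitali covering step is standard on $\Rbb$ and the LSC property is routine.
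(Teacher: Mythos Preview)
Your argument is correct and follows essentially the same Vitali $3r$-covering route as the paper. The only differences are cosmetic: you take the $I_k$ to be the connected components of the superlevel set itself (justified by your lower-semicontinuity observation), whereas the paper takes them to be the connected components of the union of the tripled Vitali balls; and for the Lipschitz part you give a direct elementary bound on $\nu((s,t))$ via the midpoint, while the paper appeals to the Acerbi--Fusco lemma. Your version is slightly more self-contained, but the substance is the same.
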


\begin{proof}
Let $t \in \R$ be such that $\Mcal\nu(t)>\eta$. Then, there exists a $h_t > 0$ such that
\[
2 h_t \eta\leq \nu((t-h_t,t+h_t)). 
\]
We may apply Vitali's covering lemma, see~\cite[Lemma~7.3]{Rudin86book}, which entails that there exists a sequence of disjoint intervals $B(t_k,h_{t_k}) =(t_k-h_{t_k},t_k+h_{t_k})$ such that
\[
  \setb{ t }{ \Mcal\nu(t)>\eta } \subset \bigcup_k \, B(t_k,3h_{t_k}).
\]
This implies
\begin{align*}
\eta\absb{\setb{ t }{ \Mcal\nu(t)>\eta }} \leq \sum_k\eta\abs{B(t_k,3h_{t_k})}=3\sum_k2h_{t_k}\eta
\leq  3\sum_k\nu(B(t_k,h_{t_k}))\leq 3\nu(\Rbb).
\end{align*}
We let the $I_k$ be the connected components of the union of the above constructed intervals $B(t_k,3h_{t_k})$. 
The size estimates follow from the estimate above. The Lipschitz continuity follows by the fact that the interior of $\set{ t }{ \Mcal\nu(t)\leq\eta }$ (if it is non-empty) is contained in the absolute continuous part of $\nu$ and the Acerbi--Fusco lemma~\cite{AcerbiFusco84} (also see~\cite[Lemma 1.68]{MalyZiemer97book}).
\end{proof}

We are now in a position to show the main result of this section.

\begin{proposition}
\label{prop:goodset}
Let $u\in \Lrm^\infty((0,T) \times \Omega;\R^m)\cap \Lrm^\infty(0,T;\Wrm^{2,2}(\Omega;\R^m))$ satisfy~\eqref{eq:subenerg},~\eqref{eq:stab} in $[0,T]$. Then, $u \in \BV(0,T;\Lrm^1(\Omega;\R^m))$, $E \in \BV(0,T)$, and the process $u$ is a strong solution for almost every $t\in (0,T)$, i.e.,~\eqref{eq:strongsol_ineq} holds  for almost every $t\in (0,T)$.
\end{proposition}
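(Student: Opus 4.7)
The plan is to combine the energy inequality \eqref{eq:subenerg}, the stability \eqref{eq:stab}, and the oscillation estimate of Lemma~\ref{lem:key1} to show that $\nabla \dot u(t)$ exists in $\Lrm^2(\Omega;\R^{m\times d})$ at almost every $t\in(0,T)$; Lemma~\ref{lem:strongsol} will then upgrade the pair $(u,E)$ to a strong solution. Before that I would first establish the $\BV$ claims. The $\Lrm^\infty$-bound on $u$ (coming from \eqref{eq:stab} via Lemma~\ref{lem:Linfty}) together with the non-negativity of $W_0$ gives a two-sided bound for $E$. Plugging $s=0$, $t=T$ into \eqref{eq:subenerg} therefore bounds $\Var_{\Rcal_1}(u;[0,T])$, so $u\in \BV(0,T;\Lrm^1(\Omega;\R^m))$; feeding this back into \eqref{eq:subenerg} and using $\dot f\in \Lrm^\infty$ controls both the upward and the downward variation of $E$, hence also $E\in\BV(0,T)$.

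Let $\nu$ denote the total variation measure of $E$ on $[0,T]$; it is finite. For each $n\in\Nbb$, set $G_n := \interior\{\,t\in(0,T) : \Mcal\nu(t)\leq n\,\}$. By Lemma~\ref{lem:vitali}, on $G_n$ the measure $\nu$ has a Lipschitz density with Lipschitz constant of order $n$; combined with $\Mcal\nu\leq n$ on $G_n$, the density is itself bounded by $Cn$, and $\nu$ has no atoms on $G_n$ (since atoms force $\Mcal\nu=\infty$). Consequently $E$ is $Cn$-Lipschitz on $G_n$ and has no jumps there. The weak-type bound of Lemma~\ref{lem:vitali} gives $\bigl|[0,T]\setminus \bigcup_{n\in\Nbb} G_n\bigr|=0$.

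Now fix $n$ and pick $s<t$ in one component of $G_n$. Since $E$ has no jumps on $[s,t]$, Corollary~\ref{cor:energ_cont} (applicable by the assumption $\nabla^2 u\in \Lrm^\infty(0,T;\Lrm^2)$) yields
\[
c_1\,\|u(t)-u(s)\|_{\Lrm^1}\leq \Var_{\Rcal_1}(u;[s,t]) = E(s)-E(t)-\int_s^t\dprb{\dot f(\tau), u(\tau)}\dd\tau\leq Cn(t-s).
\]
Feeding this Lipschitz estimate on $u$ into Lemma~\ref{lem:key1} (valid for a.e.\ $s,t$ by the hypothesis $u\in\Lrm^\infty(0,T;\Wrm^{2,2})$), I obtain
\[
\|\nabla u(t)-\nabla u(s)\|_{\Lrm^2}^2 \leq C\bigl(n^2(t-s)^2 + n\,(t-s)^2\bigr)\leq C n^2 (t-s)^2,
\]
so $s\mapsto \nabla u(s)$ is $Cn$-Lipschitz from $G_n$ into $\Lrm^2(\Omega;\R^{m\times d})$. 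Because $\Lrm^2$ has the Radon--Nikod\'ym property, this map is strongly differentiable almost everywhere on $G_n$; hence $\nabla\dot u(t)\in \Lrm^2(\Omega;\R^{m\times d})$ for a.e.\ $t\in G_n$. Lemma~\ref{lem:strongsol} then delivers the strong-solution inequality \eqref{eq:strongsol_ineq} at these $t$, and taking the union over $n$ exhausts almost every $t\in(0,T)$.

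I expect the main obstacle to be the passage from the mere $\BV$-regularity of $E$ to a quantitative Lipschitz bound on a set of large measure — this is precisely the role of the maximal-function decomposition in Lemma~\ref{lem:vitali}. A secondary technical point is the a.e.-differentiability of $\Lrm^2$-valued Lipschitz functions; if one prefers to avoid invoking the Radon--Nikod\'ym property abstractly, the same conclusion can be obtained by testing \eqref{eq:stab} at Lebesgue points with the difference quotients $(u(t+h)-u(t))/h$ and passing to the limit $h\to 0$ using the just-derived Lipschitz estimate.
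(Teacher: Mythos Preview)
Your strategy is the same as the paper's --- maximal function on the $\BV$-derivative measure, Lemma~\ref{lem:key1} to pass from an $\Lrm^1$-Lipschitz bound on $u$ to an $\Lrm^2$-Lipschitz bound on $\nabla u$, then Lemma~\ref{lem:strongsol} --- but there is one genuine gap.

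You set $G_n:=\interior\{t:\Mcal\nu(t)\le n\}$ and assert, citing the weak-type bound of Lemma~\ref{lem:vitali}, that $\bigl|[0,T]\setminus\bigcup_n G_n\bigr|=0$. The weak-type bound only gives $\bigl|\{\Mcal\nu>n\}\bigr|\le 3\nu(\R)/n$, i.e.\ $\bigcup_n\{\Mcal\nu\le n\}$ has full measure. Passing to the \emph{interior} is not innocuous: if $E$ has countably many jumps at a dense set of times (which is perfectly compatible with $E\in\BV(0,T)$ and with the abstract hypotheses of the proposition), then $\Mcal\nu=\infty$ at every jump time, so each closed set $\{\Mcal\nu\le n\}$ misses a dense set and therefore has empty interior. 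In that case $G_n=\emptyset$ for all $n$, and your argument yields nothing.

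The paper avoids this by working \emph{pointwise} rather than on components. It takes $\nu$ to be the derivative of the increasing function $\omega(t)=E(0)-E(t)-\int_0^t\dprn{\dot f,u}$, fixes a single $t$ with $\Mcal\nu(t)\le\eta$ (plus the a.e.\ conditions $\nabla^2 u(t)\in\Lrm^2$, \eqref{eq:subenerg}, \eqref{eq:stab}), and bounds the \emph{centered} difference quotients directly:
\[
\frac{\norm{u(t+h)-u(t-h)}_{\Lrm^1}}{2h}\;\le\;\frac{\nu((t-h,t+h))}{2h}\;\le\;\eta,
\qquad
\frac{\norm{u(t+h)-u(\tau)}_{\Lrm^1}}{2h}\;\le\;\eta
\]
for a.e.\ $h>0$ and $\tau\in[t-h,t+h]$, using only \eqref{eq:subenerg} and the monotonicity of $\omega$ (not Corollary~\ref{cor:energ_cont}, which would require the absence of jumps on the whole interval). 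Lemma~\ref{lem:key1} then gives $\norm{\nabla u(t+h)-\nabla u(t-h)}_{\Lrm^2}/(2h)\le C(\eta)$, hence $\nabla\dot u(t)\in\Lrm^2$. Since $\bigcup_n\{\Mcal\nu\le n\}$ has full measure by the weak-type bound, this covers a.e.\ $t$. Your proof repairs itself if you drop the interior and argue pointwise in this way; the Radon--Nikod\'ym step is then unnecessary (though it is a clean alternative once one has a Lipschitz map on a genuine interval).

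Two minor remarks: your justification for $E\in\BV$ (``controls both the upward and the downward variation'') is a bit loose; the clean statement is that $t\mapsto E(t)+\int_0^t\dprn{\dot f,u}$ is non-increasing by \eqref{eq:subenerg}, hence $\BV$, and the integral correction is Lipschitz. Also, your appeal to Corollary~\ref{cor:energ_cont} for the bound $c_1\norm{u(t)-u(s)}_{\Lrm^1}\le E(s)-E(t)-\int_s^t\dprn{\dot f,u}$ is superfluous: this is precisely \eqref{eq:subenerg}.
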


\begin{proof}
We first observe that the assumed regularity on $u$ implies that $E(t) \leq C < \infty$ for almost all $t \in [0,T]$ (where $C > 0$ does not depend on $t$); moreover, this holds at $t = 0$ by Assumption~\ref{as:u0}. Then, via~\eqref{eq:subenerg}  we obtain that $\Var_{\Rcal_1}(u;[0,T]) < \infty$, hence $u \in \BV(0,T;\Lrm^1(\Omega;\R^m))$. Moreover, the map
\[
  \omega(t) := E(0) - E(t) - \int_0^t \dprb{\dot{f}(\tau), u(\tau)} \dd \tau
\]
satisfies, by Proposition~\ref{prop:E_contpoint}, for all non-jump points $a,b \in [0,T]$ that
\[
  \Var_{\Rcal_1}(u;[a,b]) \leq \omega(b) - \omega(a) \leq C \Var_{\Rcal_1}(u;[a,b]).
\]
This implies in particular that $\omega \in \BV(0,T)$ and then also $E \in \BV(0,T)$.

Define the measure $\nu$ on $[0,T]$ to be the BV-derivative of $\omega$; in particular, for all non-jump points $a,b \in [0,T]$,
\[
 \nu([a,b]) = \omega(b) - \omega(a) = E(a)-E(b) - \int_a^b \dprb{\dot{f}(\tau), u(\tau)} \dd \tau.
\]
We use the covering from Lemma~\ref{lem:vitali} on $\nu$ to show that for every $\epsilon>0$ there exists a Borel set $I_\epsilon \subset [0,T]$ such that $\abs{[0,T]\setminus I_\epsilon}\leq \epsilon$ and at all $t\in I_\epsilon$, the process $u$ is a strong solution at $t$, i.e.,~\eqref{eq:strongsol_ineq} holds at $t$. Indeed, by Lemma~\ref{lem:vitali} it is possible to choose $\eta$ large enough such that
\[
  \absb{[0,T]\setminus \set{t}{\Mcal\nu(t)\leq \eta}}\leq \epsilon.
\]
We define
   \[
   I_\epsilon:=\setb{t}{\Mcal\nu(t)\leq \eta, \; \nabla^2u(t)\in \Lrm^2(\Omega;\R^{m \times d \times d})\text{ and~\eqref{eq:subenerg},~\eqref{eq:stab} is satisfied at $t$, a.e.\ $s$} }.
   \]  Let us next consider a point in $t\in I_\epsilon$. 
By definition of $\Mcal\nu$ and~\eqref{eq:subenerg}, we find that for almost all $h>0$,
\begin{align*}
c_1 \frac{\norm{u(t+h)-u(t-h)}_{\Lrm^1}}{2h}
&\leq \frac{\Var_{\Rcal_1}(u;[t-h,t+h])}{2h} \\
&\leq \frac{ E(t-h)-E(t+h)}{2h} - \dashint_{t-h}^{t+h}\dprb{\dot{f}(\tau), u(\tau)} \dd \tau \\
&\leq  \eta .
\end{align*}
Similarly, we have for almost all $h > 0$ and almost all $\tau \in [t-h,t+h]$ that (using~\eqref{eq:subenerg} again in $[t-h,\tau]$)
\begin{align*}
c_1 \frac{\norm{u(t+h)-u(\tau)}_{\Lrm^1}}{2h} 
&\leq \frac{\Var_{\Rcal_1}(u;[\tau,t+h])}{2h} \\
&\leq \frac{1}{2h} \biggl( E(\tau)-E(t+h) - \int_{\tau}^{t+h}\dprb{\dot{f}(\tau'), u(\tau')} \dd \tau' \biggr) \\
&\leq \frac{1}{2h} \biggl( E(t-h)-E(t+h) - \int_{t-h}^{t+h}\dprb{\dot{f}(\tau'), u(\tau')} \dd \tau' \biggr) \\
&\leq \eta.
\end{align*}
Then, Lemma~\ref{lem:key1} implies for almost every $h>0$,
\begin{align*}
&\frac{\norm{\nabla u(t+h) - \nabla u(t-h)}_{\Lrm^2}^2}{(2h)^2}\\
&\quad \leq C\biggl(\frac{\norm{u(t+h) - u(t-h)}_{\Lrm^1}}{2h}\biggr)^2 + C\norm{\dot{f}}_{\Lrm^\infty([0,T] \times \Omega)} \cdot \dashint_{t-h}^{t+h} \frac{\norm{u(t+h)-u(\tau)}_{\Lrm^1}}{2h}\dd \tau\\
&\quad\leq C\eta^2+C\norm{\dot{f}}_{\Lrm^\infty([0,T] \times \Omega)} 
\eta.
\end{align*}
This implies that $\nabla \dot{u}(t)\in \Lrm^2(\Omega;\R^m)$ and in particular that $u$ is a strong solution at $t$ due to Lemma~\ref{lem:strongsol}. Since $\abs{[0,T]\setminus \bigcup_{N\in \Nbb}\set{t}{\Mcal\nu(t)\leq N}}=0$ the proof is complete.
\end{proof}

\begin{remark}[Uniqueness]
\label{rem:uniq}
We note in passing that the estimate above also improves the uniqueness result obtained in~\cite{RindlerSchwarzacherSuli17}. Indeed, it implies that the uniqueness class for convex energies can be extended to weak solutions in $\BV(0,T;\Lrm^1(\Omega;\R^m))$ satisfying the stability estimate and an energy inequality.
\end{remark}

\section{Rate-dependent evolution}
\label{sc:RDevol}

In this section we will construct solutions $u_\lambda$ to the rate-dependent system~\eqref{eq:PDE_lambda} for any $\lambda > 0$ and establish several estimates ($\lambda$-uniform and with quantitative dependence on $\lambda$), which will be needed in the next section to pass to the limit $\lambda \todown 0$. Some of the arguments in this section are similar in spirit to the work~\cite{MielkeZelik14}.

\subsection{Regularization of $R_1$}

In order to construct $u_\lambda$, we need to regularize $R_1$, which has a kink at the origin. This introduces another parameter $\delta > 0$ and we need $\delta$-uniform estimates to let $\delta \todown 0$.

We set $K:=\set{z \in \R^m}{R_1(z)\leq 1}$ and define $K_\delta:=\bigcup_{z\in K}B(z,\eta)$ with $\eta > 0$ chosen such that
\[
  K_\delta \subset \Bigl(1+\frac{\delta}{c_2}\Bigr)K.
\]
Then, $K_\delta$ is convex, contains $0$ as an interior point, and has a smooth boundary. As is shown in Example~\ref{ex:fric}, we can associate with $K_\delta$ a $1$-homogeneous convex friction potential $\hat{R}_1^\delta$ that is smooth away from the origin and satisfies
\[
  \Bigl(1+\frac{\delta}{c_2}\Bigr)^{-1} R_1(z) \leq \hat{R}_1^\delta(z) \leq R_1(z).
\]
Hence, also using that $\hat{R}_1^\delta(z) \leq R_1(z) \leq c_2\abs{z}$ (where $c_2$ is the upper growth bound of $R_1$), we get
\[
  R_1(z) - \delta \abs{z} \leq \hat{R}_1^\delta(z) \leq R_1(z).
\]
To smooth $\hat{R}_1^\delta$ it around the origin, we choose for $\delta > 0$ a convex function $\phi_\delta\in \Crm^\infty([0,\infty),[0,\infty))$ such that for $s\in [0,\infty)$,
\begin{align*}
[s-2\delta]^{+} \leq \phi_\delta(s) &\leq [s-\delta]^{+},\\
0\leq \phi_\delta'(s) &\leq 1,\\
0\leq \phi_\delta''(s) &\leq C
\end{align*}
for some constant $C > 1$ and where $[\frarg]^{+}$ denotes the positive part.
Then we define
\[
  R_1^\delta(z):=\phi_\delta(\hat{R}_1^\delta(z)),  \qquad z \in \R^m,
\]
for which it holds that
\begin{align}
\label{eq:deltaR} 
 [c_1\abs{z}-2\delta]^{+}\leq  R^\delta_1(z) \leq  c_2\abs{z},  \qquad z \in \R^m.
\end{align}
Recall the following basic fact about subdifferentials of $1$-homogeneous functions (which is standard and easy to see):
\[
  R_1(z) = \dpr{\sigma,z}  \qquad\text{for all $z \in \R^m$ and all $\sigma \in \partial R_1(z)$.}
\]
We may then calculate
\[
\DD R_1^\delta(z)\cdot z- R^\delta_1(z)= \phi'(\hat{R}_1^\delta(z))\hat{R}^\delta_1(z)-\phi(\hat{R}_1^\delta(z)).
\]
The convexity and the other assumptions on $\phi_\delta$ imply for $s \geq 0$ that
\[
0 = -\phi_\delta(0) \leq \phi_\delta'(s)s-\phi_\delta(s)  \leq s- [s-2\delta]^{+}\leq 2\delta. 
\]
Hence,
\begin{equation} \label{eq:R1delta_formula}
\absb{\DD R_1^\delta(z)\cdot z- R^\delta_1(z)} \leq 2\delta,  \qquad z \in \R^m.
\end{equation}

We now take a smooth approximation $(u_{0,\delta}) \subset \Crm^\infty(\cl{\Omega};\R^m)$ of the initial value $u_0$, that is, $u_{0,\delta}\to u_0$ in $(\Wrm^{1,2}_0 \cap \Lrm^q)(\Omega;\R^m)$ as $\delta \todown 0$ (cf.~\ref{as:u0}) and consider the following PDE for $u_{\lambda,\delta} \colon [0,T]\times \Omega\to \R^m$:
\begin{equation} \label{eq:PDE_ulambdadelta}
\hspace{-10pt}
\left\{\begin{aligned}
 \lambda  \dot{u}_{\lambda,\delta} + \DD R^\delta_1(\dot{u}_{\lambda,\delta}) - \Delta u_{\lambda,\delta} + \DD W_0(u_{\lambda,\delta}) &= f  &&\text{in $(0,T) \times \Omega$,} \\
 u_{\lambda,\delta}(t)|_{\partial\Omega}&=0 &&\text{for $t \in (0,T)$,}
 \\ 
 u_{\lambda,\delta}(0) &=u_{0,\delta}  &&\text{in $\Omega$.}
\end{aligned}\right.
\end{equation}
\begin{lemma}
\label{lem:deltaex} 
There exists a strong solution
\[
  u_{\lambda,\delta} \in \Wrm^{2,2}(0,T;\Lrm^2(\Omega;\R^m))\cap \Wrm^{1,\infty}(0,T;\Wrm^{1,2}(\Omega;\R^m))
\]
of~\eqref{eq:PDE_ulambdadelta}. 
\end{lemma}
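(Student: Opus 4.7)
The plan is to construct $u_{\lambda,\delta}$ by a Faedo--Galerkin approximation in space, exploiting the fact that the regularization turns $R^\delta_1$ into a $\Crm^2$-function with bounded first and second derivatives, so that~\eqref{eq:PDE_ulambdadelta} becomes a smooth nonlinear parabolic problem to which standard machinery applies. I would take $\{w_k\}_{k\in\N}$ to be the $\Lrm^2$-orthonormal basis of eigenfunctions of the Dirichlet Laplacian in $(\Wrm^{1,2}_0\cap\Wrm^{2,2})(\Omega;\R^m)$ (available by the $\Crm^{1,1}$-regularity of $\partial\Omega$ from~\ref{as:domain}), set $V_N:=\spn\{w_1,\ldots,w_N\}$, and seek $u_N(t)=\sum_{k=1}^N\alpha_k^N(t)w_k$ satisfying
\[
  \dprb{\lambda\dot{u}_N+\DD R^\delta_1(\dot{u}_N)-\Delta u_N+\DD W_0(u_N)-f,\,w_j}=0,\quad j=1,\ldots,N,
\]
with $u_N(0)$ the $\Lrm^2$-projection of $u_{0,\delta}$ onto $V_N$. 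Since $v\mapsto \lambda v+\DD R_1^\delta(v)$ is strictly monotone and coercive with a globally Lipschitz inverse (using $0\leq\phi'_\delta\leq 1$ and $0\leq\phi''_\delta\leq C$), this can be rewritten as an ODE $\dot\alpha=F_N(\alpha,t)$ with $F_N$ locally Lipschitz in $\alpha$ and continuous in $t$, yielding local existence by Picard--Lindel\"of.

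Next I would establish two levels of uniform a priori estimates. Testing with $\dot u_N$ and using $\DD R_1^\delta(v)\cdot v\geq 0$ (by convexity of $R_1^\delta$ together with $R_1^\delta(0)=\phi_\delta(0)=0$) gives
\[
  \lambda\norm{\dot u_N}_{\Lrm^2}^2+\frac{d}{dt}\Wcal(u_N)\leq \dprb{f,\dot u_N},
\]
which after time integration, Young's inequality and the coercivity in~\ref{as:W} produces uniform bounds for $u_N$ in $\Lrm^\infty(0,T;(\Wrm^{1,2}_0\cap\Lrm^q)(\Omega;\R^m))$ and for $\sqrt\lambda\,\dot u_N$ in $\Lrm^2((0,T)\times\Omega;\R^m)$; in particular the Galerkin solutions exist on the whole of $[0,T]$. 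In dimensions $d\in\{2,3\}$, Sobolev embedding together with an elliptic regularity estimate as in Lemma~\ref{lem:Linfty} then yields a uniform $\Lrm^\infty$-bound on $u_N$, which is needed in the next step.

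For the higher-order estimate I would differentiate the Galerkin equation in time (legitimate since $R_1^\delta,W_0\in\Crm^2$ and the finite-dimensional solution is smooth in $t$) and test with $\ddot u_N$. Convexity of $R_1^\delta$ makes the term $\dpr{\DD^2 R_1^\delta(\dot u_N)\ddot u_N,\ddot u_N}$ non-negative, and assumption~\ref{as:W} provides $\DD^2 W_0(v)[w,w]\geq -\mu\abs{w}^2$, so
\[
  \lambda\norm{\ddot u_N}_{\Lrm^2}^2+\frac{1}{2}\frac{d}{dt}\norm{\nabla\dot u_N}_{\Lrm^2}^2\leq \mu\norm{\dot u_N}_{\Lrm^2}\norm{\ddot u_N}_{\Lrm^2}+\dprb{\dot f,\ddot u_N}.
\]
Young's inequality and Gr\"onwall then yield uniform bounds for $\dot u_N$ in $\Lrm^\infty(0,T;\Wrm^{1,2}_0(\Omega;\R^m))$ and $\ddot u_N$ in $\Lrm^2((0,T)\times\Omega;\R^m)$; the datum $\dot u_N(0)$ is controlled in $\Wrm^{1,2}_0$ by inverting $\lambda\id+\DD R_1^\delta$ applied to $\Delta u_{0,\delta}-\DD W_0(u_{0,\delta})+f(0)$, using the smoothness of $u_{0,\delta}$. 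With these bounds in hand, Aubin--Lions yields strong convergence of $u_N$ in $\Lrm^2(0,T;\Wrm^{1,2}_0(\Omega;\R^m))$ and of $\dot u_N$ in $\Lrm^2((0,T)\times\Omega;\R^m)$ along a subsequence, so the continuity of $\DD R_1^\delta$ (globally Lipschitz) and $\DD W_0$ (locally Lipschitz, controlled via the $\Lrm^\infty$-bound on $u_N$) allows passage to the limit in all nonlinear terms, producing the claimed strong solution $u_{\lambda,\delta}$ with the required regularity.

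The main obstacle will be the higher-order estimate, since the $\DD^2 W_0$-contribution has no favorable sign (only the bound $\geq -\mu$) and there is no free smallness parameter to absorb it. The rescue is that the first-order estimate together with Sobolev embedding in dimensions $d\leq 3$ gives a uniform $\Lrm^\infty$-bound on $u_N$, so $\DD^2 W_0(u_N)$ is uniformly bounded in $\Lrm^\infty$, and Cauchy--Schwarz plus Gr\"onwall close the estimate. A secondary subtlety is the solvability of the Galerkin system for $\dot\alpha$, which rests on the strict monotonicity of $v\mapsto\lambda v+\DD R_1^\delta(v)$ coming from convexity of $R_1^\delta$ and the strict positivity of $\lambda>0$; note that both constants $\lambda$ and $\delta$ appear in the quantitative bounds, so these estimates are not uniform in $\lambda$ or $\delta$, which is consistent with the role this lemma will play as a building block before the subsequent passage to the limits $\delta\todown 0$ and $\lambda\todown 0$.
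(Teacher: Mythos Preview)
Your Galerkin scheme is a perfectly legitimate alternative to the paper's route, which instead differentiates the system in time and runs a Schauder fixed-point argument on the linearized problem (with the coefficient $\DD^2 W_0(v)$ frozen at a given $v\in\Lrm^\infty([0,T]\times\Omega)$) and then tests with $\ddot w$ to obtain the same $\Wrm^{2,2}(0,T;\Lrm^2)\cap\Wrm^{1,\infty}(0,T;\Wrm^{1,2})$ bounds. The two approaches yield the same regularity class and both rely on the invertibility and Lipschitz inverse of $z\mapsto\lambda z+\DD R_1^\delta(z)$, so philosophically they are close; the advantage of the paper's fixed-point setup is that the $\Lrm^\infty$-bound on the argument of $\DD^2 W_0$ is built into the function space from the outset, whereas in your scheme it must be recovered at the Galerkin level.

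That is precisely where your argument has a gap. The claim that ``the first-order estimate together with Sobolev embedding in $d\le 3$ gives a uniform $\Lrm^\infty$-bound on $u_N$'' via Lemma~\ref{lem:Linfty} does not go through: Lemma~\ref{lem:Linfty} (and the underlying Lemma~\ref{lem:apriori-space}) require that $u$ solve the full elliptic equation $-\Delta u+\DD W_0(u)=g$, whereas $u_N$ only solves the \emph{projected} equation, so the nonlinear term $\DD W_0(u_N)$ is not absorbed to the left-hand side. From the first-order energy bound you only get $u_N\in\Lrm^\infty(0,T;\Wrm^{1,2}\cap\Lrm^q)$, which in $d=2,3$ does not embed into $\Lrm^\infty$, and testing with $-\Delta u_N$ yields merely $u_N\in\Lrm^2(0,T;\Wrm^{2,2})$, again insufficient. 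Consequently, in your displayed higher-order estimate the bound $\abs{\DD^2 W_0(u_N)[\dot u_N,\ddot u_N]}\le \mu\abs{\dot u_N}\abs{\ddot u_N}$ is not justified: assumption~\ref{as:W} gives only the \emph{lower} bound $\DD^2 W_0(v)[w,w]\ge -\mu\abs{w}^2$, not a two-sided bound on the bilinear form, and there is no growth assumption on $\DD^2 W_0$.

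A clean repair is to introduce an additional truncation: replace $W_0$ by $W_0^M$ agreeing with $W_0$ on $\{\abs{v}\le M\}$ and extended with globally bounded second derivative. Your Galerkin argument then closes (with $M$-dependent constants), and the limit $u_{\lambda,\delta}^M$ solves the full PDE, to which Lemma~\ref{lem:apriori-space} \emph{does} apply, yielding an $\Lrm^\infty$-bound independent of $M$; for $M$ large the truncation is inactive and you recover the original problem. Alternatively, pass to the limit $N\to\infty$ using only the first-order bounds to obtain a weak solution, then derive the higher-order time regularity a~posteriori on the limit via difference quotients in $t$, where Lemma~\ref{lem:apriori-space} is available.
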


\begin{proof}
Since the existence theory for~\eqref{eq:PDE_ulambdadelta} is standard, we only give a sketch of the proof. The idea is to implement a fixed-point argument over a linear PDE. This can be achieved by formally differentiating the system~\eqref{eq:PDE_ulambdadelta} to get
\begin{equation} \label{eq:PDE_ulambdadelta_dot}
\hspace{-20pt}
\left\{\begin{aligned}
 \lambda  \ddot{u}_{\lambda,\delta} + \DD^2 R^\delta_1(\dot{u}_{\lambda,\delta})\cdot\ddot{u}_{\lambda,\delta} - \Delta \dot{u}_{\lambda,\delta} + \DD^2 W_0(u_{\lambda,\delta})\cdot \dot{u}_{\lambda,\delta} &= \dot{f}  &&\text{in $(0,T) \times \Omega$,}\\
  \dot{u}_{\lambda,\delta}(t)|_{\partial\Omega}&=0  &&\text{for $t \in (0,T)$,}
 \\ 
 \dot{u}_{\lambda,\delta}(0) =\dot{u}_{0,\delta},\quad  {u}_{\lambda,\delta}(0)&={u}_{0,\delta}  &&\text{in $\Omega$,}
\end{aligned}\right.
\end{equation}
where $\dot{u}_{0,\delta}\in \Wrm^{1,2}_0(\Omega;\R^m)$ is defined via the equation
\begin{equation} \label{eq:initialeq}
\lambda  \dot{u}_{0,\delta} + \DD R^\delta_1(\dot{u}_{0,\delta}) - \Delta u_{0,\delta} + \DD W_0(u_{0,\delta}) = f(0) \qquad \text{in $\Omega$.}
\end{equation}
The mapping
\[
\Phi \colon z\mapsto \lambda  z + \DD R^\delta_1(z) ,  \qquad z \in \R^m,
\]
is smooth and invertible. Hence, $\dot{u}_{0,\delta}\in \Wrm^{1,2}(\Omega;\R^m)$ with bounds depending on $\delta,\lambda$. For later use we also record the following estimate:
\begin{equation} \label{eq:dotu0d_L2}
  \lambda \norm{\dot{u}_{0,\delta}}_{\Lrm^2}^2 \lesssim \norm{\Delta u_{0,\delta} - \DD W_0(u_{0,\delta}) + f(0)}_{\Lrm^2}^2,
\end{equation}
which follows by testing the equation~\eqref{eq:initialeq} with $\dot{u}_{0,\delta}$, using Young's inequality, and a standard absorption argument.

A strong solution to~\eqref{eq:PDE_ulambdadelta_dot} (and hence also for~\eqref{eq:PDE_ulambdadelta}) can be gained by first solving for a given $v\in \Wrm^{1,1}(0,T;\Lrm^1(\Omega;\R^m))\cap \Lrm^\infty([0,T]\times\Omega)$ the following system for $w \colon [0,T] \times \Omega \to \R^m$:
\[
\left\{\begin{aligned}
 \lambda  \ddot{w} + \DD^2 R^\delta_1(\dot{w})\cdot\ddot{w} - \Delta \dot{w} + \DD^2 W_0(v)\cdot \dot{w} &= \dot{f}  &&\text{in $(0,T) \times \Omega$,} \\
  \dot{w}_{\lambda,\delta}(t)|_{\partial\Omega}&=0  &&\text{for $t \in (0,T)$,} \\ 
 \dot{w}(0)=\dot{u}_{0,\delta},\quad  w(0)&={u}_{0,\delta}  &&\text{in $\Omega$.}
\end{aligned}\right.
\]
Then use the Schauder fixed-point theorem to obtain the solution. The natural a-priori estimates can be achieved by taking $\ddot{w}$ as a test function, which gives the following a-priori estimates:
\[
\int_{0}^T\norm{\ddot{u}_{\lambda,\delta}(t)}_{\Lrm^2}^2\dd t+\sup_{t\in [0,T]}\norm{\nabla\dot{u}_{\lambda,\delta}(t)}_{\Lrm^2}^2\leq C_{\lambda,\delta},
\]
from which the assertions follow.
\end{proof}

\subsection{Uniform estimates in $\delta$ and $\lambda$.}

Next we aim to estimate the regularity of a process $u_{\lambda,\delta}$ solving~\eqref{eq:PDE_ulambdadelta}.

\begin{lemma}
\label{lem:apri}
We have the bounds
\begin{align}
  \hspace{-10pt} \lambda \norm{\dot{u}_{\lambda,\delta}}_{\Lrm^2_{t,x}}^2 + \norm{\dot{u}_{\lambda,\delta}}_{\Lrm^1_{t,x}} +\sup_{t\in [0,T]} \Bigl( \norm{u_{\lambda,\delta}(t)}_{\Wrm^{1,2}} + \norm{u_{\lambda,\delta}(t)}_{\Lrm^q} \Bigr) &\leq C,     \label{eq:dotulambdadelta_RD_apriori}    
  \\
  \lambda \int_0^T t \norm{\nabla \dot{u}_{\lambda,\delta}(t)}_{\Lrm^2}^2\dd t &\leq C,    \label{eq:dotdotulambdadelta_RI_apriori_t}
\end{align}
and for all $r \in [1,2^*)$ and $\sigma \in (0,T]$,
\begin{align}
\sup_{t\in[\sigma,T]} \lambda^{2} \norm{\dot{u}_{\lambda,\delta}(t)}_{\Lrm^2}^{2}  + \lambda^2 \int_\sigma^T\norm{\nabla \dot{u}_{\lambda,\delta}(t)}_{\Lrm^2}^2\dd t  &\leq C \biggl(1+\frac{\lambda}{\sigma}\biggr),           \label{eq:dotdotulambdadelta_RI_apriori} \\
  \int_\sigma^T \norm{u_{\lambda,\delta}(t)}_{\Wrm^{2,r}}^2\dd t
     + \sup_{t\in[\sigma,T]} \norm{u_{\lambda,\delta}(t)}_{\Wrm^{2,2}}^{2}
&\leq C \biggl(1+\frac{\lambda}{\sigma}\biggr)^{\max\{q-1,1\}}.  \label{eq:ulambdadelta_Wp2}
\end{align}
If $d = 3$, then~\eqref{eq:ulambdadelta_Wp2} also holds for $r = 2^* = 6$. Here, the constants $C>0$ depend on $r$, $\norm{{f}}_{\Wrm^{1,\infty}(\Lrm^\infty)}$, $\norm{u_0}_{\Wrm^{1,2}}$, but are independent of $\delta$ and $\lambda$ (assuming $\delta, \lambda \in (0,1)$). Moreover, $u_{\lambda,\delta}$ satisfies the energy balance
\begin{align}\hspace{-10pt}
&\Ecal(t,u_{\lambda,\delta}(t)) - \Ecal(0,u_{\lambda,\delta}(0))\notag\\
&\quad = - \int_0^t \lambda \norm{\dot{u}_{\lambda,\delta}(\tau)}_{\Lrm^2}^2 + \Rcal_1(\dot{u}_{\lambda,\delta}(\tau)) \dd \tau -\int_0^t\dprb{\dot{f}(\tau),u_{\lambda,\delta}(\tau)}\dd \tau  \label{eq:ulambdadelta_energy}
\end{align}
for almost all $t \in [0,T]$.

If we assume additionally that
\begin{equation} \label{eq:new-ass}
\left\{\begin{aligned}
  &\Delta u_0 -\DD W_0(u_0)\in \Lrm^2(\Omega,\R^m), \\
  &\Delta u_{0,\delta} - \DD W_0(u_{0,\delta}) \to \Delta u_0 -\DD W_0(u_0) \quad\text{in $\Lrm^2$ as $\delta \todown 0$,}
\end{aligned}\right.
\end{equation}
then 
\begin{align}
\sup_{t\in[0,T]} \lambda^2 \norm{\dot{u}_{\lambda,\delta}(t)}_{\Lrm^2}^2 + \lambda \int_0^T \norm{\nabla \dot{u}_{\lambda,\delta}(t)}_{\Lrm^2}^2 \dd t
&\leq C \bigl(1 + \lambda Z_0 \bigr), \label{eq:ulambdadelta_Linfty1}
\\
\int_0^T \norm{u_{\lambda,\delta}(t)}_{\Wrm^{2,r}}^2\dd t
     + \sup_{t\in[0,T]} \norm{u_{\lambda,\delta}(t)}_{\Wrm^{2,2}}^{2}
&\leq C \bigl(1 + \lambda Z_0 \bigr)^{\max\{q-1,1\}}, \label{eq:ulambdadelta_Linfty2}
\end{align}
where $Z_0 := \norm{\Delta u_0 - \DD W_0(u_0) + f(0)}_{\Lrm^2}^2$.
\end{lemma}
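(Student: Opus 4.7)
The starting point is to test~\eqref{eq:PDE_ulambdadelta} with $\dot{u}_{\lambda,\delta}$ and integrate over $\Omega$. Using $\frac{\mathrm{d}}{\mathrm{d}t}\Wcal(u_{\lambda,\delta}) = \dprb{-\Delta u_{\lambda,\delta} + \DD W_0(u_{\lambda,\delta}), \dot{u}_{\lambda,\delta}}$, one obtains
\[
\frac{\mathrm{d}}{\mathrm{d}t}\Ecal(t,u_{\lambda,\delta}) + \lambda\norm{\dot{u}_{\lambda,\delta}}_{\Lrm^2}^2 + \int_\Omega \DD R_1^\delta(\dot{u}_{\lambda,\delta})\cdot\dot{u}_{\lambda,\delta}\dd x = -\dprb{\dot{f}, u_{\lambda,\delta}}.
\]
By~\eqref{eq:R1delta_formula} the last integrand equals $R_1^\delta(\dot{u}_{\lambda,\delta})$ up to a pointwise $2\delta$-error, and by~\eqref{eq:deltaR} one has $R_1^\delta(v) \geq [c_1\abs{v} - 2\delta]^{+}$. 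Integrating in time, invoking the coercivity in~\ref{as:W}, and applying Young/Gronwall to the $\dprb{\dot{f},u_{\lambda,\delta}}$-term, yields~\eqref{eq:dotulambdadelta_RD_apriori}. Letting $\delta\todown 0$ in the integrated identity (so that $R_1^\delta \to R_1$ monotonically and the $\delta$-error vanishes) produces~\eqref{eq:ulambdadelta_energy}.

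\textbf{Time-weighted gradient estimate.} For~\eqref{eq:dotdotulambdadelta_RI_apriori_t}, differentiate~\eqref{eq:PDE_ulambdadelta} in time and test with $t\dot{u}_{\lambda,\delta}$. The crucial algebraic identity is
\[
\DD^2 R_1^\delta(\dot{u}_{\lambda,\delta})[\ddot{u}_{\lambda,\delta},\dot{u}_{\lambda,\delta}] = \partial_t\bigl[\DD R_1^\delta(\dot{u}_{\lambda,\delta})\cdot\dot{u}_{\lambda,\delta} - R_1^\delta(\dot{u}_{\lambda,\delta})\bigr],
\]
whose primitive is bounded pointwise by $2\delta$ via~\eqref{eq:R1delta_formula}. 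Integrating by parts in time (both here and on the $\lambda t\,\ddot{u}\cdot\dot{u}$-term) produces only an $O(\delta T)$ boundary contribution plus the favorable term $\frac{\lambda T}{2}\norm{\dot{u}_{\lambda,\delta}(T)}_{\Lrm^2}^2$. The Laplacian furnishes $\int_0^T t\norm{\nabla\dot{u}_{\lambda,\delta}}_{\Lrm^2}^2\dd t$; the $\DD^2 W_0 \geq -\mu I$-term is absorbed by Poincar\'e--Friedrichs; and the $\dot{f}$-term is controlled by Cauchy--Schwarz using $\int_0^T\lambda\norm{\dot{u}_{\lambda,\delta}}_{\Lrm^2}^2\leq C$ from the previous step, contributing a factor $\lesssim\lambda^{-1/2}$. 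Multiplying the whole inequality by $\lambda$ absorbs this last contribution and yields~\eqref{eq:dotdotulambdadelta_RI_apriori_t}.

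\textbf{Non-weighted estimates and elliptic regularity.} For~\eqref{eq:dotdotulambdadelta_RI_apriori}, apply the mean-value theorem to $\int_{\sigma/2}^\sigma\lambda\norm{\dot{u}_{\lambda,\delta}}_{\Lrm^2}^2\dd t \leq C$ to select $t_0\in[\sigma/2,\sigma]$ with $\lambda\norm{\dot{u}_{\lambda,\delta}(t_0)}_{\Lrm^2}^2 \leq 2C/\sigma$; repeating the previous calculation without the $t$-weight but starting from $t_0$ gives the stated bound. The spatial regularity~\eqref{eq:ulambdadelta_Wp2} follows by applying Lemma~\ref{lem:apriori-space} to the rearranged equation $-\Delta u_{\lambda,\delta} + \DD W_0(u_{\lambda,\delta}) = f - \lambda\dot{u}_{\lambda,\delta} - \DD R_1^\delta(\dot{u}_{\lambda,\delta})$, using that $\abs{\DD R_1^\delta} \leq c_2$ uniformly in $\delta$ and the Sobolev embedding $\Wrm^{1,2}_0 \embed \Lrm^r$ for $r<2^*$ to boost the $\Lrm^2$-bound on $\dot{u}_{\lambda,\delta}(t)$ to an $\Lrm^r$-bound. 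For~\eqref{eq:ulambdadelta_Linfty1}--\eqref{eq:ulambdadelta_Linfty2} under~\eqref{eq:new-ass}, the a priori control $\lambda\norm{\dot{u}_{\lambda,\delta}(0)}_{\Lrm^2}^2 \lesssim Z_0$ supplied by~\eqref{eq:dotu0d_L2} plays the role of the selection $t_0=0$ in the preceding argument, allowing the $t$-weight to be dropped entirely.

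\textbf{Main obstacle.} The principal technical point throughout is the $\DD^2 R_1^\delta$-cross term in the differentiated equation: naive convexity only yields $\DD^2 R_1^\delta[w,w]\geq 0$, not non-negativity of the cross term $\DD^2 R_1^\delta[\ddot{u},\dot{u}]$. It is precisely the quantitative near-$1$-homogeneity~\eqref{eq:R1delta_formula} that turns this cross term into a total time derivative with uniformly small primitive, and this is what drives all the $\delta$-uniform bounds above.
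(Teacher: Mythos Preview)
Your proposal follows essentially the same strategy as the paper: test with $\dot{u}_{\lambda,\delta}$ for the basic energy bound, test the time-differentiated equation with a weighted $\dot{u}_{\lambda,\delta}$ (the paper phrases this equivalently as testing the original equation with $-\partial_t(\eta\dot{u}_{\lambda,\delta})$), and then invoke Lemma~\ref{lem:apriori-space} for elliptic regularity. Your emphasis on the identity $\DD^2 R_1^\delta(\dot u)[\ddot u,\dot u]=\partial_t\bigl[\DD R_1^\delta(\dot u)\cdot\dot u-R_1^\delta(\dot u)\bigr]$ together with~\eqref{eq:R1delta_formula} is exactly the mechanism the paper uses. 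The mean-value selection of $t_0\in[\sigma/2,\sigma]$ is a minor variant of the paper's direct use of the weighted estimate with $b\ge\sigma$; both work.

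There is one genuine slip. You write that the $\DD^2 W_0\ge -\mu I$ contribution is ``absorbed by Poincar\'e--Friedrichs''. This would require $\mu C_P^2<1$, which is not assumed (the constant $\mu$ in~\ref{as:W} is arbitrary). The paper does not absorb this term; instead it bounds
\[
\mu\int_0^b \tfrac{t}{b}\norm{\dot u_{\lambda,\delta}}_{\Lrm^2}^2\dd t \le \mu\int_0^b\norm{\dot u_{\lambda,\delta}}_{\Lrm^2}^2\dd t
\]
and, after multiplying the whole inequality by $\lambda$, controls the right-hand side via the already-established estimate $\lambda\norm{\dot u_{\lambda,\delta}}_{\Lrm^2_{t,x}}^2\le C$ from~\eqref{eq:dotulambdadelta_RD_apriori}. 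With this correction your argument closes. (A second cosmetic point: \eqref{eq:ulambdadelta_energy} is stated for fixed $\delta$, so ``letting $\delta\todown 0$'' is not needed there; the exact identity has $\int\DD R_1^\delta(\dot u)\cdot\dot u$, which by~\eqref{eq:R1delta_formula} coincides with $\Rcal_1(\dot u_{\lambda,\delta})$ up to an $O(\delta)$ error.)
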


We remark that there is a smoothing effect in the equation, giving improved regularity away from the starting time $t = 0$.

\begin{proof}
We multiply the system~\eqref{eq:PDE_ulambdadelta} with various test functions and collect the resulting estimates.
 
\proofstep{Testing with $\dot{u}_{\lambda,\delta}$.}
Multiplying~\eqref{eq:PDE_ulambdadelta} with $\dot{u}_{\lambda,\delta}(t)$ and integrating over $\Omega$ gives
\[
\lambda \norm{\dot{u}_{\lambda,\delta}}_{\Lrm^2}^2 + \int_\Omega \DD R_1^\delta(\dot{u}_{\lambda,\delta}) \dot{u}_{\lambda,\delta} \dd x + \frac{\di}{\di t} \bigl(\Wcal(u_{\lambda,\delta})-\dprb{f,u_{\lambda,\delta}}\bigr)=-\dprb{\dot{f},u_{\lambda,\delta}},
\]
where we recall the defintion of $\Wcal$ in~\eqref{eq:Wcal}. Integrating over a time interval $[0,t] \subset [0,T]$, this implies the energy equality
\begin{align*}
&\int_0^t \lambda \norm{\dot{u}_{\lambda,\delta}(\tau)}_{\Lrm^2}^2  \dd \tau + \int_0^t \int_\Omega \DD R_1^\delta(\dot{u}_{\lambda,\delta}(\tau)) \dot{u}_{\lambda,\delta}(\tau) \dd x \dd \tau  \notag\\
&\quad\quad +\Wcal(u_{\lambda,\delta}(t))-\dprb{f(t),u_{\lambda,\delta}(t)} \notag\\
&\quad = \Wcal(u_{\lambda,\delta}(0)) - \dprb{f(0),u_{\lambda,\delta}(0)} -\int_0^t\dprb{\dot{f}(\tau),u_{\lambda,\delta}(\tau)}\dd \tau.  
\end{align*}
Hence, using~\eqref{eq:deltaR},~\eqref{eq:R1delta_formula}, Assumption~\ref{as:W}, and taking the supremum over all $t \in [0,T]$,
\begin{align*}
&\lambda \norm{\dot{u}_{\lambda,\delta}}_{\Lrm^2_{t,x}}^2 +c_1 \norm{\dot{u}_{\lambda,\delta}}_{\Lrm^1_{t,x}}+\sup_{t\in [0,T]}\Big(\norm{\nabla u_{\lambda,\delta}(t)}_{\Lrm^2}^2+\norm{ u_{\lambda,\delta}(t)}_{\Lrm^q}^q\Big)  \notag\\
&\quad
\lesssim \norm{\nabla u_{0,\delta}}_{\Lrm^2}^2+\norm{u_{0,\delta}}_{\Lrm^q}^q 
+\Bigl( \norm{\dot{f}}_{\Lrm^1(\Lrm^2)}
+\sup_{t\in [0,T]}\norm{f(t)}_{\Lrm^2} \Bigr) \norm{u_{\lambda,\delta}}_{\Lrm^\infty(\Lrm^2)} + \delta . 
\end{align*}
Then, using the Young and Poincar\'e--Friedrichs inequalities followed by an absorption of $\norm{u_{\lambda,\delta}}_{\Lrm^\infty(\Lrm^2)}^2$ into the left-hand side, we arrive at
\begin{align*}
&\lambda \norm{\dot{u}_{\lambda,\delta}}_{\Lrm^2_{t,x}}^2 + \norm{\dot{u}_{\lambda,\delta}}_{\Lrm^1_{t,x}}+ \sup_{t\in [0,T]}\Big(\norm{\nabla u_{\lambda,\delta}(t)}_{\Lrm^2}^2+\norm{ u_{\lambda,\delta}(t)}_{\Lrm^q}^q\Big)  \\
&\quad
\lesssim \norm{\nabla u_{0,\delta}}_{\Lrm^2}^2+\norm{u_{0,\delta}}_{\Lrm^q}^q
+\norm{\dot{f}}_{\Lrm^1(\Lrm^2)}^2
+\sup_{t\in [0,T]}\norm{f(t)}_{\Lrm^2}^2 + \delta.
\end{align*}
This concludes the proof of estimate~\eqref{eq:dotulambdadelta_RD_apriori}.

\proofstep{Testing with $\ddot{u}_{\lambda,\delta}$.}
Let $\eta\in \Crm^2_0((0,T);[0,\infty))$. Using $-\partial_t(\eta \dot{u}_{\lambda,\delta})$ as a test function in~\eqref{eq:PDE_ulambdadelta}, we find
\begin{align*}
\int_0^T\int_\Omega{\dot{f}}{\dot{u}_{\lambda,\delta} \eta} \dd t &=\int_0^T\int_\Omega \lambda \ddot{u}_{\lambda,\delta} \dot{u}_{\lambda,\delta} \eta - \DD R_1^\delta(\dot{u}_{\lambda,\delta}) \dot{u}_{\lambda,\delta}\partial_t\eta - \partial_t( R^\delta_1(\dot{u}_{\lambda,\delta}))\eta \dd x\dd t
\\
&\quad +\int_0^T\int_\Omega 
\abs{\nabla \dot{u}_{\lambda,\delta}}^2\eta+\DD^2 W_0(u_{\lambda,\delta})[\dot{u}_{\lambda,\delta},\dot{u}_{\lambda,\delta}]\eta\dd x\dd t
\\
&=\int_0^T\int_\Omega  -\Big(\frac{\lambda}{2}\abs{\dot{u}_{\lambda,\delta}}^2 + \DD R_1^\delta(\dot{u}_{\lambda,\delta})\dot{u}_{\lambda,\delta} - R^\delta_1(\dot{u}_{\lambda,\delta})\Big)\partial_t\eta \dd x\dd t
\\
&\quad +\int_0^T\int_\Omega 
\abs{\nabla \dot{u}_{\lambda,\delta}}^2\eta+\DD^2 W_0(u_{\lambda,\delta})[\dot{u}_{\lambda,\delta},\dot{u}_{\lambda,\delta}]\eta\dd x\dd t.
\end{align*}
Approximating $\eta(t):=\frac{t}{b}\ONE_{[0,b]}(t)$ (for which $\DD_t \eta = \frac{1}{b}\ONE_{[0,b]} - \delta_b$) smoothly and employing~\eqref{eq:R1delta_formula}, we get
\begin{align*}
  &\int_\Omega \frac{\lambda}{2} \abs{\dot{u}_{\lambda,\delta}(b)}^2\dd x + \int_0^b\int_\Omega \frac{t}{b}\abs{\nabla \dot{u}_{\lambda,\delta}(t)}^2 \dd x\dd t  \\
  &\quad \leq C \delta + \int_0^b \int_\Omega \frac{\lambda}{2b} \abs{\dot{u}_{\lambda,\delta}(t)}^2 \dd x \dd t - \int_0^b \int_\Omega \frac{t}{b} \DD^2W_0(u_{\lambda,\delta}(t))[\dot{u}_{\lambda,\delta}(t),\dot{u}_{\lambda,\delta}(t)] \dd x \dd t \\
  &\quad\quad + \int_0^b \int_\Omega \frac{t}{b} \abs{\dot{f}(t)} \cdot \abs{\dot{u}_{\lambda,\delta}(t)} \dd x \dd t.
\end{align*}
Then, since $-\mu \abs{w}^2 \leq \DD^2 W_0(v)[w,w]$ by assumption~\ref{as:W}, and using Young's inequality,
\begin{align}
&\frac{\lambda}{2} \int_\Omega \abs{\dot{u}_{\lambda,\delta}(b)}^2 \dd x+ \int_0^b\int_\Omega \frac{t}{b}\abs{\nabla \dot{u}_{\lambda,\delta}(t)}^2 \dd x\dd t
\notag\\
&\quad \lesssim 1 + \biggl(1+\frac{\lambda}{b}\biggr) \int_0^b\int_\Omega \abs{\dot{u}_{\lambda,\delta}}^2 \dd x \dd t + \int_0^b\int_\Omega\abs{\dot{f}}^2\dd x\dd t .  \label{eq:TheEst}
\end{align}
Taking $b = T$ in~\eqref{eq:TheEst} and multiplying by $\lambda$, we obtain the estimate
\[
  \lambda \int_0^T\int_\Omega t \abs{\nabla \dot{u}_{\lambda,\delta}(t)}^2 \dd x\dd t \lesssim 1 + \lambda \norm{\dot{u}_{\lambda,\delta}}_{\Lrm^2_{t,x}}^2 + \norm{\dot{f}}_{\Lrm^2_{t,x}}^2.
\]
The right-hand side is uniformly bounded by~\eqref{eq:dotulambdadelta_RD_apriori} and Assumption~\ref{as:f}. From this we deduce that
\[
  \lambda \int_0^T t \norm{\nabla \dot{u}_{\lambda,\delta}(t)}_{\Lrm^2}^2\dd t \leq C,
\]
which is~\eqref{eq:dotdotulambdadelta_RI_apriori_t}. This also immediately implies
\[
  \lambda^2 \int_\sigma^T\norm{\nabla \dot{u}_{\lambda,\delta}(t)}_{\Lrm^2}^2\dd t \leq C \frac{\lambda}{\sigma}.
\]
Moreover, for any $b \geq \sigma$, we multiply~\eqref{eq:TheEst} by $\lambda$ to get
\[
  \lambda^2 \norm{\dot{u}_{\lambda,\delta}(b)}_{\Lrm^2}^2 \lesssim 1 + \biggl(1+\frac{\lambda}{\sigma}\biggr) \lambda \norm{\dot{u}_{\lambda,\delta}}_{\Lrm^2_{t,x}}^2 + \norm{\dot{f}}_{\Lrm^2_{t,x}}^2.
\]
As before, the right-hand side is uniformly bounded by~\eqref{eq:dotulambdadelta_RD_apriori} and Assumption~\ref{as:f}.  We can then take the supremum over all $b \in [\sigma,T]$ to obtain
\[
  \sup_{t\in[\sigma,T]} \lambda^2 \norm{\dot{u}_{\lambda,\delta}(t)}_{\Lrm^2}^2 \leq C \biggl(1+\frac{\lambda}{\sigma}\biggr).
\]
This completes the proof of~\eqref{eq:dotdotulambdadelta_RI_apriori}.

\proofstep{Elliptic regularity.}
In order to prove~\eqref{eq:ulambdadelta_Wp2} we use the fact that the integrability properties of the right-hand side transfer to $\nabla^2u_{\lambda,\delta}$. Indeed, we find
\begin{align*}
 \abs{- \Delta u_{\lambda,\delta} + \DD W_0(u_{\lambda,\delta})} &= \abs{f-\lambda  \dot{u}_{\lambda,\delta} - \DD R^\delta_1(\dot{u}_{\lambda,\delta})} \\
 &\leq C+\abs{f}+\lambda\abs{\dot{u}_{\lambda,\delta}}
\end{align*}
We now use Lemma~\ref{lem:apriori-space} and~\ref{as:f} to find that for $r \in [2,\infty)$ if $d = 2$ or $r \in [2,2^*]$ if $d = 3$ and almost every $t \in [0,T]$,
\begin{align}
\label{eq:ellipticdotu}
\norm{u_{\lambda,\delta}(t)}_{\Wrm^{2,r}}\lesssim 1 + \norm{\lambda \dot{u}_{\lambda,\delta}(t)}_{\Lrm^r}+ \norm{\lambda \dot{u}_{\lambda,\delta}(t)}_{\Lrm^2}^{q-1}.
\end{align}
Now,~\eqref{eq:dotdotulambdadelta_RI_apriori} implies
\[
  \sup_{\tau\in [\sigma,T]}\norm{\lambda \dot{u}_{\lambda,\delta}(\tau)}_{\Lrm^2}^{2q-2} \leq C\biggl(1+\frac{\lambda}{\sigma}\biggr)^{q-1},
\]
and, for $r \in [1,2^*)$ and also $r = 2^*=6$ if $d=3$, via~\eqref{eq:dotdotulambdadelta_RI_apriori},
\[
\int_{\sigma}^T\norm{\lambda \dot{u}_{\lambda,\delta}(\tau)}_{\Lrm^r}^2\dd \tau
\lesssim \lambda^2 \int_{\sigma}^T\norm{\nabla \dot{u}_{\lambda,\delta}(\tau)}_{\Lrm^2}^2\dd \tau
\leq C \biggl(1+\frac{\lambda}{\sigma}\biggr).
\]
Combining these estimates yields the first part of~\eqref{eq:ulambdadelta_Wp2}; the extension to the exponents $r \in [1,2)$ follows by embedding. The second part follows similarly with $r=2$ and using~\eqref{eq:dotdotulambdadelta_RI_apriori}.

\proofstep{Energy balance.}
The energy balance~\eqref{eq:ulambdadelta_energy} follows from the a-priori estimates by multiplying the equation by $\dot{u}_{\lambda,\delta}$ and arguing similarly as in the proof of~\eqref{eq:dotdotulambdadelta_RI_apriori}.

\proofstep{Estimates that are uniform as $t\to 0$.}
If we assume~\eqref{eq:new-ass}, then in the arguments leading to~\eqref{eq:TheEst} we can instead approximate $\eta(t):=\ONE_{[0,b]}(t)$ (for which $\DD_t \eta = \delta_0- \delta_b$) smoothly to find
\begin{align*}
&\lambda \int_\Omega \abs{\dot{u}_{\lambda,\delta}(b)}^2 \dd x+ \int_0^b\int_\Omega \abs{\nabla \dot{u}_{\lambda,\delta}(t)}^2 \dd x\dd t \\
&\quad \lesssim 1 + \lambda \int_\Omega \abs{\dot{u}_{\lambda,\delta}(0)}^2 \dd x + \int_0^b\int_\Omega \abs{\dot{u}_{\lambda,\delta}}^2 \dd x \dd t + \int_0^b\int_\Omega\abs{\dot{f}}^2\dd x\dd t \\
&\quad \lesssim 1 + \norm{\Delta u_{0,\delta} - \DD W_0(u_{0,\delta}) + f(0)}_{\Lrm^2}^2 + \int_0^b\int_\Omega \abs{\dot{u}_{\lambda,\delta}}^2 \dd x \dd t + \int_0^b\int_\Omega\abs{\dot{f}}^2\dd x\dd t,
\end{align*}
where in the last line we used~\eqref{eq:dotu0d_L2}. Then, as before, multiply by $\lambda$, and use~\eqref{eq:dotulambdadelta_RD_apriori} together with Assumption~\ref{as:f}, to get
\[
\lambda^2 \norm{\dot{u}_{\lambda,\delta}(b)}_{\Lrm^2}^2 + \lambda \int_0^b \norm{\nabla \dot{u}_{\lambda,\delta}(t)}_{\Lrm^2}^2 \dd t
\lesssim 1 + \lambda \norm{\Delta u_{0,\delta} - \DD W_0(u_{0,\delta}) + f(0)}_{\Lrm^2}^2.
\]
We can then take the supremum over all $b \in [0,T]$ and employ the convergence $\Delta u_{0,\delta} - \DD W_0(u_{0,\delta}) \to \Delta u_0 -\DD W_0(u_0)$ in $\Lrm^2$ as $\delta \todown 0$, to obtain~\eqref{eq:ulambdadelta_Linfty1}.

Moreover, for $r \in [1,2^*)$ and also $r = 2^*=6$ if $d=3$, we get from~\eqref{eq:ulambdadelta_Linfty1} that
\begin{align*}
\int_0^T\norm{\lambda \dot{u}_{\lambda,\delta}(\tau)}_{\Lrm^r}^2\dd \tau
&\lesssim \lambda^2 \int_0^T\norm{\nabla \dot{u}_{\lambda,\delta}(\tau)}_{\Lrm^2}^2\dd \tau \\
&\lesssim \lambda \Bigl( 1 + \lambda \norm{\Delta u_{0,\delta} - \DD W_0(u_{0,\delta}) + f(0)}_{\Lrm^2}^2 \Bigr)
\end{align*}
and
\[
  \sup_{\tau\in [0,T]}\norm{\lambda \dot{u}_{\lambda,\delta}(\tau)}_{\Lrm^2}^{2q-2}
  \lesssim \Bigl(1 + \lambda \norm{\Delta u_0 - \DD W_0(u_0) + f(0)}_{\Lrm^2}^2\Bigr)^{q-1}.
\]
Via Lemma~\ref{lem:apriori-space} and~\ref{as:f}, we have at almost every $t \in [0,T]$ that (see~\eqref{eq:ellipticdotu})
\[
\norm{u_{\lambda,\delta}(t)}_{\Wrm^{2,r}}\lesssim 1 + \norm{\lambda \dot{u}_{\lambda,\delta}(t)}_{\Lrm^r}+ \norm{\lambda \dot{u}_{\lambda,\delta}(t)}_{\Lrm^2}^{q-1},
\]
so that~\eqref{eq:ulambdadelta_Linfty2} follows from the above estimates.
\end{proof}

\subsection{Existence of $u_\lambda$.}
 
Before we establishes the existence of a solution at the scale $\lambda$ we collect a few technical compactness and interpolation results which we will use extensively in the next sections.
We begin by recalling the following result, which follows for instance from the interpolation estimate in~\cite[Theorem 2.13]{Triebel02}.

\begin{lemma}
\label{lem:triebel}
Let $\Omega$ be a bounded Lipschitz domain in $\R^d$ and let
$g\in(\Wrm^{k,s} \cap \Lrm^a)(\Omega)$. Then, for $l,k \in [0,\infty)$ and $s, a \in [1,\infty)$ such that
\[
l \leq k \qquad\text{and}\qquad
\frac{1}{\gamma}= \frac{k-l}{ka}+\frac{l}{ks},
\]
it holds that
\[
\norm{g}_{\Wrm^{l,\gamma}}\leq C\norm{g}_{\Wrm^{k,s}}^\frac{l}{k}\norm{g}_{\Lrm^{a}}^\frac{k-l}{k},
\]
where the constant depends on the exponents and the dimension only.
\end{lemma}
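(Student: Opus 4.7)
The plan is to recognize the claim as a Gagliardo--Nirenberg type interpolation inequality with the natural parameter $\theta := l/k \in [0,1]$, under which the stated exponent relation reads $1/\gamma = (1-\theta)/a + \theta/s$. The two endpoints are trivial: at $\theta = 0$ one has $l = 0$ and $\gamma = a$, so both sides reduce to $\norm{g}_{\Lrm^a}$; at $\theta = 1$ one has $l = k$ and $\gamma = s$, so both sides reduce to $\norm{g}_{\Wrm^{k,s}}$. Hence the non-trivial content lies in the interior range $l \in (0,k)$.

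First I would reduce the inequality from $\Omega$ to all of $\R^d$. Since $\Omega$ is bounded Lipschitz, Stein's extension theorem provides a bounded linear operator $E \colon \Wrm^{k,s}(\Omega) \to \Wrm^{k,s}(\R^d)$ which is simultaneously bounded as $\Lrm^a(\Omega) \to \Lrm^a(\R^d)$ and as $\Wrm^{l,\gamma}(\Omega) \to \Wrm^{l,\gamma}(\R^d)$ for every intermediate order. Composing with the obviously continuous restriction operator, it suffices to establish the estimate for $g \in (\Wrm^{k,s} \cap \Lrm^a)(\R^d)$.

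On $\R^d$ the inequality is classical. For $s \in (1,\infty)$ and integer $k$, the Sobolev space $\Wrm^{k,s}(\R^d)$ coincides up to equivalent norms with the Triebel--Lizorkin space $F^{k}_{s,2}(\R^d)$, and complex interpolation of these scales yields
\[
[\Lrm^a(\R^d), F^{k}_{s,2}(\R^d)]_\theta = F^{\theta k}_{\gamma,2}(\R^d),
\qquad \frac{1}{\gamma}= \frac{1-\theta}{a}+\frac{\theta}{s},
\]
which is exactly the content of the cited \cite[Theorem~2.13]{Triebel02}. Specializing to $\theta = l/k$ and invoking the standard multiplicative bound available in every complex interpolation couple, $\norm{g}_{[X_0,X_1]_\theta} \leq \norm{g}_{X_0}^{1-\theta} \norm{g}_{X_1}^{\theta}$ with $X_0 = \Lrm^a$ and $X_1 = \Wrm^{k,s}$, immediately yields the claim with a constant depending only on $d, k, l, s, a$.

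I expect the only genuine subtlety to be the treatment of non-integer $l$, where $\Wrm^{l,\gamma}$ must be interpreted as a Bessel potential or Sobolev--Slobodeckij space; this is precisely why one appeals to the Triebel--Lizorkin framework rather than to an elementary Gagliardo--Nirenberg argument. A more hands-on alternative would be a Littlewood--Paley decomposition $g = \sum_j \Delta_j g$, estimating high-frequency blocks via $\Wrm^{k,s}$ and low-frequency blocks via $\Lrm^a$, and optimizing the splitting scale; however, this ultimately reproduces the abstract interpolation identity and is less economical than a direct appeal to the cited theorem.
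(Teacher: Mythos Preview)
Your proposal is correct and aligns with the paper's own treatment: the paper does not give a proof but simply states that the lemma ``follows for instance from the interpolation estimate in~\cite[Theorem~2.13]{Triebel02}''. Your outline --- Stein extension to $\R^d$, identification $\Wrm^{k,s} = F^{k}_{s,2}$, complex interpolation of the Triebel--Lizorkin scale, and the standard multiplicative interpolation bound --- is precisely what that citation encodes, so you have essentially unpacked the reference rather than taken a different route.
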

This lemma combined with the usual Sobolev embeddings~\cite[Theorem~2.5.1 and Remark~2.5.2]{Ziemer89book} implies for all $m \in [0,\infty)$, $\alpha \in [1,\infty)$ that satisfy
\[
  m\leq l  \qquad\text{and}\qquad  \frac{1}{\alpha}-\frac{m}{d}\geq \frac{1}{\gamma}-\frac{l}{d} = \frac{k-l}{ka}+\frac{l}{ks} - \frac{l}{d}
\]
the estimate
\begin{align}
\label{eq:triebel}
\norm{g}_{\Wrm^{m,\alpha}}\leq C\norm{g}_{\Wrm^{k,s}}^\frac{l}{k}\norm{g}_{\Lrm^{a}}^\frac{k-l}{k} .
\end{align}

This lemma can also be used to get the following convergence result.

\begin{lemma}
\label{lem:compact}
Let $\Omega$ be a bounded Lipschitz domain in $\R^d$ and assume for $a,b,s,\rho \in [1,\infty)$ and $k \in [0,\infty)$ that
\[
  g_j\to g \quad\text{in $\Lrm^b(0,T;\Lrm^a(\Omega))$} \qquad\text{and}\qquad
  \sup_{j\in \Nbb}\, \norm{g_j}_{\Lrm^\rho(0,T;\Wrm^{k,s}(\Omega))}\leq C
\]
and assume that for some $m \in \N \cup \{0\}$, $\alpha \in [1,\infty)$, and $\theta\in [0,1)$ we have
\[
m\leq \theta k  \qquad\text{and}\qquad
 \frac{1}{\alpha}-\frac{m}{d}\geq \frac{1-\theta}{a}+\theta\biggl(\frac1s-\frac{k}d\biggr).
\]
Then, for all
\[
\beta\in \biggl[1,\frac{\rho b}{b\theta+\rho(1-\theta)}\biggr]
\]
it holds that $g_j\to g$ in $\Lrm^\beta(0,T;\Wrm^{m,\alpha}(\Omega))$.
\end{lemma}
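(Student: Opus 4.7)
The plan is to reduce the statement to a pointwise-in-time interpolation estimate followed by a Hölder inequality in time. At the pointwise level, I would apply the estimate~\eqref{eq:triebel} from the discussion following Lemma~\ref{lem:triebel} with the choice $l:=\theta k$. With this choice, the two conditions on the exponents in~\eqref{eq:triebel} become exactly the two hypotheses $m\le \theta k$ and $\frac1\alpha-\frac{m}{d}\ge \frac{1-\theta}{a}+\theta\bigl(\frac1s-\frac{k}d\bigr)$ assumed in the statement. Thus, at almost every $t\in[0,T]$,
\[
  \norm{g_j(t)-g(t)}_{\Wrm^{m,\alpha}}\le C\,\norm{g_j(t)-g(t)}_{\Wrm^{k,s}}^{\theta}\,\norm{g_j(t)-g(t)}_{\Lrm^{a}}^{1-\theta}.
\]

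Next, I would integrate in time and apply Hölder's inequality. Denote by $\beta_*$ the largest admissible exponent, that is $\beta_*:=\frac{\rho b}{b\theta+\rho(1-\theta)}$, which is precisely defined so that $\tfrac{\theta\beta_*}{\rho}+\tfrac{(1-\theta)\beta_*}{b}=1$. Raising the pointwise estimate to the power $\beta_*$ and applying Hölder with conjugate exponents $\tfrac{\rho}{\theta\beta_*}$ and $\tfrac{b}{(1-\theta)\beta_*}$ gives
\[
  \norm{g_j-g}_{\Lrm^{\beta_*}(0,T;\Wrm^{m,\alpha})}
  \le C\,\norm{g_j-g}_{\Lrm^{\rho}(0,T;\Wrm^{k,s})}^{\theta}
  \norm{g_j-g}_{\Lrm^{b}(0,T;\Lrm^{a})}^{1-\theta}.
\]
By the uniform bound on $\norm{g_j}_{\Lrm^{\rho}(\Wrm^{k,s})}$ (and hence, via the weak-limit assumption and lower semicontinuity, also on $\norm{g}_{\Lrm^{\rho}(\Wrm^{k,s})}$), the first factor on the right-hand side is bounded, while by the assumed convergence in $\Lrm^{b}(0,T;\Lrm^{a})$ the second factor tends to zero. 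This establishes convergence in $\Lrm^{\beta_*}(0,T;\Wrm^{m,\alpha})$, and for every smaller $\beta\in[1,\beta_*]$ the claim follows from the continuous embedding $\Lrm^{\beta_*}(0,T)\embed \Lrm^{\beta}(0,T)$ given that $(0,T)$ has finite measure.

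There is no real obstacle beyond bookkeeping: the only subtle point is to verify that the choice $l=\theta k$ really produces the hypotheses in the statement (the sub-critical Sobolev embedding hidden in~\eqref{eq:triebel} is precisely what gives the $-\tfrac{m}{d}$ on the left and $-\tfrac{k}{d}$ on the right in the second hypothesis), and that the conjugacy relation for the Hölder exponents in time yields the stated formula for $\beta_*$. Both are straightforward algebra. If $g$ itself does not \emph{a priori} satisfy the same $\Wrm^{k,s}$-bound, one may either argue as above via lower semicontinuity along a subsequence of $g_j\to g$ almost everywhere in $(0,T)\times\Omega$, or alternatively apply the triangle inequality $\norm{g_j-g}_{\Wrm^{k,s}}\le \norm{g_j}_{\Wrm^{k,s}}+\norm{g}_{\Wrm^{k,s}}$ after passing to such a subsequence.
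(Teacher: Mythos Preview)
Your proof is correct and follows essentially the same route as the paper: pointwise interpolation via~\eqref{eq:triebel} with $l=\theta k$, followed by H\"older's inequality in time. The only cosmetic difference is that you argue at the maximal exponent $\beta_*$ and then embed downward, whereas the paper works directly at a generic $\beta\le\beta_*$ (verifying that $\frac{b\theta\beta}{b-\beta(1-\theta)}\le\rho$, which is equivalent to $\beta\le\beta_*$); you also make explicit the small point, glossed over in the paper, that the $\Lrm^\rho(\Wrm^{k,s})$-bound on $g$ itself follows from lower semicontinuity.
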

\begin{proof}
Take $h_j:=g_j-g$. We let $l:=\theta k$ and define $\gamma$ via
\[
  \frac{1}{\gamma}=\frac{k-l}{ka}+\frac{l}{ks}=\frac{1-\theta}{a}+\frac{\theta}{s}.
\]
This implies that
\[
\frac{1}{\gamma}-\frac{l}{d}=\frac{1-\theta}{a}+\theta\biggl(\frac1s-\frac{k}d\biggr)\leq \frac{1}{\alpha}-\frac{m}{d}.
\]
Now,~\eqref{eq:triebel} gives
\begin{align*}
\int_0^T\norm{h_j(t)}_{\Wrm^{m,\alpha}}^\beta\, \dd t \leq \int_0^T\norm{h_j(t)}_{\Wrm^{k,s}}^{\theta \beta}\cdot\norm{h_j(t)}_{\Lrm^{a}}^{(1-\theta)\beta} \dd t.
\end{align*}
Since $\frac{\beta (1-\theta)}{b}+\frac{b-\beta (1-\theta)}{b}=1$, H\"older's inequality then yields that
\begin{align*}
\int_0^T\norm{h_j(t)}_{\Wrm^{m,\alpha}}^\beta\, \dd t \leq \bigg(\int_0^T\norm{h_j(t)}_{\Wrm^{k,s}}^\frac{b\theta \beta}{b-\beta(1-\theta)}\dd t\bigg)^\frac{b-\beta(1-\theta)}{b} \cdot \bigg(\int_0^T\norm{h_j(t)}_{\Lrm^{a}}^b \dd t\bigg)^\frac{\beta(1-\theta)}{b}
\end{align*}
Since $\frac{b\theta \beta}{b-\beta(1-\theta)}\leq \rho$, the right-hand side converges to $0$ due to the uniform bounds on $h_j$ in $\Lrm^\rho(0,T;\Wrm^{k,s}(\Omega))$.
\end{proof}

The following proposition establishes the existence of a solution at scale $\lambda$.

\begin{proposition} \label{prop:apriori}
For every $\lambda>0$ there exists a strong solution to~\eqref{eq:PDE_lambda}. Moreover, letting $\delta \to 0$ (and holding $\lambda$ fixed), the sequence $(u_{\lambda,\delta})_{\delta > 0}$ of solutions to~\eqref{eq:PDE_ulambdadelta} has a subsequence that converges weakly in $\Wrm^{1,2}(0,T;\Lrm^2(\Omega;\R^m)) \cap \Lrm^2(0,T;\Wrm^{2,2}(\Omega;\R^m))$ to a strong solution $u_\lambda$ of~\eqref{eq:PDE_lambda} that satisfies the a-priori estimates
\begin{align}
    \lambda \norm{\dot{u}_\lambda}_{\Lrm^2_{t,x}}^2 + \norm{\dot{u}_\lambda}_{\Lrm^1_{t,x}} +\sup_{t\in [0,T]} \Bigl( \norm{u_\lambda(t)}_{\Wrm^{1,2}} + \norm{u_\lambda(t)}_{\Lrm^q} \Bigr) &\leq C,     \label{eq:dotulambda_RD_apriori}    
  \\
  \lambda \int_0^T t \norm{\nabla \dot{u}_\lambda(t)}_{\Lrm^2}^2\dd t &\leq C,    \label{eq:dotdotulambda_RI_apriori_t}
\end{align}
and for all $r \in [1,2^*)$ and $\sigma \in (0,T]$,
\begin{align}
\sup_{t\in[\sigma,T]} \lambda^{2} \norm{\dot{u}_\lambda(t)}_{\Lrm^2}^{2}  + \lambda^2 \int_\sigma^T\norm{\nabla \dot{u}_\lambda(t)}_{\Lrm^2}^2\dd t  &\leq C \biggl(1+\frac{\lambda}{\sigma}\biggr),           \label{eq:dotdotulambda_RI_apriori} \\
  \int_\sigma^T \norm{u_\lambda(t)}_{\Wrm^{2,r}}^2\dd t
     + \sup_{t\in[\sigma,T]} \norm{u_\lambda(t)}_{\Wrm^{2,2}}^{2}
&\leq C \biggl(1+\frac{\lambda}{\sigma}\biggr)^{\max\{q-1,1\}}.  \label{eq:ulambda_Wp2}
\end{align}
If $d = 3$, then~\eqref{eq:ulambda_Wp2} also holds for $r = 2^* = 6$. Here, the constants $C>0$ depend on $r$, $\norm{{f}}_{\Wrm^{1,\infty}(\Lrm^\infty)}$, $\norm{u_0}_{\Wrm^{1,2}}$, but are independent of $\lambda$ (assuming $\lambda \in (0,1)$). Moreover, $u_\lambda$ satisfies the energy balance
\begin{align}
&\Ecal(t,u_\lambda(t)) - \Ecal(0,u_0) \notag\\
&\quad = - \int_0^t \lambda \norm{\dot{u}_\lambda(\tau)}_{\Lrm^2}^2 + \Rcal_1(\dot{u}_\lambda(\tau)) \dd \tau -\int_0^t\dprb{\dot{f}(\tau),u_\lambda(\tau)}\dd \tau \label{eq:ulambda_energy}
\end{align}
for almost all $t \in [0,T]$.

If we assume additionally that $\Delta u_0 -\DD W_0(u_0)\in \Lrm^2(\Omega,\R^m)$, then 
\begin{align}
\sup_{t\in[0,T]} \lambda^2 \norm{\dot{u}_\lambda(t)}_{\Lrm^2}^2 + \lambda \int_0^T \norm{\nabla \dot{u}_\lambda(t)}_{\Lrm^2}^2 \dd t
&\leq C \bigl(1 + \lambda Z_0 \bigr), \label{eq:ulambda_Linfty1}
\\
\int_0^T \norm{u_\lambda(t)}_{\Wrm^{2,r}}^2\dd t
     + \sup_{t\in[0,T]} \norm{u_\lambda(t)}_{\Wrm^{2,2}}^{2}
&\leq C \bigl(1 + \lambda Z_0 \bigr)^{\max\{q-1,1\}}, \label{eq:ulambda_Linfty2}
\end{align}
where $Z_0 := \norm{\Delta u_0 - \DD W_0(u_0) + f(0)}_{\Lrm^2}^2$. In this case the solution to~\eqref{eq:PDE_lambda} is also unique.
\end{proposition}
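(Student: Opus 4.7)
The plan is to pass to the limit $\delta \todown 0$ in the regularized problem~\eqref{eq:PDE_ulambdadelta}, using the $\delta$-uniform estimates of Lemma~\ref{lem:apri}, and then to obtain uniqueness under the extra initial-data hypothesis by an energy-subtraction argument. For fixed $\lambda > 0$, the bounds~\eqref{eq:dotulambdadelta_RD_apriori}--\eqref{eq:ulambdadelta_Wp2} let me extract a subsequence $\delta_j \todown 0$ converging weakly (or weakly-$*$) in each of the listed spaces; the corresponding bounds~\eqref{eq:dotulambda_RD_apriori}--\eqref{eq:ulambda_Wp2} for the limit $u_\lambda$ then follow from weak/weak-$*$ lower semicontinuity, and~\eqref{eq:ulambda_Linfty1}--\eqref{eq:ulambda_Linfty2} analogously under~\eqref{eq:new-ass}. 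To treat the nonlinearity $\DD W_0(u_{\lambda,\delta})$ I would upgrade to strong convergence: combining $u_{\lambda,\delta}$ bounded in $\Lrm^\infty(0,T;\Wrm^{1,2})$ with $\dot u_{\lambda,\delta}$ bounded in $\Lrm^2((0,T)\times\Omega;\R^m)$, an Aubin--Lions argument (equivalently, Lemma~\ref{lem:compact} with $k=1$, $s=2$, $a=2$) yields $u_{\lambda,\delta}\to u_\lambda$ strongly in $\Lrm^\beta(0,T;\Lrm^a(\Omega;\R^m))$ for appropriate $a,\beta$ and pointwise a.e.\ along a further subsequence. Together with $\abs{\DD W_0(v)} \leq C(1+\abs{v}^{q-1})$ and the uniform $\Lrm^\infty(0,T;\Lrm^q)$-bound, Vitali's theorem gives $\DD W_0(u_{\lambda,\delta}) \to \DD W_0(u_\lambda)$ in $\Lrm^2((0,T)\times\Omega;\R^m)$, and the $\Lrm^2_\loc((0,T);\Wrm^{2,2})$-bound promotes this to strong convergence of $\Delta u_{\lambda,\delta}$ on any $[\sigma,T]$.

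The subdifferential term is handled via the variational-inequality form of~\eqref{eq:PDE_ulambdadelta}: convexity of $R_1^\delta$ yields, pointwise,
\[
R_1^\delta(\dot u_{\lambda,\delta}) + \bigl(-\lambda \dot u_{\lambda,\delta} + \Delta u_{\lambda,\delta} - \DD W_0(u_{\lambda,\delta}) + f\bigr)\cdot(\xi - \dot u_{\lambda,\delta}) \leq R_1^\delta(\xi)
\]
for any smooth $\xi$. Integrating against a nonnegative cutoff $\eta \in \Crm^\infty_c(0,T)$ and letting $\delta \todown 0$, the bound $\abs{R_1^\delta - R_1} \leq 2\delta$ treats the $R_1^\delta$ terms; the linear contribution $\int \eta \dprb{\Delta u_{\lambda,\delta} - \DD W_0(u_{\lambda,\delta}) + f,\xi - \dot u_{\lambda,\delta}}\dd t$ passes by combining strong convergence of the coefficients with weak convergence of $\dot u_{\lambda,\delta}$; and the two quadratic pieces $\int \eta \Rcal_1^\delta(\dot u_{\lambda,\delta})\dd t$ and $\lambda \int \eta \norm{\dot u_{\lambda,\delta}}_{\Lrm^2}^2 \dd t$ enter with the correct sign and are controlled by weak lower semicontinuity. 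A Lebesgue-point argument in $t$ then upgrades the integrated inequality to the pointwise-in-$t$ form~\eqref{eq:Varlam}, and $u_\lambda(0) = u_0$ is inherited via the $\Wrm^{1,2}(0,T;\Lrm^2)$-trace at $t = 0$. The energy balance~\eqref{eq:ulambda_energy} is obtained by passing to the limit in~\eqref{eq:ulambdadelta_energy}: weak lower semicontinuity delivers one direction, and the matching upper bound follows from testing~\eqref{eq:Varlam} with $\xi = 0$ and using the regularity of $u_\lambda$ to justify a chain rule for $t\mapsto \Wcal(u_\lambda(t))$.

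The uniqueness statement under~\eqref{eq:new-ass} is the main remaining point. Under that hypothesis~\eqref{eq:ulambda_Linfty1}--\eqref{eq:ulambda_Linfty2} are available, so any two strong solutions $u^1, u^2$ lie in $\Lrm^\infty(0,T;\Wrm^{2,2})$ with $\dot u^i \in \Lrm^2$ and, via Lemma~\ref{lem:Linfty}, in $\Lrm^\infty((0,T)\times\Omega;\R^m)$. I would use $\xi = \dot u^2$ in~\eqref{eq:Varlam} for $u^1$ and vice versa, and add; the $\Rcal_1$ terms cancel, leaving
\[
\lambda\norm{\dot u^1 - \dot u^2}_{\Lrm^2}^2 + \frac{1}{2}\frac{\di}{\di t}\norm{\nabla(u^1-u^2)}_{\Lrm^2}^2 \leq -\dprb{\DD W_0(u^1) - \DD W_0(u^2), \dot u^1 - \dot u^2}.
\]
The Lipschitz continuity of $\DD W_0$ on the common $\Lrm^\infty$-ball containing $u^1,u^2$ (from~\ref{as:W} plus Lemma~\ref{lem:Linfty}), Young's inequality to absorb $\tfrac{\lambda}{2}\norm{\dot u^1-\dot u^2}_{\Lrm^2}^2$ into the viscous term, and Poincar\'e--Friedrichs yield $\frac{\di}{\di t}\norm{\nabla(u^1-u^2)}_{\Lrm^2}^2 \leq C(\lambda) \norm{\nabla(u^1-u^2)}_{\Lrm^2}^2$. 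Since $u^1(0)=u^2(0)=u_0$, Gr\"onwall's inequality forces $u^1\equiv u^2$. The main obstacle throughout is making the tests $\xi = \dot u^2$ and its symmetric counterpart rigorous in the nonsmooth variational formulation; this is standard but requires care with the $\dot u^i \in \Lrm^2$ regularity and the trace at $t = 0$.
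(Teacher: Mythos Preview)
Your overall strategy matches the paper's: pass to the limit $\delta\todown 0$ in the variational-inequality form using the $\delta$-uniform bounds of Lemma~\ref{lem:apri}, then argue uniqueness by cross-testing and Gr\"onwall. The uniqueness argument is exactly the paper's.

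There is, however, one genuine gap in the existence part. You assert that the $\Lrm^2_\loc(0,T;\Wrm^{2,2})$-bound ``promotes'' the strong convergence of $\DD W_0(u_{\lambda,\delta})$ to strong convergence of $\Delta u_{\lambda,\delta}$ on $[\sigma,T]$, and you then use this to pass to the limit in the cross term $\int\eta\,\dprb{\Delta u_{\lambda,\delta},\dot u_{\lambda,\delta}}\dd t$ as a strong--weak product. The bound alone does not give strong convergence of second derivatives: interpolation between strong $\Lrm^2$-convergence and the $\Wrm^{2,2}$-bound (Lemma~\ref{lem:compact}) only yields strong convergence in $\Wrm^{m,\alpha}$ for $m<2$, and from the equation $-\Delta u_{\lambda,\delta}=f-\lambda\dot u_{\lambda,\delta}-\DD R_1^\delta(\dot u_{\lambda,\delta})-\DD W_0(u_{\lambda,\delta})$ you would need strong convergence of $\dot u_{\lambda,\delta}$, which you do not have (only weak in $\Lrm^2$).

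The paper avoids this by not treating $\int \Delta u_{\lambda,\delta}\cdot\dot u_{\lambda,\delta}$ as a product at all. Instead it integrates over $[a,b]\subset(0,T)$ and rewrites
\[
-\int_a^b\int_\Omega \Delta u_{\lambda,\delta}\cdot\dot u_{\lambda,\delta}\dd x\dd t
=\int_\Omega\frac{\abs{\nabla u_{\lambda,\delta}(b)}^2}{2}-\frac{\abs{\nabla u_{\lambda,\delta}(a)}^2}{2}\dd x,
\]
then uses the strong convergence $u_{\lambda,\delta}\to u_\lambda$ in $\Lrm^2(a,b;\Wrm^{1,2})$ (from Lemma~\ref{lem:compact} with the $\Lrm^\infty_\loc(\Wrm^{2,2})$-bound) to pass to the limit in the boundary terms for almost every $a,b$. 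With this modification your argument goes through; everything else you wrote---the Aubin--Lions step, the treatment of $\DD W_0$ via pointwise convergence plus the $\Lrm^\infty$-bound on $[\sigma,T]$, the lower-semicontinuity handling of $\Rcal_1^\delta(\dot u_{\lambda,\delta})$ and $\lambda\norm{\dot u_{\lambda,\delta}}_{\Lrm^2}^2$, and the $\xi=0$ test for the energy equality---is correct and in line with the paper.
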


\begin{proof}
The proof is accomplished by passing to the limit $\delta \todown 0$ in~\eqref{eq:PDE_ulambdadelta}.

\proofstep{Existence.}
Let $u_{\lambda,\delta}$ be a solution of~\eqref{eq:PDE_ulambdadelta} for given $\lambda,\delta > 0$. In case we assume $\Delta u_0 -\DD W_0(u_0)\in \Lrm^2(\Omega,\R^m)$ for~\eqref{eq:ulambda_Linfty1},~\eqref{eq:ulambda_Linfty2}, we also require that 
for the smooth approximation $(u_{0,\delta}) \subset \Crm^\infty(\cl{\Omega};\R^m)$ of the initial value $u_0$ additionally the convergence in~\eqref{eq:new-ass} holds.

Let $\xi \in \Lrm^1(0,T;\Wrm^{1,2}_0(\Omega;\R^m)$ and test the equation with $\xi - \dot{u}_{\lambda,\delta}$. This implies for almost all $[a,b] \subset (0,T)$  that
\begin{align*}
  &\int_a^b\int_\Omega \DD R_1^\delta(\dot{u}_{\lambda,\delta}) \cdot (\xi-\dot{u}_{\lambda,\delta}) \dd x\dd t
  \\
   &\quad = \int_a^b\int_{\Omega}\bigl[- \lambda \dot{u}_{\lambda,\delta} + \Delta u_{\lambda,\delta} - \DD W_0(u_{\lambda,\delta}) + f \bigr] \cdot (\xi - \dot{u}_{\lambda,\delta}) \dd x \dd t .
\end{align*}
By the convexity we know that $\DD R_1^\delta(z) \cdot (y-z) \leq R_1^\delta(y)-R_1^\delta(z)$. Hence,
\begin{align}
  &\int_a^b \int_\Omega R_1^\delta(\dot{u}_{\lambda,\delta})\dd x
   + \int_{\Omega}\bigl[- \lambda \dot{u}_{\lambda,\delta} + \Delta u_{\lambda,\delta} - \DD W_0(u_{\lambda,\delta}) + f \bigr] \cdot (\xi - \dot{u}_{\lambda,\delta}) \dd x   \notag\\
   &\quad \leq \int_a^b \int_\Omega R_1^\delta(\xi)\dd x.  \label{eq:deltaVar}
\end{align}
The a-priori estimates of Lemma~\ref{lem:apri} imply that there is a sequence of $\delta$'s (not explicitly denoted) such that
\[
  u_{\lambda,\delta}\toweak u_{\lambda}  \quad\text{in $\Lrm^2(0,T;\Wrm^{1,2}_0(\Omega;\R^m))\cap \Wrm^{1,2}(0,T;\Lrm^{2}(\Omega;\R^m))$} \qquad \text{as $\delta \todown 0$.}
\]
The classic Aubin--Lions compactness lemma then implies that for a subsequence (not explicitly labeled) $u_{\lambda,\delta}\to u_{\lambda}$ in $\Lrm^2((0,T) \times \Omega;\R^m)$.

Moreover, on every interval $[a,b] \subset (0,T)$ we have a uniform bound on the $u_{\lambda,\delta}$ in the space
\[
  \Lrm^\infty(a,b;\Wrm^{2,2}(\Omega;\R^m))
\]
As $d = 2,3$, the space $\Wrm^{2,2}(\Omega)$ is compactly embedded into $\Crm^\alpha(\Omega)$ for some $\alpha \in (0,1)$. Thus, we find by Lemma~\ref{lem:compact} that a subsequence convergences strongly in
\[
  \Lrm^\beta((a,b) \times \Omega;\R^m)\cap \Lrm^2(a,b;\Wrm^{1,2}(\Omega;\R^m))  \qquad\text{for any $\beta<\infty$.}
\]
Hence one may pass to the (lower) limit with~\eqref{eq:deltaVar} for almost every $a,b$ as above. For the term $\Delta u_{\lambda,\delta}\cdot \dot{u}_{\lambda,\delta}$, we use that, as $\delta \todown 0$, for almost every $a,b$ it holds that
\begin{align*}
&-\int_a^b \int_\Omega \Delta u_{\lambda,\delta}\cdot \dot{u}_{\lambda,\delta}\dd x \dd t
=\int_\Omega\frac{\abs{\nabla u_{\lambda,\delta}(b)}^2}{2}- \frac{\abs{\nabla u_{\lambda,\delta}(a)}^2}{2}\dd x 
\\
&\quad\to \int_\Omega\frac{\abs{\nabla u_{\lambda}(b)}^2}{2}- \frac{\abs{\nabla u_{\lambda}(a)}^2}{2}\dd x
=  -\int_a^b \int_\Omega \Delta u_{\lambda,\delta}\cdot \dot{u}_{\lambda}\dd x \dd t.
\end{align*}
For the term involving $R_1^\delta(\dot{u}_{\lambda,\delta})$, we use the weak lower semicontinuity of the functional $\Rcal_1$ and the fact that $\abs{R_1^\delta(z) - R_1(z)} \leq C\delta(1 + \abs{z})$ to see that
\begin{align*}
 \int_a^b \int_\Omega R_1(\dot{u}_{\lambda}(t))\dd x \dd t
 &\leq \liminf_{\delta\to 0}\int_a^b \int_\Omega R_1(\dot{u}_{\lambda,\delta}(t))\dd x \dd t \\
 &\leq \liminf_{\delta\to 0}\int_a^b \int_\Omega R_1^{\delta}(\dot{u}_{\lambda,\delta}(t))\dd x \dd t.
\end{align*}
All other terms converge in a straightforward manner. Thus, a solution $u_\lambda$ that satisfies~\eqref{eq:Varlam} is constructed. The a-priori estimates follow directly from Lemma~\ref{lem:apri} together with the weak lower semicontinuity of the respective norms. The energy equality~\eqref{eq:ulambda_energy} follows by passing to the limit in~\eqref{eq:ulambdadelta_energy} at almost every $s,t \in [0,T]$.

\proofstep{Uniqueness.}
Let us assume that we have two solutions to~\eqref{eq:PDE_lambda} (we suppress the fixed $\lambda > 0$ in the following)
\[
  v,w \in \Lrm^2(0,T;\Wrm^{1,2}_0(\Omega;\R^m))\cap \Wrm^{1,2}(0,T;\Lrm^{2}(\Omega;\R^m))
\]
that satisfy~\eqref{eq:Varlam} and $w(0)=u(0)$. Moreover, we assume that~\eqref{eq:ulambda_Linfty1},~\eqref{eq:ulambda_Linfty2} hold.

We use $\dot{v}$ as a test function in the variational inequality~\eqref{eq:deltaVar} for $w$ and $\dot{w}$ as test function $\xi$ for $v$ and then add the two resulting inequalities. This implies (almost everywhere in time)
\begin{align*}
 & \int_\Omega R_1(\dot{w}) + R_1(\dot{v})+\bigl[\lambda (\dot{w}-\dot{v}) - \Delta(w-v)  + \DD W_0(w)-\DD W_0(v) \bigr] \cdot (\dot{w}-\dot{v}) \dd x
  \\
  &\quad  \leq \int_\Omega R_1(\dot{v}) + R_1(\dot{w})\dd x.
\end{align*}
Then,
\begin{align*}
  &\int_\Omega \lambda \abs{\dot{w}-\dot{v}}^2 \dd x + \frac{\di}{\di t} \int_\Omega \frac{\abs{\nabla (w-v)}^2}2 \dd x \\
  &\quad \leq \int_\Omega \Big(\DD W_0(w)-\DD W_0(v) \Big) \cdot (\dot{v}-\dot{w}) \dd x
  \\
  &\quad \leq C\int_\Omega \int_0^1\abs{\DD^2 W_0(\sigma w+(1-\sigma)v)}\dd \sigma \cdot \abs{v-w} \abs{\dot{v}-\dot{w}} \dd x.
\end{align*}
By~\eqref{eq:ulambda_Linfty2} and the Sobolev embedding theorem, we find that $u,v$ are bounded functions, hence $\abs{\DD^2 W_0(\sigma w+(1-\sigma)v})$ is uniformly bounded, and 
\begin{align*}
\int_\Omega \lambda \abs{\dot{w}-\dot{v}}^2 \dd x + \frac{\di}{\di t} \int_\Omega \frac{\abs{\nabla (w-v)}^2}{2} \dd x\leq C\int_\Omega \abs{v-w} \abs{\dot{v}-\dot{w}} \dd x
\end{align*}
Using Young's inequality, we can absorb the term $\abs{\dot{v}-\dot{w}} $ to the left-hand side and find by Poincar\'e--Friedrichs's inequality
\begin{align*}
\frac{\di}{\di t} \int_\Omega \abs{\nabla (v-w)}^2 \dd x\leq C \int_\Omega \abs{\nabla(v-w)}^2  \dd x.
\end{align*}
Now Gronwall's lemma (and the zero boundary values of $v,w$) implies that $v\equiv w$.
\end{proof}

\section{Construction of a two-speed solution} \label{sc:twospeed}

In this section we construct a two-speed solution by letting $\lambda \todown 0$ and performing a careful investigation of the behavior around jump points.

\subsection{Limit passage $\lambda \todown 0$}
\label{ssc:lambda}

We start with a convergence lemma.

\begin{lemma}
\label{lem:bv-comp}
Let $s \in [1,\infty)$ and assume that
\[
  (v_k)_k \subset \Lrm^\infty(0,T;\Wrm^{1,s}(\Omega))\cap \BV(0,T;\Lrm^1(\Omega))
\]
is uniformly bounded (in these spaces). Then, there is a (non-relabeled) subsequence such that
\begin{enumerate}[(i)]
\item $v_k\to v$ in $\Lrm^{1}((0,T)\times \Omega)$;
\item $v_k\to v$ in $\Lrm^{\beta_1}(0,T;\Lrm ^{\alpha_1} (\Omega))$ for all $\beta_1\in [1,\infty)$ and all $\alpha_1\in [1,\tilde{q})$, where $\tilde{q}=\frac{d s}{d-s}$ for $s<d$ and $\tilde{q}=\infty$ otherwise;
\item If moreover $(v_k)_k$ is uniformly bounded in $\Lrm^\rho(0,T;\Wrm^{k,s}(\Omega))$ for a $k \in \N \cup \{0\}$ and $\rho \in [1,\infty]$, then
\[\qquad
  v_k\to v  \quad\text{in $\Lrm^{\beta_2}(0,T;\Wrm^{m,\alpha_2} (\Omega))$}
\]
for all $m\in 0,\ldots,k-1$ and $\alpha_2\in [1,\tilde{q})$ with $\tilde{q}=\frac{d s}{d-(k-m)s}$ for $s<\frac{d}{k-m}$ and $\tilde{q}=\infty$ otherwise, and $\beta_2\in \bigl[1, \frac{\rho}{\theta + \rho(1-\theta)}\bigr]$, where $\theta\in [0,1)$ is defined by
  \[\qquad
\frac{1}{\tilde{q}}-\frac{m}{d}<\frac{1}{\alpha_2}-\frac{m}{d}=1-\theta+\theta\biggl(\frac{1}{s}-\frac{k}{d}\biggr).
 \]
\end{enumerate}
\end{lemma}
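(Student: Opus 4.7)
The plan is to extract a subsequence by Helly's selection principle (the spatial compactness turning the $\BV$-in-time bound into pointwise relative compactness), then to upgrade the mode of convergence via interpolation, and finally to reduce (iii) to the already-proved Lemma~\ref{lem:compact}.

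For (i), I would first note that $(v_k)_k$ is uniformly bounded in $\BV(0,T;\Lrm^1(\Omega))$ and that, at every $t\in[0,T]$, the values $v_k(t)$ lie in a bounded set of $\Wrm^{1,s}(\Omega)$, which is compactly embedded into $\Lrm^1(\Omega)$ by the Rellich--Kondrachov theorem. A Banach-valued version of Helly's selection theorem (see e.g.~\cite{MielkeRoubicek15book}) then produces a (non-relabeled) subsequence and a limit $v$ such that $v_k(t)\to v(t)$ in $\Lrm^1(\Omega)$ for every $t\in[0,T]$; here one tacitly uses canonical left-continuous representatives to make pointwise evaluation meaningful. The uniform bound $\|v_k(t)\|_{\Lrm^1}\leq C$ then converts this into $v_k\to v$ in $\Lrm^1((0,T)\times\Omega)$ via dominated convergence.

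For (ii), the Sobolev embedding $\Wrm^{1,s}(\Omega)\embed\Lrm^{\tilde q}(\Omega)$ upgrades the spatial bound to $\sup_{t,k}\|v_k(t)\|_{\Lrm^{\tilde q}}<\infty$. For any $\alpha_1\in[1,\tilde q)$, I would interpolate at each fixed $t$:
\[
\|v_k(t)-v(t)\|_{\Lrm^{\alpha_1}}\le \|v_k(t)-v(t)\|_{\Lrm^1}^{1-\theta}\,\|v_k(t)-v(t)\|_{\Lrm^{\tilde q}}^\theta
\]
for a suitable $\theta\in[0,1)$. The second factor stays uniformly bounded in $k$ and $t$, while the first tends to zero pointwise by (i), giving $v_k(t)\to v(t)$ in $\Lrm^{\alpha_1}(\Omega)$ for every $t$. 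A second dominated-convergence step, using the uniform $t$-bound of this $\Lrm^{\alpha_1}$-norm, then yields strong convergence in $\Lrm^{\beta_1}(0,T;\Lrm^{\alpha_1}(\Omega))$ for every $\beta_1<\infty$.

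Finally, (iii) follows by invoking Lemma~\ref{lem:compact} with the convergence from (ii) (choosing $a=1$ so that the exponent identity in Lemma~\ref{lem:compact} matches the one in the statement of (iii), and $b$ as large as needed to cover the full admissible range for $\beta_2$) together with the additional uniform bound in $\Lrm^\rho(0,T;\Wrm^{k,s}(\Omega))$; the required $\theta$ is exactly the one appearing in the statement. The main technical point throughout is the Banach-valued Helly theorem, which requires both the $\BV$-in-time bound and the $\Wrm^{1,s}$-in-space compactness; both are at hand, and this is a standard tool in the rate-independent literature, so no essentially new idea is needed beyond the interpolation and dominated-convergence manipulations above.
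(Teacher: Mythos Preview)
Your proposal is correct and, for parts~(i) and~(ii), takes a genuinely different route from the paper. The paper proves~(i) by a hands-on grid argument: it fixes nested partitions $G_N\subset(0,T)$ avoiding jump points, uses Rellich compactness plus a diagonal argument to get convergence of $(v_k(t_l^{(N)}))_k$ in $\Lrm^1(\Omega)$ at every grid point, and then shows directly that $(v_k)_k$ is Cauchy in $\Lrm^1((0,T)\times\Omega)$ by splitting each subinterval and invoking the uniform $\BV(0,T;\Lrm^1)$-bound to control $\sum_l\|v_k(t)-v_k(t_l^{(N)})\|_{\Lrm^1}$. For~(ii) and~(iii) the paper does not argue pointwise at all but reduces both to Lemma~\ref{lem:compact} with $a=b=1$; in~(ii) it exploits the $\Lrm^\infty(0,T;\Wrm^{1,s})$-bound by formally taking $\rho\to\infty$ there. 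Your approach instead packages~(i) into the Banach-valued Helly selection theorem (pointwise-in-$t$ compactness from $\Wrm^{1,s}\embed\embed\Lrm^1$ plus the $\BV$-bound), and then upgrades~(ii) by a direct $\Lrm^1$--$\Lrm^{\tilde q}$ interpolation at each fixed~$t$ followed by dominated convergence. This is cleaner conceptually and avoids the grid bookkeeping, at the cost of importing Helly's theorem as a black box; the paper's argument is more self-contained but slightly longer. For~(iii) both proofs coincide: the paper also applies Lemma~\ref{lem:compact} with $a=b=1$, which already yields the full range $\beta_2\in[1,\frac{\rho}{\theta+\rho(1-\theta)}]$, so your ``$b$ as large as needed'' is harmless but unnecessary.
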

\begin{proof}
For $N \in \N$ let $G_N := \{t_l^{(N)}\}_{l=1,\ldots,2^N}$ with $t_l^{(N)} \in (0,T)$ be nested grids (i.e.\ $G_N \subset G_{N+1}$) such that none of the $t_l^{(N)}$ lie on jump points on any of the $v_k$ (note that every jump set is countable), $\norm{v_k(t_l^{(N)})}_{\Wrm^{1,s}}$ is bounded uniformly in $l,k,N$ and, with $t_0^{(N)} := 0$, $t_{N+1}^{(N)} := T$,
\[
  h^{(N)} := \max_{l=0,\ldots,2^N} \absb{t_{l+1}^{(N)} - t_l^{(N)}} \to 0  \qquad\text{as $N \to \infty$.}
\]
By Rellich's compactness theorem in conjunction with a diagonal argument, we find that there is a subsequence of the $v_k$'s (not explicitly labeled) such that $(v_k(t_l^{(N)}))_k$ converges in $\Lrm^1$ as $k\to\infty$ for all $l,N\in \N$.

Let $\epsilon>0$ and fix $N$ such that $h^{(N)}\leq \epsilon$. Then, there exists an $n_\epsilon \in \N$ such that for $j,k\geq n_\epsilon$ and $l\in \{1, \ldots,2^N\}$,
\[
\max_{l=0,\ldots,2^N} \norm{v_j(t_l^{(N)})-v_k(t_l^{(N)})}_{\Lrm^1} \leq \epsilon.
\]
We then estimate
\begin{align*}
\int_0^T\norm{v_j(t)-v_k(t)}_{\Lrm^1}\dd t
&\leq \sum_{l=0}^{2^N} \int_{t_l^{(N)}}^{t_{l+1}^{(N)}}\norm{v_j(t)-v_k(t)}_{\Lrm^1}\dd t\\
&\leq \sum_{l=0}^{2^N} \int_{t_l^{(N)}}^{t_{l+1}^{(N)}}\norm{v_j(t)-v_j(t_l^{(N)})}_{\Lrm^1}\dd t
+h^{(N)}\sum_{l=0}^{2^N}\norm{v_j(t_l^{(N)})-v_k(t_l^{(N)})}_{\Lrm^1}\\
&\quad +\sum_{l=0}^{2^N} \int_{t_l^{(N)}}^{t_{l+1}^{(N)}}\norm{v_k(t)-v_k(t_l^{(N)})}_{\Lrm^1}\dd t
\\
&\leq \int_{0}^{h^{(N)}}\sum_{l=0}^{2^N}\norm{v_j(t_l^{(N)}+s)-v_j(t_l^{(N)})}_{\Lrm^1}\dd s
+T\epsilon
\\
&\quad +\int_{0}^{h^{(N)}}\sum_{l=0}^{2^N}\norm{v_k(t_l^{(N)}+s)-v_k(t_l^{(N)})}_{\Lrm^1}\dd s \\
&\leq T\epsilon + h^{(N)}\norm{v_j}_{\BV(\Lrm^1)}+h^{(N)}\norm{v_k}_{\BV(\Lrm^1)} \\
&\leq C\epsilon
\end{align*}
This implies $v_j\to v$ in $\Lrm^{1}((0,T) \times \Omega)$, i.e.,~(i).

For~(ii), use $a=b=1$, $\rho \in \N$, $k=1$, $s=s$, $\beta=\beta_1$, $m=0$ and $\alpha=\alpha_1$ in Lemma~\ref{lem:compact}. Then, the result follows by fixing $\theta\in (0,1)$ such that 
\[
\frac{1}{\tilde{q}} < \frac{1}{\alpha_1}=1-\theta+\theta\Big(\frac{1}{s}-\frac{1}{d}\Big).
\]
That this is possible follows by the fact that $\alpha_1=1$ relates to $\theta=0$ and $\alpha_1=\tilde{q}$ relates to $\theta=1$. We then get $v_j\to v$ in $\Lrm^{\beta_1}(0,T;\Lrm^{\alpha_1}(\Omega))$ for all 
\[
  \beta_1 \in \biggl[1, \frac{\rho}{b\theta + \rho(1-\theta)}\biggr].
\]
Letting first $\theta \toup 1$ and then $\rho \to \infty$ yields the claim.

For~(iii), take $a=b=1$, $\rho=\rho$, $k=k$, $s=s$, $\beta=\beta_2$, $m=m$ and $\alpha=\alpha_2$ in Lemma~\ref{lem:compact}. The result follows by fixing $\theta\in [0,1)$ such that 
\[
\frac{1}{\tilde{q}}-\frac{m}{d} < \frac{1}{\alpha_2}-\frac{m}{d}=1-\theta+\theta\Big(\frac{1}{s}-\frac{k}{d}\Big).
 \]
 Here the endpoint $\alpha_2=\tilde{q}$ relates to $\theta=1$.
 We calculate (with the usual convention $\frac1{0}=\infty$)
 \[
 \theta=\frac{\frac{1}{\alpha_2}-\frac{m}{d}-1}{\frac{1}{s}-\frac{k}{d}-1}.
 \]
Once $\theta$ is fixed, we may choose
 \[
  \beta_2\in \biggl[1, \frac{\rho}{\theta + \rho(1-\theta)}\biggr].
 \]
 This completes the proof.
\end{proof}

We now consider the behavior of the solutions $u_\lambda$ constructed in Proposition~\ref{prop:apriori} as $\lambda \todown 0$. By the uniform estimates in Proposition~\ref{prop:apriori} and
Lemma~\ref{lem:bv-comp}, we obtain the following convergences for a (non-relabeled) subsequence:
\begin{equation}
\label{eq:l_conv}
\left\{
\begin{aligned}
u_\lambda&\to u  \quad\text{in } \Lrm^a_\loc((0,T) \times \Omega;\R^m) \quad\text{for $a\in [1,\infty)$;}\\
u_\lambda&\to u  \quad\text{in } \Lrm^a_\loc(0,T;\Wrm^{1,r}_0(\Omega;\R^m)) \quad\text{for $a\in [1,\infty)$, $r\in [1,2^*)$;}\\
u_\lambda&\toweak  u  \quad\text{in }\Lrm^2_\loc(0,T;\Wrm^{2,r}(\Omega;\R^m))\quad\text{ for $r\in [1,2^*)$;}\\
u_\lambda&\toweakstar u  \quad\text{in }\Lrm^\infty(0,T;(\Wrm^{1,2} \cap \Lrm^q)(\Omega;\R^m));\\
u_\lambda&\toweakstar u  \quad\text{in }\Lrm^\infty_\loc(0,T;\Wrm^{2,2}(\Omega;\R^m));\\
u_\lambda &\toweakstar u \quad\text{in $\BV(0,T;\Lrm^1(\Omega;\R^m)))$.}
\end{aligned}
\right.
\end{equation}
Indeed, the a-priori estimates~\eqref{eq:dotulambda_RD_apriori}--\eqref{eq:ulambda_Wp2} yield uniform boundedness in the spaces
\[
  \BV(\Lrm^1), \quad \Lrm^\infty(\Wrm^{1,2} \cap \Lrm^q), \quad \Lrm^2_\loc(\Wrm^{2,r}), \quad \Lrm^\infty_\loc(\Wrm^{2,2}).
\]
Then we apply Lemma~\ref{lem:bv-comp}~(iii) with $s = 2$, $\rho = 2$, $k = 2$, $m = 1$, and $\alpha_2 \to \tilde{q} = 2^*$ ($\theta \to 1$) to obtain the strong convergence in $\Lrm^a_\loc(0,T;\Wrm^{1,r}_0(\Omega;\R^m))$ for $a\in [1,\infty)$, $r\in [1,2^*)$. Finally, the strong convergence in $\Lrm^a_\loc((0,T) \times \Omega;\R^m)$ for $a\in [1,\infty)$ follows by embedding since $r$ can be chosen to be larger than $d$.

The above convergences in particular imply that
\[
\Ecal(t,u_{\lambda}(t)) \to \Ecal(t,u(t)) \qquad\text{for almost every $t\in (0,T]$}.
\]
Moreover, by the weak* lower semicontinuity of the variation in $\BV(0,T;\Lrm^1(\Omega;\R^m))$, we find from the energy balance~\eqref{eq:ulambda_energy} the energy inequality~\eqref{eq:subenerg} for almost all $s,t \in [0,T]$.

\subsection{Jump evolutions}

In this section we construct the resolution for the jump transients. We introduce the following rescaling of $u_\lambda$:
\[
v_\lambda(t,\theta):=u_\lambda(t+\lambda\theta),  \qquad t \in [0,T], \; \theta \in \biggl(-\frac{t}{\lambda},\frac{T-t}{\lambda}\biggr).
\]

We first give a sketch of the procedure to follow. Observe (by a change of variables) that $v_\lambda$ satisfies the following PDE in $\theta$ for fixed $t$:
\[
\left\{
\begin{aligned}
\partial_\theta v_\lambda(t,\theta) + \partial \Rcal_1(\partial_\theta  v_\lambda(t,\theta)) - \Delta  v_\lambda(t,\theta) + \DD W_0( v_\lambda(t,\theta)) &\ni f(t+\lambda \theta),
\\
 v_\lambda(t,-t/\lambda)&=u_0
\end{aligned}
\right.
\]
for almost every $t \in (0,T)$, $\theta \in (-\frac{t}{\lambda},\frac{T-t}{\lambda})$. Here, the above system is understood in the same way as~\eqref{eq:Varlam}. Heuristically, for fixed $\theta$,
\[
f(t+\lambda\theta)=f(t) + \lambda\theta \dot{f}(t)+\BigO(\lambda^2).
\]
Hence, as $\lambda \todown 0$ we will be able to show (see Lemma~\ref{lem:energ2} below) that the limit process $v(t,\frarg)$ is a strong solution of
\begin{equation} \label{eq:theta}
  \partial_\theta v(t,\theta) + \partial \Rcal_1(\partial_\theta  v(t,\theta)) - \Delta  v(t,\theta) + \DD W_0( v(t,\theta)) \ni f(t)
\end{equation}
in the sense analogue to~\eqref{eq:Varlam}.

One then expects that these $v(t,\frarg)$ parameterize the jumps. However, this picture is incomplete as one may need to deal with countably many separate evolutions that together constitute the evolution over the jump. These evolutions are separated on an intermediate scale.

We start with a lemma collecting some quantitative estimates at the fine scale $\theta$.

\begin{lemma}
\label{lem:energ2}
Let $t\in (0,T]$ and let $\tau_j\to t$, $L_j\to \infty$ be such that $\lambda_j L_j\to 0$ and $[\tau_j-\lambda_jL_j,\tau_j+\lambda_j L_j]\subset (0,T)$.
Then there is a (non-relabeled) subsequence such that for every $L>0$ the maps
\[
  v^j(\theta):=u_{\lambda_j}(\tau_j+\lambda_j\theta),  \qquad \theta \in (-L,L),
\]
converge to a process
\[
  v \in \Lrm^\infty(-\infty,\infty;(\Wrm^{1,2}_0 \cap \Wrm^{2,2})(\Omega;\R^m))  \quad\text{with}\quad
  \dot{v} \in \Lrm^2((-\infty,\infty)\times\Omega;\R^m)
\]
in the following sense:
\begin{equation}
\label{eq:v_conv}
\left\{
\begin{aligned}
v^j&\to v  \quad\text{in } \Lrm^a_\loc((-\infty,\infty)\times \Omega;\R^m) \quad\text{for $a\in [1,\infty)$;}\\
v^j&\to v  \quad\text{in } \Lrm^a_\loc(-\infty,\infty;\Wrm^{1,r}_0(\Omega;\R^m)) \quad\text{for $a\in [1,\infty)$, $r\in [1,2^*)$;}\\
v^j&\toweak  v  \quad\text{in }\Lrm^2_\loc(-\infty,\infty;\Wrm^{2,r}(\Omega;\R^m))\quad\text{ for $r\in [1,2^*)$;}\\
v^j&\toweak  v  \quad\text{in }\Wrm^{1,2}_\loc(-\infty,\infty;\Wrm^{1,2}(\Omega;\R^m));\\
v^j&\toweakstar v  \quad\text{in }\Lrm^\infty(-\infty,\infty;(\Wrm^{1,2} \cap \Lrm^q)(\Omega;\R^m));\\
v^j&\toweakstar v  \quad\text{in }\Lrm^\infty_\loc(-\infty,\infty;\Wrm^{2,2}(\Omega;\R^m));\\
v^j&\toweakstar v \quad\text{in }\Wrm^{1,\infty}_\loc(-\infty,\infty;\Lrm^2(\Omega;\R^m)).
\end{aligned}
\right.
\end{equation}
Moreover, $v$ is a strong solution to~\eqref{eq:theta} and satisfies the energy balance
\begin{align}
\Ecal(t,v(b))-\Ecal(t,v(a)) &= \Wcal(v(b))-\Wcal(v(a)) \notag\\
&= -\int_a^b \norm{\partial_\theta v(\theta)}_{\Lrm^2}^2 + \Rcal_1(\partial_\theta v(\theta))  \dd \theta 
\leq 0
\label{eq:v_energy}
\end{align}
for almost all intervals $[a,b] \subset (-\infty,\infty)$.
\end{lemma}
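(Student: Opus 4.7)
My approach mimics the $\delta \todown 0$ passage from Proposition~\ref{prop:apriori}, but carried out after rescaling to the fast scale $\theta$. First I would extract uniform bounds: since $\tau_j \to t > 0$ and $\lambda_j L \to 0$, for every fixed $L > 0$ and $j$ large the window $[\tau_j - \lambda_j L, \tau_j + \lambda_j L]$ lies in a compact subset $[\sigma, T] \subset (0, T]$ with $\sigma > 0$ independent of $L$ (e.g.\ $\sigma = t/2$). The rescaling identities $\partial_\theta v^j(\theta) = \lambda_j \dot{u}_{\lambda_j}(\tau_j + \lambda_j \theta)$ and $\Rcal_1(\partial_\theta v^j) = \lambda_j \Rcal_1(\dot{u}_{\lambda_j})$ combined with the bounds~\eqref{eq:dotulambda_RD_apriori}--\eqref{eq:ulambda_Wp2} restricted to $[\sigma,T]$ translate into uniform (in $j$) bounds for $v^j$ on each $[-L, L]$ in all of the spaces appearing in~\eqref{eq:v_conv}. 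In particular, the $\Lrm^2_\loc(\Wrm^{1,2})$ bound on $\partial_\theta v^j$ rests on
\[
\int_{-L}^L \norm{\nabla \partial_\theta v^j(\theta)}_{\Lrm^2}^2 \dd \theta = \lambda_j \int_{\tau_j - \lambda_j L}^{\tau_j + \lambda_j L} \norm{\nabla \dot{u}_{\lambda_j}(s)}_{\Lrm^2}^2 \dd s \leq \frac{C}{\sigma},
\]
which follows from~\eqref{eq:dotdotulambda_RI_apriori_t}. Applying Lemma~\ref{lem:compact} on each $[-L, L]$ and extracting diagonally over $L \to \infty$ then yields~\eqref{eq:v_conv}.

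Next, changing variables $s = \tau_j + \lambda_j \theta$ in the variational inequality~\eqref{eq:Varlam} for $u_{\lambda_j}$, substituting a rescaled test function of the form $\zeta/\lambda_j$, using positive $1$-homogeneity of $\Rcal_1$, and multiplying through by $\lambda_j$ gives, for almost every $\theta$ and all $\zeta \in \Wrm^{1,2}_0(\Omega;\R^m)$,
\[
\Rcal_1(\partial_\theta v^j) + \dprb{-\partial_\theta v^j + \Delta v^j - \DD W_0(v^j) + f(\tau_j + \lambda_j \theta), \, \zeta - \partial_\theta v^j} \leq \Rcal_1(\zeta).
\]
I would integrate this over any $[a, b] \subset \R$ and pass to the limit using weak lower semicontinuity of $\Rcal_1$; the uniform convergence $f(\tau_j + \lambda_j \theta) \to f(t)$ on $[-L,L]$ from $f \in \Wrm^{1,\infty}$; the strong convergence of $v^j$ in $\Lrm^\infty_\loc(\R; \Lrm^\infty(\Omega;\R^m))$ (via the $\Lrm^\infty_\loc(\Wrm^{2,2})$ bound combined with the Sobolev embedding, valid for $d \leq 3$) to handle the nonlinearity $\DD W_0(v^j)$; and integration by parts for the Laplacian pairing, writing $\int_a^b \dprb{\Delta v^j, \partial_\theta v^j} \dd \theta = -\tfrac{1}{2}\bigl(\norm{\nabla v^j(b)}_{\Lrm^2}^2 - \norm{\nabla v^j(a)}_{\Lrm^2}^2\bigr)$, which converges at almost every $a, b$ by the strong $\Lrm^a_\loc(\Wrm^{1,r})$ convergence of $v^j$. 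This establishes the strong solution property~\eqref{eq:theta} for $v$.

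To obtain the energy identity~\eqref{eq:v_energy}, I would exploit the classical subgradient identity $\dpr{\sigma, z} = \Rcal_1(z)$ valid for every $\sigma \in \partial \Rcal_1(z)$ by positive $1$-homogeneity. Applied to~\eqref{eq:theta} with $z = \partial_\theta v$ this gives
\[
\dprb{-\partial_\theta v + \Delta v - \DD W_0(v) + f(t), \, \partial_\theta v} = \Rcal_1(\partial_\theta v)
\]
for almost every $\theta$. Rearranging and invoking the chain rule for $\theta \mapsto \Ecal(t, v(\theta))$ at fixed $t$ produces the pointwise identity $\partial_\theta \Ecal(t, v(\theta)) = -\norm{\partial_\theta v(\theta)}_{\Lrm^2}^2 - \Rcal_1(\partial_\theta v(\theta))$, which is $\leq 0$, and integration over $[a, b]$ delivers~\eqref{eq:v_energy}. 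The main technical obstacle I anticipate is the limit passage in the Laplacian term in the PDE step: the weak--weak pairing $\dprb{\Delta v^j, \partial_\theta v^j}$ is not a priori weakly continuous, and I would resolve this via the integration-by-parts reformulation, which converts it into boundary expressions that converge by strong $\nabla v^j$-convergence at almost every endpoint.
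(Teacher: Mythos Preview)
Your proposal is correct and follows essentially the same approach as the paper: rescale, transfer the $\lambda$-uniform estimates from Proposition~\ref{prop:apriori} to the fast scale, extract via compactness (Aubin--Lions and Lemma~\ref{lem:compact}), and pass to the limit in the variational inequality using the integration-by-parts rewriting of $\int_a^b \dpr{\Delta v^j, \partial_\theta v^j}\,\di\theta$. Two minor differences are worth noting: the paper handles consistency of the limits across the nested intervals $[-L,L]$ by invoking the uniqueness part of Proposition~\ref{prop:apriori} for the limit equation (arguing that $v_L$ and $v_M$ agree on overlaps), whereas your diagonal extraction over $L\to\infty$ achieves the same end more directly via uniqueness of weak limits; and you derive the energy balance~\eqref{eq:v_energy} from the limit equation using the $1$-homogeneity identity $\dpr{\sigma,z}=\Rcal_1(z)$ for $\sigma\in\partial\Rcal_1(z)$ together with the chain rule, while the paper obtains it by passing to the limit in the approximate energy balance~\eqref{eq:ulambda_energy} after rescaling.
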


\begin{proof}
Let $L>0$. There exists a $j_0\in \N$ such that $[\tau_j-\lambda_jL,\tau_j+\lambda_j L]\subset [\tau_j-\lambda_jL_j,\tau_j+\lambda_j L_j] \subset (0,T)$ for all $j\geq j_0$. Hence,
\[
\partial_\theta v^j + \partial \Rcal_1(\partial_\theta v^j) -\Delta v^j+\DD W_0(v^j) \ni f(\tau_j+\lambda_j \frarg)  \quad   \text{in }(-L,L)\times \Omega.
\]
The $\lambda$-uniform estimates from Proposition~\ref{prop:apriori} give the uniform boundedness of $v^j$ in the space
\[
  \Lrm^2(-L,L;\Wrm^{1,2}_0(\Omega;\R^m)) \cap \Lrm^\infty(-L,L;\Wrm^{2,2}(\Omega;\R^m))
\]
Moreover, employing the uniform boundedness of $\lambda \norm{\dot{u}_{\lambda_j}}_{\Lrm^2_{t,x}}^2$, see~\eqref{eq:dotulambda_RD_apriori}, we get that
\[
  \int_{-L}^{L} \abs{\partial_\theta v^j(\theta)}^2 \dd \theta
  = \lambda^2 \int_{-L}^{L} \abs{\dot{u}_{\lambda_j}(\tau_j + \lambda_j\theta)}^2 \dd \theta
  = \lambda \int_{\tau_j-\lambda_jL}^{\tau_j+\lambda_jL} \abs{\dot{u}_{\lambda_j}(t)}^2 \dd t
  \leq C.
\]
Thus we conclude that the $v^j$ are also uniformly bounded in
\[
  \Wrm^{1,2}(-L,L;\Lrm^{2}(\Omega;\R^m)).
\]
Like in the proof of Proposition~\ref{prop:apriori} (via the Aubin--Lions compactness lemma and Lemma~\ref{lem:compact}), we can then see that a (non-relabeled) subsequence of the $v^j$ converges to a process $v \in \Lrm^2(0,T;\Wrm^{1,2}_0(\Omega;\R^m))\cap \Wrm^{1,2}(0,T;\Lrm^{2}(\Omega;\R^m))$ weakly in that space and also strongly in
\[
  \Lrm^\beta((-L,L) \times \Omega;\R^m)\cap \Lrm^2(-L,L;\Wrm^{1,2}(\Omega;\R^m))  \qquad\text{for any $\beta<\infty$.}
\]
Thus, again like in the proof of Proposition~\ref{prop:apriori}, we may then pass to the limit in the equation to see that $v$ is a strong solution of
\[
\partial_\theta v + \partial \Rcal_1(\partial_\theta v) -\Delta v+\DD W_0(v) \ni f(t) \quad   \text{in }(-L,L)\times \Omega,
\]
which also satisfies the energy balance~\eqref{eq:v_energy} on $[-L,L]$. Observe that in particular $f(\tau_j+\lambda_j\theta) \to f(t)$ uniformly in $[-L,L]\times \Omega$ by~\ref{as:f}.

A solution can be found for any $L$ and we need to show that these solutions agree on their joint interval of existence. Denote the solution on the interval $[-L,L]$ by $v_L$. Since by~\eqref{eq:ulambda_Wp2}, $v_\lambda$ is in $\Lrm^\infty((-L+\delta,L);\Wrm^{2,2}(\Omega;\R^m))$ for arbitrary $\delta > 0$, we infer that
\[
  \Delta v_L(-L+\delta) -\DD W_0(v_L(-L+\delta))\in \Lrm^2(\Omega,\R^m).
\]
Moreover, for any $M > L$, we also have $v_M(-L+\delta) = v_L(-L+\delta)$. Hence, by the uniqueness part of Proposition~\ref{prop:apriori}, on the interval $(-L+\delta,L)$ the solutions $v_L$ and $v_M$ agree. As we can choose $\delta > 0$ arbitrarily, $v_L = v_M$ on the joint interval of existence $(-L,L)$. Thus, we can construct the global solution by concatenation.

The additional convergences and the energy balance (locally) in $(-\infty,\infty)$ are satisfied due to the uniformity of the rescaled estimates in Proposition~\ref{prop:apriori} and compactness arguments like we employed at the beginning of this section for~\eqref{eq:l_conv}.
\end{proof}

\begin{lemma}
\label{lem:energ2b}
In the situation of the previous lemma, with $E(t) := \Ecal(t,u(t))$ the following implication holds: If
\[
  \Ecal(\tau_j-\lambda_jL_j,u_{\lambda_j}(\tau_j-\lambda_jL_j))\to \limsup_{s\toup t} E(s)
\]
and
\[
  \Ecal(\tau_j+\lambda_jL_j,u_{\lambda_j}(\tau_j+\lambda_jL_j))\to  \liminf_{s \todown t} E(s),
\]
then
\begin{align}
\label{eq:energ-v}
  0 \leq \Ecal(t,v(-\infty))-\Ecal(t,v(\infty))\leq \limsup_{s\toup t} E(s) - \liminf_{s \todown t} E(s).
\end{align}
In particular, if $t \in (0,T)$ is a continuity point of the energy process $E(t)$, then $\Ecal(t,v(\theta))= \Ecal(t,u(t))$ for all $\theta \in (-\infty,\infty)$ and consequently $v\equiv u(t)$.  
\end{lemma}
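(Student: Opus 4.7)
My plan is to split the argument into three stages. First, the existence of the limits $v(\pm\infty)$ in $\Lrm^1(\Omega;\R^m)$ together with the lower bound $\Ecal(t,v(-\infty)) \geq \Ecal(t,v(\infty))$ is immediate from the global energy balance~\eqref{eq:v_energy} established in Lemma~\ref{lem:energ2}: the map $\theta \mapsto \Wcal(v(\theta))$ is non-increasing and bounded below (using the coercivity of $W_0$ from~\ref{as:W} and the fixed Dirichlet boundary data), so it has a limit at $\pm \infty$; and the bound $\int_{-\infty}^\infty \Rcal_1(\partial_\theta v)\,d\theta < \infty$ combined with $\Rcal_1(z) \geq c_1 \abs{z}$ from~\ref{as:R} shows $\partial_\theta v \in \Lrm^1(\R;\Lrm^1(\Omega;\R^m))$, so $v(\pm\infty)$ exist in $\Lrm^1(\Omega;\R^m)$.

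The heart of the argument is the second inequality. Fix $L > 0$ and take $j$ large enough so that $L \leq L_j$. Writing the energy identity~\eqref{eq:ulambda_energy} of Proposition~\ref{prop:apriori} on each of the three adjacent subintervals inside $[\tau_j - \lambda_j L_j, \tau_j + \lambda_j L_j]$ and discarding the non-negative dissipation on the two outer pieces yields
\[
\int_{\tau_j - \lambda_j L}^{\tau_j + \lambda_j L} \lambda_j \norm{\dot u_{\lambda_j}}_{\Lrm^2}^2 + \Rcal_1(\dot u_{\lambda_j})\,d\tau \leq \Ecal(\tau_j - \lambda_j L_j, u_{\lambda_j}(\tau_j - \lambda_j L_j)) - \Ecal(\tau_j + \lambda_j L_j, u_{\lambda_j}(\tau_j + \lambda_j L_j)) + \Theta_j,
\]
where $\Theta_j$ absorbs the loading integral $-\int \dprb{\dot f, u_{\lambda_j}}\,d\tau$ and is $O(\lambda_j L_j) \to 0$ by~\ref{as:f} and the $\Lrm^\infty$-bound on $u_{\lambda_j}$ coming from Proposition~\ref{prop:apriori}. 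The change of variables $\tau = \tau_j + \lambda_j \theta$ together with the $1$-homogeneity of $\Rcal_1$ rewrites the left-hand side as $\int_{-L}^L \norm{\partial_\theta v^j}_{\Lrm^2}^2 + \Rcal_1(\partial_\theta v^j)\,d\theta$. Passing $j \to \infty$ using weak lower semicontinuity of both terms along the convergences~\eqref{eq:v_conv}, then $L \to \infty$, and finally identifying the resulting left-hand side with $\Ecal(t,v(-\infty)) - \Ecal(t,v(\infty))$ via~\eqref{eq:v_energy} completes~\eqref{eq:energ-v}.

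For the ``in particular'' assertion, continuity of $E$ at $t$ makes the upper bound in~\eqref{eq:energ-v} vanish, so the non-increasing map $\theta\mapsto \Ecal(t,v(\theta))$ has equal limits at $\pm\infty$ and is therefore constant; the energy balance~\eqref{eq:v_energy} then forces $\partial_\theta v \equiv 0$, so $v \equiv \bar v$ for some $\bar v$. The main obstacle of the whole proof is the final identification $\bar v = u(t)$. I plan to combine: (a) the strong convergence $v^j \to \bar v$ in $\Lrm^1_\loc((-\infty,\infty)\times\Omega;\R^m)$ from~\eqref{eq:v_conv}, which in particular gives $u_{\lambda_j}(\tau_j + \lambda_j \theta) \to \bar v$ for almost every $\theta$; (b) the vanishing of the full dissipation $\int_{\tau_j - \lambda_j L_j}^{\tau_j + \lambda_j L_j} \Rcal_1(\dot u_{\lambda_j})\,d\tau \to 0$ from the bound just derived, which via~\ref{as:R} controls the $\Lrm^1$-variation of $u_{\lambda_j}$ across the whole window; and (c) the strong convergence $u_{\lambda_j}\to u$ in $\Lrm^1_\loc((0,T)\times\Omega;\R^m)$ established in~\eqref{eq:l_conv}. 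The concentration-free vanishing-variation bound in (b) prevents $\bar v$ from drifting away from the traces $u(t\pm)$, while continuity of $E$ at $t$, together with the $\BV$-regularity of $u$, pins both those traces to $u(t)$, so (a)--(c) close the loop and give $\bar v = u(t)$.
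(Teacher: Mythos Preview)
Your argument for the main inequality~\eqref{eq:energ-v} is correct and in fact slightly more streamlined than the paper's. The paper does not pass through the dissipation integrals and weak lower semicontinuity; instead it fixes $L_l$ large, uses that $\theta \mapsto \Ecal(\tau_j+\lambda_j\theta, v^j(\theta))$ converges \emph{uniformly} on $[-L_l,L_l]$ to $\theta \mapsto \Ecal(t,v(\theta))$ (via the Aubin--Lions/interpolation argument behind~\eqref{eq:v_conv}), and then exploits the monotonicity of $\tau\mapsto \Wcal(u_{\lambda_j}(\tau))-\int_0^\tau\dprb{f,\dot u_{\lambda_j}}$ to enlarge the window from $[\tau_j-\lambda_j L_l,\tau_j+\lambda_j L_l]$ to $[t-h,t+h]$ with $h\sim 1/l$, finally averaging in $h$ and passing to the limit with the strong convergence $u_{\lambda_j}\to u$ from~\eqref{eq:l_conv}. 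Your route via~\eqref{eq:ulambda_energy} on the full window, dropping the outer dissipation, and then applying weak lower semicontinuity is equivalent and arguably more transparent; it avoids the uniform-energy-convergence step entirely. One small imprecision: you invoke an ``$\Lrm^\infty$-bound on $u_{\lambda_j}$'' for the loading error, but all that is needed (and all that Proposition~\ref{prop:apriori} directly gives at this stage) is the uniform $\Lrm^\infty(0,T;\Lrm^q)$ or $\Lrm^\infty(0,T;\Wrm^{1,2})$ bound, which is enough since $\dot f\in\Lrm^\infty$.

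For the final identification $\bar v = u(t)$, your outline (a)--(c) contains the right mechanism, though you should be aware that the paper's own proof is terse here as well --- it only records that $v$ is constant and that its energy equals $E(t)$. Your step (b), the vanishing of the full-window variation $\int_{\tau_j-\lambda_j L_j}^{\tau_j+\lambda_j L_j}\Rcal_1(\dot u_{\lambda_j})\,d\tau\to 0$, is indeed the key ingredient: it lets you compare $u_{\lambda_j}$ at any point of the shrinking window with $u_{\lambda_j}$ at a generic time $s$ just outside it, where~\eqref{eq:l_conv} gives $u_{\lambda_j}(s)\to u(s)$. To close the loop rigorously you need that $t$ is also an $\Lrm^1$-continuity point of $u$; this is available at this stage of the paper via Proposition~\ref{prop:E_contpoint} (continuity of $E$ controls $\Var_{\Rcal_1}(u;\cdot)$, hence the $\Lrm^1$-oscillation of $u$). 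With that in hand your argument goes through.
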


\begin{proof}
We set (see~\eqref{eq:v_energy})
\[
\gamma := \Ecal(t,v(-\infty))-\Ecal(t,v(\infty))=\Wcal(v(-\infty))-\Wcal(v(\infty)) \geq 0.
\]
For $l\in\N$ fix $L_l>0$ such that
 \[
 \gamma-\frac{1}{l}\leq \Ecal(t,v(-L))-\Ecal(t,v(L))  \qquad \text{for all $L\geq L_l$.}
 \]
The functions $\theta \mapsto \Ecal(\tau_j + \lambda_j\theta, v^j(\theta))$ converge uniformly on bounded sets to $\theta \mapsto \Ecal(t,v(\theta))$ as $j\to \infty$, which is implied by the convergence $v^j \toweak  v$ in $\Wrm^{1,2}_\loc(-\infty,\infty;\Wrm^{1,2}(\Omega;\R^m))$ together with the $\Lrm^\infty_\loc(-\infty,\infty;\Wrm^{2,2}(\Omega;\R^m))$-boundedness of $v^j$, see~\eqref{eq:v_conv}. Indeed, the first convergence gives by the Aubin--Lions compactness lemma that $v^j \to v$ in $\Lrm^\infty_\loc(-\infty,\infty;\Lrm^2(\Omega;\R^m))$, which together with the boundedness in $\Lrm^\infty_\loc(-\infty,\infty;\Wrm^{2,2}(\Omega;\R^m))$ yields by interpolation (see, e.g.,~\cite[Remark 3.2]{RindlerSchwarzacherSuli17} together with the Arzela--Ascoli theorem)
\[
  v^j \to v  \quad\text{in $\Lrm^\infty_\loc(-\infty,\infty;\Wrm^{1,2}(\Omega;\R^m))$.}
\]
Thus the claimed uniform convergence above follows.
Hence, there is an index $j_0\in \N$ such that we have $\lambda_j L_l \leq 1/l$, $\abs{t-\tau_j}\leq 1/l$, and
\[
  \gamma -\frac{2}{l}\leq \Ecal( \tau_j-\lambda_j L_l, v^j(-L_l) ) - \Ecal( \tau_j+\lambda_j L_l, v^j(L_l) )  \qquad
  \text{for all $j\geq j_0$.}
\]
This implies using the uniform continuity of $f$ that for $h \in [\frac{2}{l},\frac{3}{l}]$ (also using~\eqref{eq:v_energy})
\begin{align*}
\gamma -\frac{3}{l}
&\leq \Ecal( t,u_{\lambda_j}(\tau_j-\lambda_j L_l) ) - \Ecal( t,u_{\lambda_j}(\tau_j+\lambda_j L_l) )
\\
&= \Wcal(u_{\lambda_j}(\tau_j-\lambda_j L_l)) - \Wcal(u_{\lambda_j}(\tau_j+\lambda_j L_l))
\\
&\leq \Wcal(u_{\lambda_j}(t-h)) - \Wcal(u_{\lambda_j}(t+h)).
\end{align*}
Then, as $j\to \infty$,
\[
\gamma -\frac{3}{l}\leq \dashint_{\frac{2}{l}}^\frac{3}{l}\Wcal (u_{\lambda_j}(t-s))-\Wcal(u_{\lambda_j}(t+s))\dd s \to \dashint_{\frac{2}{l}}^\frac{3}{l}\Wcal (u(t-s))-\Wcal(u(t+s)) \dd s.
\]
 This implies~\eqref{eq:energ-v} by the uniform continuity of $f$.

If $t$ is a continuity point of $E(t)$, we find that $\Wcal(v(\theta))= E(t)$ for all $\theta\in (-\infty,\infty)$. By~\eqref{eq:v_energy} this is only possible if $v$ is constant in time.
\end{proof}

\subsection{Parabolic points}
\label{ssc:parab}

We recall from Example~\ref{ex:jumplength} and Example~\ref{ex:jumplength1} that jumps may have a rate-independent dissipation that is strictly larger than the total variation of the limit time derivative. In the following we investigate the points where the parabolic term contributes and introduce for the rate-independent jumps the corresponding ``stretchings''.

Let $N_j \in \N$ with $N_j \to \infty$ as $j \to \infty$. We divide $[0,T]$ into $N_j$ intervals of size $\lambda_j := \frac{T}{N_j}$ and set
\[
  t_{k}^{\lambda_j} := k\lambda_j = k\frac{T}{N_j},  \qquad k = 0,\ldots,N_j.
\]
The main task in the following will be to analyze the change of the energy in each interval $[t_{k}^{\lambda_j},t_{k+1}^{\lambda_j}]$. 

We define the following \term{energy loss process} (associated to our solutions $u_{\lambda_j}$):
\begin{align*}
\omega^{\lambda_j}(t) &:=\Ecal(0,  u_0)  
-\Ecal( t, u_{\lambda_j}(t)) -\int_{0}^t \dprb{\dot{f}(\tau),u_{\lambda_j}(\tau)} \dd \tau\\
&\phantom{:}= \Wcal(u_0) - \Wcal(u_{\lambda_j}(t)) + \int_{0}^t \dprb{f(\tau),\dot{u}_{\lambda_j}(\tau)} \dd \tau,  \qquad t \in [0,T].
\end{align*}
These functions are continuous and increasing by~\eqref{eq:ulambda_energy}.

The goal is to classify the jumps that develop in $\omega^{\lambda_j}$ when $\lambda_j \todown 0$ as either rate-dependent jumps (where the parabolic approximation matters) or rate-independent jumps (where the approximation gives no contribution). We will do this inductively for $m=1,2,\ldots$ and at each step take
subsequences of our $\lambda_j$'s, which we will however still denote as
$\lambda_j$.

Suppose first that $m=1$. We will say that we have a \term{parabolic sequence} $\{t_{k_j}^{\lambda_j}\}_{j=1}^{\infty}$ for
$m=1$ if for some $j^{(1)}\in \Nbb$ it holds that
\[
\omega^{\lambda_j}\left(t_{k_j+1}^{\lambda_j}\right)  - \omega^{\lambda_j}\left(t_{k_j}^{\lambda_j}\right)  \geq\frac{1}{m}=1  \qquad\text{for all $j \in \{ j^{(1)},j^{(1)}+1,\ldots\}$}
\]
and
\[
  t_{k_j}^{\lambda_j}\to t^{(1)}\in[0,T]  \qquad\text{as $j\to\infty$.}
\]
Over the interval $[t_{k_j}^{\lambda_j},t_{k_j+1}^{\lambda_j}]$ the processes $u_{\lambda_j}$ will develop a rate-dependent jump since the related transient slope is of order $\lambda_j$. We will assume in the following that $t^{(1)} \in\left(0,T\right)$. If $t^{(1)}=0$ or $T$ we need to consider problems in half-infinite intervals, and make suitable adaptations to the arguments.

It now follows by Lemma~\ref{lem:energ2} that for a parabolic sequence $\{t_{k_j}^{\lambda_j}\}_{j=1}^{\infty}$ with $m=1$ there exists a
sequence of integers $L_j\to\infty$, chosen below, such that $\lambda_jL_j\to 0$ and the functions
\[
v_j(\theta) :=u_{\lambda_j}\left(t_{k_j}
^{\lambda_j}+\lambda_j\theta\right)
\]
converge in the sense of~\eqref{eq:v_conv} to a strong solution
of the equation
\[
\partial_{\theta}v(\theta)+ \partial\Rcal_1(\partial_{\theta}v(\theta))-\Delta u+\DD W_0(v(\theta))  =f(t^{(1)}),  \qquad\theta\in  (-\infty,\infty),
\]
We label this solution as $v^{(1)}$. The total change of energy of
this function between $\theta=-\infty$ and $\theta=\infty$ can be computed to be
\begin{align*}
\Ecal(t^{(1)},v^{(1)}(-\infty))-\Ecal(t^{(1)},v^{(1)}(\infty))
&=\phantom{:} \int_{-\infty}^{\infty}  \norm{\partial_\theta v^{(1)}(\theta)}_{\Lrm^2}^2 + \Rcal_1(\partial_\theta v^{(1)}(\theta)) \dd \theta \\
&=:d^{(1)}\geq\frac{1}{m}=1.
\end{align*}
Indeed, observe that since $\omega^{\lambda_j}$ is increasing,
\begin{align*}
  \frac{1}{m} &\leq \omega^{\lambda_j}\left(t_{k_j+1}^{\lambda_j}\right)  - \omega^{\lambda_j}\left(t_{k_j}^{\lambda_j}\right) \\
  &\leq \omega^{\lambda_j}\left(t_{k_j}^{\lambda_j} + \lambda_j L_j\right) - \omega^{\lambda_j}\left(t_{k_j-L_j}^{\lambda_j} - \lambda_j L_j\right)\\
  &= \Wcal \left( v_j(-L_j) \right) - \dprb{f(t_{k_j}^{\lambda_j}-\lambda_j L_j), v_j(-L_j)}  - \Wcal \left( v_j(+L_j) \right) + \dprb{f(t_{k_j}^{\lambda_j}+\lambda_j L_j), v_j(L_j)} \\
  &\quad - \int_{t_{k_j}^{\lambda_j}-\lambda_j L_j}^{t_{k_j}^{\lambda_j}+\lambda_j L_j} \dprb{\dot{f}(\tau),u_{\lambda_j}(\tau)} \dd \tau
\end{align*}
and the last expression converges via~\eqref{eq:v_conv},~\ref{as:f} to
\[
  d^{(1)} = \Ecal(t^{(1)},v^{(1)}(-\infty))-\Ecal(t^{(1)},v^{(1)}(\infty)).
\]
The expression involving the dissipation integrals then follows from~\eqref{eq:v_energy}.

We can furthermore assume that the $L_j$ are chosen in such a way that
\[
\left\vert \omega^{\lambda_j}\left(t_{k_j}^{\lambda_j} + \lambda_j L_j\right) - \omega^{\lambda_j}\left(t_{k_j-L_j}^{\lambda_j} - \lambda_j L_j\right) - d^{(1)}\right\vert \leq\frac{1}{j}d^{(1)}.
\]
Note that here we may need to take another (non-relabeled) subsequence of $\lambda_j$.

Define the first set of \term{parabolic indices},
\[
A_j^{(1)}:=\setB{  t_{\ell}^{\lambda_j}}{\left\vert
\ell-k_j\right\vert \leq L_j}
\]
and consider the new collection of time points
\[
\Sigma_j^{(1)}:=\setB{  t_{k}^{\lambda_j} }{ k=0,1, \ldots,N_j } \setminus A_j^{(1)}.
\]

We now examine if in this sequence there are additional parabolic points
with $m=1$. If there are, we iterate the argument (taking a further subsequence of the $\lambda_j$'s if needed). The corresponding new parabolic sequence, which can be represented as $\bigl\{  t_{k_j^{(2)}}^{\lambda_j^{(2)}}\bigr\}$, converges to another time $t^{(2)}$. We can then repeat again the argument. The corresponding limit function $v^{(2)}$ has a dissipation $d^{(2)}$ relative to a sequence $L_j$ as above, which we label as $L_j^{(2)}$.
 We then define
\[
A_j^{(2)}:=\setB{  t_{\ell}^{\lambda_j}\in\Sigma
_j^{(1)}}{\left\vert \ell-k_j\right\vert \leq L_j^{(2)}}.
\]
Observe that we can choose $L_j^{(2)}\to\infty$ in such a way that $A_j^{(2)}$ does not overlap with the previous set $A_j^{(1)}$. Indeed, if this were not the case, it would mean that the sequences yielding the first parabolic point $t^{(1)}$ and the second parabolic point $t^{(2)}$ are at a distance of order $\leq C\lambda_j$ (uniformly for $j$ large enough) and hence can be merged into a single evolution.

We then iterate until finishing with all the parabolic points for $m=1$. The
number of such points is finite, because each of them yields a dissipation of
energy at least $1$.

We now define the stretching function at the level $m=1$. After removing all
the sets $A_j^{(\ell)}$ associated to parabolic points with
$m=1$, we are left with
\[
\Sigma_j^{(\ell)}=\Sigma_j^{\left(\ell-1\right)
}\setminus A_j^{(\ell)},
\]
where $\ell$ is the number of removed rate-dependent jumps with $m=1$.
Notice that we have new sequences and implicitly do a relabelling each time we take another subsequence. We then denote alternatively $Z_j^{1}:=\Sigma_j^{(\ell)}$ to emphasize that we are at the end of the level $m=1$.

We can now define the \term{stretching functions} $\psi_j^{1} \colon [0,T]\to [0,\infty)$ as the piecewise affine functions that satisfy (the superindex $1$ denotes $m=1$):
\begin{align*}
\psi_j^{1}\left(t_{k+1}^{\lambda_j}\right)  -\psi_j^{1}\left(
t_{k}^{\lambda_j}\right)  
 &  := \begin{cases}
\left(t_{k+1}^{\lambda_j}-t_{k}^{\lambda}\right)  
 +\left(\omega^{\lambda_j}\left(t_{k+1}^{\lambda_j}\right)  -\omega^{\lambda_j}\left(t_{k}^{\lambda_j}\right)
\right) 
 &\text{if }t_{k}^{\lambda_j}\in Z_j^{1},
 \\
\left(t_{k+1}^{\lambda_j}-t_{k}
^{\lambda_j}\right)  &\text{if }t_{k}^{\lambda_j}\notin Z_j^{1},
 \end{cases}
\\
\psi_j^{1}\left(0\right)  & :=0.
\end{align*}
At all other intermediate points, $\psi_j^{1}$ are defined by linear interpolation.

The idea is that we stretch out all possible jumps of the energy which are approached in a rate-independent manner, meaning that the speed of their evolution is much slower than $1/\lambda_j$. 
Observe that there is an $i^{(1)}\in \Nbb$ such that $m \lambda_j \leq 1$ and for $t_{k}^{\lambda_j}\in Z_j^{1}$, $j\in \{i^{(1)},i^{(1)}+1,i^{(1)}+2,...\}$ it holds that
\[
\omega^{\lambda_j}\left(t_{k+1}^{\lambda_j}\right)  -\omega^{\lambda_j}\left(t_{k}^{\lambda_j}\right)\leq \frac{2}{m}=2,
\]
since otherwise one could construct another parabolic sequence that is associated to a parabolic point with jump height at least $1/m=1$. Hence we find for $j\in \{i^{(1)},i^{(1)}+1,i^{(1)}+2,...\}$
\begin{equation}
\label{eq:lipschitz}
\abs{\partial_t\psi_j^{1}} \leq 1+\frac{2}{m \lambda_j}\leq \frac{3}{m \lambda_j}=\frac{3}{\lambda_j}.
\end{equation}
The addition of the term $\left(t_{k+1}^{\lambda_j}-t_{k}^{\lambda_j}\right)$ guarantees also a lower bound on the derivative, such that
\[
1\leq \abs{\partial_t\psi_j^{1}}\leq \frac{3}{\lambda_j}.
\]

Notice that the functions $\psi_j^{1}$ are strictly increasing and uniformly bounded in the interval $[0,T]$ due to the finiteness in the change of the energy in that
interval. Notice also that the functions $\psi_j^{1}$ have large variation where there is a large amount of rate-independent dissipation. 
Defining
\[
  \phi_j^{1}(s):=\left(\psi_j^{1}\right)^{-1}(s)  \qquad\text{and}\qquad 
s^{\lambda_j}_k:=\psi(t^{\lambda_j}_k),
\]
we have for $t^{\lambda_j}_k\in Z_j^{1}$ that
\begin{align*}
s^{\lambda_j}_{k+1}-s^{\lambda_j}_{k} = \left(t_{k+1}^{\lambda_j}-t_{k}^{\lambda_j}\right)  
 +\left(\omega^{\lambda_j}\left(t_{k+1}^{\lambda_j}\right) - \omega^{\lambda_j}\left(t_{k}^{\lambda_j}\right)\right).
\end{align*}

We now iterate over $m$. We remove the parabolic points at the level $m=2,$ which are characterized by the condition
\[
\omega^{\lambda_j}\left(t_{k_j+1}^{\lambda_j}\right)  -\omega^{\lambda_j} \left(t_{k_j}^{\lambda_j}\right)  \geq\frac{1}{m}=\frac{1}{2},
\]
for $j$ large enough.
This defines additional parabolic limits $v^{(r)}$, $r=\ell+1,\ell+2,\ldots$, for which the corresponding dissipations $d^{(r)} \geq \frac{1}{m} = \frac12$ satisfy
\begin{equation}
\label{A1}
\left\vert \omega^{\lambda_j}\left(t_{k_j}^{\lambda_j} + \lambda_j L_j\right)
-\omega^{\lambda_j}\left(t_{k_j}^{\lambda_j} - \lambda_j L_j\right)  -d^{(r)}\right\vert \leq\frac{1}{j}d^{(r)}.
\end{equation}
The sequence $\{d^{(r)}\}$ constructed in this way is independent of the subsequence of $\lambda_j \to0$ that we chose above.

The sets $A_j^{\left(
\ell+1\right)},\ A_j^{(\ell+2)},\ldots$ are defined exactly as
above at the new rate-dependent jumps, and then also
\begin{align*}
\Sigma_j^{(\ell+1)}  &  :=\Sigma_j^{(\ell)
}\setminus A_j^{(\ell)},\\
\Sigma_j^{(\ell+2)}  &  :=\Sigma_j^{(\ell+1)
}\setminus A_j^{(\ell+1)},\\
&\ldots
\end{align*}
Eventually, we will exhaust the rate-dependent jumps at level $m=2$ at some $\ell = \ell^{(2)}$. We then denote the corresponding set $\Sigma_j^{\left(\ell^{(2)}\right)}$ by $Z_j^{2}$.

As before, we define the functions $\psi_j^{2},$ exactly in the same form as above, namely as the piecewise affine functions satisfying
\begin{align*}
\psi_j^{2}\left(t_{k+1}^{\lambda_j}\right)  -\psi_j^{2}\left(
t_{k}^{\lambda_j}\right)  
 &  := \begin{cases}
\left(t_{k+1}^{\lambda_j}-t_{k}^{\lambda}\right)  
 +\left(\omega^{\lambda_j}\left(t_{k+1}^{\lambda_j}\right)  -\omega^{\lambda_j}\left(t_{k}^{\lambda_j}\right)
\right) 
 &\text{if }t_{k}^{\lambda_j} \in Z_j^{2},
 \\
\left(t_{k+1}^{\lambda_j}-t_{k}
^{\lambda_j}\right)  &\text{if }t_{k}^{\lambda_j}\notin Z_j^{2},
 \end{cases}
\\
\psi_j^{2}\left(0\right)  & :=0.
\end{align*}
Iterating, we exhaust all parabolic points for $m=1,2,\ldots$, with the
corresponding parabolic limits $v^{(r)}$, dissipations $d^{(r)}$, and the stretching function $\psi_j^{m}$ at level $m$.

In order to avoid overlap of the parabolic jump intervals, it is convenient to choose $L_j$ and the values of
$\lambda_j$ in such a way that the total measure of the intervals around the
jump points converge to zero. So, we additionally require
\[
\lambda_j\sum_{k=1}^{\infty}L_j^{(k)}\to
0  \qquad\text{as $j \to \infty$,}
\]
where $L_j^{(k)}$ is the $L_j$ corresponding to the $k$'th parabolic point. We furthermore need that $L_j^{(k)}\to\infty$ for each
fixed $k$. We can take for instance $L_j^{(k)}\leq 2^{-k}(\lambda_j)^{-1/2}$.

Notice furthermore that by the finite dissipation of energy and the assumed bounds on $f$ it holds that
\[
\sum_{r=1}^\infty d^{(r)}<\infty
\]
and
\begin{align}
\label{eq:inverse}
\omega^{\lambda_j}\left (\phi_j^{m}(s^{\lambda_j}_{k+1})\right)-\omega^{\lambda_j}\left (\phi_j^{m}(s^{\lambda_j}_{k})\right)\leq s^{\lambda_j}_{k+1}-s^{\lambda_j}_{k}.
\end{align}

\subsection{The canonical slow time scale}
\label{ssc:s}

In this section we construct a new time scale in which the rate-independent dissipation (in the jumps) takes place in times of order one.

Notice that by construction we have the following monotonicity:
\[
  \psi_j^{m}(t) \geq \psi_j^{m+1}(t),  \qquad t\in [0,T]
\]
due to the fact that the sets $Z_j^{m}$ are decreasing in $m$.

We want to take the limit as $j\to\infty$ and $m\to\infty$ at the same time. It is convenient to
work with the inverse functions
\[
  \varphi_j^{m}:=\left(\psi_j^{m}\right)  ^{-1}.
\]
These inverses always exist, because the
functions $\psi_j^{m}$ are strictly increasing.
Due to the fact that $\frac{\di}{\di t} \psi_j^{m} \geq 1$ we find that the $\varphi_j^{m}$ are uniformly (in $j$ and $m$) Lipschitz. 
Moreover, we have, using the monotonicity above,
\[
\varphi_j^{m}(s) \leq \varphi_j^{m+1}(s).
\]
Observe that by the construction of the piecewise affine functions $\psi_j^{m}, \varphi_j^{m}$ we can estimate their Lipschitz constants. Indeed, there is an $i^{(m)}\in \Nbb$ such that for $t_{k}^{\lambda_j}\in Z_j^{1}$, and $j\in \{i^{(m)},i^{(m)}+1,i^{(m)}+2,...\}$
\[
\omega^{\lambda_j}\left(t_{k+1}^{\lambda_j}\right)  -\omega^{\lambda_j}\left(t_{k}^{\lambda_j}\right)\leq \frac{2}{m},
\]
since otherwise one could construct another parabolic sequence that is associated to a parabolic point with jump $\geq \frac{1}{m}$. Hence we find analogously to~\eqref{eq:lipschitz} that
for $j\in \{i^{(m)},i^{(m)}+1,i^{(m)}+2,...\}$,
\begin{align}
\label{eq:lipschitz1}
1\leq  \partial_{t}\psi_j^{m} \leq \frac{3}{m\lambda_j}  \qquad\text{and}\qquad
\frac{m\lambda_j}{3}\leq \partial_{s}\varphi_j^{m}  \leq1.
\end{align}
Since the functions $\varphi_j^{m}$ are uniformly Lipschitz, they converge uniformly as $j\to\infty$ for each fixed $m$. On the
other hand, given that $\partial_{t}\psi_j^{m}\geq1$ we find
$\varphi_j^{m}(s)  \leq s$ for $s\geq0$. Since we know that the domain of definition of $\varphi_j^{m}$ is uniformly bounded by the total energy dissipation plus $T$, we can extend these functions to a common interval of finite length. 
Then, if we first take
the limit $j\to\infty$ and then $m\to\infty$ we obtain, using the Arzela--Ascoli compactness theorem, the
convergence to some limit function $\varphi$ uniformly in the interval
$s\in\left[  0,S_{0}\right]$, where $S_0$ is taken as the smallest number such that $\varphi(S_{0})=T$. Observe that $S_{0}$ is $T$ plus the dissipated energy that is not captured by the parabolic resolution functions $v^{(\ell)}$.

 Using a diagonal argument, we obtain the existence of a subsequence
$m_j\to\infty$ as $j\to\infty$ such that
\[
\varphi_j^{m_j}\to\varphi \quad\text{uniformly in $[0,S_{0}]$.}
\]
In fact, $\norm{ \varphi_j^{m_j}-\varphi} _{L^{\infty}([0,S_{0,j}])}\to0$ as $j\to\infty,$ where $S_{0,j}$ is the interval of definition for $\varphi_j^{m_j}$.

In the following we study the properties of the processes
\[
\widetilde{\omega}_j(s)  :=\omega^{\lambda_j}\left(\varphi_j^{m_j
}(s)  \right),  \qquad s \in [0,S_{0}].
\]
Our construction implies that if $t_i^{\lambda_j}\leq t_1\leq t_2\leq t_k^{\lambda_j}$ and $t_{j}^{\lambda_j}\not\in Z^{m_j}_j$ for all $j\in \{i,..,k\}$, then for $a:=\psi(t_1)$ and $b:=\psi(t_2)$ we have by~\eqref{eq:inverse} that
\begin{align}
\label{eq:etildalambda}
\absb{\widetilde{\omega}_j(a)-\widetilde{\omega}_j(b)}\leq b-a .
\end{align}
The functions $\widetilde{\omega}_j$ contain the information about the dissipation of energy at the rate-dependent jumps. 
 More precisely, we have for yet another subsequence of the $\lambda_j$'s that for some function $\widetilde{\omega} \colon [a,b] \to \R$,
 \[
\widetilde{\omega}_j \to\widetilde{\omega} \quad\text{a.e.\ in $[a,b]$} \qquad\text{as $j\to\infty$.}
\]
Moreover, the convergence of the
functions $\widetilde{\omega}_j$ is uniform due to~\eqref{eq:etildalambda} except at a countable set of points, which are characterized in the following way: We recall that we have associated to each rate-dependent jump
a time $t^{(\ell)}$. We recall that there is a sequence $t_{k_j}^{\lambda_j}\to t^{(\ell)}$ (the dependence on $\ell$ will be suppressed in the following) and set
\[
s_j=s_j^{(\ell)} := \psi_j^{m_j}\left(t_{k_j}^{\lambda_j}\right).
\]
We define
\[
  s^{(\ell)}:=\liminf_{j\to\infty}\psi_j^{m_j}\left(t_{k_j}^{\lambda_j}\right).
\]
By taking another subsequence, we can ensure that $s_j^{(\ell)}\to s^{(\ell)}$ as $j\to \infty$. We claim that the limit values arising in this way are precisely the rate-dependent jump times in the new time $s$.

To see the claim, we first remark that these points are jump points of $\widetilde{\omega}$. We will further show that these jump points are the only jump points of $\widetilde{\omega}$ and that all these jumps are of a rate-dependent (inertial/parabolic) nature. Indeed, the times $s^{(\ell)}$ are the only points where we have discontinuities of the
limit energy $\widetilde{\omega}$ due to the construction of the functions $\psi
_j^{m_j}$, which guarantees that away from the rate-dependent jumps the
functions $\widetilde{\omega}_j$ are uniformly continuous by~\eqref{eq:etildalambda}. Of course, the jumps may occur in a dense subset.

It is crucial to observe that we can have several $s^{(\ell)}$ taking the same value. This means that there are several rate-dependent jumps taking place around $s^{(\ell)}$. Then, of course also the old time is the same for all these jumps.

On the other hand, we might have rate-dependent jumps at different values of $s$, but associated to the same value of $t$. Notice that the times $t$ are given in terms of $s$ by means of the function $t=\varphi(s)$ and $\varphi$ can have flat regions (a countable number of them). This would represent having, at a given time $t$, intertwined rate-dependent (inertial) and rate-independent (slide) time intervals.

\subsection{Convergence in the canonical slow time scale}

In this subsection we pass to the limit in $j$ with
\[
  \wlj(s):=u_{\lambda_j}(\varphi_j^{m_j}(s)),  \qquad s \in [0,S_{0,j}].
\]
Rewriting the equation~\eqref{eq:PDE_lambda} using
the new variable $s=\psi^{m_j}_j(t)$, we obtain
\begin{align} \label{A3}
  \left\{
  \begin{aligned}
    -\frac{\lambda_j}{\partial_{s}\varphi_j^{m_j}} \partial_{s}\wlj +  \Delta \wlj - \DD W_0(\wlj)+\tilde{f}_j &\in  \partial \Rcal_1(\partial_s \wlj)
      &&\text{in $(0,S_{0,j}) \times \Omega$,}\\
    \wlj(s)|_{\partial \Omega} &= 0  &&\text{for $s \in (0,S_{0,j}]$,} \\
    \qquad \wlj(0) &= u_0  &&\text{in $\Omega$},
  \end{aligned} \right.
\end{align}
where
\[
  \tilde{f}_j(s):=f(\varphi_j^{m_j}(s)).
\]
We may pass to the limit $j \to \infty$ ($\lambda_j \todown 0$) in the same fashion as in Lemma~\ref{lem:energ2}, using also the fact that the $\varphi^{m_j}_j$ are uniformly Lipschitz continuous and uniformly convergent, i.e.\
\[
  \tilde{f}_j\to f\circ \varphi =:\tilde{f} \quad\text{uniformly.}
\]
We then get analogously to~\eqref{eq:l_conv} that
\begin{equation} \label{eq:converg}
\left\{
\begin{aligned}
\wlj &\to \w  \quad\text{in } \Lrm^a_\loc((0,\infty) \times \Omega;\R^m) \quad\text{for $a\in [1,\infty)$;}\\
\wlj &\to \w  \quad\text{in } \Lrm^a_\loc(0,\infty;\Wrm^{1,r}_0(\Omega;\R^m)) \quad\text{for $a\in [1,\infty)$, $r\in [1,2^*)$;}\\
\wlj &\toweak  \w  \quad\text{in }\Lrm^2_\loc(0,\infty;\Wrm^{2,r}(\Omega;\R^m))\quad\text{ for $r\in [1,2^*)$;}\\
\wlj &\toweakstar \w  \quad\text{in }\Lrm^\infty(0,\infty;(\Wrm^{1,2} \cap \Lrm^q)(\Omega;\R^m));\\
\wlj &\toweakstar \w  \quad\text{in }\Lrm^\infty_\loc(0,\infty;\Wrm^{2,2}(\Omega;\R^m));\\
\wlj &\toweakstar \w \quad\text{in }\BV(0,\infty;\Lrm^1(\Omega;\R^m))).  
\end{aligned}
\right.
\end{equation}
We further define
\begin{align*}
\widetilde{E}_j(s) &:= \Ecal_j(s,\wlj(s)):=\Wcal(\wlj(s))- \dprb{\tilde{f}_j(s),\wlj(s)}, \\
\widetilde{E}(s) &:= \Wcal(\w(s))- \dprb{\tilde{f}(s),\w(s)}.
\end{align*}
By the above a-priori estimates we obtain
\[
\widetilde{E}_j \to \widetilde{E} \quad\text{almost everywhere}.
\]

The weak lower semicontinuity of the variation and the convergence of the energy for almost every $[a,b] \subset [0,S_0]$ such that $a,b$ are not jump points, then imply
\begin{align}
\label{eq:energyineq}
\Ecal(b,\w(b))-\Ecal(a,\w(a)) \leq - \Var_{\Rcal_1}(\w;[a,b]) -\int_a^b \dprb{\partial_s \tilde{f}(s),\w(s)} \dd s. 
\end{align}
Moreover, we can refine the above energy inequality by removing an arbitrary number of jumps and replacing them via the parabolic resolutions. Around a parabolic point $s^{(\ell)} \in (a,b)$ in the new time scale, or $t^{(\ell)}$ in the old time scale, we argue as follows: Set
\[
  s_j^{(\ell)-} := \psi_j^{m_j}\left(t_{k_j}^{\lambda_j} - \lambda_j L_j\right)  \qquad\text{and}\qquad
  s_j^{(\ell)+} := \psi_j^{m_j}\left(t_{k_j}^{\lambda_j} + \lambda_j L_j\right)
\]
Then, via~\eqref{A1}, we get
\begin{align*}
\widetilde{\omega}_j \left(s_j^{(\ell)-}\right) - \widetilde{\omega}_j\left(s_j^{(\ell)+}\right)  &=  \omega^{\lambda_j}\left(t_{k_j}^{\lambda_j} + \lambda_j L_j\right) -\omega^{\lambda_j}\left(t_{k_j}^{\lambda_j} - \lambda_j L_j\right)\\
&\to d^{(r)}  \qquad\text{as $j\to\infty$.}
\end{align*}
Since
\begin{align}
  &\widetilde{\omega}_j\left(s_j^{(\ell)+}\right) - \widetilde{\omega}_j \left(s_j^{(\ell)-}\right)  \notag\\
  &\quad = \Ecal \left( t_{k_j}^{\lambda_j} - \lambda_j L_j, u_{\lambda_j}(t_{k_j}^{\lambda_j} - \lambda_j L_j) \right) - \Ecal \left( t_{k_j}^{\lambda_j} + \lambda_j L_j, u_{\lambda_j}(t_{k_j}^{\lambda_j} + \lambda_j L_j) \right)  \notag\\
  &\quad\qquad - \int_{t_{k_j}^{\lambda_j} - \lambda_j L_j}^{t_{k_j}^{\lambda_j} + \lambda_j L_j} \dprb{\dot{f}(\tau),u_{\lambda_j}(\tau)} \dd \tau  \notag\\
  &\quad = \widetilde{E}_j(s_j^{(\ell)-}) - \widetilde{E}_j(s_j^{(\ell)+}) - \int_{t_{k_j}^{\lambda_j} - \lambda_j L_j}^{t_{k_j}^{\lambda_j} + \lambda_j L_j} \dprb{\dot{f}(\tau),u_{\lambda_j}(\tau)} \dd \tau,  \label{eq:omegatilde_est}
\end{align}
we get
\begin{align*}
\widetilde{E}(b) -\widetilde{E}(a)
&= \lim_{j\to\infty} \biggl( \widetilde{E}_j(b) - \widetilde{E}_j(s_j^{(\ell)+}) + \widetilde{E}_j(s_j^{(\ell)-}) -\widetilde{E}(a) - \Bigl[ \widetilde{\omega}_j\left(s_j^{(\ell)+}\right) - \widetilde{\omega}_j \left(s_j^{(\ell)-}\right) \Bigr] \biggr)\\
&\leq - \Var_{\Rcal_1}(\w;[a,b]\setminus \{s^{(\ell)}\}) - d^{(\ell)} -\int_a^b \dprb{\partial_s \tilde{f}(s),\w(s)} \dd s. 
\end{align*}
Setting $D_K:=\{s^{(l_1)},\cdots,s^{(l_K)}\}$, we find by iteration of the above argument that
\[
\widetilde{E}(b) -\widetilde{E}(a)
\leq - \Var_{\Rcal_1}(\w;(a,b)\setminus D_K) - \sum_{i \;:\; s^{(l_i)}\in [a,b]}d^{(l_i)}
 -\int_a^b \dprb{\partial_s \tilde{f}(s),\w(s)} \dd s. 
\]

\subsection{Stability}

 We next consider the question of stability. 
\begin{proposition}
\label{prop:stab}
For almost every $s \in [0,S_0]$ the stability is satisfied in the new time variable $s$, i.e.\
\begin{align}
\label{eq:stabw}
  \dprb{- \DD W_0(\w(s)) + \tilde{f}(s), \psi} - \dprb{\nabla \w(s), \nabla \psi} 
  \leq \Rcal_1(\psi)
\end{align}
for all $\psi\in \Wrm^{1,2}_0(\Omega;\R^m)$. 
Moreover, for all $r\in [1,\infty)$,
\begin{equation} \label{eq:utilde_reg}
\norm{\w}_{\Lrm^\infty(0,S_0;\Wrm^{2,r}(\Omega;\R^m))}\leq C.
\end{equation}
\end{proposition}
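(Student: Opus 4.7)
The strategy is to pass to the limit in the variational inequality satisfied by $\wlj$ coming from the time-rescaled equation~\eqref{A3}, obtain the stability of $\w$ for almost every $s$, and then invoke Lemma~\ref{lem:Linfty} to conclude the elliptic regularity~\eqref{eq:utilde_reg}. The key observation is that the coefficient of the parabolic term in~\eqref{A3} has vanishing contribution in the limit $j \to \infty$ when averaged against test functions.

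\textbf{Step 1: Pointwise variational inequality for $\wlj$.} Since $u_{\lambda_j}$ is a strong solution of~\eqref{eq:PDE_lambda}, we have for a.e.\ $t \in (0,T)$ and all $\eta \in \Wrm^{1,2}_0(\Omega;\R^m)$ that, by testing~\eqref{eq:Varlam} with $\xi = \dot{u}_{\lambda_j}(t) + \eta$ and using the sublinearity of $\Rcal_1$,
\[
  \dprb{-\lambda_j \dot{u}_{\lambda_j}(t) + \Delta u_{\lambda_j}(t) - \DD W_0(u_{\lambda_j}(t)) + f(t), \eta} \leq \Rcal_1(\eta).
\]
Setting $t = \varphi_j^{m_j}(s)$ yields, for a.e.\ $s \in [0,S_{0,j}]$,
\[
  \dprb{-\lambda_j \dot{u}_{\lambda_j}(\varphi_j^{m_j}(s)) + \Delta \wlj(s) - \DD W_0(\wlj(s)) + \tilde{f}_j(s), \eta} \leq \Rcal_1(\eta).
\]

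\textbf{Step 2: Vanishing of the parabolic term.} For an arbitrary $[a,b] \subset [0,S_0]$ (with $j$ large so that $b \leq S_{0,j}$), integration over $[a,b]$ followed by the Cauchy--Schwarz inequality yields
\[
  \absBB{\int_a^b \dprb{\lambda_j \dot{u}_{\lambda_j}(\varphi_j^{m_j}(s)),\eta} \dd s} \leq (b-a)^{1/2}\norm{\eta}_{\Lrm^2} \biggl(\int_a^b \normb{\lambda_j \dot{u}_{\lambda_j}(\varphi_j^{m_j}(s))}_{\Lrm^2}^2 \dd s\biggr)^{1/2}.
\]
The change of variables $t = \varphi_j^{m_j}(s)$ gives $\dd s = \partial_t \psi_j^{m_j}(t) \dd t$. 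Combining this with the bound $\partial_t \psi_j^{m_j} \leq 3/(m_j \lambda_j)$ from~\eqref{eq:lipschitz1} and the a-priori estimate $\lambda_j \norm{\dot{u}_{\lambda_j}}_{\Lrm^2_{t,x}}^2 \leq C$ from~\eqref{eq:dotulambda_RD_apriori}, I obtain
\[
  \int_a^b \normb{\lambda_j \dot{u}_{\lambda_j}(\varphi_j^{m_j}(s))}_{\Lrm^2}^2 \dd s \leq \frac{3\lambda_j}{m_j}\,\lambda_j\int_0^T\normb{\dot{u}_{\lambda_j}(t)}_{\Lrm^2}^2\dd t \leq \frac{C}{m_j} \tolong 0.
\]
This is the main point of the argument: the stretching of the time variable at the rate-independent parts is precisely balanced against the smallness of $\lambda_j$, making the inertial contribution negligible in the $s$-variable.

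\textbf{Step 3: Passing to the limit in the remaining terms.} From~\eqref{eq:converg}, $\nabla \wlj \to \nabla \w$ strongly in $\Lrm^2_\loc(0,S_0;\Lrm^2(\Omega;\R^{m \times d}))$; also, by the growth assumption on $\DD W_0$ in~\ref{as:W} combined with $\wlj \to \w$ in $\Lrm^a_\loc((0,S_0)\times\Omega;\R^m)$ for every $a<\infty$ and the uniform $\Lrm^\infty_\loc$-bound from the $\Wrm^{2,2}$-regularity, one has $\DD W_0(\wlj) \to \DD W_0(\w)$ in $\Lrm^1_\loc$. Finally, $\tilde{f}_j \to \tilde{f}$ uniformly. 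Therefore, integrating by parts and passing to the limit $j \to \infty$ in the integrated inequality yields, for all non-jump points $a,b \in [0,S_0]$ and every $\eta \in \Wrm^{1,2}_0(\Omega;\R^m)$,
\[
  \int_a^b \biggl( - \int_\Omega \nabla \w(s)\cdot \nabla \eta \dd x + \int_\Omega \bigl[-\DD W_0(\w(s)) + \tilde{f}(s)\bigr]\cdot \eta \dd x \biggr) \dd s \leq (b-a)\Rcal_1(\eta).
\]
Applying this to a countable dense subset of test functions $\eta$ and invoking Lebesgue's differentiation theorem together with the continuity of the map $\eta\mapsto \Rcal_1(\eta)$, one obtains~\eqref{eq:stabw} for almost every $s\in [0,S_0]$.

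\textbf{Step 4: Regularity.} With the stability~\eqref{eq:stabw} established and $\tilde{f} = f\circ\varphi\in \Lrm^\infty(0,S_0;\Lrm^\infty(\Omega;\R^m))$ uniformly bounded (since $\varphi$ is Lipschitz and $f\in \Wrm^{1,\infty}(\Lrm^\infty)$), Lemma~\ref{lem:Linfty} applied at almost every $s \in [0,S_0]$ yields the uniform bound
\[
  \norm{\w(s)}_{\Wrm^{2,r}} \leq C\bigl(1 + \norm{\tilde{f}(s)}_{\Lrm^r} + \norm{\tilde{f}(s)}_{\Lrm^2}^{q-1}\bigr)
\]
for each $r\in[1,\infty)$, which gives~\eqref{eq:utilde_reg} after taking the essential supremum in $s$.

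The main technical point is the bound in Step 2; it crucially relies on the explicit construction of the stretching function $\psi_j^{m_j}$ in Section~\ref{ssc:parab}, which ensures that the time dilation in the $s$-variable is large enough to absorb the $\lambda_j$ multiplying the inertial term, while keeping the dilation summable in the $\Lrm^2$-norm. The rest is standard weak compactness and Lebesgue differentiation.
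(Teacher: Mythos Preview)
Your proof is correct and follows essentially the same route as the paper's. Both arguments test the approximate variational inequality with $\xi = \partial_s\wlj + \psi$ (equivalently $\dot{u}_{\lambda_j} + \eta$), and the crucial estimate---that the parabolic term $\lambda_j\dot{u}_{\lambda_j}\bigl(\varphi_j^{m_j}(s)\bigr)$ vanishes in $\Lrm^2(0,S_0;\Lrm^2)$ with rate $C/m_j$ via the bound $\partial_t\psi_j^{m_j}\leq 3/(m_j\lambda_j)$ combined with $\lambda_j\norm{\dot{u}_{\lambda_j}}_{\Lrm^2_{t,x}}^2\leq C$---is identical in content (the paper phrases it in the $s$-variable as $\frac{\lambda_j}{\partial_s\varphi_j^{m_j}}\partial_s\wlj$, but this equals your $\lambda_j\dot{u}_{\lambda_j}\circ\varphi_j^{m_j}$ by the chain rule). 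The only cosmetic difference is that you integrate over $[a,b]$ and recover the pointwise inequality via Lebesgue differentiation and a density argument in $\psi$, whereas the paper passes to the limit directly at almost every $s$ using a.e.\ convergence along a subsequence; both are valid. For the regularity~\eqref{eq:utilde_reg} you invoke Lemma~\ref{lem:Linfty}, while the paper goes through Lemma~\ref{lem:apriori-space} after first reading off $\norm{-\Delta\w+\DD W_0(\w)}_{\Lrm^\infty}\leq C$ from the stability---these amount to the same thing.
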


\begin{proof}
From~\eqref{A3} we know
\begin{align*}
  &\Rcal_1(\partial_s\wlj (s)) 
 + \dprBB{-\frac{\lambda_j}{\partial_{s}\varphi_j^{m_j}(s)}\wlj(s) + \Delta \wlj(s) - \DD W_0(\wlj(s) ) + \tilde{f}_j(s), \xi(s) - \partial_s\wlj(s))} 
\\
&\quad  \leq \Rcal_1(\xi(s)) 
\end{align*}
for almost all $s \in [0,S_{0,j}]$ and all $\xi \in \Lrm^1(0,S_{0,j};\Wrm^{1,2}_0(\Omega;\R^m))$. Setting $\xi := \partial_s \wlj + \psi$ with $\psi \in \Wrm^{1,2}_0(\Omega;\R^m)$ and using the subadditivity of $\Rcal_1$, we find
\begin{equation} \label{eq:stabapprox}
  \dprBB{- \frac{\lambda_j}{\partial_{s}\varphi_j^{m_j}(s)} \partial_s\wlj(s) - \DD W_0(\wlj(s) ) + \tilde{f}_j(s), \psi} - 
\dprb{\nabla \wlj(s), \nabla \psi} 
  \leq \Rcal_1(\psi).
\end{equation}
Moreover, we have the following energy equality for $0\leq a<b\leq S_0$
\begin{align}
\label{eq:tildeE}
\int_a^{b}\Rcal_1(\partial_s\wlj (s))+\frac{\lambda_j}{\partial_{s}\varphi_j^{m_j}(s)}\frac{\norm{\partial_{s}\wlj(s)}_{\Lrm^2}^2}{2}\dd s = \widetilde{E}_j(a)-\widetilde{E}_j(b).
\end{align}
Hence, the uniform bounds induced by the energy~\eqref{eq:tildeE}  (note that $\widetilde{E}_j$ is continuous on $[0,S_0]$) together with~\eqref{eq:lipschitz1} imply that
\[
\int_0^{S_0}\biggl(\frac{\lambda_j}{\partial_{s}\varphi_j^{m_j}(s)}\norm{\partial_{s}\wlj(s)}_{\Lrm^2}\biggr)^2\dd s
\leq C \int_0^{S_0} \frac{\lambda_j}{\partial_{s}\varphi_j^{m_j}(s)} \dd s
\leq \frac{C}{m_j}.
\]
Therefore, this term tends to zero in $\Lrm^2(0,S_0;\Lrm^2(\Omega;\R^m))$ since $m_j\to\infty$.

We have the following convergences (see the beginning of this section):
\begin{align*}
  \wlj(s)  &\to \w(s) &&\text{in $\Lrm^q(\Omega;\R^m)$ for a.e.\ $s \in [0,S_0]$,} &&&&& \\
 \frac{\lambda_j}{\partial_{s}\varphi_j^{m_j}}\partial_s\wlj &\to 0 &&\text{in $\Lrm^2(0,S_0;\Lrm^2(\Omega;\R^m))$,} \\
  \nabla \wlj(s)  &\toweak \nabla \w(s) &&\text{in $\Lrm^2(\Omega;\R^m)$ for a.e.\ $s \in [0,S_0]$.}
\end{align*}
Hence, we can pass to the limit in~\eqref{eq:stabapprox} to get~\eqref{eq:stabw}.

Observe that the stability~\eqref{eq:stabw} implies by the dual characterization of $\Lrm^\infty$ that 
\[
\norm{-\Delta \w +\DD W_0(\w)}_{\Lrm^\infty([0,S_0]\times \Omega)}\leq C.
\]
Hence, Lemma~\ref{lem:apriori-space} implies the desired regularity estimate~\eqref{eq:utilde_reg}.
\end{proof}

\subsection{Energy equality}

We are now finally ready to completely resolve the energy dissipation.

\begin{proposition}\label{prop:energy}
Let $[a,b]\subset [0,S_0]$. Then,
\[
\widetilde{E}(b-) - \widetilde{E}(a+)
= - \int_a^b \Rcal_1(\partial_s \w(s))\dd s - \sum_{\ell \;:\; s^{(\ell)}\in [a,b]} d^{(\ell)} -\int_a^b\dprb{\partial_s\tilde{f}(s),\w(s)}\dd s,
\]
where $\partial_s \w$ is the absolutely continuous part of the time derivative of $\w$.
\end{proposition}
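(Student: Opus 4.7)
The plan is to view $\w$ as a process in the slow time $s$ driven by $\tilde f$ and to apply the abstract theory of Section~\ref{sc:energy_ineq} combined with a pointwise dissipation identity derived from the strong-solution inequality. All prerequisites are in hand: Proposition~\ref{prop:stab} supplies the stability~\eqref{eq:stabw} and the $\Lrm^\infty(\Wrm^{2,r})$-regularity~\eqref{eq:utilde_reg}, while the end of Subsection~\ref{ssc:s} yields the truncated energy inequality
\[
  \widetilde E(b) - \widetilde E(a) \leq -\Var_{\Rcal_1}(\w;(a,b)\setminus D_K) - \sum_{i\,:\,s^{(\ell_i)}\in[a,b]} d^{(\ell_i)} - \int_a^b\dprb{\partial_s\tilde f,\w}\dd s
\]
for every finite $D_K\subset J:=\{s^{(\ell)}\}_\ell$. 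Consequently Proposition~\ref{prop:goodset} applies and gives $\w \in \BV(0,S_0;\Lrm^1(\Omega;\R^m))$, $\widetilde E \in \BV(0,S_0)$, and the strong-solution property at almost every $s\in[0,S_0]$.

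For the ``$\leq$'' direction in the proposition, I would pass to the limit $K\to\infty$ in the displayed inequality (using $\sum_\ell d^{(\ell)}<\infty$) and invoke the general bound $\Var_{\Rcal_1}(\w;[a,b]\setminus J) \geq \int_a^b \Rcal_1(\partial_s\w)\dd s$ comparing total $\Rcal_1$-variation with the absolutely continuous part of $d\w$. For the reverse inequality I would test the strong-solution inequality~\eqref{eq:strongsol_ineq} with $\xi=0$ and $\xi=2\partial_s\w$ to obtain the pointwise dissipation identity
\[
  \Rcal_1(\partial_s\w(s)) = \dprb{\Delta\w(s)-\DD W_0(\w(s))+\tilde f(s),\,\partial_s\w(s)}
\]
for almost every $s$. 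On any $\widetilde E$-jump-free subinterval $[c,d]\subset[a,b]\setminus J$, Proposition~\ref{prop:E_contpoint} delivers $\Var_{\Rcal_1}(\w;[c,d]) = \widetilde E(c) - \widetilde E(d) - \int_c^d\dprb{\partial_s\tilde f,\w}\dd s$, and integrating the pointwise identity by the chain rule for $\Wcal$ then yields $\widetilde E(c) - \widetilde E(d) = \int_c^d \Rcal_1(\partial_s\w)\dd s + \int_c^d\dprb{\partial_s\tilde f,\w}\dd s$. At each parabolic $s^{(\ell)}\in J$ the $\widetilde E$-jump is exactly $d^{(\ell)}$: the lower bound is immediate from the truncated energy inequality with $D_K=\{s^{(\ell)}\}$, while the upper bound is Lemma~\ref{lem:energ2b} applied to the parabolic sequence constructed in Subsection~\ref{ssc:parab}. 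Summing the continuous contributions over the at most countably many components of $[a,b]\setminus J$ and adding the $d^{(\ell)}$ then yields the claimed equality.

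The main obstacle is the rigorous justification of the chain rule on jump-free intervals, which requires that $\w$ have no continuous singular (Cantor-type) part away from $J$. To handle this I would combine Propositions~\ref{prop:goodset} and~\ref{prop:E_contpoint}: the former shows $\partial_s\w \in \Lrm^2(\Omega;\R^m)$ at almost every $s$, while the latter, applied on each jump-free subinterval, sandwiches the $\Rcal_1$-variation measure of $\w$ between two constant multiples of the energy-loss measure $-\di\bigl(\widetilde E + \int \dprb{\partial_s\tilde f,\w}\dd\sigma\bigr)$. The latter in turn has a Lipschitz density on the interior of the good set of the associated maximal function thanks to Lemma~\ref{lem:vitali}. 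This joint control rules out Cantor parts of $d\w$ and gives the required absolute continuity, completing the argument.
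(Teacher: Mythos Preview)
Your overall strategy---prove ``$\leq$'' from the truncated inequality and ``$\geq$'' from the pointwise strong-solution identity plus a chain rule---is different from the paper's, and it could be made to work, but as written it has two concrete gaps.

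First, the sentence ``At each parabolic $s^{(\ell)}\in J$ the $\widetilde E$-jump is exactly $d^{(\ell)}$'' is false in general. Several parabolic sequences constructed in Subsection~\ref{ssc:parab} can share the same limit $s_k$ in the slow time (this is precisely why the paper introduces the index sets $\Ibb_k=\{\ell:s^{(\ell)}=s_k\}$). The jump of $\widetilde E$ at $s_k$ is then $\sum_{\ell\in\Ibb_k} d^{(\ell)}$, not a single $d^{(\ell)}$, and Lemma~\ref{lem:energ2b} only bounds one resolution at a time. The paper avoids this by working at a fixed level $m$, where there are only \emph{finitely many} parabolic points (each with dissipation $\geq 1/m$), telescoping $\widetilde E$ over the approximating intervals $[s_j^{(\ell)-},s_j^{(\ell)+}]$---which are disjoint for different $\ell$ even when their limits coincide---and only then letting $m\to\infty$.

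Second, your argument that Lemma~\ref{lem:vitali} ``rules out Cantor parts of $d\w$'' does not work. Lemma~\ref{lem:vitali} gives a Lipschitz density only on the interior of the good set $\{\Mcal\nu\leq\eta\}$; the bad set $\{\Mcal\nu>\eta\}$ can be made small in Lebesgue measure, but a Cantor-type singular measure lives on a Lebesgue-null set and is perfectly compatible with these estimates. What actually excludes Cantor parts here is the stretching construction: by~\eqref{eq:etildalambda} (i.e.\ $|\widetilde\omega_j(a)-\widetilde\omega_j(b)|\leq|b-a|$), the limit $\widetilde\omega$---and hence $\widetilde E$---is $1$-Lipschitz off $J$, so by~\eqref{eq:energyineq} the map $s\mapsto\Var_{\Rcal_1}(\w;[a,s])$ is Lipschitz off $J$ as well, and $\w$ is therefore absolutely continuous there. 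You do not invoke~\eqref{eq:etildalambda}, and without it your chain-rule step is unjustified.

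For comparison, the paper's proof sidesteps both issues at once by using the \emph{two-sided} estimate of Proposition~\ref{prop:E_contpoint} on each interval between consecutive level-$m$ parabolic points (where all remaining energy jumps are $\leq 1/m$), getting
\[
\absBB{\widetilde E(a)-\widetilde E(b)-\Var_{\Rcal_1}\bigl(\w;[a,b]\setminus D_m\bigr)-\sum_{i=1}^{K_m}d^{(\ell_i)}-\int_a^b\dprb{\partial_s\tilde f,\w}\dd s}\leq\frac{C}{m}\Var_{\Rcal_1}\bigl(\w;[a,b]\setminus D_m\bigr),
\]
and then letting $m\to\infty$. The identification $\Var_{\Rcal_1}(\w;[a,b]\setminus D_m)\to\int_a^b\Rcal_1(\partial_s\w)\,\dd s$ still needs the Lipschitz property from~\eqref{eq:etildalambda}, but no separate chain-rule argument is required.
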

\begin{proof}
First assume that $a\neq 0$.
Take $m\in \N$. Following the construction of the timescale $s$ in Section~\ref{ssc:s}, we consider all sequences $\{t_{k_j}^{\lambda_j}\}_j$, which are related to $m$. There are only a finite number of such sequences, since each of them yield a dissipation (a negative jump in $\widetilde{\omega}_j$) of size at least $1/m$. For all $j$ such that $m_j\geq m$ we can then define
\[
  s_{t_j}^{\lambda_j}:=\phi_j^{m_j}(t_{k_j}^{\lambda_j}).
\]
From 
Section~\ref{ssc:s} we know that these sequences converge to some values $s^{(1)}\leq s^{(2)}\leq\cdots\leq s^{(K_m)}$, with $K_m\leq m \widetilde{\omega}(S_0)$. The construction implies that $\widetilde{\omega}$ has no jump larger than $1/m$ in $[0,S_0]\setminus \{s^{(l)}\}_{l=1}^{K_m}$. This entails that also $\widetilde{E}$ has no jump larger than $1/m$ on $[0,S_0]\setminus \{s^{(l)}\}_{l=1}^{K_m}$.

Let now $[a,b] \subset [0,S_0]$. We re-index the collection $s^{(\ell)}$ as $s^{(l_i)}$ with $i = 1,\cdots, K_m$ such that $a\leq s^{(l_1)}\leq s^{(l_2)}\leq \cdots\leq s^{(l_{K_m})}\leq b$. The respective dissipations are denoted by $d^{(l_i)}$ and can be computed via~\eqref{eq:v_energy},~\eqref{eq:omegatilde_est} as
\[
d^{(l_i)} = \widetilde{E}(s^{(l_i)}-) - \widetilde{E}(s^{(l_i)}+)
= \int_{-\infty}^{\infty} \norm{\partial_\theta v^{(l_i)}}_{\Lrm^2}^2 + \Rcal_1(\partial_\theta v^{(l_i)}) \dd \theta,
\]
where $\widetilde{E}(s^{(l_i)}\pm)$ denote the right and left limits of $\widetilde{E}$ at $s^{(l_i)}$, respectively.

Without loss of generality we assume that $a<s^{(l_1)}\leq s^{(l_{K_m})}< b$ (otherwise, at the endpoints, in the following some terms do not appear). Then,
\begin{align*}
\widetilde{E}(a)-\widetilde{E}(b)
&=\widetilde{E}(a)-\widetilde{E}(s^{(l_1)}-) +
\sum_{i=1}^{K_m}\Bigl( \widetilde{E}(s^{(l_i)}-) - \widetilde{E}(s^{(l_i)}+) \Bigr) \\
&\quad +\sum_{i=1}^{K_m-1} \Bigl( \widetilde{E}(s^{(l_i)}+) - \widetilde{E}(s^{(l_{i+1})}-) \Bigr) + \widetilde{E}(s^{(l_{K_m})}+)- \widetilde{E}(b)\\
&= \widetilde{E}(a)-\widetilde{E}(s^{(l_1)}-)
+\sum_{l=1}^{K_m}d^{(l_i)} \\
&\quad +\sum_{i=1}^{K_m-1} \Bigl( \widetilde{E}(s^{(l_i)}+) - \widetilde{E}(s^{(l_{i+1})}-) \Bigr) + \widetilde{E}(s^{(l_{K_m})}+)- \widetilde{E}(b).
\end{align*}
Since $\Ecal(\frarg,\w(\frarg))$ has no jump larger than $1/m$ on $[0,S_0]\setminus \{s^{(l)}\}_{l=1}^{K_m}$, Proposition~\ref{prop:E_contpoint} implies
\begin{align*}
&\absBB{\widetilde{E}(s^{(l_i)}+) - \widetilde{E}(s^{(l_{i+1})}-) - \Var_{\Rcal_1}\bigl(\w;(s^{(l_i)},s^{(l_{i+1})})\bigr) - \int_{{s^{(l_i)}}}^{s^{(l_{i+1})}}\dprb{\partial_s\tilde{f}(s),\w(s)}\dd s}\\
&\quad \leq \frac{C}{m} \Var_{\Rcal_1}\bigl(\w;(s^{(l_i)},s^{(l_{i+1})})\bigr).
\end{align*}
This also holds with $s^{(0)} := a$, $s^{(l_{K_m+1})} = b$. So, setting $D_m:=\{s^{(l)}\}_{l=1}^{K_m}$,
\begin{align*}
&\absBB{\widetilde{E}(a)-\widetilde{E}(b) - \Var_{\Rcal_1}\bigl(\w;[a,b] \setminus D_m \bigr) - \sum_{l=1}^{K_m}d^{(l_i)} - \int_{a}^{b}\dprb{\partial_s\tilde{f}(s),\w(s)}\dd s} \\
&\quad \leq \frac{C}{m} \Var_{\Rcal_1}\bigl(\w;[a,b] \setminus D_m\bigr).
\end{align*}
Letting $m\to \infty$, we find the assertion. Indeed,
\[
  \sum_{l=1}^{K_m}d^{(l_i)} \to \sum_{\ell \;:\; s^{(\ell)}\in [a,b]} d^{(\ell)}
\]
by monotone convergence, and
\[
  \Var_{\Rcal_1}\bigl(\w;[a,b] \setminus D_m\bigr)
  \to \int_a^b \Rcal_1(\partial_s \w(s))\dd s
\]
since we know that $\w$ is continuous on all of $[a,b]$ except for the points $\{s^{(\ell)}\}$.

For the initial value we also need to consider the case $s^{(1)}=a$. Then there are two possibilities. Either in the respective sequence $\{t^{\lambda_j}_{k_j}\}_j$, the $k_j$ are uniformly bounded. In this case the limit function defined as 
$v^1 \colon [0,\infty)\times \Omega\to \R^m$ satisfies
\begin{align*}
 \Ecal(0,u(0)+) &= \Ecal(0,u_0)-\Diss_{\mathrm{jump}}(0), 
  \\
  \Diss_{\mathrm{jump}}(0) &:= \int_{0}^\infty \norm{\partial_\theta v^1(0,\theta)}_{\Lrm^2}^2 + \Rcal_1(\partial_\theta v^1(0,\theta)) \dd \theta
  \\
  &\quad  + \sum_{j \in \Ibb_0 \setminus\{1\}} \biggl(\int_{-\infty}^\infty \norm{\partial_\theta v^j(t_k,\theta)}_{\Lrm^2}^2 + \Rcal_1(\partial_\theta v^j(t_k,\theta))\dd \theta \biggr).
  \end{align*}
In particular,
\[
d^{1}=\int_{0}^\infty \norm{\partial_\theta v^1(0,\theta)}_{\Lrm^2}^2 + \Rcal_1(\partial_\theta v^1(0,\theta)) \dd \theta.
\]
For all other $\ell$ we have $v^{(\ell)} \colon (-\infty,\infty)\times \Omega\to \R^m$.
\end{proof}

Notice that the limit functions $\widetilde{\omega}(s)$, $\varphi(s)$ as well as the dissipation values $d^{(\ell)}$ yield all the information on how the dissipation is taking place.

The rate-dependent jumps of $\widetilde{\omega}(s)$ take place at at most a countable number of (new) times $s^{(\ell)}$. As mentioned before, we can have
several repeated values $s^{(\ell)}$ (they do not appear consecutively in the sequence, because the jumps are ordered to the magnitude of the energy jump, not the times). Therefore, a given point $s^{\ast}$
could appear infinitely often in the $s^{(\ell)}$.

The original times at which these parabolic times take place are the times $t^{(\ell)}=\varphi(s^{(\ell)})$. Given that the function $\varphi$ might have plateaus, there could be several jumps associated to an original time point $t$, with rate-independent regions in between.

\subsection{Continuity over the jumps}

Using the derivation above we define for a jump point $s_k\in [0, S_0]$ of $\w$ the set
\begin{equation} \label{eq:Ik}
  \Ibb_k:=\setb{\ell\in \Nbb}{s^{(\ell)}=s_k}.
\end{equation}
\begin{proposition}
\label{pro:continuity}
For $i,\ell\in \Ibb_k$ there is a constant $C > 0$ just depending on $R_1$ such that
\begin{enumerate}[a)]
\item If $
0\leq \Ecal(s_k,v^i(\infty))- \Ecal(s_k,v^\ell(-\infty))=\delta
$, then 
$
\norm{v^i(\infty)-v^\ell(-\infty)}_{\Lrm^1}\leq C\delta.
$
\item If $
0\leq \widetilde{E}(s_k-) - \Ecal(s_k,v^\ell(-\infty))=\delta
$, then  
$
\norm{\w(s_k-)-v^\ell(-\infty)}_{\Lrm^1}\leq C\delta.
$
\item If $
0\leq  \Ecal(s_k,v^\ell(\infty))- \widetilde{E}(s_k+) =\delta
$, then 
$
\norm{v^\ell(\infty)-\w(s_k+)}_{\Lrm^1}\leq C\delta.
$
\end{enumerate}
Moreover:
\begin{enumerate}[a)]
\item[d)] For every $\epsilon>0$ there exists an $i\in \Ibb_k$ such that
$\norm{u(s_k-)-v^{i}(-\infty)}_{\Lrm^1}\leq \epsilon$.
\item[e)] For every $\epsilon>0$ and every $\ell\in \Ibb_k$ there exists $i\in \Ibb$ such that
$\norm{v^{\ell}(\infty)-v^{i}(-\infty)}_{\Lrm^1}\leq \epsilon$.
\item[f)] For every $\epsilon>0$ there exists an $i\in \Ibb_k$ such that
$\norm{v^{i}(\infty)-u(s_k+)}_{\Lrm^1}\leq \epsilon$.
\end{enumerate}
\end{proposition}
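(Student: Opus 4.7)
The plan rests on two building blocks. First, a \emph{single-transient $\Lrm^1$-bound}: for any jump-resolution map $v^r$, the energy identity in Lemma~\ref{lem:energ2} reads $d^{(r)}=\int_{-\infty}^{\infty}\norm{\partial_\theta v^r}_{\Lrm^2}^2+\Rcal_1(\partial_\theta v^r)\dd\theta$, so combined with the pointwise estimate $c_1\abs{z}\leq R_1(z)$ from Section~\ref{sc:assume} we get
\[
  \norm{v^r(-\infty)-v^r(\infty)}_{\Lrm^1}\leq\int_{-\infty}^{\infty}\norm{\partial_\theta v^r(\theta)}_{\Lrm^1}\dd\theta\leq \frac{1}{c_1}\int_{-\infty}^{\infty}\Rcal_1(\partial_\theta v^r(\theta))\dd\theta\leq \frac{d^{(r)}}{c_1}.
\]
Second, a \emph{no-gap property} at $s_k$: applying Proposition~\ref{prop:energy} on $[s_k-\eta,s_k+\eta]$ with $\eta\todown 0$ yields
\[
  \widetilde{E}(s_k-)-\widetilde{E}(s_k+)=\sum_{\ell\in\Ibb_k}d^{(\ell)}.
\]
The ordering in~\ref{itm:v} lets me telescope this difference across the (possibly countable) family: the ``top'' gap $\widetilde{E}(s_k-)-\sup_\ell\Ecal(s_k,v^\ell(-\infty))$, each ``middle'' gap $\Ecal(s_k,v^\ell(\infty))-\Ecal(s_k,v^{\ell'}(-\infty))$ between successive $\ell,\ell'$ (in the order of decreasing start-energy), and the ``bottom'' gap $\inf_\ell\Ecal(s_k,v^\ell(\infty))-\widetilde{E}(s_k+)$ are all non-negative by~\ref{itm:v}, but their total plus $\sum d^{(\ell)}$ equals $\widetilde{E}(s_k-)-\widetilde{E}(s_k+)=\sum d^{(\ell)}$; hence every one of these gaps is zero.

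For (a), the hypothesis $\delta\geq 0$ together with~\ref{itm:v} forces $i$ to precede $\ell$ in the ordering, and the vanishing of all intermediate gaps chains the transients $v^i,v^{r_1},\ldots,v^{r_M},v^\ell$ endpoint-to-endpoint. The triangle inequality combined with the single-transient bound yields
\[
  \norm{v^i(\infty)-v^\ell(-\infty)}_{\Lrm^1}\leq \sum_j\frac{d^{(r_j)}}{c_1}=\frac{\delta}{c_1},
\]
so $C=1/c_1$ works. Parts (b) and (c) are proved identically, with $\w(s_k\pm)$ playing the role of the outermost endpoint; the top/bottom no-gap identity (or its limiting version in the non-attained case) identifies $\w(s_k\pm)$ with the corresponding extremal $v^\ell$-endpoint.

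For (d) and (f), the vanishing of the top and bottom gaps gives $\sup_{\ell\in\Ibb_k}\Ecal(s_k,v^\ell(-\infty))=\widetilde{E}(s_k-)$ and $\inf_{\ell\in\Ibb_k}\Ecal(s_k,v^\ell(\infty))=\widetilde{E}(s_k+)$; given $\epsilon>0$, pick $i\in\Ibb_k$ making the relevant energy gap smaller than $\epsilon c_1$ and invoke (b) for (d) or (c) for (f). For (e), the middle no-gap identity applied at the successor $i$ of $\ell$ combined with (a) at $\delta=0$ gives $\norm{v^\ell(\infty)-v^i(-\infty)}_{\Lrm^1}=0$ when a successor exists; otherwise the same approximation device as in (f) concludes.

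The main obstacle is structural rather than analytical: $\Ibb_k$ is only linearly ordered by~\ref{itm:v} and could in principle have a dense order-type on its energy values, so ``consecutive'' entries and maxima/minima need not exist. The clean energy-accounting identity above tames this, since the sum of non-negative gap terms is forced to be zero; even in pathological cases one can always approximate any desired energy level arbitrarily closely, and (d)--(f) are phrased existentially precisely to accommodate this. Apart from that, the analytical inputs are just the single-transient bound, the ordering~\ref{itm:v}, and the localized energy balance of Proposition~\ref{prop:energy}.
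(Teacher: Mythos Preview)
Your no-gap energy accounting is correct, but it does not deliver the $\Lrm^1$-bounds you claim. From $\widetilde{E}(s_k-)-\widetilde{E}(s_k+)=\sum_{\ell\in\Ibb_k}d^{(\ell)}$ and the non-negativity of the telescoped terms you rightly conclude that every \emph{energy} gap vanishes: for consecutive $r,r'$ in the ordering, $\Ecal(s_k,v^r(\infty))=\Ecal(s_k,v^{r'}(-\infty))$. But equal energy does not imply equal state, so you have not shown that $v^r(\infty)=v^{r'}(-\infty)$ in $\Lrm^1$. Your triangle-inequality step
\[
  \norm{v^i(\infty)-v^\ell(-\infty)}_{\Lrm^1}\leq\sum_j\frac{d^{(r_j)}}{c_1}
\]
silently drops all the cross terms $\norm{v^{r_j}(\infty)-v^{r_{j+1}}(-\infty)}_{\Lrm^1}$, and bounding each of these by the corresponding (zero) energy gap is precisely statement~(a) with $\delta=0$ --- the argument is circular. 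The same circularity infects your treatment of (b) and (c), where you identify $\w(s_k\pm)$ with an extremal $v$-endpoint purely from the top/bottom energy no-gap.

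The paper sidesteps this by returning to the approximating sequence: for fixed $i,\ell$ it selects $s_j^i,s_j^\ell\to s_k$ with $s_j^i+L_j^i\lambda_j\leq s_j^\ell-L_j^\ell\lambda_j$ so that $u_{\lambda_j}(s_j^i+L_j^i\lambda_j)\to v^i(\infty)$ and $u_{\lambda_j}(s_j^\ell-L_j^\ell\lambda_j)\to v^\ell(-\infty)$ in $\Lrm^1$, together with convergence of the corresponding energies. The \emph{continuous} process $u_{\lambda_j}$ supplies the missing path between the two endpoints, and the energy equality~\eqref{eq:ulambda_energy} at the $\lambda_j$-level gives
\[
  c_1\,\normb{u_{\lambda_j}(s_j^i+L_j^i\lambda_j)-u_{\lambda_j}(s_j^\ell-L_j^\ell\lambda_j)}_{\Lrm^1}
  \leq\int_{s_j^i+L_j^i\lambda_j}^{s_j^\ell-L_j^\ell\lambda_j}\Rcal_1(\dot{u}_{\lambda_j})\,\dd\tau
  \leq\Delta\Ecal_j+o(1),
\]
from which (a) follows by passing to the limit; (b) and (c) are analogous. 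Your reductions of (d)--(f) to (a)--(c) via approximate attainment of the extremal energies are essentially what the paper does.
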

\begin{proof}
For a) we find by the construction above an approximation $L_j^i,L_j^\ell,\lambda_j,s_j^i,s_j^\ell$ such that as $j\to \infty$:
\begin{enumerate}
\item $L_j^i\to \infty$ and $L_j^\ell\to \infty$ with $\lambda_j \cdot \max\{L_j^i,L_j^\ell\}\to 0$;
\item $s_j^i\to s_k$
 and $s_j^\ell\to s_k$ with $s_j^i+L_j^i\lambda_j\leq s_j^\ell-L_j^\ell\lambda_j$;
 \item
$
u_{\lambda_j}(s_j^i+L_j^i\lambda_j)=v^i_{\lambda_j}(L_j^i)\to v^i(\infty) 
$
and 
$
u_{\lambda_j}(s_j^\ell-L_j^i\lambda_j)=v^\ell_{\lambda_j}(-L_j^\ell)\to v^\ell(-\infty) 
$ strongly in $\Lrm^1$;
\item $\Ecal(s_j^i+L_j^i\lambda_j, u_{\lambda_j}(s_j^i+L_j^i\lambda_j))\to \Ecal(s_k,v^i(\infty))$ and $\Ecal(s_j^\ell-L_j^\ell\lambda_j, u_{\lambda_j}(s_j^\ell-L_j^\ell\lambda_j))\to \Ecal(s_k,v^\ell(-\infty))$.
\end{enumerate}
This is possible since in the construction of the $v^i$, we ordered them by the size of their energy contribution such that any ''overlap'' is excluded. Then, by the energy equality~\eqref{eq:ulambda_energy}, we can estimate
\begin{align*}
&\norm{u_{\lambda_j}(s_j^i+L_j^i\lambda_j)-u_{\lambda_j}(s_j^\ell-L_j^\ell\lambda_j)}_{\Lrm^1} \\
&\qquad \leq C\int_{s_j^i+L_j^i\lambda_j}^{s_j^\ell-L_j^\ell\lambda_j}\Rcal(\dot{u}(\tau)) \dd \tau 
\\
&\qquad \leq C\Big(\Ecal(s_j^i+L_j^i\lambda_j, u_{\lambda_j}(s_j^i+L_j^i\lambda_j))-\Ecal(s_j^\ell-L_j^\ell\lambda_j, u_{\lambda_j}(s_j^\ell-L_j^\ell\lambda_j))\Big)
\\
&\qquad \qquad  +C\int_{s_j^i+L_j^i\lambda_j}^{s_j^\ell-L_j^\ell\lambda_j}\norm{\dot{f}(\tau)}_{\Lrm^\infty} \cdot\norm{u_{\lambda_j}(\tau)}_{\Lrm^1} \dd \tau.
\end{align*}
This implies the result by passing to the limit.

The proofs for b) and c) are analogous. Indeed, for b) we find approximating sequences $L_j^\ell,\lambda_j,s_j^\ell,s_j$, such that with $j\to \infty$ 
\begin{enumerate}
\item $L_j^\ell\to \infty$ with $\lambda_jL_j^\ell\to 0$;
\item $s_j^\ell\to s_k$ and $s_j\to s_k$ with $s_j\leq s_j^\ell-L_j^\ell\lambda_j$;
 \item
$
u_{\lambda_j}(s_j)\to \w(s_k-)
$
and 
$
u_{\lambda_j}(s_j^\ell-L_j^i\lambda_j)=v^\ell_{\lambda_j}(-L_j^\ell)\to v^\ell(-\infty) 
$ strongly in $\Lrm^1$;
\item $\Ecal(s_j, u_{\lambda_j}(s_j))\to \widetilde{E}(s_k-)$ and $\Ecal(s_j^\ell-L_j^\ell\lambda_j, u_{\lambda_j}(s_j^\ell-L_j^\ell\lambda_j))\to \Ecal(s_k,v^\ell(-\infty))$.
\end{enumerate}
Now, the estimate b) in the same fashion as a).

For d) observe that due to the energy equality and the monotonicity of the energy the family $\{\Ecal(s_k,v^i(-\infty))\}_{i\in \Ibb_k}\cup \{\Ecal(s_k,v^i(\infty))\}_{i\in \Ibb_k}$ can (iteratively) be completely ordered by size. Moreover, 
\[
\widetilde{E}(s_k-)=\inf_{i\in \Ibb_k} \Ecal(s_k,v^i(-\infty)).
\]
Hence, we find for every $\epsilon>0$ an $i\in \Ibb_k$ such that
\[
\widetilde{E}(s_k-)-\Ecal(s_k,v^i(-\infty)) \leq C\epsilon.
\]
Now b) implies
\[
\norm{\tilde{u}(s_k-))-v^{i}(-\infty)}\leq C\epsilon.
\]
Similarly, we find for e) that
 \[
 \Ecal(s_k,v^\ell(\infty))=\inf \, \setb{\Ecal(s_k,v^i(-\infty))}{ i\in \Ibb_k \text{ and } \Ecal(s_k,v^\ell(\infty))<\Ecal(s_k,v^i(-\infty))},
\]
which implies by a) that for every $\epsilon>0$ there exists an $i\in \Ibb_k$ with
\[
\norm{v^\ell(\infty)-v^{i}(-\infty)}\leq \epsilon.
\]
The assertion f) follows in the same way as d) (using c)).
\end{proof}

\subsection{Proofs of the main theorems} \label{ssc:proofs}

\begin{proof}[Proof of Theorem~\ref{thm:main_ex}]
All objects occurring in the claims of Theorem~\ref{thm:main_ex} have already been constructed. A simple change of variables allows to define
\begin{align*}
\widehat{\phi}(t):= \frac{T}{S_0}\phi(t),  \qquad
u(s):=\w \biggl( \frac{S_0}{T}s \biggr),  \qquad
\tilde{f}:=f\circ\widehat{\phi},
\end{align*}
where $\w$ is defined via~\eqref{eq:converg}. Hence, $\widehat{\phi} \colon [0,T]\to [0,T]$ and so the time domain for $u,\tilde{f}$ is $[0,T]$ as well.

We define $\Jbb$ as the set of jump points of $E(s) = \Ecal(s,u(s))$, which is equal to $\Jbb:=\{s^{(\ell)}\}_{\ell}$ (see the end of Subsection~\ref{ssc:s}). For every $s_k\in \Jbb$, as in~\eqref{eq:Ik} we collect in $\Ibb_k$ all $\ell$, such that $s^{(\ell)}=s_k$ and set $v^i(s_k,\theta):=v^{(\ell)}(\theta)$.

We also define
\[
  z_\lambda \in \Lrm^\infty(0,\infty;(\Wrm^{1,2} \cap \Lrm^q)(\Omega;\R^m) \cap \Crm^{0,1}(0,\infty;\Lrm^1(\Omega;\R^m))
\]
to be an $\Lrm^1$-arclength-parametrization of $\tilde{u}_{\lambda_j}$ (or of $u_{\lambda_j}$, as these two processes are just reparametrizations of each other), extended constantly to the right. By Lemma~\ref{lem:bv-comp}~(iii) with $s = 2$, $\rho = 2$, $k = 2$, $m = 1$, and $\alpha_2 \to \tilde{q} = 2^*$ ($\theta \to 1$) we then obtain the strong convergence of $z_\lambda$ to $z$ strongly in $\Lrm^a_\loc(0,\infty;\Wrm^{1,r}_0)$ for $a\in [1,\infty)$, $r\in [1,2^*)$ and weakly* in $\Crm^{0,1}(0,\infty;\Lrm^1)$. In particular, $z$, which we also restrict to $[0,L]$ for some $L$ such that to the right of $L$ the process $z$ is constant, is still Lipschitz with values in $\Lrm^1$.

In the following we will indicate how the results from this chapter imply that
\[
  u, \; \Jbb, \; \Ibb_k, \; \{v^i(s_k)\}_{i\in \Ibb_k,k\in \Jbb}
\]
indeed satisfy all requirements to be a two-speed solution. Indeed:

\begin{enumerate} [(I)]
\item follows from Propositon~\ref{prop:E_contpoint}, also see Corollary~\ref{cor:energ_cont};
\item follows from Proposition~\ref{prop:goodset} and the a-priori estimates in~\eqref{eq:converg};
\item follows from Proposition~\ref{prop:stab};
\item follows from Lemma~\ref{lem:energ2};
\item follows from Lemma~\ref{lem:energ2b} and the construction of the sets $Z^{m}_j$ in Subsection~\ref{ssc:parab} and Proposition~\ref{pro:continuity};
\item follows from Proposition~\ref{prop:energy};
\item follows from Proposition~\ref{prop:energy} for $a = 0$.
\item follows from Proposition~\ref{pro:continuity}.
\end{enumerate}
The regularity follows from the uniform estimates in Lemma~\ref{lem:apri}, see also Proposition~\ref{prop:apriori} and~\eqref{eq:converg}.
\end{proof}

\begin{proof}[Proof of Corollary~\ref{cor:main_ex}]
The proof of the corollary follows by defining $t=\psi(s)$, where $\psi$ is the (increasing) left inverse of $\widehat{\phi}$ defined above. We define $w(t):=u\circ\psi(t)$ and $\Jbb_w$ as the jump set of the respective energy.
The definition of the $v^i(t_k,\theta)=v^i(\psi(s_k),\theta)$ remains the same. Now, $(w,\Jbb_w,\{v^i(t_k)\})$ satisfy~\ref{itm:i}--\ref{itm:v}. In order to get an energy balance, we have to include the rate-independent evolutions $b^k$. We define them in the following chronological order. We take the smallest $t_k\in \Jbb$, such that $\psi$ has a jump at $t_k$ and set $s_{k*}:=\inf\set{s}{\psi(s)=t_k}$ and $s_k^*:=\sup\set{s}{\psi(s)=t_k}$. Define $b^k \colon [0,1]\times \Omega\to \R^m$ via
\[
  b^k(\tau,x):=\w\bigl(s_{k*}+(s_k^*-s_{k*})\tau \bigr),  \qquad (\tau,x) \in [0,1]\times \Omega,
\]
and the existence result is established for
\[
  w, \; \{b^k\}_{k\in \Jbb_w}, \; \{v^i(t_k)\}_{i\in \Ibb_k,k\in \Jbb_w}.
\]
The regularity follows from the uniform estimates in Lemma~\ref{lem:apri} and~\eqref{eq:converg}.
\end{proof}

\begin{proof}[Proof of Theorem~\ref{thm:main_approx}]
The proof of Theorem~\ref{thm:main_approx} can be collected from the estimates above.
\begin{enumerate}
\item[\eqref{eq:conv1}] is explained in~\eqref{eq:l_conv}.
\item[\eqref{eq:conv2}] is a consequence of the construction in Section~\ref{ssc:s}. Hereby the $\tau^{(k,i)}_j$ are defined via $t_k+\tau^{(k,i)}_j=t^{\lambda_j}_{k_j}$.
\item[\eqref{eq:conv3}] and~\eqref{eq:conv4} follows from the definition and~\ref{itm:vi} and~\ref{itm:vii} of Theorem~\ref{thm:main_ex}.
\end{enumerate}
The proof is complete.
\end{proof}



\providecommand{\bysame}{\leavevmode\hbox to3em{\hrulefill}\thinspace}
\providecommand{\MR}{\relax\ifhmode\unskip\space\fi MR }
\providecommand{\MRhref}[2]{%
  \href{http://www.ams.org/mathscinet-getitem?mr=#1}{#2}
}
\providecommand{\href}[2]{#2}

\end{document}